\newtheorem{theorem}{Theorem}[section]
\newtheorem{lemma}[theorem]{Lemma}
\newtheorem{corollary}[theorem]{Corollary}
\newtheorem{setup}[theorem]{Setup}
\newtheorem{prop}[theorem]{Proposition}
\newtheorem{defn}[theorem]{Definition}
\newtheorem{example}[theorem]{Example}
\newtheorem{remark}[theorem]{Remark}
\newtheorem{conjecture}[theorem]{Conjecture}
\numberwithin{equation}{section}
\newcommand{\Z}{\mathbb{Z}}
\newcommand{\R}{\mathbb{R}}
\newcommand{\C}{\mathbb{C}}
\newcommand{\bS}{\mathbb{S}}
\newcommand{\bT}{\mathbf{T}}
\newcommand{\bL}{\mathbb{L}}
\newcommand{\bF}{\mathbb{F}}
\newcommand{\Hom}{\mathrm{Hom}}
\newcommand{\eff}{\mathrm{eff}}
\newcommand{\Span}{\mathrm{Span}}
\newcommand{\one}{\mathbf{1}}
\newcommand{\bP}{\mathbb{P}}
\newcommand{\Id}{\mathrm{Id}}
\title[Equivariant Lagrangian correspondence]{Equivariant Lagrangian correspondence and a conjecture of Teleman}
\author[Lau]{Siu-Cheong Lau}
\address{Department of Mathematics\\ Boston University}
\email{lau@math.bu.edu}
\author[Leung]{Nai-Chung Conan Leung}
\address{Institute of Mathematical Sciences\\ The Chinese University of Hong Kong}
\email{leung@math.cuhk.edu.hk}
\author[Li]{Yan-Lung Leon Li}
\address{Center of Geometry and Physics, Institute for Basic Science (IBS), Pohang 37673, Korea}
\email{ylli@ibs.re.kr}
\begin{document}
\maketitle

\begin{abstract}
    In this paper, we study the Floer theory of equivariant Lagrangian correspondences and apply it to derive precise relations between the disc potential of an invariant Lagrangian submanifold and that of its quotient, thereby addressing a conjecture of Teleman. Furthermore, we proved that their (equivariant) Lagrangian Floer cohomologies are isomorphic. In particular, the functor by equivariant Lagrangian correspondence induces a quasi-isomorphism between the equivariant derived Fukaya category at a regular moment-map level and the derived Fukaya category of the corresponding symplectic quotient.

    A key step is to extend Fukaya's construction of an $A_\infty$ tri-module for Lagrangian correspondences to Borel spaces.  We demonstrate that the equivariant obstruction of a Lagrangian correspondence plays an essential role, which leads to quantum corrections in the disc potentials of the quotients.  We computed the disc potential of the Lagrangian correspondence in the toric setup and relate it with mirror maps for compact semi-Fano toric manifolds.
\end{abstract}

\section{Introduction}
Let $(Y,\omega)$ be a Hamiltonian $G$-manifold, i.e. a symplectic manifold $Y$ equipped with a Hamiltonian $G$-action, where $G$ is a compact Lie group, and a moment map $\mu: Y \to \mathfrak{g}^*$. We consider a smooth symplectic quotient 
$$X=X_Q := Y\sslash_Q G = \mu^{-1}(Q) / G,$$
where $Q \subset \mathfrak{g}^*$ is a coadjoint orbit such that $G$ acts freely on $\mu^{-1}(Q)$.

This paper aims to investigate the relationship between the mirror complex geometry of a Hamiltonian $G$-manifold $Y$ and its symplectic quotient $X$.  In \cite{Teleman}, Teleman, based on toric mirror constructions by Givental \cite{Givental} and Hori-Vafa \cite{Hori-Vafa}, proposed the following conjecture when $G=T$ is abelian:

\begin{conjecture}
\label{introtelemanconj}
[Teleman \cite{Teleman}]
	\begin{enumerate}
		\item The mirror of a Hamiltonian $T$ action on a symplectic manifold $Y$ is a holomorphic fibration 
		$$ F: \check{Y} \to \check {T}_\mathbb{C} $$
		where $\check{Y}$ is the mirror of $Y$ and $\check{T}_\mathbb{C}$ is the complexified dual torus.
		\item For each $Q$ as above, the mirror of the symplectic quotient $X_Q$ is given by a fiber $F^{-1}\{\tilde{Q}\}$ for some $\tilde{Q} \in \check {T}_\mathbb{C}$.
	\end{enumerate}

 Moreover, under the Landau-Ginzburg (LG) mirror symmetry, if $(\check{Y}, W_Y)$ is an LG model of $Y$, then $(\check{X}, W_X) \coloneqq (F^{-1}(\tilde{Q}), W_Y|_{F^{-1}(\tilde{Q})})$ is an LG model of $X$.
\end{conjecture}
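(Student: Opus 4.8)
The plan is to exhibit the fibration $F$ as an \emph{equivariant disc potential} of $Y$ and to extract the disc potential of the quotient from a degeneration formula for holomorphic discs across the symplectic reduction, organized via a Fukaya-type $A_\infty$ tri-module built in the Borel setting. First I would set up the reduction correspondence: the inclusion $\mu^{-1}(Q)\hookrightarrow Y$ together with the quotient map $p\colon\mu^{-1}(Q)\to X_Q$ defines a Lagrangian correspondence $C_Q=\{(y,p(y)):y\in\mu^{-1}(Q)\}\subset Y^-\times X_Q$, and an invariant Lagrangian $L\subset\mu^{-1}(Q)$ has geometric composition $C_Q\circ L=p(L)=:\bar L\subset X_Q$. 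Following Fukaya, one attaches to $C_Q$ an $A_\infty$ tri-module over the Fukaya categories of $Y$ and $X_Q$ whose structure maps count pseudo-holomorphic quilts; evaluated on $L$ and $\bar L$ this produces a map $\CF(L,L)\to\CF(\bar L,\bar L)$, ideally a quasi-isomorphism intertwining the disc potentials $W_L$ and $W_{\bar L}$.

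To see the torus directions one performs this construction equivariantly, replacing $Y$ by the Borel space $ET\times_T Y$ — in practice a finite-dimensional approximation $ET^{(N)}\times_T Y$ — and building the tri-module for the induced correspondence. Equivariant Floer theory of $Y$ carries formal parameters valued in $H^\bullet(BT)$ whose exponentiated spectrum is the complexified dual torus $\check{T}_{\mathbb{C}}$, and the equivariant disc potential $W_L^T$ of an invariant Lagrangian depends on these parameters; gluing the $\check{T}_{\mathbb{C}}$-dependence over the mirror charts attached to invariant Lagrangians yields the fibration $F\colon\check Y\to\check{T}_{\mathbb{C}}$. This would settle part (1) modulo convergence of the Borel construction.

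The core geometric input for parts (2) and the ``moreover'' is then a gluing formula: under neck-stretching along $p$, holomorphic discs in $X_Q$ bounded by $\bar L$ arise from holomorphic discs in $Y$ bounded by $L$ together with ``vertical'' configurations in the orbit directions of $\mu^{-1}(Q)\to X_Q$, and the vertical contributions are exactly those recorded by the equivariant obstruction $\mathfrak{m}^{\mathrm{eq}}_0(C_Q)$ of the correspondence. Solving this obstruction — producing a bounding cochain for the Borel correspondence — pins down a value $\tilde Q\in\check{T}_{\mathbb{C}}$ (morally $\exp(Q)$, corrected by the cochain), and the gluing identity then reads $W_{\bar L}=W_L^T\big|_{F=\tilde Q}$ up to the quantum correction carried by that cochain; matching the restriction $W_Y|_{F^{-1}(\tilde Q)}$ with the reduction of the Fukaya category along the tri-module identifies $(F^{-1}(\tilde Q), W_Y|_{F^{-1}(\tilde Q)})$ as an LG model of $X_Q$.

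The hard part is twofold. First, making the Borel tri-module well defined: the Borel space is infinite-dimensional, so one works over the approximations $ET^{(N)}$ and must show that the quilted disc moduli and the resulting structure constants stabilize as $N\to\infty$, which requires transversality for pseudo-holomorphic quilts and uniform energy bounds. Second — and this is the real content — actually solving $\mathfrak{m}^{\mathrm{eq}}_0(C_Q)$: a priori the equivariant obstruction need not vanish, and its solvability is precisely what forces the ``quantum corrections'' predicted by the conjecture. In the toric case I expect this to be explicit: a dimension and positivity argument for compact semi-Fano toric manifolds reduces the obstruction to a closed-string quantity, and the bounding cochain that solves it turns out to be the toric mirror map — which both establishes the conjecture in that setting and identifies $\tilde Q$ with its instanton-corrected value.
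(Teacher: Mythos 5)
Your proposal follows essentially the same route as the paper: the moment-level correspondence $L^\pi\subset Y^-\times X$, Fukaya's correspondence tri-module promoted to the Borel setting via finite-dimensional approximations $ET^{(N)}$, the fibration $F$ read off from the equivariant part $h_L$ of the equivariant disc potential, and the identification of the correspondence's equivariant contribution with the toric mirror map in the semi-Fano case (note the paper only establishes a localized, open-string version of the conjecture, Theorem \ref{telthm}, not the conjecture itself). Two points where your description of the mechanism diverges from what is actually done: the identity $W^T_L(b_L)+W^T_{L^\pi}(b_{L^\pi})=W_{\bar L}(b_{\bar L})$ is not obtained from a neck-stretching/gluing formula for discs but purely algebraically, from the left-cyclic element $\mathbf{1}_\infty$ of the tri-module together with unitality, by solving $n^{b'',b',b}_{0,0,0}(\mathbf{1}_\infty)=0$ inductively over the Novikov filtration; and in the $Y=\C^n$ semi-Fano setting the correspondence $L^\pi$ is shown to be honestly unobstructed, so the mirror map arises as the \emph{equivariant disc potential} $h_{L^\pi}$ of $L^\pi$ (a $\lambda$-linear term in $W^{Morse}_{L^\pi,T}$ shifting which fiber of $F$ one restricts to), not as a bounding cochain cancelling an obstruction — bounding cochains and bulk deformations only enter in the genuinely obstructed examples such as $Y=(\bP^1)^3$.
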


 \begin{remark}
In his ICM talk, Teleman further conjectured that for general $G$, the mirror of a Hamiltonian $G$ action on $Y$ is a holomorphic fibration 
$$ F: \check{Y} \to \check {G}_\mathbb{C}/\mathrm{Ad} $$
		where $\check{G}_\mathbb{C}$ is the complexified Langlands dual group and $\check{G}_\mathbb{C}/\mathrm{Ad}$ is its space of conjugacy classes, such that the mirror of $Y\sslash_0 G$ is related to a fiber of $F$ (See \cite{Telemanicmslide} for further details).
 \end{remark}

In the closed-string sector, Iritani and Sanda are constructing maps relating (equivariant) quantum D-modules $QDM_T(Y)$ and $QDM(X)$. In the present work, we prove an open-string and local version of this conjecture using equivariant Lagrangian Floer theory. We briefly describe our approach below, whose details are in Theorem \ref{telthm}.

From Floer-theoretic perspective, $\check{Y}$ is constructed by gluing local mirror charts given by $MC_{weak}(L)$, the weak Maurer-Cartan space of $L$ endowed with the disk potential $W_L$, via wall-crossing transformations \cite{CHL-toric}. When $L$ is $T$-invariant, $F$ is defined using the equivariant disk potential of $L$ due to Kim, the first-named author and Zheng \cite{KLZ}; a major portion of this paper is to justify Conjecture \ref{introtelemanconj} (2) by developing the theory of \textit{equivariant correspondence tri-modules}, as an equivariant extension of correspondence tri-modules by Fukaya \cite{Fukaya-corr}. 

To illustrate, consider the following example supporting Conjecture \ref{introtelemanconj}:

\begin{example} 
Let $Y=\C^{n+1}$ equipped with an $\bS^1$-action in the direction $(1, \dots ,1)$. Its symplectic quotient at any regular level is $\bP^n$ (see Figure \ref{fig:P2} when $n=2$).

    The Hori-Vafa mirror of $\bP^n$ (as a K\"ahler manifold) is the LG model 
    $$((\C^\times)^n, W_{\bP^n} = z_1 + \ldots z_n + \frac{q}{z_1\ldots z_n}),$$ where $q$ is the K\"ahler parameter encoding the symplectic area of the line class.  This LG model can be obtained from that of $\C^{n+1}$
    $$((\C^\times)^{n+1}, W_{\C^{n+1}} = z_1 + \ldots z_n + z_{n+1})$$ by restricting $W_{\C^{n+1}}$ on the fiber $F^{-1}\{q\}$ where $F: (\C^\times)^{n+1} \to \C^\times$ is defined as $F=z_1\ldots z_{n+1}$.  

\begin{figure}[h]
\begin{center}
\includegraphics[scale=0.25]{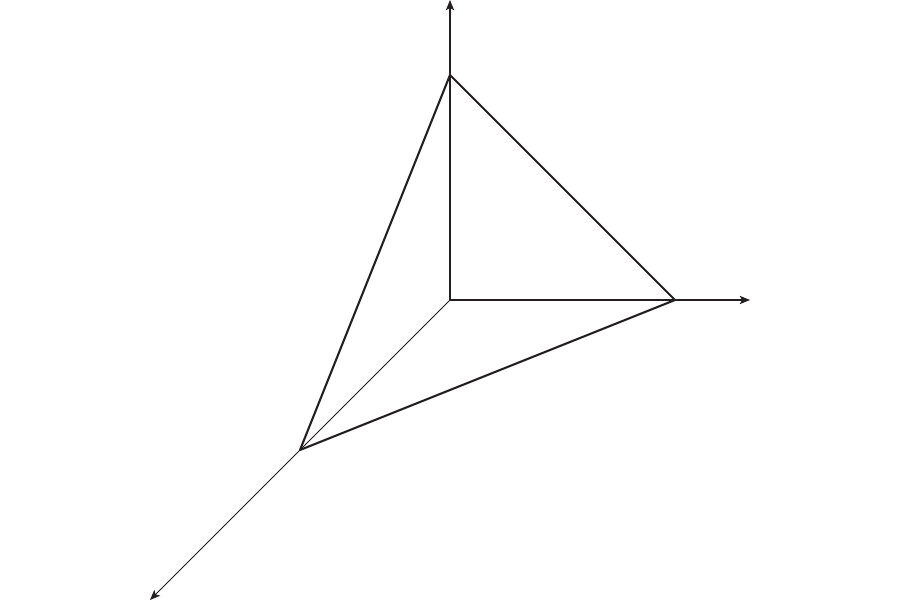}
\caption{\small{$\bP^2$ as a symplectic quotient of $\C^3$.}}
\label{fig:P2}
\end{center}
\end{figure}
    
\end{example}

However, even for compact toric Fano manifolds, non-trivial `quantum corrections' come up.  Let's consider the following example.

\begin{example} \label{ex:P1P1P1}
    Let $Y=(\bP^1)^3$ with an $\bS^1$-action in the direction $(1,1,1)$.  Its symplectic quotient equals $\bP^2$ (see Figure \ref{fig:P1P1P1}). The Hori-Vafa mirror of $Y$, setting the K\"ahler parameters $q_i=1$ for all $i$ for simplicity, is given by 
    $$((\C^\times)^{3}, W_{(\bP^1)^3} = z_1 + z_2 + z_3 + \frac{1}{z_1} + \frac{1}{z_2} + \frac{1}{z_3}).$$  Restricting to the fiber of $F=z_1z_2z_3$ at $c \in \C^\times$, we get 
    $$((\C^\times)^{2}, z_1 + z_2 + \frac{c}{z_1z_2} + \frac{1}{z_1} + \frac{1}{z_2} + \frac{z_1z_2}{c}),$$ which seems hard to be compared with the LG potential $W_{\bP^2} = z_1+z_2+\frac{q}{z_1z_2}$ of $\bP^2$. We will return to this in Example \ref{ex:P1P1P1cont} and \ref{ex:P1P1P1-2}.
\begin{figure}[h]
\begin{center}
\includegraphics[scale=0.25]{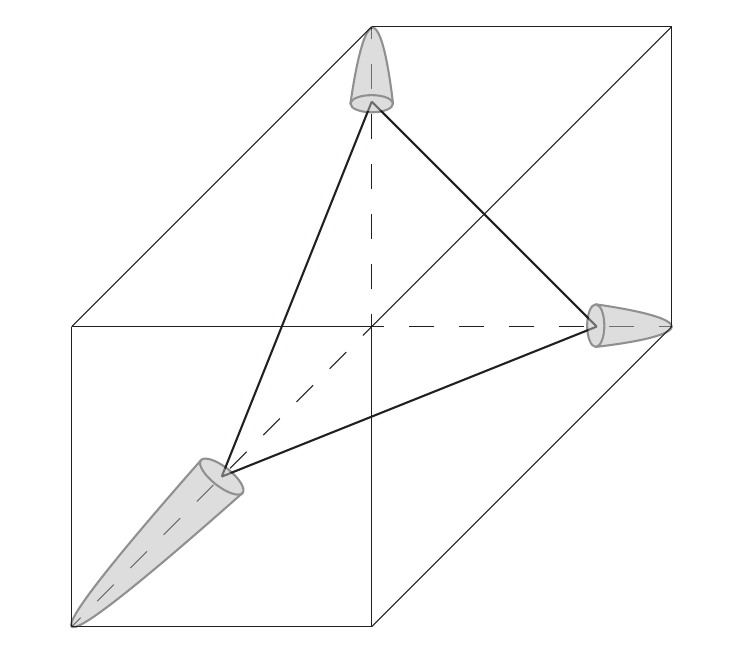}
\caption{\small{$\bP^2$ as a symplectic quotient of $(\bP^1)^3$.}}
\label{fig:P1P1P1}
\end{center}
\end{figure}
\end{example}

In this paper, we tackle the problem from the SYZ approach \cite{SYZ} and Lagrangian Floer theory \cite{FOOO10}.   By SYZ, the mirror $\check{Y}$ of the symplectic manifold $Y$ should be constructed as the complexified moduli space of (possibly degenerate) fibers of a Lagrangian torus fibration.  The construction receives quantum corrections coming from the Lagrangian deformation and obstruction theory of the fibers.  To compare the mirrors, we should find relations between the moduli space of Lagrangians in $Y$ and that in the symplectic quotient $X$.

Lagrangians in $Y$ and its symplectic quotient $X$ at $Q = \{0\}$ are related by a Lagrangian correspondence, the moment-level Lagrangian $$L^\pi := \{(y,\pi(y)) \in Y^- \times X: y \in \mu^{-1}(0)\}.$$ It relates a $G$-invariant Lagrangian $L \subset Y$ with its reduction $\bar{L} \subset X$.   Note that $L^\pi$ is diffeomorphic to $\mu^{-1}(0)$.  Moreover, $L^\pi$ is invariant under the diagonal $G$-action on $Y^- \times X$ (in which $G$ acts on $X$ trivially).

The Floer theory of Lagrangian correspondences was first studied by Wehrheim-Woodward \cite{wwquilted} in the exact/monotone setting.  More recently, Fukaya \cite{Fukaya-corr} developed a general theory and constructed an $A_\infty$ tri-module to encode the relations between the deformation-obstruction theory of $L, L^\pi$ and $\bar{L}$.  We would like to follow their constructions to understand Teleman's conjecture.

On the other hand, equivariant Floer theory is essential to understand how the fibration $F$ on $\check{Y}$ comes up.  Equivariant Lagrangian Floer theory is one of the essential ingredients in Daemi-Fukaya's approach of proving Atiyah-Floer conjecture \cite{Daemi-Fukaya}.  In \cite{KLZ}, Kim, the first-named author and Zheng developed equivariant extensions of the SYZ program and Lagrangian Floer theory.  

A key feature is that the \emph{equivariant Borel space $L_G = L\times_{G} EG$ of a Lagrangian $L$ can bound non-trivial stable discs, and hence captures equivariant quantum corrections}. Assuming $L$ has minimal Maslov index 0, the disc potential of $L_G$ takes the form 
$$\textstyle W(z) + \sum_i \lambda_i \log F_i(z) $$
where $W$ and $F_i$ are functions of the formal deformation space of $L$, and $\lambda_i$ are the equivariant parameters which form a basis of $H^2(BG)$ for the classifying space $BG$.  Thus, fibration $F$ arises from the first principle using equivariant Lagrangian Floer theory.

The goal of this paper is to develop the theory of equivariant Lagrangian correspondence and apply it to construct mirrors of symplectic quotients.  We find that it is rather common that the Lagrangian correspondence $L^\pi$ is obstructed in Floer theory, even in simple toric situations.  In general, one needs to use bulk deformations \cite[Theorem 3.8.41 and Corollary 3.8.43]{FOOO10} of $Y^-\times X$ in order to kill the obstructions.

Suppose $L^\pi$ is weakly unobstructed, possibly after bulk deformations.  A further ingredient is the equivariant disc potential of $L_G^\pi$.  Namely, the equivariant theory will give rise to non-trivial equivariant obstruction of $L_G^\pi$.  Such equivariant terms of $L_G^\pi$ will combine with the equivariant part of $L_G$, and produce further quantum corrections in the fibration $F$.  In general, the fibration $F$ involves a highly non-trivial mirror map, which is a central object that accounts for the powerful predictions of mirror symmetry in enumerative geometry.  A main idea of this paper is that the equivariant disc potential of the Lagrangian correspondence between $Y$ and $X$ contains information about the mirror map.

Here is the main theorem that we obtain for the Borel construction of the Lagrangian correspondence $L^\pi$. Let $L^\pi_G$ be the Borel space, which is a Lagrangian in $((Y^- \times X) \times T^*EG)\sslash_0 G$. Readers are referred to Section \ref{sec:equivCor} for our detailed setup of equivariant correspondence. 

\begin{theorem} \label{mainthm}
Under Setup \ref{hamlagsmred}, and hence the $G$-action on $L$ is free, so that $L_G$ is homotopic to $\bar{L}$. Assume further that $L,L^\pi,\bar{L}$ are weakly unobstructed.
 \begin{enumerate}
    \item (Proposition \ref{equivobstr}, simplified form) The $A_\infty$ tri-module \\
    $CF_{eq}(\bar{L}; L,L^\pi)$ has an equivariant obstruction (after boundary deformations) of the form
		\begin{equation}
		(n_{0,0,0})^2 =  (W_{L} + W_{L^\pi} - W_{\bar{L}}) \Id \pm (h_{L} + h_{L^\pi}) \cdot \lambda
		\label{eq:n^2}
		\end{equation}
		where $W_{L} + h_{L} \cdot \lambda$ and  $W_{L^\pi} + h_{L^\pi} \cdot \lambda$ are the equivariant disc potentials of $L$ and $L^\pi$ respectively, $\lambda = (\lambda_1,\ldots,\lambda_k)$ are the degree-two equivariant parameters of $G$ (and $k$ is the rank), and $W_{\bar{L}}$ is the disc potential of $\bar{L}$.  
		\item(Corollary \ref{compequiud}) After fixing the canonical models for $L_G$ and $L^\pi_G$, there exists a map between the equivariant weak Maurer-Cartan spaces $$\circ: MC_{weak}(L_G) \times MC_{weak}(L^\pi_G) \to MC_{weak}(\bar{L})$$
        $$(b_{L_G}, b_{L^\pi_G}) \mapsto b_{\bar{L}} \coloneqq b_{L^\pi_G} \circ b_{L_G},$$
		 such that their equivariant disc potentials satisfy
		\begin{equation}
		W_{L_G}(b_{L_G}) + W_{L^\pi_G}(b_{L^\pi_G}) = W_{\bar{L}}(b_{\bar{L}}).
		\label{eq:W}
		\end{equation}
		\item (Corollary \ref{udisom})
        For any chosen $b_{L^\pi_G}$ and $(b_{L_G}, b_{\bar{L}})$ under \ref{updownmcspaceisom}, we have an algebra isomorphism between the deformed Floer cohomology rings 
		\begin{equation} \label{hfisom}
        HF(L_G,b_{L_G}) \cong HF(\bar{L},b_{\bar{L}}).
        \end{equation}
	\end{enumerate}
\end{theorem}

\begin{remark}
    Theorem \ref{mainthm} (3) (and its generalisations to pairs of Lagrangians in the sense of Remark \ref{severallag}) would imply 
    an equivalence between (equivariant) derived Fukaya categories of $Y$ and $X$  \footnote{See also \cite{LS} for more precise statements on (equivariant) wrapped Fukaya categories.}:
    \begin{equation}\label{fukcatequiv}
        \mathrm{DFuk}_G(Y)_{\mu=0} \cong \mathrm{DFuk}(X),
    \end{equation}
with object level bijection given by
    $$(L,b_{L_G}) \leftrightarrow (\bar{L},b_{\bar{L}}),$$
    
 where $\mathrm{DFuk}_G(Y)_{\mu=0}$ is (formally) the derived Fukaya category of $Y_G$, as an (ordinary) category with objects $(L,b_{L_G})$, where $L\subseteq \mu^{-1}(0)$ is a $G$-invariant Lagrangian and $b_{L_G}\in MC_{weak}(L_G)$, 
 and morphism spaces being the deformed Floer cohomology:
 $$\Hom_{\mathrm{DFuk}_G(Y)}((L^{(1)},b_{L^{(1)}_G}), (L^{(2)},b_{L^{(2)}_G})) = HF_G(L^{(1)}, L^{(2)}; b_{L^{(1)}_G}, b_{L^{(2)}_G}).$$ 

\end{remark}

Further comments on Theorem \ref{mainthm}: for (2), we need to use the assumption that the $G$-action on $L$ is free, so that $H_G(L) \cong H(\bar{L})$ in classical cohomology.  In particular, $n_{0,0,1}(\one,-)$ gives an isomorphism between $H(L_G)$ and $H(\bar{L})$ which are taken as canonical models for the (quilted) Floer theory of $(\bar{L}, L_G, L^\pi_G)$ and $\bar{L}$ respectively.  Using this isomorphism and the inductive technique over the Novikov ring found by Fukaya \cite{Fukaya-corr}, the map $\circ: MC_{weak}(L_G) \times MC_{weak}(L^\pi_G) \to MC_{weak}(\bar{L})$ can be constructed by solving the equation $n_{0,0,0}(\one)=0$ under boundary deformations.

Under Equation \eqref{eq:W}, the deformed complex $(CF_{eq}(\bar{L}; L, L^\pi),n^{def}_{0,0,0})$ is unobstructed. Then both $n^{def}_{0,0,1}(\one;-)$ and $n^{def}_{1,0,0}(-;\one)$ are chain isomorphisms.  This gives (3) on the cohomology level, which turns out to be a ring isomorphism with respect to the deformed product structure.

In general, the obstruction of $L^\pi$ and the equivariant potential $W_{L^\pi} + \lambda 
\cdot h_{L^\pi_G}$ are highly non-trivial.  In Section \ref{sec:toric}, we find some toric geometries in which the obstruction vanishes and the equivariant potential can be computed.  In particular, when $Y=\C^n$ and $X=\C^n \sslash_c T^k$ is a semi-Fano toric manifold for some level $c$, we find that $h_{L^\pi_G}$ is essentially the mirror map.  That is,

\begin{theorem}[Theorem \ref{thm:equiv-semi-Fano}]
    Let $X$ be a compact semi-Fano toric manifold, and $Y=\C^n$ whose coordinate axes are in a one-to-one correspondence with the rays of the fan of $X$.  Then the Lagrangian correspondence $L^\pi$ is unobstructed.  Moreover, the equivariant disc potential of $L^\pi$ equals
    $$W^{Morse}_{L^\pi, T} = \sum_{j=1}^{n-d} \lambda_j (\log q_j - \log \check{q}_j(q))$$
    where $\check{q}_j(q)$ denotes the inverse mirror map for $X$.
\end{theorem}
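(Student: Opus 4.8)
The plan is to derive the statement from the master relation \eqref{eq:W} of Corollary \ref{compequiud}, specialised to $Y=\C^n$ and $X=\C^n\sslash_c T^k$ with $k=n-d$, once the two input potentials $W_{L_G}$ and $W_{\bar L}$ have been made explicit. As a preliminary step I would verify that $L^\pi$ is weakly unobstructed: a holomorphic disc bounded by $L^\pi$ in $\overline{\C^n}\times X$ projects to a holomorphic disc in $X$ and to an anti-holomorphic disc in $\C^n$ with boundary meeting $\mu^{-1}(c)$, and because $\C^n$ carries no non-constant holomorphic spheres and its torus fibres bound only the $n$ basic Maslov-index-$2$ discs, the $\C^n$-component never contributes a Maslov-$0$ bubble; an index count then shows that, after a boundary deformation, the obstruction $n_{0,0,0}(\one)$ of the tri-module $CF_{eq}(\bar{L};L,L^\pi)$ is a scalar multiple of $\one$, so that \eqref{eq:n^2} and \eqref{eq:W} are available. (One also uses the classical facts that $L\subset\C^n$ is unobstructed and that $\bar{L}\subset X$ is weakly unobstructed for $X$ semi-Fano.)

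Next I would make $W_{L_G}$ explicit. By \cite{KLZ}, the $T^k$-equivariant disc potential of the toric fibre $L\subset\C^n$ has the form
\[
W_{L_G}=\sum_{i=1}^n z_i+\sum_{j=1}^{k}\lambda_j\,\log\!\Bigl(\prod_{i=1}^n z_i^{\,Q_{ij}}\Bigr),
\]
where $Q=(Q_{ij})$ is the weight matrix of $T^k\hookrightarrow T^n$ (equivalently the matrix of linear relations $\sum_i Q_{ij}v_i=0$ among the primitive ray generators $v_i$ of the fan of $X$), the $z_i$ are the formal deformation coordinates of $L$, and the $\lambda$-linear part is read off from the $T^k$-moment monomials. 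Under the homotopy equivalence $L_G\simeq\bar{L}$, the non-equivariant part $W_{L_G}|_{\lambda=0}=\sum_i z_i$, restricted to the GIT slice $\prod_i z_i^{Q_{ij}}=q_j$ and expressed in coordinates $w$ on the deformation space of $\bar L$, becomes the Hori--Vafa superpotential $W^{HV}_X(w;q)$. On the other hand, by the toric semi-Fano mirror theorem of Chan--Lau--Leung--Tseng (see also \cite{FOOO10,CHL-toric}), the disc potential of $\bar{L}\subset X$ equals $W_{\bar{L}}(w)=W^{HV}_X(w;\check q(q))$: the same superpotential evaluated at the inverse mirror map.

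Substituting into \eqref{eq:W}: at $\lambda=0$ the relation reads $W^{HV}_X(w;q)+W_{L^\pi_G}|_{\lambda=0}=W_{\bar L}(b_{L^\pi_G}\circ b_{L_G})=W^{HV}_X(\,\cdot\,;\check q(q))$, and since the two Hori--Vafa potentials differ only by the coordinate change induced by $q\mapsto\check q(q)$, the comparison map $b\mapsto b\circ b_{L_G}$ must restrict, on the $\lambda=0$ locus, to exactly that change; hence $W_{L^\pi_G}|_{\lambda=0}=W_{L^\pi}=0$, which upgrades weak unobstructedness of $L^\pi$ to genuine unobstructedness with no bulk deformation. For the $\lambda$-linear terms I would differentiate \eqref{eq:W} in $\lambda_j$ at $\lambda=0$: on the left $\partial_{\lambda_j}W_{L_G}|_0=\log(\prod_i z_i^{Q_{ij}})$, which equals $\log q_j$ on the GIT slice; the contribution of the comparison map on the right is $\log\check q_j(q)$, this being precisely the obstruction to realising $q\mapsto\check q(q)$ by a monomial change of the $w$-variables, which by the fan relation $\sum_i Q_{ij}v_i=0$ lives exactly in the $k$ directions indexed by $\lambda_1,\dots,\lambda_k$; and the remaining terms, proportional to $\partial W_{\bar L}$, vanish on the Maurer--Cartan locus. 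This gives $\partial_{\lambda_j}W_{L^\pi_G}|_0=\log q_j-\log\check q_j(q)$, and a degree count (the disc potential sits in cohomological degree $2$ and each $\lambda_j$ has degree $2$, so no $\lambda$-monomial of degree $\ge 2$ can appear) shows $W_{L^\pi_G}$ is exactly linear in $\lambda$, yielding $W^{Morse}_{L^\pi,T}=\sum_{j=1}^{n-d}\lambda_j(\log q_j-\log\check q_j(q))$.

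The main obstacle I expect is making rigorous the identification ``comparison map $=$ inverse mirror map'' used above: one must show that the map $\circ:MC_{weak}(L_G)\times MC_{weak}(L^\pi_G)\to MC_{weak}(\bar{L})$ produced abstractly by Corollary \ref{compequiud} coincides, on the chosen canonical models, with the toric coordinate change $q\mapsto\check q(q)$ (composed with a monomial change of the $w$-variables), so that the $\lambda$-linear coefficient is exactly $\log(q_j/\check q_j(q))$ rather than some uncontrolled series. I would attack this by combining the \cite{KLZ} formula for $W_{L_G}$ with the hypergeometric ($I$-function) description of the toric mirror map and the fan-relation computation $\sum_i Q_{ij}v_i=0$, while tracking how the boundary deformation solving $n_{0,0,0}(\one)=0$ interacts with the GIT coordinates; a secondary technical point, that $W_{L^\pi_G}$ receives no contribution beyond first order in $\lambda$, I would settle by the degree argument above supplemented if needed by a direct count of the equivariant quilted discs bounded by $L^\pi_G$.
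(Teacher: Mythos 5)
Your overall architecture is the paper's: KLZ's formula for the equivariant potential of $L\subset\C^n$, the Chan--Lau--Leung--Tseng expression of $W_{\bar L}$ through the inverse mirror map, and the additivity relation for potentials under the correspondence tri-module, solved for $W_{L^\pi}$. But two steps that carry real content are either wrong or left open.

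First, the unobstructedness argument. The danger for $L^\pi$ in the semi-Fano case does not come from the $\C^n$ factor at all: by Lemma \ref{lem:dics_class}, disc classes of $(\bar Y\times X,L^\pi)$ beyond the basic ones include sphere classes of $X$, and semi-Fano only gives $c_1\ge 0$, i.e.\ Maslov index $0$ or $2$. The Maslov-$0$ classes are precisely what produce the equivariant potential (so they certainly exist and "never contributes a Maslov-$0$ bubble" is the wrong claim), and the Maslov-$2$ classes threaten a degree-two obstruction $m_0(L^\pi)\in H^2(L^\pi)$. The paper kills the latter not by an index count but by proving $H^2(L^\pi)=0$ (Proposition \ref{prop:semi-Fano} plus the argument that the $(\C^\times)^n$-action pushes any two-cycle of $\mu^{-1}\{c\}$ into its intersection with a coordinate plane $\C^2\subset\C^n$, which supports no $H^2$). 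Your later attempt to deduce $W_{L^\pi}|_{\lambda=0}=0$ from the potential identity is circular: Proposition \ref{equivobstr} and Corollary \ref{equivmorseud}, hence Equation \eqref{eq:WWW}, already presuppose that $L^\pi$ is weakly unobstructed with minimal Maslov index $0$. You need the topological vanishing first.

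Second, the step you flag as your "main obstacle" is exactly the computation the paper performs, and it is not an abstract identification of the comparison map with the mirror map. Once one knows $W^{Morse}_{L^\pi,T}=\sum_j\lambda_j h_j(q)$ is purely equivariant and $\lambda$-linear (which follows from the KLZ normal form for minimal Maslov index $0$, not merely a degree count on $\lambda$), Equation \eqref{eq:WWW} forces $\sum_i\tilde z_i$ restricted to the locus $\log\tilde z_l+\sum_{i=1}^d(\nu_i,v_l)\log\tilde z_i+h_j=0$ to coincide with $\sum_l\exp(g_l(\check q(q)))Z_l$, and the closed formula $\check q_j(q)=q_j\prod_{l}(\exp g_l(\check q(q)))^{-D_l\cdot\Psi_j}$ then pins down $h_j=\log q_j-\log\check q_j(q)$ by matching coefficients of the Laurent monomials. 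Your $\lambda$-differentiation scheme would land at the same comparison, but without supplying this explicit matching the proof is incomplete precisely where the theorem's content lies.
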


The mirror map plays a central role in closed-string mirror symmetry for the enumerative geometry of holomorphic curves.  They are given by hypergeometric functions that are solutions to a certain Picard-Fuchs system of differential equations.  See Equation \eqref{eqn:funcn_g} for the expression in the toric case. 
Compared to our previous method of wall crossing and compactification \cite{CLL,CLLT,CCLT13}, 
the equivariant theory gives another approach to understanding mirror maps for toric Calabi-Yau manifolds via the Lagrangian correspondence for symplectic quotients.  It extends our understanding towards quantum corrections in SYZ mirror symmetry. 

In the above theorem, we take $Y=\C^n$ to ensure unobstructedness of $L^\pi$.  In general, if we take $Y$ to be a compact toric Fano manifold such as $\bP^1 \times \bP^1$, non-trivial (non-equivariant or equivariant) obstruction of $L^\pi$ can occur.  See Example \ref{ex:P1P1P1} and Example \ref{ex:P1P1} in Section \ref{sec:toric}.  

\begin{example}
\label{ex:P1P1P1cont}
    We continue to discuss Example \ref{ex:P1P1P1}.  Using the Maslov index formula by \cite{Cho-Kim} as explained in Proposition \ref{prop:Maslov}, we find that the Maslov indices of the depicted discs in Figure \ref{fig:P1P1P1} have Maslov index $(-2)$.  Thus, even in this simple situation, one needs to use a bulk deformation (of degree four) to kill these negative discs.  The bulk deformation will produce extra terms in the disc potential, which explains the discrepancy in the comparison of $W_L$ and $W_{\bar{L}}$.  See Example \ref{ex:P1P1P1-2}.
\end{example}

The relations between the (equivariant, wrapped) Fukaya categories of $Y$ and $X$ were conjectured in \cite{LS} for singular cases. Throughout the article, we have assumed that $G$ acts freely on $\mu^{-1}(0)$, therefore $0$ is a regular value of $\mu$. In some examples, we can check by hand that our statements on the relation between equivariant mirrors and mirrors of quotients still hold at singular moment levels.  We will illustrate an example in Subsection \ref{singlevel}.

\subsection*{Relation to other works} Since the pioneering work of Khovanov and Seidel \cite{KS-quiver} (for the exact case and $G=\Z_2$), there has been many developments of Lagrangian Floer Theory in presence of symmetry for both finite case (e.g. \cite{aurouxsmith,  baohonda, CH, CHL, chowleung,  HLS, HLS-corr, SS}) and continuous case (e.g. \cite{HLS-simplicial, zernik2015equivariant, zernik2017fixed, Woodward-Xu, Daemi-Fukaya, KLZ, HKLZ, futakisanda, Cazassus, xiao2023equivariant, LS}) with a wide range of applications, a noteworthy one being a formulation of the ``symplectic side" of the Atiyah-Floer conjecture \cite{atiyahconj} (e.g. \cite{Manolescu-Woodward, Daemi-Fukaya, Cazassus} \footnote{We refer the reader to \cite{Cazassus} for an overview on the role of equivariant Floer theory to Atiyah-Floer conjecture.}). See also \cite{ Se-lect, lekilipascaleff, EL}. We briefly describe some of them below, which developed a relation between a version of the equivariant Floer theory of $Y$ and the Floer theory of the quotient $X$.  A distinguished feature of our formulation is that it produces the fibration structure conjectured by Teleman, which enables a more direct comparison between the theories of $Y$, $Y_G$, and $X$.

In \cite{Daemi-Fukaya}, Daemi and Fukaya announced a construction of an $A_\infty$ homotopy equivalence from (a component of) the $G$-equivariant Fukaya category of $Y$ to the (bulk-deformed) Fukaya category of $X$ using a functor induced from $L^\pi$. They used an equivariant de Rham model that required $G$-equivariant Kuranishi structure on the disk moduli of $L$, and assumed minimal Maslov index greater than two.  
    
In our work, we make use of the disk moduli of (the approximation spaces of) the Borel spaces $L_G \subset Y_G$ as in \cite{KLZ}.  Moreover, since we do not restrict the minimal Maslov index to be greater than two, we need to take care of obstructions for the Lagrangian correspondence, which can also have an equivariant disc potential.
    
In \cite{Woodward-Xu}, Woodward and Xu constructed an open quantum Kirwan map from the gauged Floer theory of $Y$ to the Floer theory of $X$ by counting affine vortices.  The quasimap Floer theory for $Y$ in \cite{Woodward} is the key ingredient in their formulation.

In this paper, a main observation is that the equivariant Lagrangian correspondence encodes the discrepancies caused by discs emanated from unstable locus in $Y$ for the $G_\C$ action.  Moreover, the usual Floer theory of $(Y,L)$ is obtained as the non-equivariant limit of our formulation of equivariant Lagrangian Floer theory.
    
In \cite{Cazassus}, Cazassus constructed the equivariant Floer complex $CF_G(L, L')$ and Kirwan morphisms between $CF_G(L, L')$ and $CF(\bar{L}, \bar{L}')$ for a pair of $G$-Lagrangians $(L, L')$ in a different way using quilted Floer theory together with a telescope construction.  

Equivariant theory in the closed-string sector has a much longer history which is one of the crucial ingredients for the discovery of mirror symmetry.  See for instance the works of Givental \cite{givental_imrn96} in terms of Gromov-Witten invariants and Viterbo \cite{viterbo} in terms of Floer cohomology. Recently, González, Mak and Pomerleano \cite{GMP1,GMP2} studied equivariant quantum cohomology and symplectic cohomology in relation with equivariant Seidel representations and 3D mirror symmetry. A common feature (compared to our construction here and also to \cite{KLZ,Cazassus}) is the use of Borel spaces in constructing equivariant Floer theory.

\subsection*{Updates} After the arXiv preprint of this work, important related works appear and let us make some updates here.

In \cite{PT1}, Pomerleano and Teleman studied the $LG$-equivariant quantum cohomology $QH_{LG}(Y)$ and the relations to quantum cohomology of the symplectic quotient $QH(X)$. Monotonicity condition plays an important role for the theory.  In this paper, we stress on the equivariance obstruction theory for Lagrangian correspondence in general situations.

In \cite{xiao2}, assuming equivariant transversality, Xiao used the bulk deformation technique in \cite{FOOO} to solve the obstruction of the the moment level correspondence $L^\pi \subseteq Y^- \times X$ by deforming the ambient space $Y^- \times X$.
Surjectivity of the restriction map between basic form complexes
\begin{align}\label{bascpxsurj}
\Omega^\bullet_{bas}(Y \times X) \rightarrow \Omega^\bullet_{bas}(L^\pi)
\end{align}
is one of the key ingredients.

In our formulation, we consider weakly unobstructed deformations, which are slightly more flexible and is particularly useful in Fano and general-type geometries, in which the disc potential plays an important role. Furthermore, we take the Borel construction of $Y \times X$ and do not assume equivariant transversality. 
By considering canonical models and observing that $H^\bullet_{G}(L^\pi) \cong H^\bullet(X)$, we have the following surjection
\begin{align}
H^\bullet_{G}(Y \times X) \rightarrow H^\bullet_{G}(L^\pi)
\end{align}
which allows us to solve equivariant obstructedness by equivariant bulk deformation of $L^\pi$ in a forthcoming work.

In \cite{sampietro}, under the relatively exact setting, Sampietro constructed equivariant Lagrangian Floer homologies $HF^G_*(L_0, L_1; \Lambda_0)$ for a pair of cleanly-intersecting $G$-invariant Lagrangians $L_0, L_1 \subseteq \mu^{-1}(0)$, by lifting Cazassus's construction \cite{Cazassus} to the Novikov ring $\Lambda_0$. Moreover, he proved an isomorphism between (equivariant) Lagrangian Floer homologies of $(L_0, L_1)$ and their quotients $(\overline{L_0}, \overline{L_1})$:
$$HF^G_*(L_0, L_1; \Lambda_0) \cong HF_*(\overline{L_0}, \overline{L_1}; \Lambda_0).$$

Compared to \ref{hfisom} in Theorem \ref{mainthm} (3), while we restricted ourselves to the case of a single Lagrangian $L$, it works for more general symplectic manifolds as in \cite{FOOO}. In such settings, due to the existence of obstructions to defining (equivariant) Lagrangian Floer cohomology, deformations by (equivariant) weak Maurer-Cartan elements are needed. To obtain \ref{hfisom} in full generality, an identification of their (equivariant) weak Maurer-Cartan elements is necessary, for which we did so carefully via equivariant correspondence tri-modules.

The structure of this paper is as follows.  We review the Floer theory of Lagrangian correspondences (developed by Fukaya \cite{Fukaya-corr}) in Section \ref{sec:corr} and equivariant Lagrangian Floer theory in Section \ref{sec:equiv}.  In Section \ref{sec:equivCor}, we develop a Floer theory for equivariant Lagrangian correspondences and tackle Teleman's conjecture.  In Section \ref{sec:toric}, we solve the obstructions in the toric setup and find a relation with the mirror map for toric semi-Fano manifolds.

\section*{Acknowledgements}
The first-named author expresses his gratitude to Yoosik Kim and Xiao Zheng for enlightening discussions on various related topics.  The third-named author thanks Denis Auroux, Kwokwai Chan, Cheol-Hyun Cho, Dongwook Choa, Hiroshi Iritani, Yu-Shen Lin, Ziming Ma, Yong-Geun Oh, Hiroshi Ohta, Kaoru Ono,  Paul Seidel and Weiwei Wu for valuable discussions on various stages of this project, and Ki-Fung Chan for a careful reading on the draft. He also thanks the National Center for Theoretical Sciences for hospitality in which part of this work was done and presented. Special thanks to E. Shelukhin for pointing out a missing assumption in Proposition \ref{prop:semi-Fano} in earlier versions. We also thank the anonymous referee for pointing out some mistakes in Proposition \ref{prop:unobs} in earlier versions. 

N. C. Leung was supported by grants of the Hong Kong Research Grants Council (Project No. CUHK14301721 \& CUHK14306720 \& CUHK14306322) and direct grants from CUHK. This work was supported by the Institute for Basic Science (IBS-R003-D1).

\section{Weakly-unobstructed Lagrangian correspondences} \label{sec:corr}
In this section, we recall and develop the Lagrangian Floer theory of (weakly-unobstructed) Lagrangian correspondences, following Fukaya \cite{Fukaya-corr}. 

The first three subsections focus on the algebraic aspects: in subsection \ref{ainftyalg}, we review the notion of $A_\infty$ algebras and tri-modules; in subsection \ref{cyclicprop}, we recall the concept of cyclic property for $A_\infty$ tri-modules and introduce a stronger notion of bi-cyclic property. Also, we strengthen Fukaya's composition result to weak bounding cochains in Proposition \ref{algcompweak}; in Subsection \ref{hpt}, we review the homological perturbation theory of filtered $A_\infty$ algebras and develop that of filtered $A_\infty$ tri-modules.

The latter three subsections are more geometric in nature: In Subsection \ref{lft}, we recall the de Rham model of Lagrangian Floer theory; in Subsection \ref{subsec-lagcorrcomp}, we review the concept of Lagrangian correspondences and their geometric compositions; finally, their Floer theory via correspondence tri-modules will be recalled in Subsection \ref{subsec-corrtrimod}. We also extend Fukaya's unobstructedness result to weakly-unobstructed cases in Corollary \ref{weakunobcomp}.

\subsection{$A_\infty$ algebras and tri-modules} \label{ainftyalg}

In this subsection, we first recall the notion of $A_\infty$ algebras and $A_\infty$ tri-modules over them, building on Fukaya's foundational work \cite{Fukaya-corr}. See also \cite{mau}.

\subsubsection{Novikov coefficients}
We fix the notation for the Novikov coefficients.\\
Given a (commutative, unital, ungraded) ground ring $R$, the (universal) Novikov ring over $R$ is a $T$-adic completion of $R[T]$ defined by
$$\Lambda_0 = \Lambda_0(R) = \left\{\sum^\infty_{i=0} a_i T^{\lambda_{i}}| a_i\in R; 0 = \lambda_{0}<\cdots < \lambda_i <\cdots; \lim_i \lambda_{i} = \infty\right\}$$
as a valuation ring with (unique) maximal ideal $\Lambda_+$ and fraction field $\Lambda$.\\
For each discrete submonoid $$\mathbb{G} = \{0 = \beta_{0}<\beta_{1}<\cdots < \beta_i <\cdots\}\subseteq (\mathbb{R}_{\geq 0}, +, 0)$$
the subring of $\mathbb{G}$-gapped elements 
$\Lambda^{\mathbb{G}}_0 \subseteq \Lambda_0$ is defined by 
$$\Lambda^{\mathbb{G}}_0 = \left\{\sum^\infty_{i=0} a_i T^{\beta_{i}} \in \Lambda_0\right\}$$
as a valuation subring with the maximal ideal $\Lambda^{\mathbb{G}}_+$ and fraction field $\Lambda^{\mathbb{G}}$.\\

For any graded $R$-module $\overline{C}$, the completed tensor product $C \coloneqq \overline{C}\hat{\otimes}\Lambda_0$ is a graded complete $\Lambda_0$-module with $\deg T = 0$. Similarly, define $C_+ \coloneqq \overline{C}\hat{\otimes}\Lambda_+$; given any discrete submonoid $\mathbb{G}\subseteq (\mathbb{R}_{\geq 0}, +, 0)$, denote the submodule of $\mathbb{G}$-gapped elements as $C^\mathbb{G}\coloneqq \overline{C}\hat{\otimes}\Lambda^\mathbb{G}_0$; similarly $C^\mathbb{G}_+\coloneqq \overline{C}\hat{\otimes}\Lambda^\mathbb{G}_+$.

\begin{remark}
For later purposes, we will also consider $R$ being a $2\mathbb{Z}_{\geq 0}$-graded commutative algebra, i.e. a $\mathbb{Z}$-graded commutative algebra (over some ring $S$) concentrated in nonnegative even degrees $R = \displaystyle{\bigoplus_{2m\in\mathbb{Z}_{\geq 0} }}R^{2m}$. A typical example is the rational cohomology ring $H^*(BG; \mathbb{Q})$ of the classifying space $BG$ for a compact connected Lie group $G$. In such situation, the Novikov ring $\Lambda_0(R)$ will also be $2\mathbb{Z}_{\geq 0}$-graded with $\deg{T}=0$. Hence the grading in the completed tensor product $C \coloneqq \overline{C}\hat{\otimes}\Lambda_0$ will be the total grading of $\overline{C}$ and $\Lambda_0(R)$.
\end{remark}

\subsubsection{$A_\infty$ Algebras}
We refer the readers to \cite[Chapter 3]{Ohfukcat} for the precise definitions of a (filtered, gapped) $A_\infty$ algebra $C = (C^\bullet = \overline{C}^\bullet\hat{\otimes}\Lambda_0, \{m_k\}_{k\geq 0})$, as well as its (strict) unitality, weak Maurer-Cartan set (resp. space) \\ $\widehat{MC}_{weak}(C; \Lambda^{\mathbb{G}}_+)$ (resp. $MC_{weak}(C; \Lambda^{\mathbb{G}}_+)$) and potential function $W$.

Moreover, we define the restriction of scalars of $A_\infty$ algebras as follows:

\begin{defn}
    Given a filtered  $A_\infty$ algebra $C = (C^\bullet, \{m_k\})$ over $\Lambda_0(R)$, for any ring morphism $S\xrightarrow{\varphi}R$, the restriction of scalars of $C$ (along $\varphi$), denoted as $C_S$, is a filtered  $A_\infty$ algebra $ (C^\bullet, \{m^{S}_k\}_{k\geq 0})$ over $\Lambda_0(S)$, where 
    \begin{itemize}
        \item $C^\bullet = \overline{C}^\bullet\hat{\otimes}\Lambda_0(R)$ is the $\Lambda_0(S)$-module obtained from the restriction of scalars of $C$ along the ring morphism $\Lambda_0(S)\xrightarrow{\varphi}\Lambda_0(R)$.
        \item $m^S_k: (C_S[1])^{\hat{\otimes}k} \rightarrow C_S[1]$ is defined by $m_k$ as a $\Lambda_0(S)$-multi-linear map.
    \end{itemize}
\end{defn}
It follows immediately that if $C$ is gapped (resp. unital), so as $C_S$.

Besides, the $A_\infty$ algebras $C$ and $C_S$ are related by identity map as follows:
\begin{corollary}
    The identity morphism $ Id: C_S \rightarrow C$ is a strict $A_\infty$ morphism over $\varphi$. It is gapped (resp. unital) if $C$ is.
\end{corollary}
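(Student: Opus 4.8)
The plan is to unwind the definitions and observe that restriction of scalars alters only the coefficient ring, leaving the underlying graded module and the structure maps literally unchanged, so that the identity map on the underlying module is tautologically a strict $A_\infty$ morphism. Concretely, I would first recall that a strict $A_\infty$ morphism $f\colon C_S \to C$ over $\varphi$ is a single component $f_1$ (all higher components vanishing) of degree $0$, $\Lambda_0(S)$-linear on the source and $\varphi$-semilinear into the target — that is, $\Lambda_0(S)$-linear when $C$ is regarded as a $\Lambda_0(S)$-module via $\varphi$ — satisfying $f_1 \circ m^S_k = m_k \circ f_1^{\hat{\otimes}k}$ for every $k\geq 0$ (the case $k=0$ reading $f_1(m^S_0(1)) = m_0(1)$). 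Take $f_1 = \mathrm{Id}_{C^\bullet}$. Since $m^S_k$ is by definition nothing but $m_k$ regarded as a $\Lambda_0(S)$-multilinear map, each morphism relation collapses to $m_k = m_k$; and the $\Lambda_0(S)$-module structure on $C_S$ is precisely the one pulled back along $\varphi$, so $\mathrm{Id}$ is automatically $\varphi$-semilinear. Hence the identity is a strict $A_\infty$ morphism over $\varphi$.

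Next I would treat gappedness. If $C$ is $\mathbb{G}$-gapped, write $m_k = \sum_{\beta\in\mathbb{G}} m_{k,\beta}T^{E(\beta)}$ with $R$-linear maps $m_{k,\beta}$ on $\overline{C}$. An $R$-linear map between $R$-modules is automatically $S$-linear for the $S$-module structures pulled back along $\varphi$, so the very same expansion exhibits $m^S_k$ as $\mathbb{G}$-gapped over $\Lambda_0(S)$; and its $S$-reduction $(\overline{C}_S, \{m_{k,\beta_0}\})$ is exactly the classical restriction of scalars of the classical $A_\infty$ algebra $\overline{C}$, which is again a classical $A_\infty$ algebra by the same formal check as above. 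Thus $C_S$ is $\mathbb{G}$-gapped. The identity morphism is then gapped because $f_1 = \mathrm{Id}$ carries the trivial gapped expansion $f_{1,\beta_0} = \mathrm{Id}$, $f_{1,\beta}=0$ for $\beta\neq\beta_0$, and $f_k=0$ for $k\neq 1$.

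Finally, for unitality: if $e\in\overline{C}^0$ is a strict unit for $C$, then the same element, now viewed in $\overline{C}_S$, satisfies $m^S_{2,\beta_0}(e,x) = m_{2,\beta_0}(e,x) = x = (-1)^{|x|}m_{2,\beta_0}(x,e)$ and $m^S_{k,\beta}(\dots,e,\dots)=0$ for $(k,\beta)\neq(2,\beta_0)$, since these are literally the same maps; hence $C_S$ is unital. The identity morphism preserves this unit because $f_1(e)=e$, so it is a unital morphism. I do not anticipate any genuine obstacle: the statement is a bookkeeping corollary whose content is entirely that ``restriction of scalars'' is defined so as to leave every structure map untouched. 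The only point requiring a moment's care is keeping track of which ring each $\Lambda_0$-linear (and $\Z$-graded versus $\Z/2$-graded) assertion is made over, and fixing the semilinearity convention for ``$A_\infty$ morphism over $\varphi$'' so that $\mathrm{Id}$ qualifies; once those conventions are pinned down the verification is immediate.
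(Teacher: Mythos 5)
Your proof is correct and matches the paper's intent: the paper states this corollary without proof, treating it as immediate from the definition of restriction of scalars, and your careful unwinding confirms exactly that — the structure maps are literally unchanged, so every relation is a tautology. Nothing further is needed.
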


This implies the following corollary on their weak Maurer-Cartan sets: 

\begin{corollary}
\label{basechangemc}
 $Id$ induces $Id: C^{odd}_{S, +} \rightarrow C^{odd}_{+}$, which restricts to a map 
 \begin{equation}
 \label{idmcmap}
 Id: \widehat{MC}_{weak}(C_S; \Lambda_+(S))\rightarrow \widehat{MC}_{weak}(C; \Lambda_+(R))
  \end{equation}
 between weak Maurer-Cartan sets such that $\varphi(W_S(b)) = W(b)$ for any $b\in\widehat{MC}_{weak}(C_S; \Lambda_+(S))$.
\end{corollary}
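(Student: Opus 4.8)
The plan is to unwind the definitions and check that everything is functorial under restriction of scalars; this is essentially a bookkeeping exercise, and the main (minor) subtlety is keeping track of which module each Maurer--Cartan equation is solved in.

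First I would set up notation: let $\varphi: S \to R$ be the given ring morphism, inducing the obvious ring morphism $\Lambda_0(S) \to \Lambda_0(R)$ (sending $T \mapsto T$ and acting by $\varphi$ on coefficients), which I also denote $\varphi$. Applying the previous corollary, the identity $\Id: C_S \to C$ is a strict $A_\infty$ morphism over $\varphi$, gapped and unital when $C$ is. Concretely this means that as a map of underlying sets $C_S = C$, and the structure maps agree: $m_k^S = m_k$ viewed through $\varphi$. In particular the bare map $\Id: C^{odd}_{S,+} \to C^{odd}_+$ makes sense (it is the identity on the underlying graded abelian group $\overline{C}^{odd}\hat\otimes\Lambda_+$, with the $\Lambda_0(S)$-module structure replaced by the $\Lambda_0(R)$-module structure via $\varphi$).

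Next I would verify that this map sends weak Maurer--Cartan solutions to weak Maurer--Cartan solutions. Take $b \in \widehat{MC}_{weak}(C_S; \Lambda_+(S))$, so $\sum_{k\geq 0} m_k^S(b^{\otimes k}) \equiv 0 \bmod \Lambda_0(S)\, e$, say $\sum_k m_k^S(b^{\otimes k}) = W_S(b)\, e$ with $W_S(b) \in \Lambda_0(S)$. Since $\Id$ is a strict $A_\infty$ morphism over $\varphi$, applying it to this identity gives $\sum_k m_k(b^{\otimes k}) = \varphi(W_S(b))\, e$ in $C$, where now $\varphi(W_S(b)) \in \Lambda_0(R)$ and $e$ is the (common) strict unit. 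This is exactly the statement that $b \in \widehat{MC}_{weak}(C; \Lambda_+(R))$, and that its potential value is $W(b) = \varphi(W_S(b))$, which is the claimed relation. The only point to be careful about is that the strict unit of $C_S$ is literally the same element $e \in \overline{C}^0$ as that of $C$ (this is immediate from the definition of restriction of scalars), so ``$\bmod\ \Lambda_0(S)\, e$'' maps into ``$\bmod\ \Lambda_0(R)\, e$'' under $\varphi$.

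I do not anticipate a genuine obstacle here; the statement is a formal consequence of the fact that a strict $A_\infty$ morphism over a ring map $\varphi$ intertwines the weak Maurer--Cartan equations and transports potentials via $\varphi$. If one wants to be thorough, the one thing worth spelling out is gappedness: one should note that $b$ being $\mathbb{G}$-gapped over $\Lambda_+(S)$ forces its image to be $\mathbb{G}$-gapped over $\Lambda_+(R)$ since $\varphi$ preserves the $T$-adic filtration and the gap monoid $\mathbb{G}$ is unchanged, so the infinite sum $\sum_k m_k(b^{\otimes k})$ converges in the $T$-adic topology on $C$ exactly as it did in $C_S$. With that observation in place the proof is complete.
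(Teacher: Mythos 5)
Your proposal is correct and follows exactly the route the paper intends: the corollary is stated as an immediate consequence of $\Id:C_S\to C$ being a strict unital $A_\infty$ morphism over $\varphi$, and your unwinding — that $\sum_k m_k^S(b^{\otimes k})=W_S(b)\cdot_S e$ is literally the same element of $C$ as $\sum_k m_k(b^{\otimes k})=\varphi(W_S(b))\cdot_R e$ under restriction of scalars — is the same computation the paper carries out (in the reverse direction) in the proof of Proposition \ref{restrscamcisom}. Your added remarks on the shared strict unit and on gappedness are fine and do not change the argument.
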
 

Moreover, consider the following fiber product
$$\widehat{MC}_{weak}(C; \Lambda_+(R)) \times_{\Lambda_0(R)} \Lambda_0(S) = \{(b, a) | W(b) = \varphi(a)\}.$$

Then it follows from Corollary \ref{basechangemc} that (\ref{idmcmap}) factors through a map
$$f: \widehat{MC}_{weak}(C_S; \Lambda_+(S))\rightarrow \widehat{MC}_{weak}(C; \Lambda_+(R)) \times_{\Lambda_0(R)} \Lambda_0(S)$$ defined as $f(b) = (b, W_S(b))$.

\begin{prop}\label{restrscamcisom}
    $f$ is a bijection with the inverse    
    $$g:\widehat{MC}_{weak}(C; \Lambda_+(R)) \times_{\Lambda_0(R)} \Lambda_0(S) \rightarrow \widehat{MC}_{weak}(C_S; \Lambda_+(S))$$
   defined as $g(b, a) = b$. Moreover, $g$ intertwines the natural projection to $\Lambda_0(S)$ and $W_S$, i.e. $W_S(g(b, a)) = a$.
\end{prop}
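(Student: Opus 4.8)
The plan is to unwind the definitions and verify directly that $f$ and $g$ are mutually inverse, using Corollary \ref{basechangemc} together with the elementary observation that $C_S$ and $C$ have the same underlying graded module and the same structure maps $\{m_k\}$, so that the $T$-adic positive part is unchanged and $C^{odd}_{S,+}=C^{odd}_+$, while the only difference between the two weak Maurer--Cartan conditions is that ``$\bmod \Lambda_0 e$'' is read in $\varphi(\Lambda_0(S))$ rather than in $\Lambda_0(R)$.

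First I would check that $g$ is well defined. Take $(b,a)$ in the fiber product, so that $b\in\widehat{MC}_{weak}(C;\Lambda_+(R))$, $a\in\Lambda_0(S)$, and $W(b)=\varphi(a)$. Then $b\in C^{odd}_{S,+}=C^{odd}_+$, and since $\sum_k m_k(b^{\otimes k})=W(b)\,e=\varphi(a)\,e$, the element $b$ solves the weak Maurer--Cartan equation of $C_S$ (whose right-hand side ``$\bmod \Lambda_0(S)\,e$'' means ``$\bmod \varphi(\Lambda_0(S))\,e$'' in $C_S$). Hence $b\in\widehat{MC}_{weak}(C_S;\Lambda_+(S))$, so $g(b,a)=b$ lands where it should. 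Moreover $W_S(b)$ is by definition the unique $s\in\Lambda_0(S)$ with $\varphi(s)\,e=\sum_k m_k(b^{\otimes k})$ (uniqueness because $e$ is a non-zero-divisor and $\varphi$ is injective), so comparing with the previous displayed equality forces $\varphi(W_S(b))=\varphi(a)$, hence $W_S(b)=a$; this already gives the final assertion $W_S(g(b,a))=a$.

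It then remains to see that $f$ and $g$ are inverse to each other. That $f$ is well defined is precisely Corollary \ref{basechangemc}, which gives $Id(b)=b\in\widehat{MC}_{weak}(C;\Lambda_+(R))$ and $\varphi(W_S(b))=W(b)$, so $f(b)=(b,W_S(b))$ lies in $\widehat{MC}_{weak}(C;\Lambda_+(R))\times_{\Lambda_0(R)}\Lambda_0(S)$. The two compositions are immediate: $g(f(b))=g(b,W_S(b))=b$, and $f(g(b,a))=f(b)=(b,W_S(b))=(b,a)$ by the identity $W_S(b)=a$ proved above. The main obstacle is not the algebra but the bookkeeping: one must make sure that $C_S$, which is not literally presented as a completed tensor product of an $R$-module with $\Lambda_0(S)$, still carries unambiguous notions of positive part, of reduction modulo $\Lambda_0(S)\,e$, and of potential value $W_S$ --- the last of which requires $e$ to remain a non-zero-divisor in $C_S$, i.e.\ the injectivity of $\varphi$; once this is granted, the statement is a formal consequence of the transport-of-structure already recorded in Corollary \ref{basechangemc}.
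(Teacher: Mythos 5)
Your proposal is correct and follows essentially the same route as the paper: unwind the fiber-product condition to see that $\sum_k m_k(b^{\otimes k})=\varphi(a)\cdot e$ forces $b\in\widehat{MC}_{weak}(C_S;\Lambda_+(S))$ with $W_S(b)=a$, then read off both compositions. The only difference is that you explicitly flag the well-definedness of $W_S$ (via injectivity of $a\mapsto\varphi(a)e$), which the paper leaves implicit; this is a reasonable extra care, not a divergence in method.
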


\begin{proof}
    Note that for any $(b,a) \in \widehat{MC}_{weak}(C; \Lambda_+(R)) \times_{\Lambda_0(R)} \Lambda_0(S)$,
    $$\sum_k m_k(b^{\otimes k}) = \varphi(a) \cdot_R e $$
   which implies  
    $$\sum_k m^S_k(b^{\otimes k}) = a \cdot_S e $$
    hence $b \in \widehat{MC}_{weak}(C; \Lambda_+(S))$ with $W_S(b) = a$. This implies $g$ maps into \\
    $\widehat{MC}_{weak}(C_S; \Lambda_+(S))$ and  satisfies both $W_S(g(b, a)) = a$ and $f\circ g = Id$. 
    
    The remaining identity $g\circ f = Id$ follows directly from definition.
\end{proof}

\subsubsection{$A_\infty$ tri-modules}
We refer the readers to \cite[Section 5.1]{Fukaya-corr} for the precise definitions of (filtered, gapped, unital) $A_{\infty}$ tri-modules $D = (D^\bullet = \overline{D}^\bullet\hat{\otimes}\Lambda_0, \{n_{k'',k',k}\})$ and $A_{\infty}$ tri-module morphisms $f: D_1 \rightarrow D_2$. Besides, we recall the notion of pullbacks of $A_\infty$ tri-modules and morphisms below:

\begin{defn}
\label{pullbacktrimod}
Given three (unital) filtered $A_{\infty}$ algebra morphisms $g: C_1 \rightarrow C_2$, $g': C'_1 \rightarrow C'_2$, $g'': C''_1 \rightarrow C''_2$ and (unital) filtered left $C''_i$, right $(C'_i, C_i)$-$A_{\infty}$ tri-modules $D_i$ for $i = 1, 2$, the pullback $A_\infty$ tri-module of $D_2$ by $(g'', g', g)$, denoted as 
    $(g'', g', g)^*D_2$, is a (unital) filtered left $C''_1$, right $(C'_1, C_1)$-$A_{\infty}$ tri-module defined as tri-module version of the pullback $A_{\infty}$ bi-module defined in \cite[Definition 3.5.2]{Ohfukcat}.
    
 A (unital) filtered $A_\infty$ tri-module morphism $f: D_1 \rightarrow D_2$ over $(g'', g', g)$ is a (unital) filtered left $C''_1$, right $(C'_1, C_1)$-$A_{\infty}$ tri-module morphism $f: D_1 \rightarrow (g'', g', g)^*D_2$.
\end{defn}
The details are the same as those in the bimodule case and hence omitted.

\subsection{Cyclic Property} \label{cyclicprop}
In this subsection, we discuss the notion of cyclic property for $A_{\infty}$ tri-modules due to Fukaya in \cite{Fukaya-corr}, and introduce the stronger notion of bi-cyclic property. Both of them are crucial in relating the deformation-obstruction theory of $A_\infty$ algebras via their $A_\infty$ modules.
\subsubsection{Cyclic Property}
We recall the notion of cyclic elements in $A_\infty$ tri-modules, introduced by Fukaya in \cite[Definition 6.5]{Fukaya-corr}, as follows:

\begin{defn}
\label{leftcyclic}
    Given unital, $\mathbb{G}$-gapped filtered $A_{\infty}$ algebras $C'', C', C$ and a unital, $\mathbb{G}$-gapped filtered left $C''$, right $(C', C)$ - $A_{\infty}$ tri-module $D$, a $\mathbb{G}$-gapped element $\mathbf{1}\in D^{0,\mathbb{G}}$ is called left $C''$-cyclic (or simply left cyclic) if
\begin{enumerate}
    \item $\overline{n}_{0,0,0}(\overline{\mathbf{1}}) = 0$.
    \item $n_{1,0,0}(-; \mathbf{1}): C'' \rightarrow D$ is an isomorphism of $\Z /2$-graded gapped $\Lambda_0$-modules.
\end{enumerate}
Similarly, we call $\mathbf{1}$ right $C'$-cyclic (resp. right $C$-cyclic) if (1) and (2) are satisfied with $n_{1,0,0}(-; \mathbf{1})$ replaced by $n_{0,1,0}(\mathbf{1}; -): C' \rightarrow D$ (resp. $n_{0,0,1}(\mathbf{1}; -): C \rightarrow D$).

\end{defn}
\begin{remark}
    It follows from $\mathbb{G}$-gappedness that (2) above is equivalent to the following condition:
    
    \begin{enumerate}
    \item [(2)'] $\overline{n}_{1,0,0}(-; \overline{\mathbf{1}}): \overline{C}'' \rightarrow \overline{D}$ is an isomorphism of $\Z$-graded $R$-modules.
\end{enumerate}
\end{remark}

We also recall the following important result of Fukaya \cite{Fukaya-corr} on compositions of (strict) bounding cochains of $C$ and $C'$ using cyclic property:

\begin{prop} \label{algcompdegoneprop}
    Fix a left cyclic element $\mathbf{1}\in D^{0, \mathbb{G}}$, then there exists a map, called the composition map 
\begin{equation} \label{algcompdegone}
    C^{odd}_{+, \mathbb{G}} \times C'^{odd}_{+, \mathbb{G}} \xrightarrow{\circ} C''^{odd}_{+, \mathbb{G}},   \end{equation}  
    $$(b,b') \mapsto b'' \coloneqq b \circ b',$$

     where $b''$ is uniquely determined by the equation $n^{b'',b',b}_{0,0,0}(\mathbf{1})=0$.
    
    Moreover, it restricts to a map between their (strict) Maurer-Cartan sets
    \begin{equation} \label{algcompstricteqn}
    \widehat{MC}(C) \times \widehat{MC}(C') \xrightarrow{\circ} \widehat{MC}(C'')
    \end{equation} 

which respects their gauge equivalence relations. Therefore, it descends to a map between their (strict) Maurer-Cartan spaces 
    \begin{equation} 
    MC(C) \times MC(C') \xrightarrow{\circ} MC(C'').
    \end{equation}
\end{prop}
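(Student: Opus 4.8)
The plan is to construct the composition map $\circ$ by exploiting the left-cyclic isomorphism $n_{1,0,0}(-;\mathbf{1}): C'' \to D$ together with an inductive solution of the equation $n^{b'',b',b}_{0,0,0}(\mathbf{1}) = 0$ over the Novikov filtration, in the spirit of Fukaya's argument in \cite{Fukaya-corr}. First I would fix notation: given $b \in C^{odd}_{+,\mathbb{G}}$ and $b' \in C'^{odd}_{+,\mathbb{G}}$, write
\[
n^{b'',b',b}_{0,0,0}(\mathbf{1}) = \sum_{k'',k',k \geq 0} n_{k'',k',k}\bigl((b'')^{\otimes k''}; \mathbf{1}; (b')^{\otimes k'}; b^{\otimes k}\bigr),
\]
which is a $D[1]$-valued expression depending on the unknown $b'' \in C''^{odd}_{+,\mathbb{G}}$. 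Separating off the $k''=1$ term (which by unitality of $D$ and the tri-module structure is governed by $n_{1,0,0}(b'';\mathbf{1})$ after absorbing the bi-module contributions), this reads $n_{1,0,0}(b'';\mathbf{1}) + (\text{terms of lower order in }b'' \text{ or with }k'' \neq 1) = 0$. Since $n_{1,0,0}(-;\mathbf{1})$ is a $\Lambda_0$-module isomorphism, one can solve for $b''$ order by order in the $T$-adic filtration: the $\mathbb{G}$-gappedness guarantees that the energy levels $\beta_0 < \beta_1 < \cdots$ are discrete, so at each stage the right-hand side is already determined by the previously constructed coefficients of $b''$ (those contributions with strictly smaller energy, together with $k''=0$ or $k'' \geq 2$ terms which involve only lower-order pieces of $b''$ since $b'' \in C''_+$ raises energy), and one applies $(n_{1,0,0}(-;\mathbf{1}))^{-1}$ to obtain the next coefficient. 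Convergence is automatic since $\lim_i \beta_i = \infty$. The element $b''$ obtained is odd because all $n_{k'',k',k}$ have degree $1 \pmod 2$ and $\mathbf{1}$, $b$, $b'$ have the appropriate parities; uniqueness of $b''$ with $n^{b'',b',b}_{0,0,0}(\mathbf{1}) = 0$ follows from the same inductive scheme applied to the difference of two solutions.

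Next I would verify that $\circ$ restricts to Maurer-Cartan sets, i.e. that if $b$ and $b'$ satisfy the (strict) Maurer-Cartan equations $\sum_k m_k(b^{\otimes k}) = 0$ and $\sum_{k'} m'_{k'}((b')^{\otimes k'}) = 0$, then $b'' = b \circ b'$ satisfies $\sum_{k''} m''_{k''}((b'')^{\otimes k''}) = 0$. The mechanism is the tri-module $A_\infty$ relation specialized with all inputs equal to the respective bounding cochains and the middle input $\mathbf{1}$: the curvature-type identity expresses $n_{1,0,0}\bigl(\text{(MC-defect of }b''); \mathbf{1}\bigr)$ — more precisely $n^{b'',b',b}_{*}$ applied to the defect terms — in terms of $n^{b'',b',b}_{0,0,0}(\mathbf{1}) = 0$, the defect $\sum m_k(b^{\otimes k})$, and the defect $\sum m'_{k'}((b')^{\otimes k'})$. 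When the latter two vanish, one gets an equation of the form $n_{1,0,0}(\text{defect of }b''; \mathbf{1}) = 0$ (modulo the strict unit), and since $n_{1,0,0}(-;\mathbf{1})$ is injective, the defect of $b''$ vanishes. This is exactly how Proposition \ref{algcompweak} is foreshadowed to extend the result to the weak setting, and here in the strict case it is the cleaner version. I would then check gauge invariance: a gauge equivalence $b \sim b_1$ (via some $c \in C^{even}$) induces, through the tri-module operations applied to the path of bounding cochains, a gauge equivalence $b'' \sim b''_1$ in $C''$; the pseudo-isotopy / homotopy argument is the standard one (e.g. differentiate the family $b_t''$ defined by the same characterizing equation with $b_t$ in place of $b$, and produce the gauge parameter in $C''$ using $(n_{1,0,0}(-;\mathbf{1}))^{-1}$). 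Descending to $MC(C) \times MC(C') \to MC(C'')$ is then immediate.

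The main obstacle I anticipate is \emph{not} the existence of the formal solution $b''$ — that is a routine $T$-adic induction once the cyclic isomorphism is in hand — but rather the bookkeeping in the Maurer-Cartan and gauge-invariance steps: one must carefully track which terms of the tri-module $A_\infty$ relation (there are four families of sums, with Koszul signs $*_1,\dots,*_4$) survive when all tensor slots are filled by bounding cochains and the distinguished input is $\mathbf{1}$, and confirm that the result genuinely factors through $n_{1,0,0}(-;\mathbf{1})$ applied to the Maurer-Cartan defect of $b''$ rather than some more complicated combination. In particular one needs the unitality axioms of the tri-module in an essential way to discard the terms where a strict unit would have to be inserted, and one needs $\overline{n}_{0,0,0}(\overline{\mathbf{1}}) = 0$ (condition (1) of left-cyclicity) to start the induction at energy $\beta_0$. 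For gauge invariance, the technical heart is constructing the gauge homotopy in $C''$ from the one in $C$, which requires either a mapping-cylinder/pseudo-isotopy version of the whole setup or a direct derivation-type argument; I would follow whichever of these Fukaya uses in \cite[Section 6]{Fukaya-corr} and merely indicate that the same proof applies verbatim here.
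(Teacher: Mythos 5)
Your proposal is correct and follows essentially the same route as the paper, which simply invokes Fukaya's Propositions 6.6 and 6.16: the $T$-adic induction via the isomorphism $\overline{n}_{1,0,0}(-;\overline{\mathbf{1}})$ (seeded by $\overline{n}_{0,0,0}(\overline{\mathbf{1}})=0$) to construct $b''$, and the deformed tri-module relation applied to $\mathbf{1}$ together with unitality and injectivity of $n^{b'',b',b}_{1,0,0}(-;\mathbf{1})$ to propagate the Maurer--Cartan equation, is exactly the mechanism the paper itself spells out when proving the weak analogue (Proposition \ref{algcompweak}).
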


\begin{proof}
    The proof is identical to that of \cite[Proposition 6.6, 6.16]{Fukaya-corr} (where $D$ is a left $(C, C')$, right $C''$-$A_{\infty}$ tri-module and $\mathbf{1}$ is right cyclic).
\end{proof}

We generalise Proposition \ref{algcompdegoneprop} to weak Maurer-Cartan sets/spaces below:
\begin{prop} \label{algcompweak}
 (\ref{algcompdegone})
    induces a map between weak Maurer-Cartan sets
    \begin{equation} \label{algcompweakeqn} \widehat{MC}_{weak}(C) \times \widehat{MC}_{weak}(C') \xrightarrow{\circ} \widehat{MC}_{weak}(C''),
    \end{equation}
    $$(b,b') \mapsto b'' \coloneqq b \circ b',$$

    in which their potential functions satisfy
    \begin{equation} \label{algpotentialeqn}
    W_C(b) + W_{C'}(b') = W_{C''}(b'').
     \end{equation}
Moreover, (\ref{algcompweakeqn}) descends to a map between their weak Maurer-Cartan spaces 
    \begin{equation} 
    MC_{weak}(C) \times MC_{weak}(C') \xrightarrow{\circ} MC_{weak}(C'').
    \end{equation}
\end{prop}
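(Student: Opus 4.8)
The strategy is to mimic Fukaya's inductive construction from the strict case (Proposition \ref{algcompdegoneprop}), but now allowing the error terms to land in the span of the units rather than forcing them to vanish. Given $b \in \widehat{MC}_{weak}(C)$ and $b' \in \widehat{MC}_{weak}(C')$, I want to produce $b'' = b \circ b' \in C''^{odd}_{+,\mathbb{G}}$ with $n^{b'',b',b}_{0,0,0}(\mathbf{1})$ lying in $\Lambda_0 \cdot \mathbf{1}$ (the image of the unit $e''$ of $C''$ under the cyclic iso $n_{1,0,0}(-;\mathbf{1})$, which sends $e'' \mapsto \mathbf{1}$). The construction of $b''$ is exactly the one in the strict case: use $\mathbb{G}$-gappedness to set up induction on the energy $\beta \in \mathbb{G}$, and at each stage solve for the next coefficient of $b''$ using that $n_{1,0,0}(-;\mathbf{1})$ is an isomorphism — so the only genuinely new point is to track what the residual ``unit component'' is and identify it with the potentials.

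First I would recall the deformed tri-module operator $n^{b'',b',b}_{k'',k',k}$, defined by inserting the (group-like) exponentials of $b'',b',b$ into all slots of $n$; the relevant $A_\infty$ relation for the deformed structure reads
\begin{equation*}
n^{b'',b',b}_{1,0,0}\!\left(m^{b''}_0(\one_{C''}); \mathbf{1}\right) + n^{b'',b',b}_{0,1,0}\!\left(\mathbf{1}; m^{b'}_0(\one_{C'})\right) + n^{b'',b',b}_{0,0,1}\!\left(\mathbf{1}; m^{b}_0(\one_C)\right) + m\text{-free terms} = 0,
\end{equation*}
where $m^b_0 = \sum_k m_k(b^{\otimes k})$, etc. Since $b \in \widehat{MC}_{weak}(C)$ means $m^b_0 = W_C(b)\,e$, unitality of the tri-module collapses $n^{b'',b',b}_{0,0,1}(\mathbf{1}; W_C(b) e) = W_C(b)\cdot n_{0,0,1}(\mathbf{1};e) = \pm W_C(b)\,\mathbf{1}$, and similarly for the $C'$-term; the remaining ``$m$-free'' contributions assemble into the $A_\infty$-relation satisfied by the deformed $n^{b'',b',b}_{0,0,0}(\mathbf{1})$ itself. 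The upshot is that, once $b''$ is built so that $n^{b'',b',b}_{0,0,0}(\mathbf{1})$ is a multiple of $\mathbf{1}$, that multiple is forced to equal $W_C(b) + W_{C'}(b') - W_{C''}(b'')$, which is Equation \eqref{algpotentialeqn} after rearrangement. So the potential identity is not an extra thing to prove — it falls out of the same $A_\infty$ relation that governs the construction.

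The inductive step: working modulo $T^{\beta_{i+1}}$, suppose $b''$ has been determined mod $T^{\beta_{i+1}}$ so that $n^{b'',b',b}_{0,0,0}(\mathbf{1}) \in \Lambda_0\mathbf{1}$ mod $T^{\beta_{i+1}}$. Varying the $T^{\beta_{i+1}}$-coefficient of $b''$ changes $n^{b'',b',b}_{0,0,0}(\mathbf{1})$ mod $T^{\beta_{i+2}}$ precisely by $n_{1,0,0}(\delta b''_{\beta_{i+1}}; \mathbf{1})T^{\beta_{i+1}}$ up to the unit span (all other variations are higher order, using $\overline{n}_{0,0,0}(\overline{\mathbf{1}})=0$ from cyclicity clause (1) to kill the leading term of the obstruction). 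Since $n_{1,0,0}(-;\mathbf{1})$ is an isomorphism of $\Z/2$-graded gapped $\Lambda_0$-modules, we can choose $\delta b''_{\beta_{i+1}}$ to cancel the non-unit part of the obstruction; the leftover is automatically a multiple of $\mathbf{1}$. One must check the parity/grading works out so $\delta b''_{\beta_{i+1}}$ is odd — this is where the degree conventions in the definition of cyclicity (that $\mathbf{1}$ has degree $0$ and $n_{1,0,0}$ is degree $1$ mod $2$) are used. After the induction, $b''$ solves $n^{b'',b',b}_{0,0,0}(\mathbf{1}) = (W_C(b)+W_{C'}(b')-W_{C''}(b''))\,\mathbf{1}$; interpreting this via the cyclic iso back in $C''$ shows $m^{b''}_0 \in \Lambda_0 e''$, i.e. $b'' \in \widehat{MC}_{weak}(C'')$, with the stated potential. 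Finally, gauge-invariance is inherited: a gauge equivalence $b_0 \sim b_1$ (resp. $b'_0 \sim b'_1$) induces, via Fukaya's argument in \cite[Prop.\ 6.16]{Fukaya-corr} applied to the deformed tri-module, a gauge equivalence $b_0\circ b' \sim b_1 \circ b'$; one checks the homotopy parameter does not disturb the unit component, so the potential is constant along gauges and the map descends to $MC_{weak}(C)\times MC_{weak}(C') \to MC_{weak}(C'')$.

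The main obstacle I expect is \emph{bookkeeping of the unit components through the deformed $A_\infty$ relation}: one must verify carefully that every term in the relevant quadratic $A_\infty$ identity for $n^{b'',b',b}$, other than the three displayed ``$m_0$-insertion'' terms and the self-pairing of $n^{b'',b',b}_{0,0,0}(\mathbf{1})$, either vanishes by unitality or is absorbed, and that the two surviving $m_0$-terms contribute exactly $\pm W_C(b)$ and $\pm W_{C'}(b')$ with the correct signs — the Koszul signs $*_1,\dots,*_4$ in the tri-module relation need to be tracked, and the fact that $\mathbf{1}$ is declared to be left/right/$C$-cyclic of degree $0$ is what makes all those signs collapse cleanly. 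Everything else is a routine $T$-adic induction parallel to the strict case in \cite{Fukaya-corr}.
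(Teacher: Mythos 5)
Your core mechanism is exactly the paper's: apply the deformed $A_\infty$ tri-module relation to $\mathbf{1}$, use unitality to convert the $m_0^{b}$- and $m_0^{b'}$-insertion terms into $\pm W_C(b)\,\mathbf{1}$ and $\pm W_{C'}(b')\,\mathbf{1}$, and then use injectivity of $n^{b'',b',b}_{1,0,0}(-;\mathbf{1})$ to conclude $m_0^{b''}=(W_C(b)+W_{C'}(b'))\,e''$, which yields weak unobstructedness of $b''$ and the potential identity in one stroke. Where you diverge is in proposing to re-run the energy induction to build $b''$ with the relaxed target $n^{b'',b',b}_{0,0,0}(\mathbf{1})\in\Lambda_0\mathbf{1}$. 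This is unnecessary: the map \eqref{algcompdegone} is already defined on all of $C^{odd}_{+, \mathbb{G}}\times C'^{odd}_{+, \mathbb{G}}$ by Proposition \ref{algcompdegoneprop}, characterized by $n^{b'',b',b}_{0,0,0}(\mathbf{1})=0$ exactly, so the proposition only requires checking that weak bounding cochains are sent to weak bounding cochains. Moreover, the relaxation is vacuous on degree grounds: every term of $n^{b'',b',b}_{0,0,0}(\mathbf{1})$ has odd unshifted degree (the deformation cochains have shifted degree $0$, $\mathbf{1}$ has degree $0$, and $n$ has shifted degree $1$), while $\Lambda_0\mathbf{1}$ is even; so ``cancelling the non-unit part'' is cancelling everything, and the purported residual multiple of $\mathbf{1}$ is automatically zero. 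Relatedly, the middle of your argument briefly treats $m_0^{b''}$ as if it were already known to equal $W_{C''}(b'')\,e''$ — that is the conclusion, not a hypothesis — but your final appeal to the cyclic isomorphism repairs this circularity. The gauge-equivalence claim is, as in the paper, inherited from the strict case.
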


\begin{remark}
    Analogous statement holds for $\mathbf{1}$ being right $C'$-cyclic.
\end{remark}

\begin{proof}
    Let $b,b'$ be weak bounding cochains as stated and $b''\in C''^{odd}_+$ their composition. Consider the deformed $A_{\infty}$ algebras $(C'', \{m''^{b''}_{k''}\}, e'')$, \\
    $(C', \{m'^{b'}_{k'}\}, e')$, $(C, \{m^{b}_{k}\}, e)$, then $D$ also admits an $A_\infty$ deformation \\
    $(D, \{n^{b'',b',b}_{k'',k',k}\})$ as a unital, $\mathbb{G}$-gapped filtered left $(C'', b'')$, \\
    right $((C', b'), (C, b))$-$A_{\infty}$ tri-module.\\
    
Consider the following $A_\infty$ relation applied to $\mathbf{1}\in D$,
\begin{align} \label{trimodeqn1}
    &n^{b'',b',b}_{0,0,0}(n^{b'',b',b}_{0,0,0}(\mathbf{1})) + n^{b'',b',b}_{1,0,0}(m''^{b''}_0(1); \mathbf{1}) + (-1)^{|\mathbf{1}|}n^{b'',b',b}_{0,1,0}( \mathbf{1};m'^{b'}_0(1)) \\
    &+ (-1)^{|\mathbf{1}|}n^{b'',b',b}_{0,0,1}(\mathbf{1}; m^{b}_0(1))=0. \nonumber
    \end{align}
Note that the first term vanishes by definition of $b''=b\circ b'$; the third term equals $n^{b'',b',b}_{0,1,0}( \mathbf{1}; W_{C'}(b')\cdot e') = (-1)^{||\mathbf{1}||} W_{C'}(b') \cdot \mathbf{1}$ by unitality; similarly, the fourth term equals $n^{b'',b',b}_{0,0,1}(\mathbf{1} ; W_C(b)\cdot e ) = (-1)^{||\mathbf{1}||}W_C(b) \cdot \mathbf{1}$. Therefore, 
\begin{align*}
    &n^{b'',b',b}_{0,1,0}( \mathbf{1};m'^{b'}_0(1))+ n^{b'',b',b}_{0,0,1}(\mathbf{1}; m^{b}_0(1))= (-1)^{||\mathbf{1}||}(W_C(b) +  W_{C'}(b')) \cdot \mathbf{1}\\
    &= -n^{b'',b',b}_{1,0,0}((W_C(b) +  W_{C'}(b')) \cdot e'';\mathbf{1}) 
\end{align*}
by unitality again. Therefore, (\ref{trimodeqn1}) becomes 
$$n^{b'',b',b}_{1,0,0}(  m''^{b''}_0(1)- ((W_C(b) +  W_{C'}(b')) \cdot e''); \mathbf{1}) = 0.$$
Since $n^{b'',b',b}_{1,0,0}(-; \mathbf{1}): C'' \rightarrow D$ is a gapped isomorphism, it implies $$m''^{b''}_0(1) = ((W_C(b) +  W_{C'}(b')) \cdot e'',$$ i.e. $b''\in \widehat{MC}_{weak}(C'')$ with $W_{C''}(b'') = W_C(b) + W_{C'}(b')$.\\

    The last assertion on gauge equivalence follows directly from Proposition \ref{algcompdegoneprop}.
\end{proof}

Recall that each $b\in C^{odd}_{+, \mathbb{G}}$ induces a $b$-deformed gapped $A_\infty$ algebra $C_b = (C, \{m^b_k\})$ which is unobstructed (i.e. $(m^b_1)^2 = 0$) if $b \in \widehat{MC}_{weak}(C)$; similarly, each triple $(b'', b', b)\in C''^{odd}_{+, \mathbb{G}} \times C'^{odd}_{+, \mathbb{G}} \times C^{odd}_{+, \mathbb{G}}$ induces a $(b'', b', b)$-deformed gapped left $C''_{b''}$, right $(C'_{b'}, C_b)$-$A_{\infty}$ tri-module $(D, \{n^{b'', b', b}_{k'',k',k}\})$, which is unobstructed (i.e. $(n^{b'', b', b}_{k'',k',k})^2 = 0$) if they are weak bounding cochains satisfying $W_{C''}(b'') = W_C(b) + W_{C'}(b')$. When $b''\coloneqq b \circ b'$, $m^{b''}_1$ and $n^{b'', b', b}_{0,0,0}$ can be related as follows:

\begin{prop} \label{defisom}
    Given $b\in C^{odd}_{+, \mathbb{G}}, b'\in C'^{odd}_{+, \mathbb{G}}$ with $b''\coloneqq b \circ b'\in C''^{odd}_{+, \mathbb{G}}$, then the map 
    $$\phi''\coloneqq n^{b'', b', b}_{1,0,0}(-; \mathbf{1}): (C'', m^{b''}_1) \rightarrow (D, n^{b'', b', b}_{0,0,0})$$ is a pre-chain isomorphism (up to a sign), i.e. a bijection such that for any $x''\in C''$,
    \begin{equation}
    \label{defbij}
        \phi''(m^{b''}_1(x'')) = -n^{b'', b', b}_{0,0,0}(\phi''(x'')).
        \end{equation}
\end{prop}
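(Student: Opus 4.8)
The plan is to extract equation \eqref{defbij} from the defining $A_\infty$ relation for the tri-module $(D,\{n^{b'',b',b}_{k'',k',k}\})$ applied to the element $x''\otimes\mathbf{1}\in C''[1]\hat\otimes D[1]$ with no other inputs, combined with the characterisation $n^{b'',b',b}_{0,0,0}(\mathbf{1})=0$ of the composition $b''=b\circ b'$. Concretely, I would write out the tri-module $A_\infty$ relation with $(k'',k',k)=(1,0,0)$ for the deformed operators, i.e. the identity whose terms are (a) the ``inner $n$'' terms $n_{k''_1,0,0}(\cdots;n_{k''_2,0,0}(\cdots;\mathbf{1}))$ with $k''_1+k''_2=1$, and (b) the ``inner $m''$'' terms $n_{k''_1,0,0}(\ldots,m''_{k''_2}(\ldots),\ldots;\mathbf{1})$ with $k''_1+k''_2=2$, there being no $C'$- or $C$-inputs so the other two families of terms are absent. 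After deformation by $(b'',b',b)$ this relation still holds (this is exactly the content recalled just before the statement, that $(D,\{n^{b'',b',b}\})$ is a left $C''_{b''}$, right $(C'_{b'},C_b)$-tri-module).

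The key steps are then: first, identify the two inner-$n$ terms as $n^{b'',b',b}_{1,0,0}(x'';n^{b'',b',b}_{0,0,0}(\mathbf{1}))$ and $n^{b'',b',b}_{0,0,0}(n^{b'',b',b}_{1,0,0}(x'';\mathbf{1}))$; the former vanishes because $n^{b'',b',b}_{0,0,0}(\mathbf{1})=0$, so only $n^{b'',b',b}_{0,0,0}(\phi''(x''))$ survives from this family. Second, identify the inner-$m''$ terms with $k''_1+k''_2=2$: for $(k''_1,k''_2)=(1,1)$ one gets $n^{b'',b',b}_{1,0,0}(m''^{b''}_1(x'');\mathbf{1})=\phi''(m''^{b''}_1(x''))$, and for $(k''_1,k''_2)=(0,2)$ one gets $n^{b'',b',b}_{0,0,0}(m''^{b''}_2(\cdot,\cdot);\mathbf{1})$-type terms — but with only the single input $x''$ available these reduce to $m''^{b''}_2(e'',x'')$ or $m''^{b''}_2(x'',e'')$ contributions if one inserts the unit, which do not arise here since there is no unit input; more carefully, with $k''=1$ total inputs the only way to split as $0+1$ leaving room for $m''_{k''_2}$ is $k''_2=1$ acting on the single $x''$ (or $k''_2=0$, $m''^{b''}_0$, which is the $W$-term already absorbed by the weak Maurer-Cartan condition). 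Third, collect signs: the Koszul signs $*_1,\dots,*_4$ in the tri-module relation, evaluated with $y=\mathbf{1}$ of degree $0$ and a single odd-or-even input $x''$, produce exactly the single sign $(-1)$ on the right-hand side of \eqref{defbij}; I would track this using $||\mathbf{1}||=-1$ and the stated formula $*_2=\sum_{j=1}^{i}||x''_j||$, which is empty (hence $+1$) when $i=0$ and equals $||x''||$ when $i=1$. Rearranging the resulting identity $\phi''(m''^{b''}_1(x''))+n^{b'',b',b}_{0,0,0}(\phi''(x''))=0$ (up to the bookkeeping sign) gives \eqref{defbij}.

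Finally, bijectivity of $\phi''=n^{b'',b',b}_{1,0,0}(-;\mathbf{1})$ is inherited from left-cyclicity of $\mathbf{1}$: condition (2) in the definition of left-$C''$ cyclic says $n_{1,0,0}(-;\mathbf{1})\colon C''\to D$ is an isomorphism of $\Z/2$-graded gapped $\Lambda_0$-modules for the \emph{undeformed} operations, and since deformation by $(b'',b',b)\in C''^{odd}_+\times C'^{odd}_+\times C^{odd}_+$ changes $n_{1,0,0}(-;\mathbf{1})$ only by terms of strictly positive valuation (the $\mathbb{G}$-gappedness hypothesis), the deformed map $\phi''$ differs from an isomorphism by a filtration-increasing operator and is therefore itself a bijection on the $T$-adically complete modules — a standard successive-approximation argument over $\Lambda_0$. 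The main obstacle, and the only place demanding real care, is the sign bookkeeping: making sure that exactly one minus sign survives in \eqref{defbij} and that no stray $m''^{b''}_2$- or unit-insertion terms have been dropped. This is a finite check once the $(1,0,0)$ tri-module relation is written out in full, and I expect it to follow the same pattern as the analogous bimodule statement \cite[Definition 5.2.8 ff.]{FOOO}, so I would mirror that computation.
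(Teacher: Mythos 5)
Your proposal follows essentially the same route as the paper's proof: both apply the $(k'',k',k)=(1,0,0)$ deformed tri-module relation to $x''$ and $\mathbf{1}$, kill the term containing $n^{b'',b',b}_{0,0,0}(\mathbf{1})$ using the defining equation of $b''=b\circ b'$, and deduce bijectivity of $\phi''$ from the left-cyclicity of $\mathbf{1}$ via its $R$-reduction together with $\mathbb{G}$-gappedness. The remaining bookkeeping you describe (signs and the absence of genuine $m''_2$-terms) matches the paper's computation.
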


\begin{proof}
    That $\phi''$ is bijective follows from the bijectivity of its $R$-reduction $\overline{\phi}'' = \overline{n}_{1,0,0}(-; \overline{\mathbf{1}})$; to show (\ref{defbij}), recall the following $A_\infty$ relation applied to $x''$ and $\mathbf{1}$:
    \begin{align*} &n^{b'', b', b}_{1,0,0}(m^{b''}_1(x''); \mathbf{1}) + n^{b'', b', b}_{0,0,0}(n^{b'', b', b}_{1,0,0}(x''; \mathbf{1})) \\
    &+ (-1)^{||x||} n^{b'',b',b}_{1,0,0}(x''; n^{b'',b',b}_{0,0,0}(\mathbf{1})) =0.
   \end{align*}
The result follows by observing that the last term vanishes by assumption.
\end{proof}

In case of weak bounding cochains, we obtain the following isomorphism of deformed chain complexes and cohomologies:

\begin{corollary} \label{defchisom}
If in addition $b\in \widehat{MC}_{weak}(C)$ and $b'\in \widehat{MC}_{weak}(C')$, then $\phi'' $ is a chain isomorphism of  gapped $\Lambda_0$-modules (up to a sign), inducing the following isomorphism of cohomologies as gapped $\Lambda_0$-modules:
$$[\phi'']: H(C'', m^{b''}_1) \rightarrow H(D, n^{b'', b', b}_{0,0,0}), $$
$$[x'']\mapsto [n^{b'', b', b}_{1,0,0}(x''; \mathbf{1})].$$
\end{corollary}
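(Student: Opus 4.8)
The plan is to bootstrap directly from Proposition \ref{defisom}. That proposition already supplies the bijection $\phi'' = n^{b'',b',b}_{1,0,0}(-;\mathbf{1})\colon C'' \to D$ together with the intertwining identity $\phi''(m^{b''}_1(x'')) = -n^{b'',b',b}_{0,0,0}(\phi''(x''))$. So to upgrade this to a chain isomorphism (up to a sign) between $(C'', m^{b''}_1)$ and $(D, n^{b'',b',b}_{0,0,0})$, it suffices to check that the two operators in question are honest differentials, i.e. $(m^{b''}_1)^2 = 0$ and $(n^{b'',b',b}_{0,0,0})^2 = 0$. Once both square to zero, $\phi''$ is an isomorphism of cochain complexes (reading $-n^{b'',b',b}_{0,0,0}$ as the differential on $D$, which has the same cohomology), and the induced map on cohomology is the asserted $[x'']\mapsto [n^{b'',b',b}_{1,0,0}(x'';\mathbf{1})]$.

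For $(m^{b''}_1)^2 = 0$: by hypothesis $b\in\widehat{MC}_{weak}(C)$ and $b'\in\widehat{MC}_{weak}(C')$, so Proposition \ref{algcompweak} gives $b'' = b\circ b'\in\widehat{MC}_{weak}(C'')$, hence $m^{b''}_0(1) = W_{C''}(b'')\,e''$. Plugging this into the order-one $A_\infty$ relation for $C''_{b''}$ and using (deformed) strict unitality of $e''$, the two curvature terms $m^{b''}_2(m^{b''}_0(1), x'')$ and $(-1)^{||x''||}m^{b''}_2(x'', m^{b''}_0(1))$ evaluate to $W_{C''}(b'')\,x''$ and $-W_{C''}(b'')\,x''$ and cancel, leaving $(m^{b''}_1)^2 = 0$.

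For $(n^{b'',b',b}_{0,0,0})^2 = 0$: apply the tri-module $A_\infty$ relation for $(D,\{n^{b'',b',b}_{k'',k',k}\})$ with no algebra inputs to an arbitrary $y\in D$, obtaining
\begin{align*}
(n^{b'',b',b}_{0,0,0})^2(y) &+ n^{b'',b',b}_{1,0,0}(m''^{b''}_0(1); y)\\
&+ (-1)^{|y|}n^{b'',b',b}_{0,1,0}(y; m'^{b'}_0(1)) + (-1)^{|y|}n^{b'',b',b}_{0,0,1}(y; m^{b}_0(1)) = 0.
\end{align*}
Substituting $m''^{b''}_0(1) = W_{C''}(b'')e''$, $m'^{b'}_0(1) = W_{C'}(b')e'$, $m^{b}_0(1) = W_C(b)e$ and invoking the three unitality relations for the tri-module (the signs use $||y|| = |y|-1$), the last three terms collapse to $\bigl(W_{C''}(b'') - W_{C'}(b') - W_C(b)\bigr)\,y$, which vanishes by the potential identity \eqref{algpotentialeqn} of Proposition \ref{algcompweak}. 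Hence $(n^{b'',b',b}_{0,0,0})^2 = 0$, and combining with Proposition \ref{defisom} finishes the proof.

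The argument is essentially formal and parallels \cite[Proposition 6.16]{Fukaya-corr}; the only points that genuinely require care are the sign bookkeeping in the two curvature-cancellation steps and the (standard) fact that strict unitality is preserved under boundary deformation by $b, b', b''$. Neither is a serious obstacle once the weakly-unobstructed composition identity $W_{C''}(b'') = W_C(b) + W_{C'}(b')$ from Proposition \ref{algcompweak} is in hand.
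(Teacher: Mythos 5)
Your proposal is correct and follows exactly the route the paper intends: the paper records (in the paragraph preceding Proposition \ref{defisom}) that the deformed algebra $C''_{b''}$ and the deformed tri-module are unobstructed once $b,b'$ are weak bounding cochains with $W_{C''}(b'')=W_C(b)+W_{C'}(b')$, and the corollary is then immediate from Proposition \ref{defisom}; you have simply supplied the standard curvature-cancellation verifications of those two facts, and your sign bookkeeping with the unitality relations checks out.
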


\subsubsection{Bi-cyclic property} We introduce the stronger notion of bi-cyclic property below:
\begin{defn}
Under the same setting of Definition \ref{leftcyclic}, a $\mathbb{G}$-gapped element $\mathbf{1}\in D^{0,\mathbb{G}}$ is called $(C'', C')$-bi-cyclic (or simply bi-cyclic) if it is both left $C''$ cyclic and right $C'$ cyclic.
\end{defn}

Bi-cyclic property provides an identification of deformation cochains:

\begin{prop}
\label{bicycliciden}
    Given a bi-cyclic element $\mathbf{1}\in D^{0,\mathbb{G}}$, then for any $b\in C^{odd}_{+, \mathbb{G}}$, we have the following mutually inverse isomorphisms
    \begin{equation}
    \label{defspaceisom}
    \begin{tikzcd}
    C'^{odd}_{+, \mathbb{G}}
    \arrow[r,rightarrow, yshift=0.3ex, "b \circ (-) "] \arrow[r,leftarrow, yshift=-0.3ex, "(-)\circ b"']
& C''^{odd}_{+, \mathbb{G}},
\end{tikzcd}
\end{equation}
$$ b' \leftrightarrow b''$$
characterised by the equation $n^{b'',b',b}_{0,0,0}(\mathbf{1})=0$. 
\end{prop}

\begin{proof}
    Given $b'\in C'^{odd}_{+, \mathbb{G}}$, apply the left cyclic property to define $b'' \coloneqq b \circ b'$ via $n^{b'',b',b}_{0,0,0}(\mathbf{1})=0$. Then apply the right cyclic property to define $b'' \circ b \in C'^{odd}_{+, \mathbb{G}}$ via $n^{b'',b'' \circ b, b}_{0,0,0}(\mathbf{1})=0$. Note that $b'$ solves $n^{b'',b',b}_{0,0,0}(\mathbf{1})=0$ by assumption. Therefore, by the uniqueness of solution to $n^{b'',b'' \circ b, b}_{0,0,0}(\mathbf{1})=0$, we have $b'' \circ b = b'$, showing one of the inverse equalities. The proof of the other one is analogous.
\end{proof}

Combining Proposition \ref{defisom} and \ref{bicycliciden} yields the following identification of deformed pre-chain complexes:

\begin{corollary}
\label{bicycisom}
(\ref{defspaceisom}) induce pre-chain isomorphisms (up to a sign)
\begin{equation}
\label{algdefchisom}
    (C', m'^{b'}_1) \xrightarrow[\sim]{\phi'\coloneqq n^{b'', b', b}_{0,1,0}(\mathbf{1}; - )} (D, n^{b'',b',b}_{0,0,0}) \xleftarrow[\sim]{\phi''\coloneqq n^{b'', b', b}_{1,0,0}(-; \mathbf{1})} (C'', m''^{b''}_1)
    \end{equation}
i.e. bijections such that for any $x'\in C'$, $x''\in C''$,
    $$\phi'(m'^{b'}_1(x')) = -n^{b'', b', b}_{0,0,0}(\phi'(x')); \phi''(m^{b''}_1(x'')) = -n^{b'', b', b}_{0,0,0}(\phi''(x'')).$$ 
Hence, $\varphi\coloneqq (\phi'')^{-1} \circ \phi' : C' \rightarrow C''$ is a (genuine) pre-chain isomorphism.
\end{corollary}

Combining further with Proposition \ref{algcompweak} and Corollary \ref{defchisom} yields the following identification of deformed chain complexes and cohomologies:

\begin{corollary}
\label{bicycchisom}
    If in addition $b\in \widehat{MC}_{weak}(C)$, then (\ref{defspaceisom})
restricts to 
    \begin{equation}
    \label{bicycmcisom}
    \begin{tikzcd}
    \widehat{MC}_{weak}(C')
    \arrow[r,rightarrow, yshift=0.3ex, "b \circ (-) "] \arrow[r,leftarrow, yshift=-0.3ex, "(-)\circ b"']
& \widehat{MC}_{weak}(C''),
\end{tikzcd}
\end{equation}
$$ b' \leftrightarrow b''$$
satisfying $W_C(b) + W_{C'}(b') = W_{C''}(b'')$, which further descends to 
\begin{equation}
    \begin{tikzcd}
    MC_{weak}(C')
    \arrow[r,rightarrow, yshift=0.3ex, "b \circ (-) "] \arrow[r,leftarrow, yshift=-0.3ex, "(-)\circ b"']
& MC_{weak}(C''),
\end{tikzcd}
\end{equation}
$$ [b'] \leftrightarrow [b''],$$
which depends only on the gauge equivalence class $[b]\in MC_{weak}(C)$.\\

Moreover, (\ref{algdefchisom}) are chain isomorphisms (up to a sign) and $\varphi$ is a (genuine) chain isomorphism, which induces the following isomorphisms of gapped $\Lambda_0$-modules
$$H(C', m'^{b'}_1) \xrightarrow[\sim]{[\phi']} H(D, n^{b'',b',b}_{0,0,0}) \xleftarrow[\sim]{[\phi'']} H(C'', m''^{b''}_1).$$
\end{corollary}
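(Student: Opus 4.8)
The plan is to assemble the statement from three ingredients already in place: Proposition \ref{algcompweak} (applied in both its left-cyclic and right-$C'$-cyclic forms), Corollary \ref{bicycisom}, and Corollary \ref{defchisom}. \emph{Step 1 (the restricted bijection).} For the forward direction, given $b \in \widehat{MC}_{weak}(C)$ (which is assumed) and any $b' \in \widehat{MC}_{weak}(C')$, Proposition \ref{algcompweak} gives that $b'' \coloneqq b \circ b'$ lies in $\widehat{MC}_{weak}(C'')$ with $W_C(b) + W_{C'}(b') = W_{C''}(b'')$. For the backward direction I would invoke the right-$C'$-cyclic analogue of Proposition \ref{algcompweak} (the Remark following it): for $b'' \in \widehat{MC}_{weak}(C'')$ and the same $b$, the element $b'' \circ b$ lies in $\widehat{MC}_{weak}(C')$ and satisfies $W_{C'}(b'' \circ b) + W_C(b) = W_{C''}(b'')$, i.e. the same potential identity. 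Since by Corollary \ref{bicycisom} the maps $b \circ (-)$ and $(-) \circ b$ are already mutually inverse bijections $C'^{odd}_{+, \mathbb{G}} \leftrightarrow C''^{odd}_{+, \mathbb{G}}$, and each has just been shown to send weak bounding cochains to weak bounding cochains, their restrictions are automatically mutually inverse bijections $\widehat{MC}_{weak}(C') \leftrightarrow \widehat{MC}_{weak}(C'')$ respecting potentials, which is $(\ref{bicycmcisom})$.

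\emph{Step 2 (descent to spaces).} Proposition \ref{algcompweak}, hence its right-$C'$-cyclic version, already records that the composition respects gauge equivalence, so $b \circ (-)$ and $(-) \circ b$ descend to mutually inverse maps $MC_{weak}(C') \leftrightarrow MC_{weak}(C'')$. Moreover $[b \circ b'] \in MC_{weak}(C'')$ depends only on $[b]$ and $[b']$ since $(\ref{algcompweakeqn})$ factors through $MC_{weak}(C) \times MC_{weak}(C')$, so the induced map depends only on $[b] \in MC_{weak}(C)$.

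\emph{Step 3 (chain isomorphisms).} With $b \in \widehat{MC}_{weak}(C)$, $b' \in \widehat{MC}_{weak}(C')$ and $b'' \in \widehat{MC}_{weak}(C'')$ (the last two now matched by Step 1) one has $(m'^{b'}_1)^2 = 0$, $(m''^{b''}_1)^2 = 0$, and, since $(b'', b', b)$ are weak bounding cochains with $W_{C''}(b'') = W_C(b) + W_{C'}(b')$, also $(n^{b'', b', b}_{0,0,0})^2 = 0$, as recalled just before Proposition \ref{defisom}. Hence the pre-chain isomorphisms $\phi'$, $\phi''$ of Corollary \ref{bicycisom} are genuine chain isomorphisms (equivalently, apply Corollary \ref{defchisom} together with its right-$C'$-cyclic analogue), and $\varphi = (\phi'')^{-1} \circ \phi'$ is a genuine chain isomorphism $(C', m'^{b'}_1) \to (C'', m''^{b''}_1)$, the two sign twists cancelling in the composite. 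Taking cohomology then yields the stated isomorphisms $[\phi']$, $[\phi'']$ and $[\varphi]$ of gapped $\Lambda_0$-modules.

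The analytic content all lives in the cited results, so the main obstacle is purely bookkeeping: one must spell out precisely the right-$C'$-cyclic version of Proposition \ref{algcompweak} and check that it produces the \emph{same} potential identity (with the roles of the source and target potentials exchanged), and one must verify that the sign conventions in $\phi'$ and $\phi''$ indeed cancel so that $\varphi$ is a chain map on the nose rather than merely up to a sign.
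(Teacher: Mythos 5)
Your proposal is correct and follows exactly the route the paper intends: the corollary is left without explicit proof precisely because it is the combination of Proposition \ref{algcompweak} (together with its right-$C'$-cyclic analogue noted in the remark after it), Corollary \ref{bicycisom}, and Corollary \ref{defchisom}. The two bookkeeping points you flag — that the right-cyclic version yields the same potential identity $W_C(b)+W_{C'}(b')=W_{C''}(b'')$, and that the sign twists in $\phi'$ and $\phi''$ cancel in $\varphi=(\phi'')^{-1}\circ\phi'$ — are indeed the only things to check, and both go through as you describe.
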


Recall that $H(C', m'^{b'}_1)$ is an associative algebra (as an $A_\infty$ algebra, i.e. associativity holds up to signs) $(H(C', m'^{b'}_1), [m'^{b'}_2], [e'])$. It turns out that $[\varphi]$ respects the product structure (up to a sign) as follows:
\begin{prop}
\label{ringisom}
    $[\varphi]: (H(C', m'^{b'}_1), [m'^{b'}_2]) \rightarrow (H(C'', m''^{b''}_1), [m''^{b''}_2])$ is a unital algebra isomorphism up to a sign, i.e. for any $[x'_1], [x'_2]\in H(C', m'^{b'}_1)$, 
    \begin{equation}
    \label{preservesm2}
    [m''^{b''}_2(\varphi(x'_1), \varphi(x'_2))] = - [\varphi(m'^{b'}_2(x'_1, x'_2))];
    \end{equation}
    \begin{equation}
    \label{preservesunit}[\varphi(e')] = -[e''].
    \end{equation}
\end{prop}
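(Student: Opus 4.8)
The plan is to transport both product structures onto the common cohomology $H(D, n^{b'',b',b}_{0,0,0})$ along the chain isomorphisms $[\phi']$ and $[\phi'']$ of Corollary \ref{bicycchisom}, and then compare them by means of the $A_\infty$ tri-module relations for $D$. Since $\varphi = (\phi'')^{-1}\circ\phi'$ is a genuine chain isomorphism we have $[\phi'']\circ[\varphi] = [\phi']$, so, applying the bijective $[\phi'']$, it suffices to prove for homogeneous $m'^{b'}_1$-cocycles $x'_1,x'_2\in C'$ the two identities
\[
[\phi''(m''^{b''}_2(\varphi(x'_1),\varphi(x'_2)))] = -[\phi'(m'^{b'}_2(x'_1,x'_2))], \qquad [\phi'(e')] = -[\phi''(e'')]
\]
in $H(D, n^{b'',b',b}_{0,0,0})$ (the general case then follows by $R$-bilinearity of all operations involved). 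Throughout I abbreviate the $(b'',b',b)$-deformed operations by $n$, and I use freely that $n_{0,0,0}(\mathbf{1}) = 0$ (which holds since $b'' = b\circ b'$), the unitality relations for $D$, and the chain-homotopy identities $\phi'(m'^{b'}_1 x') = -n_{0,0,0}(\phi'(x'))$ and $\phi''(m''^{b''}_1 x'') = -n_{0,0,0}(\phi''(x''))$ of Corollary \ref{bicycisom}, which in particular guarantee that $\phi'$ and $\phi''$ send $m_1$-cocycles to $n_{0,0,0}$-cocycles.

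The unit identity is actually a chain-level statement: by unitality of $D$ and $|\mathbf{1}| = 0$ one has $\phi'(e') = n_{0,1,0}(\mathbf{1};e') = -\mathbf{1}$ and $\phi''(e'') = n_{1,0,0}(e'';\mathbf{1}) = \mathbf{1}$, because for the deformed operations every further contribution carrying a copy of $b$, $b'$ or $b''$ vanishes by unitality, its multi-index being none of $(1,0,0),(0,1,0),(0,0,1),(0,0,0)$; since $e'$ and $e''$ remain $m_1$-cocycles after weak deformation, $\phi'(e') = -\phi''(e'')$ then descends to cohomology. For the product identity the chain-level input is $\phi''\circ\varphi = \phi'$, that is $n_{1,0,0}(\varphi(x');\mathbf{1}) = n_{0,1,0}(\mathbf{1};x')$ for every $x'\in C'$. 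I would then concatenate three $A_\infty$ tri-module relations for $D$, each read in $H(D,n_{0,0,0})$, so that $n_{0,0,0}$-exact terms are dropped, terms containing $m_1$ of a cocycle vanish outright, and all $m_0$-terms are killed by unitality. First, the $(k'',k',k) = (2,0,0)$ relation applied to $(\varphi(x'_1),\varphi(x'_2);\mathbf{1})$ gives $[\phi''(m''^{b''}_2(\varphi(x'_1),\varphi(x'_2)))] = -(-1)^{||x'_1||}[n_{1,0,0}(\varphi(x'_1);n_{1,0,0}(\varphi(x'_2);\mathbf{1}))]$, after which $n_{1,0,0}(\varphi(x'_2);\mathbf{1})$ is replaced by $n_{0,1,0}(\mathbf{1};x'_2)$. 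Second, the $(k'',k',k) = (1,1,0)$ relation applied to $(\varphi(x'_1);\mathbf{1};x'_2)$ trades $[n_{1,0,0}(\varphi(x'_1);n_{0,1,0}(\mathbf{1};x'_2))]$ for $\pm[n_{0,1,0}(n_{1,0,0}(\varphi(x'_1);\mathbf{1});x'_2)] = \pm[n_{0,1,0}(n_{0,1,0}(\mathbf{1};x'_1);x'_2)]$, using $\phi''\circ\varphi = \phi'$ once more. Third, the $(k'',k',k) = (0,2,0)$ relation applied to $(\mathbf{1};x'_1,x'_2)$ trades $[n_{0,1,0}(n_{0,1,0}(\mathbf{1};x'_1);x'_2)]$ for $\pm[n_{0,1,0}(\mathbf{1};m'^{b'}_2(x'_1,x'_2))] = \pm[\phi'(m'^{b'}_2(x'_1,x'_2))]$. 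Concatenating the three identities yields $[\phi''(m''^{b''}_2(\varphi(x'_1),\varphi(x'_2)))] = \pm[\phi'(m'^{b'}_2(x'_1,x'_2))]$, which, after applying $[\phi'']^{-1}$, is \eqref{preservesm2} up to sign, while \eqref{preservesunit} is the chain-level statement above.

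The hard part will be pinning the overall sign down to $-1$. In the product identity this sign is assembled from the Koszul signs $*_1,\dots,*_4$ of the three tri-module relations together with the explicit $(-1)^{||x'_1||}$ of the first step; these simplify using $|\mathbf{1}| = 0$ and $||\varphi(x')|| = ||x'||$ (as $\varphi$ preserves degree), and I would run the bookkeeping term by term with $x'_1,x'_2$ taken homogeneous. A good consistency check is that the degree-dependent contributions must all cancel, leaving a single global sign, which one then reads off to be $-1$; the sign in the unit identity is simply the $(-1)^{||\mathbf{1}||} = -1$ of the substitution $n_{0,1,0}(\mathbf{1};e') = -\mathbf{1}$. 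Finally, I would double-check the two points used implicitly: that $m''^{b''}_2(\varphi(x'_1),\varphi(x'_2))$ is an $m''^{b''}_1$-cocycle (immediate from the $A_\infty$ relation in $C''$ applied to the two cocycles), and that every $m_0$-term discarded along the way is genuinely annihilated by the unitality of the tri-module $D$ rather than merely $n_{0,0,0}$-exact.
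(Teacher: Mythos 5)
Your proposal is correct and follows essentially the same route as the paper: reduce to an identity in $H(D,n^{b'',b',b}_{0,0,0})$ by applying the bijection $[\phi'']$, then chain together the $(2,0,0)$, $(1,1,0)$ and $(0,2,0)$ tri-module relations using the chain-level identity $\phi''\circ\varphi=\phi'$, with the unit statement coming directly from the unitality relations of $D$ (your explicit computation $\phi'(e')=-\mathbf{1}$, $\phi''(e'')=\mathbf{1}$ via $\|\mathbf{1}\|=-1$ is exactly what the paper leaves implicit). The only part you defer — the final Koszul sign bookkeeping — is handled at the same level of detail in the paper's proof, so nothing essential is missing.
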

\begin{proof}
    Given $[x'_1], [x'_2]\in H(C', m'^{b'}_1)$, denote $x''_i \coloneqq \varphi(x'_i) \in C''$ for $i=1, 2$. Applying $[\phi'']$ on both sides of (\ref{preservesm2}), it suffices to show that 
    \begin{equation}
    \label{a}
    [n^{b'', b', b}_{1,0,0}(m''^{b''}_2(x''_1, x''_2); \mathbf{1})] = - [n^{b'', b', b}_{0,1,0}(\mathbf{1}; m'^{b'}_2(x'_1, x'_2) )].
    \end{equation}
    For the LHS, consider the $A_\infty$ tri-module relation applied to $x''_1, x''_2 \in C''$, $\mathbf{1}\in D$, which descends to the following equation in $H(D)$:
    \begin{equation}
    \label{aa}
    [n^{b'', b', b}_{1,0,0}(m''^{b''}_2(x''_1, x''_2); \mathbf{1})] + (-1)^{||x''_1||}[n^{b'', b', b}_{1,0,0}(x''_1; n^{b'', b', b}_{1,0,0}(x''_2; \mathbf{1}))] = 0.
    \end{equation}
    Observe that the second term equals $(-1)^{||x''_1||}[n^{b'', b', b}_{1,0,0}(x''_1; n^{b'', b', b}_{0,1,0}(\mathbf{1}; x'_2))]$ by the $A_\infty$ relation applied to $x''_1 \in C''$, $\mathbf{1}\in D$, $x'_2 \in C'$. 
    Therefore, (\ref{aa}) becomes 
    \begin{equation}
    [n^{b'', b', b}_{1,0,0}(m''^{b''}_2(x''_1, x''_2); \mathbf{1})] + (-1)^{||x''_1||}[n^{b'', b', b}_{1,0,0}(x''_1; n^{b'', b', b}_{0,1,0}(\mathbf{1}; x'_2))] = 0.
    \end{equation}
    Similarly for the RHS, consider instead the $A_\infty$ tri-module relation applied to $x'_1, x'_2 \in C'$, $\mathbf{1}\in D$, which descends to 
    \begin{equation}
    \label{aaa}
    [n^{b'', b', b}_{0,1,0}(\mathbf{1}; m'^{b'}_2(x'_1, x'_2))] + [n^{b'', b', b}_{0,1,0}(n^{b'', b', b}_{0,1,0}(\mathbf{1} ; x'_1); x'_2)] = 0,
    \end{equation}
    where the second term equals $[n^{b'', b', b}_{0,1,0}(n^{b'', b', b}_{1,0,0}(x''_1; \mathbf{1}); x'_2)]$, and hence
    (\ref{aaa}) becomes 
    \begin{equation}
    [n^{b'', b', b}_{0,1,0}(\mathbf{1}; m'^{b'}_2(x'_1, x'_2))] + [n^{b'', b', b}_{0,1,0}(n^{b'', b', b}_{1,0,0}(x''_1; \mathbf{1}); x'_2)] = 0.
    \end{equation}
    Therefore, (\ref{a}) is equivalent to the following equation
    \begin{equation}
  (-1)^{||x''_1||}[n^{b'', b', b}_{1,0,0}(x''_1; n^{b'', b', b}_{0,1,0}(\mathbf{1}; x'_2))] = -[n^{b'', b', b}_{0,1,0}(n^{b'', b', b}_{1,0,0}(x''_1; \mathbf{1}); x'_2)], 
    \end{equation}
which follows from the (induced equation in $H(D)$ of the) $A_\infty$ tri-module relation applied to $x''_1 \in C''$, $x'_2 \in C'$ and $\mathbf{1}\in D$.\\
(\ref{preservesunit}) follows immediately from the unitality relations of $D$.
\end{proof}

\subsection{Homological Perturbation Theory} \label{hpt}
In this subsection, we review the homological perturbation theory of filtered $A_\infty$ algebras, pioneered by Fukaya, Oh, Ohta and Ono in \cite{FOOO}, which transfers $A_\infty$ algebra structure of $C$ to its canonical model via a (strong) contraction. Then we develop so for filtered $A_\infty$ tri-modules. Our treatment below will be closer to that of \cite{yuannonarchi}. 

\subsubsection{Strong Contractions}
In this subsection, we first recall the notion of (strong) contraction, followed by examples from Witten-Morse theory.
\begin{defn}
    Given two graded (co)chain complexes of $R$-modules $\overline{C} = (\overline{C}^\bullet, d), \overline{H} = (\overline{H}^\bullet, \delta)$, a contraction of $(\overline{C}, \overline{H})$ consists of a triple of linear maps $(i, p, h)$, where
    \begin{itemize}
        \item $i: \overline{H}^\bullet \rightarrow \overline{C}^\bullet, p: \overline{C}^\bullet \rightarrow \overline{H}^\bullet$ are degree 0 (co)chain maps.
        \item $h: \overline{C}^\bullet \rightarrow \overline{C}^{\bullet-1}$ is a chain homotopy between $i \circ p$ and $Id_{\overline{C}}$, i.e.
         \begin{equation}
         \label{chhmtyeqn}
            Id_{\overline{C}} -i \circ p = d\circ h + h \circ d.
         \end{equation}
    \end{itemize}
    A strong contraction of $(\overline{C}, \overline{H})$ is a contraction $(i, p, h)$ satisfying the following:
    \begin{equation*}
        p \circ i = Id_{\overline{H}}; h \circ h = 0; h \circ i = 0; p \circ h = 0.
    \end{equation*}
\end{defn}

We recall the following construction of a strong contraction when $\overline{H} = (H^\bullet(C, d), 0)$:

\begin{prop} \label{strongcontractionconstr}
    Given a graded cochain complex of vector spaces $\overline{C}$ over a field $R$. Then there exists a strong contraction $(i, p, h)$ between $\overline{C}$ and $\overline{H} \coloneqq (H^\bullet(\overline{C}, d), 0)$. 
\end{prop}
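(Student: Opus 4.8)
The plan is to construct the strong contraction $(i,p,h)$ by hand using a choice of splitting of each cochain group, exploiting that $R$ is a field so every short exact sequence of vector spaces splits. First I would decompose, for each degree $n$, the space $\overline{C}^n$ as $\overline{C}^n = B^n \oplus \mathcal{H}^n \oplus L^n$, where $B^n = \Im(d\colon \overline{C}^{n-1}\to \overline{C}^n)$ is the space of coboundaries, $\mathcal{H}^n$ is a chosen complement of $B^n$ inside the cocycles $Z^n = \Ker(d\colon \overline{C}^n\to\overline{C}^{n+1})$, and $L^n$ is a chosen complement of $Z^n$ in $\overline{C}^n$. This uses the field hypothesis twice (to split $Z^n\hookrightarrow\overline{C}^n$ and $B^n\hookrightarrow Z^n$). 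The composite $\mathcal{H}^n \hookrightarrow Z^n \twoheadrightarrow H^n(\overline{C},d)$ is then an isomorphism, which I use to transport maps between $\overline{H}^n = H^n(\overline{C},d)$ and $\mathcal{H}^n\subseteq \overline{C}^n$.

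Next I would define the three maps. Let $i\colon \overline{H}^n \to \overline{C}^n$ be the inclusion $\overline{H}^n \cong \mathcal{H}^n \hookrightarrow \overline{C}^n$, and let $p\colon \overline{C}^n \to \overline{H}^n$ be the projection killing $B^n$ and $L^n$ followed by the identification $\mathcal{H}^n \cong \overline{H}^n$. Both are manifestly degree $0$, and $pi = \Id_{\overline{H}}$ by construction. For the homotopy $h\colon \overline{C}^n \to \overline{C}^{n-1}$: the restriction of $d$ to $L^{n-1}$ is an isomorphism $L^{n-1}\xrightarrow{\sim} B^n$ (since $\Ker d|_{L^{n-1}} = L^{n-1}\cap Z^{n-1} = 0$ and $\Im d|_{L^{n-1}} = \Im d = B^n$, the last equality because $\overline{C}^{n-1} = Z^{n-1}\oplus L^{n-1}$ and $d$ kills $Z^{n-1}$). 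Define $h$ to be the inverse of this isomorphism on $B^n$ and zero on $\mathcal{H}^n \oplus L^n$.

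Then I would verify the contraction identity \eqref{chhmtyeqn} and the three side conditions by evaluating both sides on each summand $B^n$, $\mathcal{H}^n$, $L^n$ separately. On $\mathcal{H}^n$: $ip$ is the identity and $d$ vanishes while $h$ vanishes, so both sides are $\Id$. On $B^n$: $ip = 0$; writing an element as $d\ell$ with $\ell\in L^{n-1}$, we get $hd(d\ell) = 0$ and $dh(d\ell) = d\ell$, so $dh + hd = \Id$ there too. On $L^n$: $ip = 0$; $h$ vanishes on $L^n$, and $d\ell \in B^{n+1}$ so $hd\ell = \ell$, giving $dh\ell + hd\ell = \ell$. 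The side conditions $h\circ h = 0$, $h\circ i = 0$, $p\circ h = 0$ are immediate from the definitions ($h$ lands in $L^{n-1}\subseteq \Ker h \cap \Ker p$, and $i$ lands in $\mathcal{H}^n\subseteq\Ker h$). Finally, $\delta = 0$ on $\overline{H}$ makes $i$ and $p$ trivially cochain maps, using $dp = 0$ (as $p$ lands in cocycles... more precisely $pd = 0$ since $d(\overline{C}^{n-1})\subseteq B^n$ and $p$ kills $B^n$; and $di = 0$ since $i$ lands in cocycles).

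I do not anticipate a genuine obstacle here — the statement is the standard construction of the harmonic/Hodge-type splitting over a field, and the only place the hypothesis is essential is the existence of the direct sum decomposition $\overline{C}^n = B^n\oplus\mathcal{H}^n\oplus L^n$, which fails over a general ring. The one point requiring a little care is bookkeeping the identification $\overline{H}\cong\mathcal{H}$ consistently so that $p\circ i$ is literally the identity rather than merely an isomorphism; I would fix that identification once at the start and use it throughout.
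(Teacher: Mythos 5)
Your construction is exactly the paper's: the same three-term splitting $\overline{C}^n = L^n \oplus B^n \oplus \mathcal{H}^n$ (the paper writes $\overline{F}\oplus \operatorname{Im} d\oplus\overline{\mathcal{H}}$), the same definitions of $i$, $p$ via the identification $\mathcal{H}\cong H$, and the same $h$ as the inverse of $d|_{L^{n-1}}$ on coboundaries and zero elsewhere. The proof is correct; you merely spell out the summand-by-summand verification that the paper leaves as "follows from definition."
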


\begin{proof}
    We choose a direct sum decomposition of graded vector spaces $\overline{C}^\bullet = \overline{F}^\bullet \oplus \mathrm{Ker} \, d|_{\overline{C}^\bullet}$ (hence $d|_{\overline{F}^\bullet}: \overline{F}^\bullet \xrightarrow{\sim} \mathrm{Im} \, d|_{\overline{C}^\bullet}$). We further choose a direct sum decomposition $\mathrm{Ker} \, d|_{\overline{C}^\bullet} = \mathrm{Im} \, d|_{\overline{C}^{\bullet-1}} \oplus \overline{\mathcal{H}}^\bullet$ , which induces an isomorphism $\overline{\mathcal{H}}^\bullet \xrightarrow{\phi} \overline{H}^\bullet$. Therefore, we have the following ``Hodge decomposition" of $\overline{C}$:
    $$\overline{C}^\bullet = \overline{F}^\bullet \oplus \mathrm{Im} \, d|_{\overline{C}^{\bullet-1}} \oplus \overline{\mathcal{H}}^\bullet.$$

We then define the contraction $(i, p, h)$ as follows:
\begin{itemize}
    \item 
$i: \overline{H}^\bullet \rightarrow \overline{C}^\bullet$ as the composition of the inclusion of $\overline{\mathcal{H}}^\bullet$ and $\phi^{-1}$. 
 \item 
$p: \overline{C}^\bullet \rightarrow \overline{H}^\bullet$ as the composition of  $\phi$ and the projection onto $\overline{\mathcal{H}}^\bullet$.
\item $h: \overline{C}^\bullet \rightarrow \overline{C}^{\bullet-1}$ as $h|_{\mathrm{Im}\, d|_{\overline{C}^{\bullet-1}}}\coloneqq (d|_{\overline{F}^{\bullet-1}})^{-1}$ and zero on other summands.
\end{itemize}

It follows from definition that under the ``Hodge decomposition", every element $x\in \overline{C}^\bullet$ can be decomposed as $x = h(dx) + d(h(x)) + i(p(x))$, which implies (\ref{chhmtyeqn}). Other properties follow directly from definitions. 
\end{proof}

\paragraph{Harmonic Contractions}
A class of geometric examples of strong contractions is called harmonic contractions, whose origin comes from the (Riemannian) Hodge Decomposition of the de Rham complexes of closed oriented Riemannian manifolds $(L, g)$. We briefly recall its construction, and refer the reader to e.g. \cite[Section 7]{yuannonarchi} for further details.

\begin{defn} \label{harmcontr}
    Given a closed oriented Riemannian manifold $(L, g)$, the associated harmonic contraction is a contraction of the de Rham complex $(\Omega^\bullet(L), d)$ and its (de Rham) cohomology $(H^\bullet(L; \mathbb{R}), 0)$. It consists of a triple $(i, p, h)$ which is defined as in the proof of Proposition \ref{strongcontractionconstr}, where 
    \begin{itemize}
        \item $\overline{F}^\bullet = d^*\Omega^{\bullet+1}(L)$ is the subspace of co-exact forms.
        \item $\overline{\mathcal{H}}^\bullet = \mathcal{H}^\bullet_\Delta(L) \coloneqq Ker(\Delta|_{{\Omega^{\bullet}(L)}})$ is the subspace of harmonic forms.
    \end{itemize}
\end{defn}

\begin{corollary}
    $(i, p, h)$ is a strong contraction. Moreover, the constant-1 function $\mathbf{1}$ satisfies $i(p(\mathbf{1})) = \mathbf{1}$.
\end{corollary}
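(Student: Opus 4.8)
The plan is to reduce the statement to Proposition~\ref{strongcontractionconstr} by exhibiting the data $(\overline{F}^\bullet,\overline{\mathcal{H}}^\bullet)$ of Definition~\ref{harmcontr} as an honest instance of the ``Hodge decomposition'' used in the proof of that proposition, applied to the de Rham complex $\overline{C}=(\Omega^\bullet(L),d)$ over the field $\mathbb{R}$. The one input from outside linear algebra is the Riemannian Hodge decomposition theorem, which I would simply cite: for a closed oriented Riemannian manifold $(L,g)$ there is an $L^2$-orthogonal splitting
$$\Omega^\bullet(L) = d\,\Omega^{\bullet-1}(L)\ \oplus\ d^*\Omega^{\bullet+1}(L)\ \oplus\ \mathcal{H}^\bullet_\Delta(L),\qquad \mathcal{H}^\bullet_\Delta(L)=\Ker\bigl(\Delta|_{\Omega^\bullet(L)}\bigr),$$
with $\Ker d|_{\Omega^\bullet(L)} = d\,\Omega^{\bullet-1}(L)\oplus\mathcal{H}^\bullet_\Delta(L)$, and with the composite $\mathcal{H}^\bullet_\Delta(L)\hookrightarrow\Ker d|_{\Omega^\bullet(L)}\twoheadrightarrow H^\bullet(L;\mathbb{R})$ an isomorphism, which is the map $\phi$ of Proposition~\ref{strongcontractionconstr} (with $\overline{\mathcal{H}}^\bullet=\mathcal{H}^\bullet_\Delta(L)$).

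Next I would verify that the two choices required in the proof of Proposition~\ref{strongcontractionconstr} are satisfied. Taking $\overline{F}^\bullet=d^*\Omega^{\bullet+1}(L)$: this is a complement of $\Ker d|_{\Omega^\bullet(L)}$ inside $\Omega^\bullet(L)$ by the first display, and $d|_{\overline{F}^\bullet}$ is injective because a closed co-exact form $\alpha=d^*\beta$ satisfies $\langle\alpha,\alpha\rangle=\langle\beta,d\alpha\rangle=0$, hence $\alpha=0$; it is onto $\Im d|_{\Omega^\bullet(L)}$ since any exact form $d\gamma$ equals $d(d^*\gamma'')$ after Hodge-decomposing $\gamma$. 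Taking $\overline{\mathcal{H}}^\bullet=\mathcal{H}^\bullet_\Delta(L)$ gives the required complement of $\Im d|_{\Omega^{\bullet-1}(L)}$ inside $\Ker d|_{\Omega^\bullet(L)}$, together with the isomorphism $\phi$ onto $H^\bullet(L;\mathbb{R})$ recalled above. Since the proof of Proposition~\ref{strongcontractionconstr} uses only the existence of such a splitting (infinite-dimensionality of $\Omega^\bullet(L)$ playing no role once the splitting is given), it applies verbatim and produces a strong contraction $(i,p,h)$; by construction $i\circ p$ is the projection of $\Omega^\bullet(L)$ onto $\overline{\mathcal{H}}^\bullet=\mathcal{H}^\bullet_\Delta(L)$ along $\overline{F}^\bullet\oplus\Im d|_{\Omega^{\bullet-1}(L)}$.

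For the last assertion I would note that the constant function $\mathbf{1}\in\Omega^0(L)$ is smooth, $d\mathbf{1}=0$, and $d^*\mathbf{1}=0$ vacuously since $\Omega^{-1}(L)=0$; hence $\Delta\mathbf{1}=0$, i.e. $\mathbf{1}\in\mathcal{H}^0_\Delta(L)=\overline{\mathcal{H}}^0$. As $i\circ p$ is the projection onto $\overline{\mathcal{H}}^\bullet$, this immediately gives $i(p(\mathbf{1}))=\mathbf{1}$. The proof is thus a one-line reduction to Proposition~\ref{strongcontractionconstr} plus the harmonicity of constants; the only point requiring any care --- and the closest thing to an obstacle --- is checking that the analytic Hodge-theoretic splitting genuinely meets the hypotheses of Proposition~\ref{strongcontractionconstr} (existence of the three-term decomposition and invertibility of $d$ from co-exact to exact forms), and both are standard consequences of elliptic theory, so no new work is needed.
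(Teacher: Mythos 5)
Your proposal is correct and is essentially the argument the paper intends: the harmonic contraction is by definition an instance of the construction in Proposition~\ref{strongcontractionconstr}, so the corollary reduces to checking that the Hodge decomposition supplies the required splitting (with $\overline{F}^\bullet$ the co-exact forms and $\overline{\mathcal{H}}^\bullet$ the harmonic forms) and that constants are harmonic, exactly as you do. The paper leaves this verification implicit (recording the extra Hodge-theoretic detail, e.g.\ $h=d^*\circ Gr_{\Delta}$, only in the subsequent remark), so your write-up adds nothing beyond a careful unwinding of the same reduction.
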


\begin{remark}\label{harmcontrrmk}
Actually, more is known from Hodge theory: from the proof of Proposition \ref{strongcontractionconstr}, the ``Hodge decomposition" of $\Omega^{\bullet}(L)$ takes the form
\begin{align*}
\Omega^{\bullet}(L) &= d^*\Omega^{\bullet+1}(L) \oplus d\Omega^{\bullet-1}(L) \oplus \mathcal{H}^\bullet_\Delta(L)\\
&= (\bigoplus_{\lambda>0}\Omega^{\bullet}(L)_{\lambda})\oplus \mathcal{H}^\bullet_\Delta(L),
\end{align*} 
    which is the (real) Hodge decomposition and the eigenform decomposition.
    \\ Moreover, one may check that $h=d^* \circ Gr_{\Delta}$ satisfies the assumption of $h$ in the proof, where $Gr_{\Delta}: \Omega^{\bullet}(L) \rightarrow \Omega^{\bullet}(L) $ is the Green's operator, i.e. 
    $$Gr_{\Delta}(\alpha) = \Delta^{-1}(\alpha-i(p(\alpha))) = \sum_{\lambda>0}\lambda^{-1}\alpha_\lambda,$$ where $\alpha = \sum_{\lambda\geq0}\alpha_\lambda$.\\
    Furthermore, the homotopy equation $x = h(dx) + d(h(x)) + i(p(x))$ becomes 
   \begin{align*}      
   \alpha &= d^* Gr_{\Delta}(d\alpha) + d(d^* Gr_{\Delta}\alpha) + i(p(\alpha))\\
   &= \Delta(Gr_{\Delta}(\alpha)) + i(p(\alpha)),
   \end{align*}
   i.e. the defining equation of $Gr_{\Delta}$.
\end{remark}

\paragraph{$\lambda_0$-harmonic Contractions}
`The above formulae of harmonic contractions suggests the following generalisation, called $\lambda_0$-harmonic contractions for fixed $\lambda_0 \geq 0$, defined as follows:

\begin{defn}\label{lambdaharmcontr}
Given a closed oriented Riemannian manifold $(L, g)$ and fixed $\lambda_0 \geq 0$, a $\lambda_0$-harmonic contraction is a contraction of the de Rham complex $(\Omega^\bullet(L), d)$ and the direct sum of its eigenform summands supported on $[0, \lambda_0]$, $\displaystyle{\bigoplus_{0\leq \lambda\leq \lambda_0}\Omega^{\bullet}(L)_{\lambda}}$,  as a subcomplex of $(\Omega^\bullet(L), d)$.  It is a triple $(i_{\lambda_0}, p_{\lambda_0}, h_{\lambda_0})$, where 
\begin{itemize}
    \item $i_{\lambda_0}, p_{\lambda_0}$ are the inclusion and projection with respect to the decomposition 
    \begin{align*}
\Omega^{\bullet}(L)
&= (\bigoplus_{\lambda_0< \lambda}\Omega^{\bullet}(L)_{\lambda})\oplus (\bigoplus_{0\leq \lambda\leq \lambda_0}\Omega^{\bullet}(L)_{\lambda}),
\end{align*} 
which are degree 0 chain maps by definition. 
\item $h_{\lambda_0}\coloneqq d^* \circ Gr_{\lambda_0}$, where $Gr_{\lambda_0}: \Omega^{\bullet}(L) \rightarrow \Omega^{\bullet}(L) $ is defined as
    $$Gr_{\lambda_0}(\alpha) = \Delta^{-1}(\alpha-i_{\lambda_0}(p_{\lambda_0}(\alpha))) = \sum_{\lambda>\lambda_0}\lambda^{-1}\alpha_\lambda,$$ where $\alpha = \sum_{\lambda\geq0}\alpha_\lambda$. 
\end{itemize}
The chain homotopy equation follows from the definition of $Gr_{\lambda_0}$ as demonstrated in the Remark \ref{harmcontrrmk}.
\end{defn}

In particular, when $\lambda_0=0$, it reduces to the usual harmonic contraction (after identifying $\mathcal{H}^\bullet_\Delta(L)$ with $H^\bullet(L; \mathbb{R})$).

\begin{corollary}
    $(i_{\lambda_0}, p_{\lambda_0}, h_{\lambda_0})$ is a strong contraction. Moreover, the constant-1 function $\mathbf{1}$ satisfies $i_{\lambda_0}(p_{\lambda_0}(\mathbf{1})) = \mathbf{1}$.
\end{corollary}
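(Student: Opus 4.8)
The plan is to verify the defining properties of a strong contraction for the triple $(i_{\lambda_0}, p_{\lambda_0}, h_{\lambda_0})$ one at a time, using throughout the single structural fact that $d$ and $d^*$ commute with the Laplacian $\Delta = dd^* + d^*d$ and therefore preserve each eigenspace $\Omega^\bullet(L)_\lambda$. Consequently both $d$ and $d^*$ preserve the splitting $\Omega^\bullet(L) = \big(\bigoplus_{\lambda > \lambda_0}\Omega^\bullet(L)_\lambda\big) \oplus \big(\bigoplus_{0 \le \lambda \le \lambda_0}\Omega^\bullet(L)_\lambda\big)$ into its ``high'' and ``low'' parts; since $i_{\lambda_0}$ and $p_{\lambda_0}$ are nothing but the inclusion of and the projection onto the low part, this already yields that they are degree-$0$ chain maps and that $p_{\lambda_0}\circ i_{\lambda_0} = \mathrm{Id}$. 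The operator $Gr_{\lambda_0}$ is assembled from the same spectral projections $\pi_\lambda$ (namely $Gr_{\lambda_0} = \sum_{\lambda > \lambda_0}\lambda^{-1}\pi_\lambda$), so it commutes with $d$ and with $d^*$, it annihilates the low part, and its image lies in the high part; these are the three observations that drive the remaining verifications.

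For the chain-homotopy identity, I would use that $Gr_{\lambda_0}$ commutes with $d$ to compute
\begin{equation*}
d\,h_{\lambda_0} + h_{\lambda_0}\,d = d\,d^*\,Gr_{\lambda_0} + d^*\,Gr_{\lambda_0}\,d = (dd^* + d^*d)\,Gr_{\lambda_0} = \Delta\circ Gr_{\lambda_0},
\end{equation*}
and then invoke the defining relation $Gr_{\lambda_0}\alpha = \Delta^{-1}(\alpha - i_{\lambda_0}p_{\lambda_0}\alpha)$ from Definition \ref{lambdaharmcontr}, which makes sense precisely because $\alpha - i_{\lambda_0}p_{\lambda_0}\alpha$ lies in the high part, on which $\Delta$ is invertible (every eigenvalue there being $> \lambda_0 \ge 0$, hence $> 0$). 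Applying $\Delta$ gives $\Delta\circ Gr_{\lambda_0} = \mathrm{Id} - i_{\lambda_0}p_{\lambda_0}$, which is exactly the desired equation $\mathrm{Id} - i_{\lambda_0}p_{\lambda_0} = d h_{\lambda_0} + h_{\lambda_0} d$. This is the same computation as in the $\lambda_0 = 0$ case carried out in Remark \ref{harmcontrrmk}, with $0$ replaced by $\lambda_0$.

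It then remains to check the four strong-contraction relations. The identity $p_{\lambda_0}i_{\lambda_0} = \mathrm{Id}$ was noted above. For $h_{\lambda_0}\circ h_{\lambda_0} = 0$: since $Gr_{\lambda_0}$ commutes with $d^*$, one has $h_{\lambda_0}^2 = d^*Gr_{\lambda_0}d^*Gr_{\lambda_0} = (d^*)^2 Gr_{\lambda_0}^2 = 0$ because $(d^*)^2 = 0$. For $h_{\lambda_0}\circ i_{\lambda_0} = 0$: the image of $i_{\lambda_0}$ is the low part, which $Gr_{\lambda_0}$ annihilates, so $Gr_{\lambda_0}i_{\lambda_0} = 0$ and hence $h_{\lambda_0}i_{\lambda_0} = d^*Gr_{\lambda_0}i_{\lambda_0} = 0$. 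For $p_{\lambda_0}\circ h_{\lambda_0} = 0$: the image of $Gr_{\lambda_0}$ lies in the high part, which $d^*$ preserves, so the image of $h_{\lambda_0} = d^*Gr_{\lambda_0}$ lies in the high part, which is the kernel of the projection $p_{\lambda_0}$.

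Finally, for the claim about the constant function: on a closed manifold $\mathbf{1}$ is harmonic, $\Delta\mathbf{1} = 0$, so $\mathbf{1} \in \Omega^0(L)_0 \subseteq \bigoplus_{0\le\lambda\le\lambda_0}\Omega^\bullet(L)_\lambda$ lies in the low part, whence $i_{\lambda_0}(p_{\lambda_0}(\mathbf{1})) = \mathbf{1}$. I do not expect a genuine obstacle in this corollary: every step is a formal consequence of the spectral decomposition of $\Delta$, and the only non-formal input — that $Gr_{\lambda_0}$ is a well-defined operator on smooth forms, i.e. that $\sum_{\lambda>\lambda_0}\lambda^{-1}\alpha_\lambda$ converges in $C^\infty$ — is standard Hodge/elliptic theory and is already built into the construction of $Gr_{\lambda_0}$ in Definition \ref{lambdaharmcontr} and Remark \ref{harmcontrrmk}. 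If any care is needed, it is only in making precise that $d$ and $d^*$ act ``blockwise'' with respect to the eigenspace decomposition, so that the formal manipulations with $\pi_\lambda$ and $Gr_{\lambda_0}$ are legitimate.
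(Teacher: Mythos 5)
Your proof is correct and follows essentially the same route the paper intends: the paper leaves this corollary as an immediate consequence of the spectral decomposition, with the chain-homotopy identity already noted in Definition~\ref{lambdaharmcontr} to follow from the defining relation of $Gr_{\lambda_0}$ exactly as in Remark~\ref{harmcontrrmk} for $\lambda_0=0$. Your explicit verification of the four strong-contraction identities via the commutation of $d$, $d^*$ with the spectral projections, and the observation that $\mathbf{1}$ is harmonic and hence lies in the low part, is precisely the intended argument.
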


\paragraph{($\lambda_0$-)Harmonic Contractions for Witten Laplacians}
Actually, the above constructions of ($\lambda_0$-)harmonic contractions generalises to Witten deformation of $\Omega^\bullet(L)$, introduced by Witten in \cite{wittenmorse}, for which we briefly recall:

\begin{defn}
    Given a smooth manifold $L$ and a smooth function $f:L \rightarrow \mathbb{R}$, for each $t\in\mathbb{R}$, the Witten deformation of $\Omega^\bullet(L)$ (by $tf$) is the cochain complex $\Omega^\bullet_{t}(L) = (\Omega^\bullet(L), d_{t})$, where 
$$d_{t} = e^{-tf}de^{tf} =   d + tdf\wedge : \Omega^\bullet(L) \rightarrow \Omega^{\bullet+1}(L).$$
If $(L,g)$ is a closed oriented Riemannian manifold, the Witten deformation of the codifferential $d^*$ and Laplacian $\Delta$ are defined as 
$$d^*_{t} = e^{tf}d^*e^{-tf},$$
$$\Delta_{t} = d_{t}d^*_{t}+ d^*_{t}d_{t}.$$
\end{defn}

The construction of $\lambda_0$-contraction in Definition \ref{lambdaharmcontr} carries through to the Witten deformation $\Omega^\bullet_{t}(L)$ and the subcomplex $\displaystyle{\bigoplus_{0\leq \lambda\leq \lambda_0}\Omega^{\bullet}_t(L)_{\lambda}}$ (a.k.a. Witten's instanton complex), and is denoted as $(i'_{t, \lambda_0}, p'_{t, \lambda_0}, h'_{t, \lambda_0})$. 
\begin{corollary}
    $(i'_{t, \lambda_0}, p'_{t, \lambda_0}, h'_{t, \lambda_0})$ is a strong contraction. Moreover, the function $e^{-tf}$ satisfies $i'_{t, \lambda_0}(p'_{t, \lambda_0}(e^{-tf})) = e^{-tf}$.
\end{corollary}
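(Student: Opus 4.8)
The plan is to handle the two assertions separately, in each case reducing to the computations already carried out for the untwisted Laplacian in Remark \ref{harmcontrrmk} and Proposition \ref{strongcontractionconstr}, with $d,d^*,\Delta$ replaced by $d_t = e^{-tf}de^{tf}$, $d^*_t = e^{tf}d^*e^{-tf}$, $\Delta_t = d_td^*_t + d^*_td_t$. First I would record the purely algebraic facts about these operators: $d_t^2 = e^{-tf}d^2e^{tf} = 0$ and $(d^*_t)^2 = e^{tf}(d^*)^2e^{-tf} = 0$, whence $d_t\Delta_t = d_td^*_td_t = \Delta_td_t$ and, dually, $d^*_t\Delta_t = \Delta_td^*_t$. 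Since $\Delta_t$ is a self-adjoint nonnegative elliptic operator on the closed manifold $L$ (and preserves real forms, as $f$ is real-valued), there is an orthogonal eigenform decomposition $\Omega^\bullet(L) = \bigoplus_{\lambda\geq 0}\Omega^{\bullet}_t(L)_{\lambda}$; the commutation relations just obtained show that $d_t$ and $d^*_t$ each preserve every eigenspace $\Omega^{\bullet}_t(L)_{\lambda}$, hence commute with all spectral projections of $\Delta_t$, and in particular with $Gr_{t,\lambda_0}$.

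Granting this, the strong-contraction axioms follow exactly as in the untwisted case. The maps $i'_{t,\lambda_0}, p'_{t,\lambda_0}$ are the inclusion and projection attached to the decomposition of $\Omega^\bullet(L)$ into the $\Delta_t$-eigenspaces with $\lambda>\lambda_0$ and with $0\leq\lambda\leq\lambda_0$; the second summand is a $d_t$-subcomplex precisely because $d_t$ preserves eigenspaces, so $i'_{t,\lambda_0},p'_{t,\lambda_0}$ are degree-$0$ chain maps and $p'_{t,\lambda_0}i'_{t,\lambda_0} = \Id$. The chain-homotopy identity is the one-line computation of Remark \ref{harmcontrrmk}: using that $Gr_{t,\lambda_0}$ commutes with $d_t$,
$$d_th'_{t,\lambda_0} + h'_{t,\lambda_0}d_t = (d_td^*_t + d^*_td_t)Gr_{t,\lambda_0} = \Delta_tGr_{t,\lambda_0} = \Id - i'_{t,\lambda_0}p'_{t,\lambda_0},$$
the last equality because $\Delta_tGr_{t,\lambda_0}$ is the projection onto $\bigoplus_{\lambda>\lambda_0}\Omega^{\bullet}_t(L)_{\lambda}$. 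Finally: $h'_{t,\lambda_0}i'_{t,\lambda_0}=0$ since $Gr_{t,\lambda_0}$ annihilates $\Im i'_{t,\lambda_0} = \bigoplus_{0\leq\lambda\leq\lambda_0}\Omega^{\bullet}_t(L)_{\lambda}$; $p'_{t,\lambda_0}h'_{t,\lambda_0}=0$ since $\Im h'_{t,\lambda_0}\subseteq\bigoplus_{\lambda>\lambda_0}\Omega^{\bullet}_t(L)_{\lambda}$ (as $d^*_t$ preserves eigenspaces and $\Im Gr_{t,\lambda_0}$ lies there); and $h'_{t,\lambda_0}h'_{t,\lambda_0} = d^*_td^*_tGr_{t,\lambda_0}Gr_{t,\lambda_0} = 0$ because $Gr_{t,\lambda_0}$ commutes with $d^*_t$ and $(d^*_t)^2 = 0$.

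For the ``moreover'' I would verify directly that $e^{-tf}$ is $\Delta_t$-harmonic, so that it lies in the $0$-eigenspace, a fortiori in $\bigoplus_{0\leq\lambda\leq\lambda_0}\Omega^{\bullet}_t(L)_{\lambda}$, forcing $i'_{t,\lambda_0}(p'_{t,\lambda_0}(e^{-tf})) = e^{-tf}$. Indeed $d_t(e^{-tf}) = e^{-tf}\,d(e^{tf}e^{-tf}) = e^{-tf}\,d(1) = 0$, while $d^*_t(e^{-tf}) = e^{tf}\,d^*(e^{-2tf}) = 0$ because $d^*$ kills $0$-forms; hence $\Delta_t(e^{-tf}) = 0$. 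I do not expect a genuine obstacle anywhere: every step is either an algebraic identity for $d_t$ and $d^*_t$ or a verbatim transcription of Remark \ref{harmcontrrmk}. The only point deserving explicit justification — and the sole place where the ``strong'' part of the statement really uses the spectral geometry rather than formal algebra — is that $d_t$ and $d^*_t$ preserve the $\Delta_t$-eigenspaces (equivalently, commute with $Gr_{t,\lambda_0}$), which itself rests only on the commutation relations $d_t\Delta_t = \Delta_td_t$ and $d^*_t\Delta_t = \Delta_td^*_t$ established at the outset.
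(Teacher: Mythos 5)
Your proposal is correct and follows essentially the same route as the paper, which leaves this verification implicit by asserting that the construction of Definition \ref{lambdaharmcontr} and the computation of Remark \ref{harmcontrrmk} carry through verbatim once $d,d^*,\Delta$ are replaced by their Witten deformations. You have simply written out the details the paper omits — the commutation of $d_t, d^*_t$ with $\Delta_t$ (hence with the spectral projections), the resulting strong-contraction identities, and the direct check that $e^{-tf}$ is $\Delta_t$-harmonic — all of which are accurate.
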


Moreover, recall that there are canonical chain isomorphisms  
$$\begin{tikzcd}
(\Omega^\bullet(L), d) \arrow[r,rightarrow, yshift=0.3ex, "e^{-tf}\cdot"] \arrow[r,leftarrow, yshift=-0.3ex, "e^{tf}\cdot"']
& (\Omega^\bullet(L), d_t). 
\end{tikzcd}$$
Therefore, the contraction $(i'_{t, \lambda_0}, p'_{t, \lambda_0}, h'_{t, \lambda_0})$ on $(\Omega^\bullet_t(L), \displaystyle{\bigoplus_{0\leq \lambda\leq \lambda_0}\Omega^{\bullet}_t(L)_{\lambda}})$ pulls back to $(i_{t, \lambda_0}, p_{t, \lambda_0}, h_{t, \lambda_0})$ on $(\Omega^\bullet(L), \displaystyle{\bigoplus_{0\leq \lambda\leq \lambda_0}e^{tf}\Omega^{\bullet}_t(L)_{\lambda}})$.

\begin{corollary}\label{wittendefcontr}
    $(i_{t, \lambda_0}, p_{t, \lambda_0}, h_{t, \lambda_0})$ is a strong contraction. Moreover, the constant-1 function $\mathbf{1}$ satisfies $i_{t, \lambda_0}(p_{t, \lambda_0}(\mathbf{1})) = \mathbf{1}$.
\end{corollary}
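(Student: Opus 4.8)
The plan is to deduce the corollary by transporting the strong contraction $(i'_{t,\lambda_0},p'_{t,\lambda_0},h'_{t,\lambda_0})$ of the preceding corollary along the chain isomorphism $\Phi \coloneqq (e^{-tf}\cdot)\colon(\Omega^\bullet(L),d)\to(\Omega^\bullet_t(L),d_t)$, whose inverse is $\Psi \coloneqq (e^{tf}\cdot)$. First I would record the elementary facts that $\Phi$ and $\Psi$ are mutually inverse, degree-$0$, $\mathbb{R}$-linear maps satisfying $d_t\Phi = \Phi d$ and $d\Psi = \Psi d_t$ (immediate from $d_t = e^{-tf}de^{tf}$ and the Leibniz rule). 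It follows that $\Psi$ carries the subcomplex $H \coloneqq \bigoplus_{0\le\lambda\le\lambda_0}\Omega^\bullet_t(L)_\lambda$ of $(\Omega^\bullet_t(L),d_t)$ isomorphically onto the set $\widetilde{H} \coloneqq e^{tf}H = \bigoplus_{0\le\lambda\le\lambda_0}e^{tf}\Omega^\bullet_t(L)_\lambda$, which is therefore a $d$-subcomplex of $(\Omega^\bullet(L),d)$, so that ``strong contraction of $(\Omega^\bullet(L),\widetilde{H})$'' makes literal sense; and that the pulled-back triple is, by definition, $i_{t,\lambda_0} \coloneqq \Psi\circ i'_{t,\lambda_0}\circ(\Phi|_{\widetilde{H}})$ --- which is just the inclusion $\widetilde{H}\hookrightarrow\Omega^\bullet(L)$ --- together with $p_{t,\lambda_0} \coloneqq \Psi\circ p'_{t,\lambda_0}\circ\Phi$ and $h_{t,\lambda_0} \coloneqq \Psi\circ h'_{t,\lambda_0}\circ\Phi$.

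Next I would verify the strong-contraction axioms by pure conjugation bookkeeping, repeatedly inserting $\Phi\Psi = \Psi\Phi = \mathrm{Id}$ and using that $\Phi|_{\widetilde{H}}\colon\widetilde{H}\to H$ and $\Psi|_H\colon H\to\widetilde{H}$ are mutually inverse. That $i_{t,\lambda_0},p_{t,\lambda_0}$ are degree-$0$ chain maps is automatic, being composites of such. From $i_{t,\lambda_0}p_{t,\lambda_0} = \Psi\,(i'_{t,\lambda_0}p'_{t,\lambda_0})\,\Phi$ one obtains the chain-homotopy identity by applying $\Psi(-)\Phi$ to $\mathrm{Id}-i'_{t,\lambda_0}p'_{t,\lambda_0} = d_th'_{t,\lambda_0}+h'_{t,\lambda_0}d_t$ and commuting $\Psi$, $\Phi$ past the differentials: $\mathrm{Id}_{\Omega^\bullet(L)}-i_{t,\lambda_0}p_{t,\lambda_0} = d\,h_{t,\lambda_0}+h_{t,\lambda_0}\,d$. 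The remaining relations transfer in the same way: $p_{t,\lambda_0}i_{t,\lambda_0} = \Psi\,(p'_{t,\lambda_0}i'_{t,\lambda_0})\,\Phi|_{\widetilde{H}} = \mathrm{Id}_{\widetilde{H}}$, $h_{t,\lambda_0}^2 = \Psi\,(h'_{t,\lambda_0})^2\,\Phi = 0$, $h_{t,\lambda_0}i_{t,\lambda_0} = \Psi\,(h'_{t,\lambda_0}i'_{t,\lambda_0})\,\Phi|_{\widetilde{H}} = 0$, and $p_{t,\lambda_0}h_{t,\lambda_0} = \Psi\,(p'_{t,\lambda_0}h'_{t,\lambda_0})\,\Phi = 0$, each time invoking the corresponding identity for $(i'_{t,\lambda_0},p'_{t,\lambda_0},h'_{t,\lambda_0})$ established in the preceding corollary.

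Finally, for the statement about $\mathbf{1}$: since $i_{t,\lambda_0}p_{t,\lambda_0} = \Psi\,(i'_{t,\lambda_0}p'_{t,\lambda_0})\,\Phi$ and $\Phi(\mathbf{1}) = e^{-tf}\cdot\mathbf{1} = e^{-tf}$, the relation $i'_{t,\lambda_0}(p'_{t,\lambda_0}(e^{-tf})) = e^{-tf}$ from the preceding corollary gives $i_{t,\lambda_0}(p_{t,\lambda_0}(\mathbf{1})) = \Psi(e^{-tf}) = e^{tf}\cdot e^{-tf} = \mathbf{1}$. I do not anticipate any genuine obstacle here; the one point that deserves care is the domain/codomain bookkeeping --- in particular checking that $\widetilde{H}$ really is a $d$-subcomplex of $\Omega^\bullet(L)$ and that $\Phi|_{\widetilde{H}}$ inverts $\Psi|_H$, since it is exactly this that legitimizes the cancellations $\Phi|_{\widetilde{H}}\Psi|_H = \mathrm{Id}_H$ used throughout the computation.
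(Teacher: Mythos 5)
Your proposal is correct and follows exactly the route the paper intends: the paper defines $(i_{t,\lambda_0},p_{t,\lambda_0},h_{t,\lambda_0})$ precisely as the pullback of $(i'_{t,\lambda_0},p'_{t,\lambda_0},h'_{t,\lambda_0})$ along the chain isomorphisms $e^{\mp tf}\cdot$, and leaves the corollary as an immediate consequence. Your conjugation bookkeeping (including the check that $e^{tf}\bigoplus_{0\le\lambda\le\lambda_0}\Omega^\bullet_t(L)_\lambda$ is a $d$-subcomplex and the evaluation $i_{t,\lambda_0}(p_{t,\lambda_0}(\mathbf{1}))=e^{tf}\cdot e^{-tf}=\mathbf{1}$) simply makes explicit what the paper omits.
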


\paragraph{Witten-Morse Contraction}
An important property of the Witten complex is that for a Morse-Smale pair $(f, g)$ and sufficiently large $t$, \\
$(\displaystyle{\bigoplus_{0\leq \lambda\leq \lambda_0}\Omega^{\bullet}_t(L)_{\lambda}}, d_t)$ is  chain isomorphic to the Morse complex $CM^\bullet(f)$. This allows one to use the $\lambda_0$-harmonic contractions for Witten Laplacians $(i_{t, \lambda_0}, p_{t, \lambda_0}, h_{t, \lambda_0})$ to induce a contraction $(i, p, h)$  from de Rham complex $(\Omega^\bullet(L), d)$ to Morse complex $(CM^\bullet(f), d_{Morse})$ via Witten  complex, called a Witten-Morse Contraction. We summarise the results from Witten-Morse theory needed for constructing such a contraction below, and reader are referred to \cite{zhangwittendef} and the reference therein for further details.

\begin{prop}
    Given a closed oriented Riemannian manifold $(L, g)$ and a Morse function $f$ such that $(g,f)$ is a Morse-Smale pair,

    \begin{enumerate}
        \item \cite[Theorem 6.4]{zhangwittendef} There is a graded quasi-isomorphism (de Rham map) $$p: (\Omega^\bullet(L), d) \rightarrow (CM^\bullet(f), d_{Morse})$$
        defined by integrating differential forms along unstable submanifolds associated to critical points of $f$.

    \item \cite[Theorem 6.9]{zhangwittendef} For each fixed $\lambda_0 \geq 0$, there exists $t_0>0$ such that for each $t\geq t_0$, the composition of the following chain maps
$$(\displaystyle{\bigoplus_{0\leq \lambda\leq \lambda_0}\Omega^{\bullet}_t(L)_{\lambda}}, d_t) \subseteq (\Omega^{\bullet}_t(L), d_t) \xrightarrow{e^{tf}\cdot}
    (\Omega^{\bullet}(L), d) \xrightarrow{p} (CM^\bullet(f), d_{Morse})$$
    is a chain isomorphism.
        
    \end{enumerate}

\end{prop}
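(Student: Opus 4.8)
The plan is simply to note that both assertions are verbatim recollections from Zhang's monograph — \cite[Theorem 6.4]{zhangwittendef} for (1) and \cite[Theorem 6.9]{zhangwittendef} for (2) — so that the ``proof'' consists of quoting those results; below I indicate the shape of the arguments and where the real work sits, so that the statement can be used as a black box in the sequel.

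For (1), I would first invoke the Morse--Smale condition, which guarantees that each unstable manifold $W^u(x)$ of the negative gradient flow of $f$ is an embedded submanifold whose dimension equals the Morse index of $x$, and that its closure $\overline{W^u(x)}$ is a compact set stratified by unstable manifolds, the codimension-one strata being the $W^u(y)$ for $y$ of index one less, occurring with multiplicity $n(x,y)$ given by the signed count of gradient trajectories from $x$ to $y$ (here one uses the compactness of the moduli of broken trajectories and a coherent choice of orientations). Integration of smooth forms over $W^u(x)$ then converges, $p(\alpha) \coloneqq \sum_x \bigl(\int_{W^u(x)}\alpha\bigr)\, x$ is well defined, and Stokes' theorem on $\overline{W^u(x)}$ yields $\int_{W^u(x)} d\alpha = \sum_y n(x,y)\int_{W^u(y)}\alpha$, i.e. $p\circ d = d_{Morse}\circ p$. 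That $p$ is a quasi-isomorphism is the classical statement that the Morse--Smale--Witten construction computes de Rham cohomology; alternatively it follows a posteriori from (2), since the positive-eigenvalue part of $\Omega^\bullet_t(L)$ is acyclic, so the small-eigenvalue subcomplex is quasi-isomorphic to $(\Omega^\bullet(L),d)$, and (2) identifies it with $(CM^\bullet(f),d_{Morse})$ through $p$.

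For (2), the analytic core is Witten's spectral gap for $\Delta_t$, which near the critical points is of the form $\Delta + t(\,\text{order zero}\,) + t^2|df|^2$. One shows there exist $c>0$ and $t_0>0$ so that for $t\ge t_0$ the number of eigenvalues of $\Delta_t$ on $\Omega^k(L)$ lying in $[0,e^{-ct/2}]$ equals $m_k \coloneqq \#\{\text{critical points of index }k\}$, while the remaining spectrum is $\ge ct$. Enlarging $t_0$ so that $ct_0>\lambda_0$ and $e^{-ct_0/2}<\lambda_0$, the subcomplex $\bigoplus_{0\le\lambda\le\lambda_0}\Omega^\bullet_t(L)_\lambda$ is exactly the span of the small-eigenvalue eigenforms, of dimension $m_k=\dim CM^k(f)$ in degree $k$; since $d_t$ commutes with $\Delta_t$ it is a subcomplex, $e^{tf}\cdot$ is a chain isomorphism and $p$ is a chain map, so the composition in the statement is automatically a chain map between complexes of equal finite dimension, and one needs only bijectivity. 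For this I would use the Helffer--Sjöstrand/Bismut--Zhang localization: near each critical point $x$ of index $k$ there is an approximate eigenform $\rho_{t,x}$, exponentially concentrated at $x$, of the shape (a Gaussian in the normal directions) wedged with (a unit volume form along the unstable directions) up to normalization, and these span the small-eigenvalue space up to $O(e^{-ct})$; applying Laplace's method to $\int_{W^u(y)} e^{tf}\rho_{t,x}$ shows the integral localizes at the unique critical point of $\overline{W^u(y)}$ meeting the support of $\rho_{t,x}$, so the matrix of the composition in the bases $\{\rho_{t,x}\}$, $\{x\}$ converges to an invertible diagonal matrix as $t\to\infty$; hence the composition is invertible for $t\gg 0$.

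The hard part is concentrated entirely in (2): the spectral-gap estimate and the concentration of the small-eigenvalue eigenforms near the critical points are genuine semiclassical statements, proved in \cite{zhangwittendef} (following Witten, Helffer--Sjöstrand and Bismut--Zhang). By contrast, everything in (1) is elementary differential topology once the compactified trajectory spaces and orientations are in hand, and the reduction of (2) to invertibility of a finite matrix is purely formal.
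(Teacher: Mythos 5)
Your proposal is correct and matches the paper's treatment: the paper states this proposition purely as a recollection of \cite[Theorems 6.4 and 6.9]{zhangwittendef} and offers no proof of its own, exactly as you do, and your sketches of the underlying arguments (Stokes' theorem on the compactified unstable manifolds for the de Rham map, and the Witten spectral gap plus semiclassical localization for the isomorphism in (2)) are the standard ones behind the cited results.
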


It follows that $p$ factors through 
$p_{t, \lambda_0}: \Omega^{\bullet}(L) \rightarrow \displaystyle{\bigoplus_{0\leq \lambda\leq \lambda_0}e^{tf}\Omega^{\bullet}_t(L)_{\lambda}}$, inducing a chain isomorphism $(\displaystyle{\bigoplus_{0\leq \lambda\leq \lambda_0}e^{tf}\Omega^{\bullet}_t(L)_{\lambda}}, d) \xrightarrow{\sim} (CM^\bullet(f), d_{Morse})$. Denote its inverse by $i: CM^\bullet(f) \xrightarrow{\sim} \displaystyle{\bigoplus_{0\leq \lambda\leq \lambda_0}e^{tf}\Omega^{\bullet}_t(L)_{\lambda}}\subseteq \Omega^{\bullet}(L)$, which factors through $i_{t, \lambda_0}$. Define $h\coloneqq h_{t, \lambda_0}: \Omega^{\bullet}(L) \rightarrow \Omega^{\bullet-1}(L)$. Hence, together with Corollary \ref{wittendefcontr}, we have shown the following:
\begin{corollary}
\label{wittenmorsecontr}
    For each fixed $\lambda_0 \geq 0$, there exists $t_0>0$ such that for each $t\geq t_0$, there exists a strong contraction (a Witten-Morse contraction) $(i, p, h)$ of $((\Omega^\bullet(L), d), (CM^\bullet(f), d_{Morse}))$. Moreover, the constant-1 function $\mathbf{1}$ satisfies $i(p(\mathbf{1})) = \mathbf{1}$.
\end{corollary}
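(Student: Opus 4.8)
The construction is already outlined in the paragraph preceding the statement; what is left is to check that the resulting triple satisfies the strong-contraction axioms, and the plan is to realize $(i,p,h)$ as a composite of two ingredients already in hand. The first is the strong contraction $(i_{t,\lambda_0},p_{t,\lambda_0},h_{t,\lambda_0})$ of Corollary \ref{wittendefcontr}, which contracts $(\Omega^\bullet(L),d)$ onto the finite-dimensional subcomplex $V_{t,\lambda_0}\coloneqq\bigoplus_{0\le\lambda\le\lambda_0}e^{tf}\Omega^\bullet_t(L)_\lambda$. The second is the chain isomorphism $j\colon (V_{t,\lambda_0},d)\xrightarrow{\sim}(CM^\bullet(f),d_{Morse})$ obtained, for $t$ large, by restricting the de Rham map $p$ to $V_{t,\lambda_0}$; this is exactly the content of the ``$p$ factors through $p_{t,\lambda_0}$'' remark recorded just before the corollary, combined with part (2) of the preceding Proposition.

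First I would fix $\lambda_0\ge 0$ and choose $t_0>0$ as in that Proposition, so that for every $t\ge t_0$ the map $j$ is a chain isomorphism. Then I would set
$$i\coloneqq i_{t,\lambda_0}\circ j^{-1},\qquad p\coloneqq j\circ p_{t,\lambda_0},\qquad h\coloneqq h_{t,\lambda_0},$$
which are precisely the maps described before the statement, and verify the axioms by transporting those of $(i_{t,\lambda_0},p_{t,\lambda_0},h_{t,\lambda_0})$ along $j$. Since $i_{t,\lambda_0}$, $p_{t,\lambda_0}$, $j$ and $j^{-1}$ are degree-$0$ (co)chain maps, so are $i$ and $p$. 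The chain-homotopy identity is inherited verbatim, using $j^{-1}\circ j=\Id_{V_{t,\lambda_0}}$:
$$\Id-i\circ p=\Id-i_{t,\lambda_0}\circ(j^{-1}\circ j)\circ p_{t,\lambda_0}=\Id-i_{t,\lambda_0}\circ p_{t,\lambda_0}=d\circ h_{t,\lambda_0}+h_{t,\lambda_0}\circ d=d\circ h+h\circ d.$$
For strongness, $p\circ i=j\circ(p_{t,\lambda_0}\circ i_{t,\lambda_0})\circ j^{-1}=j\circ j^{-1}=\Id$, while $h\circ h=h_{t,\lambda_0}\circ h_{t,\lambda_0}=0$, $h\circ i=(h_{t,\lambda_0}\circ i_{t,\lambda_0})\circ j^{-1}=0$ and $p\circ h=j\circ(p_{t,\lambda_0}\circ h_{t,\lambda_0})=0$, using the three corresponding identities of Corollary \ref{wittendefcontr}. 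Finally $i(p(\mathbf{1}))=i_{t,\lambda_0}\big(j^{-1}(j(p_{t,\lambda_0}(\mathbf{1})))\big)=i_{t,\lambda_0}(p_{t,\lambda_0}(\mathbf{1}))=\mathbf{1}$ by the last sentence of Corollary \ref{wittendefcontr}.

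The argument carries essentially no difficulty of its own: all the analytic input — the Witten deformation of $\Omega^\bullet(L)$, the Hodge-theoretic construction of $h_{t,\lambda_0}=d^\ast\circ Gr_{\lambda_0}$, and the large-$t$ identification of the instanton complex $\bigoplus_{0\le\lambda\le\lambda_0}\Omega^\bullet_t(L)_\lambda$ with the Morse complex — has been packaged into Corollary \ref{wittendefcontr} and the cited theorems of Zhang. The one place that genuinely needs attention is the compatibility of the two ingredients: one must be sure that $j$ really is the de Rham map $p$ restricted to $V_{t,\lambda_0}$, so that the identity $p=j\circ p_{t,\lambda_0}$ holds on the nose and $j^{-1}$ lands inside $\Omega^\bullet(L)$ through $i_{t,\lambda_0}$. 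This is precisely what the ``$p$ factors through $p_{t,\lambda_0}$, inducing a chain isomorphism'' remark before the corollary provides; granting it, the strong-contraction axioms and the normalization $i(p(\mathbf{1}))=\mathbf{1}$ follow as above.
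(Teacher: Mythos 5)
Your proposal is correct and is essentially the paper's own argument: the paper gives no separate proof, stating the corollary as an immediate consequence of the construction in the preceding paragraph (define $h=h_{t,\lambda_0}$, let $p$ factor through $p_{t,\lambda_0}$ via the chain isomorphism $j$ onto the Morse complex, and take $i$ to be $i_{t,\lambda_0}$ composed with $j^{-1}$), and you have simply written out the transport of the strong-contraction axioms along $j$. The verification and the normalization $i(p(\mathbf{1}))=\mathbf{1}$ are all correct.
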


\subsubsection{Transfer of $A_\infty$ algebra structures}
In this subsection, we recall the homological perturbation lemma for (unital) filtered $A_\infty$ algebras, following the version stated in \cite[Theorem 4.4, Proposition 4.7]{yuannonarchi}.

\begin{prop}
\label{algebratransfer}
    Given a contraction $(i, p, h)$ of $\overline{C}, \overline{H}$, for any $\mathbb{G}$-gapped filtered $A_\infty$ algebra structure $(C^\bullet, \{m_k = \sum_{\beta\in\mathbb{G}}m_{k, \beta}T^{E(\beta)}\})$ on $C$ with $m_{1,\beta_0} = d$, there exists a natural  $\mathbb{G}$-gapped filtered $A_\infty$ algebra structure $(H^\bullet, \{m^H_k = \sum_{\beta\in\mathbb{G}}m^H_{k, \beta}T^{E(\beta)}\})$ on $H$ with $m^H_{1,\beta_0} = \delta$ and a natural  $\mathbb{G}$-gapped filtered $A_\infty$ morphism
    $$\Tilde{i} = \{\Tilde{i}_k = \sum_{\beta\in\mathbb{G}}\Tilde{i}_{k, \beta}T^{E(\beta)}\}_{k\geq 1}: (H^\bullet, \{m^H_k\}) \rightarrow (C^\bullet, \{m_k\})$$
    such that $\Tilde{i}_{k, \beta_0} = i$.
    
    If in addition $(C^\bullet, \{m_k\})$ admits a strict unit $e \in \overline{C}^0$ such that $i(p(e)) = e$, then $e_H \coloneqq p(e)\in \overline{H}^0$ is a strict unit for $(H^\bullet, \{m^H_k\})$ such that $\Tilde{i}$ is unital.
\end{prop}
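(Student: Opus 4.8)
The plan is to prove this by the standard homological perturbation lemma for filtered $A_\infty$ algebras (the ``tensor trick'' of \cite{FOOO}, in the form recorded in \cite{yuannonarchi}), so below I only lay out the skeleton of the argument. I would first reformulate the data coalgebraically: a filtered $A_\infty$ structure on $C$ is the same as a degree-one continuous coderivation $\widehat{m}$ of the completed tensor coalgebra $\widehat{T}(C[1])$ with $\widehat{m}^2=0$ and $m_0(1)\in C^2_+$, and an $A_\infty$ morphism is a continuous counital coalgebra map. The contraction $(i,p,h)$ of $(\overline{C},d)$ and $(\overline{H},\delta)$ lifts, by the classical tensor trick, to a contraction between the coaugmented tensor coalgebras $\widehat{T}(\overline{C}[1])$ and $\widehat{T}(\overline{H}[1])$ carrying the coderivations induced by $d$ and $\delta$; I would then perturb by $\widehat{m}-\widehat{d}$ and run the perturbation lemma, reading off $\{m^H_k\}$ and $\{\tilde{i}_k\}$ on cogenerators. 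Equivalently — and this is the formula one actually computes with — $m^H_k$ is the signed sum over planar rooted trees with $k$ leaves whose internal vertices carry the operations $m_{j,\beta}$ with $(j,\beta)\neq(1,\beta_0)$ (so $j\ge 0$: curvature sources $m_{0,\beta}$ and the ``bad'' bivalent $m_{1,\beta}$, $\beta\neq\beta_0$, are allowed), whose internal edges carry $h$, whose leaves carry $i$, and whose root carries $p$; and $\tilde{i}_k$ is the same sum with the root carrying $h$ instead of $p$, plus the trivial-tree term $i$ when $k=1$. The signs are the usual Koszul signs from the $[1]$-shift.

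The next step is to check that these sums converge in the Novikov-completed module. Assigning to a tree the total energy $\sum_v E(\beta_v)$ over its internal vertices, every internal vertex is either a source $m_{0,\beta_v}$ with $\beta_v>\beta_0$, a bad bivalent vertex with $\beta_v>\beta_0$, or a vertex with at least two inputs; a standard count (Euler characteristic of the tree) bounds, for fixed leaf-count and fixed energy, the number of vertices of each type, so for every $\beta\in\mathbb{G}$ only finitely many trees contribute to the $T^{E(\beta)}$-coefficient of $m^H_k$ or $\tilde{i}_k$. This is exactly where the hypothesis $m_{1,\beta_0}=d$ (rather than an arbitrary filtered $m_1$) is essential — it guarantees that the only energy-zero vertices have $\ge 2$ inputs — and discreteness of $\mathbb{G}$ finishes the argument. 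The same observation gives $m^H_{1,\beta_0}=p\,d\,i=p\,i\,\delta=\delta$ (using that $i$ is a chain map and $p\,i=\Id$), $\tilde{i}_{1,\beta_0}=i$, and $m^H_0(1)\in H^2_+$, and shows the $R$-reduction is the classical transferred $A_\infty$ structure on $\overline{H}$, so the output is genuinely $\mathbb{G}$-gapped.

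Then I would verify the $A_\infty$ relations for $\{m^H_k\}$ and the $A_\infty$-morphism relations for $\tilde{i}$, and I expect this to be the main obstacle. The idea is to expand $\sum \pm\, m^H(\cdots,m^H(\cdots),\cdots)$ (resp. the morphism relation), group terms according to the composite tree obtained by grafting, single out the internal $h$-edge created by the grafting, and apply $dh+hd=\Id-i\,p$ on that edge. The $\Id$-summand, together with the $A_\infty$ relations of $\{m_k\}$ on $C$ (including the $m_{1,\beta_0}=d$ terms that were excluded from the tree sum), contracts the edge and absorbs all the ``$d$-redistribution'' terms, producing exactly the honest-tree contributions to the $A_\infty$ relation for $m^H$; the $-i\,p$-summand splits the composite tree and reproduces precisely $\sum m^H(\cdots,m^H(\cdots),\cdots)$; and the strong contraction identities $h^2=0$, $h\,i=0$, $p\,h=0$, $p\,i=\Id$ kill the remaining spurious terms. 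Curvature inputs are handled uniformly since $j=0$ is allowed at internal vertices. The genuinely fiddly point is matching the Koszul signs across the grafting operation, and there I would follow the bookkeeping already carried out in \cite{yuannonarchi,FOOO} rather than redo it.

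Finally, for the unital statement, put $e_H\coloneqq p(e)$; from $h\,i=0$ and $e=i(p(e))$ one gets $h(e)=0$. In any tree contributing to $m^H_{k,\beta}(\dots,e_H,\dots)$ or $\tilde{i}_{k,\beta}(\dots,e_H,\dots)$, the leaf carrying $e_H$ is sent by $i$ to $e$, which enters some internal vertex $m_{j,\beta_v}$; strict unitality of $e$ in $C$ forces $(j,\beta_v)=(2,\beta_0)$, or else that term vanishes, and then $m_{2,\beta_0}$ merely passes along its other input. The identities $h^2=0$, $h\,i=0$, $h(e)=0$, $p\,h=0$ then force the subsequent propagation to vanish unless $(k,\beta)=(2,\beta_0)$ and the tree is the trivial one, which gives $m^H_{2,\beta_0}(e_H,x)=p\,m_{2,\beta_0}(e,i(x))=p\,i(x)=x$ and symmetrically on the right; likewise $\tilde{i}_k$ vanishes on $e_H$-inputs for $k\ge 2$ while $\tilde{i}_1(e_H)=e$. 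Hence $e_H$ is a strict unit and $\tilde{i}$ is unital, completing the plan.
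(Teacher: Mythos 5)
The paper does not actually prove this proposition: it is recalled verbatim from \cite{FOOO} and \cite[Theorem 4.4, Proposition 4.7]{yuannonarchi}, and your sketch is precisely the standard argument of those sources (tensor trick, tree-sum formulas with $h$ on internal edges, energy filtration plus discreteness of $\mathbb{G}$ for convergence, and $dh+hd=\Id-i\,p$ applied to the grafting edge for the relations), so in that sense you have reproduced the intended proof; the inductive formulas the paper writes down for the tri-module analogue (Proposition \ref{hpltrimod}) are exactly your tree recursion with root $p$ versus root $h$. One caveat worth stating explicitly: at several points you use $p\,i=\Id$ and the side conditions $h\,i=0$, $h^2=0$, $p\,h=0$ --- e.g.\ to get $m^H_{2,\beta_0}(e_H,x)=p\,i(x)=x$, to derive $h(e)=0$ from $i(p(e))=e$, and in the "kill the spurious terms" step --- but the proposition as stated only posits a \emph{contraction}, and the paper's definition of a contraction does not include $p\,i=\Id$; all four identities belong to its notion of \emph{strong} contraction. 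So your unitality argument (and, strictly, the clean form of the transferred structure) requires the strong-contraction hypotheses. This is consistent with how the paper actually uses the result --- every contraction it constructs is strong, and Corollary \ref{hpltrimodbasechange} explicitly adds "all the contractions are strong contractions" when unitality is needed --- but you should either assume a strong contraction outright or note that a general contraction can first be modified to satisfy the side conditions.
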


\begin{remark} \label{hplmcspace}
    $\tilde{i}$ induces a map between their weak Maurer-Cartan sets
     \begin{equation}
    exp(\tilde{i}): \widehat{MC}_{weak}(H) \rightarrow \widehat{MC}_{weak}(C)
    \end{equation}
     respecting their potential functions, i.e. $W_C(exp(\tilde{i})(b)) = W_H(b)$.\\
     Moreover, it descends to a map between their weak Maurer-Cartan spaces:
    \begin{equation} \label{hplmcisom}
    exp(\tilde{i}): MC_{weak}(H) \rightarrow MC_{weak}(C).
    \end{equation}
    
    In case of strong contraction, $i$ is a chain homotopy equivalence,  and hence   $\tilde{i}$ is an $A_\infty$ homotopy equivalence by $A_\infty$ Whitehead Theorem (see e.g. \cite[Theorem 4.2.45]{FOOO}). Therefore, (\ref{hplmcisom}) is a bijection.
   
\end{remark}

As a corollary, one can construct canonical models for any $A_\infty$ algebras:

\begin{corollary} \label{canonicalalg}
    Given any $\mathbb{G}$-gapped, unital filtered $A_\infty$ algebra $C$, there exists a $\mathbb{G}$-gapped, unital filtered $A_\infty$ algebra structure $(H^\bullet(C) \coloneqq H^\bullet(\overline{C}, m_{1, \beta_0}; \Lambda_0), \{m^H_k = \sum_{\beta\in\mathbb{G}}m^H_{k, \beta}T^{E(\beta)}\}, [e])$ on $H(C)$, called a canonical model of $C$. 
\end{corollary}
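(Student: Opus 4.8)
The plan is to deduce this directly from the two main results of this subsection: the existence of a strong contraction onto cohomology (Proposition \ref{strongcontractionconstr}) and the $A_\infty$ transfer theorem (Proposition \ref{algebratransfer}), applied to the $R$-reduction of $C$. Throughout I would work over a field $R$ — as in the de Rham model, where $R=\mathbb{R}$ or $\mathbb{Q}$ — so that Proposition \ref{strongcontractionconstr} is available; more generally the argument goes through whenever the $R$-reduction of $C$ admits a strong contraction onto its cohomology.

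First I would pass to the $R$-reduction $\overline{C}=(\overline{C}^\bullet,d)$ with $d\coloneqq m_{1,\beta_0}$, which by $\mathbb{G}$-gappedness is a genuine cochain complex of $R$-vector spaces, and invoke Proposition \ref{strongcontractionconstr} to obtain a strong contraction $(i,p,h)$ of $\overline{C}$ onto $\overline{H}\coloneqq (H^\bullet(\overline{C},d),0)$. The one point to watch is the behaviour of the strict unit $e\in\overline{C}^0$: since $m_{1,\beta_0}(e)=0$ by strict unitality, $e$ is a cocycle, so in the choices made in the proof of Proposition \ref{strongcontractionconstr} I would take the complement $\overline{\mathcal{H}}^\bullet$ of $\Im d|_{\overline{C}^{\bullet-1}}$ in $\Ker d|_{\overline{C}^\bullet}$ to contain $e$ (possible whenever $[e]\neq 0$; if $[e]=0$ one checks the cohomology algebra is forced to be trivial and there is nothing to prove). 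With this choice $i(p(e))=e$.

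Next I would feed $(i,p,h)$ and the given $\mathbb{G}$-gapped filtered $A_\infty$ structure $\{m_k=\sum_{\beta\in\mathbb{G}}m_{k,\beta}T^{E(\beta)}\}$ — whose linear part at $\beta_0$ is precisely $d$ — into Proposition \ref{algebratransfer}. This produces a $\mathbb{G}$-gapped filtered $A_\infty$ structure $\{m^H_k=\sum_{\beta\in\mathbb{G}}m^H_{k,\beta}T^{E(\beta)}\}$ on $H^\bullet(C)\coloneqq H^\bullet(\overline{C},m_{1,\beta_0};\Lambda_0)$ with $m^H_{1,\beta_0}$ equal to the zero differential of $\overline{H}$, together with a $\mathbb{G}$-gapped filtered $A_\infty$ morphism $\tilde{i}$ refining $i$. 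Because $i(p(e))=e$, the unital clause of Proposition \ref{algebratransfer} then gives that $[e]=p(e)$ is a strict unit for $\{m^H_k\}$ and that $\tilde{i}$ is unital, so $(H^\bullet(C),\{m^H_k\},[e])$ is a $\mathbb{G}$-gapped, unital filtered $A_\infty$ algebra, which is the desired canonical model.

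As for the main obstacle: there really is none beyond bookkeeping — the statement is essentially an assembly of Propositions \ref{strongcontractionconstr} and \ref{algebratransfer}. The only thing that genuinely needs a moment's thought is ensuring the strong contraction is compatible with the strict unit (i.e. $i(p(e))=e$), handled above by choosing the Hodge-type decomposition appropriately; in the geometric situations of interest this is automatic, since the harmonic and Witten--Morse contractions of Corollary \ref{wittenmorsecontr} and its companions already satisfy $i(p(\mathbf{1}))=\mathbf{1}$.
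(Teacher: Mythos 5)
Your proposal is correct and follows essentially the same route as the paper: apply Proposition \ref{strongcontractionconstr} to the $R$-reduction $(\overline{C},m_{1,\beta_0})$ to get a strong contraction onto cohomology, then feed it into Proposition \ref{algebratransfer}, with unitality secured by arranging $e\in\overline{\mathcal{H}}$ so that $i(p(e))=e$. Your extra care about the unit (and the degenerate case $[e]=0$) is exactly the point the paper's proof disposes of by assuming $H(C)\neq 0$.
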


\begin{proof}
   Applying Proposition \ref{algebratransfer} to the strong contraction constructed from Proposition \ref{strongcontractionconstr} associated to the chain complex $(\overline{C}^\bullet, m_{1, \beta_0})$ yields the desired $A_\infty$ algebra structure on $H(C)$. Unitality follows from the fact that $e\in \mathcal{H}$ (as long as $H(C) \neq 0$, which WLOG can be assumed).
\end{proof}

\begin{remark}\label{alglowdeg}
    From the definition it follows that $\overline{m}^H_{2} = \cdot$ is the cohomological product induced by $\overline{m}_{2}$, which is therefore independent of the choice of a strong contraction. In particular, the unital algebra $(H^\bullet(\overline{C}), \overline{m}^H_{2}, [e])$ is the usual cohomology  ring induced from the classical $A_\infty$ algebra $(\overline{C}, \{\overline{m}_{k}\}, e)$.
\end{remark}

\subsubsection{Transfer of $A_\infty$ tri-module structures}

In this subsection, we prove the following transfer theorem for $A_\infty$ tri-modules: 

\begin{prop} \label{hpltrimod}
Given a contraction $(i_D, p_D, h_D)$ of $(\overline{D}, d_D), (\overline{H}_D, \delta_D)$, for any $\mathbb{G}$-gapped filtered left $C''$, right $(C', C)$ $A_\infty$ tri-module $D$  with $\overline{n}_{k'', k', k} = d_D$, there exists a natural $\mathbb{G}$-gapped filtered left $C''$, right $(C', C)$ $A_\infty$ tri-module $(H_D^\bullet, \{n^H_{k'', k', k}\})$  with $\overline{n}^H_{k'', k', k, \beta_0} = \delta_D$, and a natural  $\mathbb{G}$-gapped filtered $A_\infty$ tri-module morphism
    $$\Tilde{i}^D = \{\Tilde{i}^D_{k'',k', k} = \sum_{\beta\in\mathbb{G}}\Tilde{i}^D_{k'',k', k, \beta}T^{E(\beta)}: (H_D^\bullet, \{n^H_{k'', k', k}\}) \rightarrow (D^\bullet, \{n_{k'', k', k}\})\}$$
    such that $\Tilde{i}^D_{0, 0, 0, \beta_0} = i_D$.

    If in addition  $C$ (resp. $C', C''$) has a strict unit $e$ (resp. $e', e''$) such that $D$ is unital with respect to $(e'', e', e)$, and $(i_D, p_D, h_D)$ is a strong contraction,  then $H_D$ is also unital with respect to $(e'', e', e)$.
\end{prop}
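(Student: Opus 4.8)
The plan is to adapt, essentially verbatim, the homological perturbation argument for filtered $A_\infty$ algebras (Proposition \ref{algebratransfer}) to the tri-module setting. The crucial simplification is that only the module strand $D$ is being transferred: the three algebras $C''$, $C'$, $C$ and all their operations $m''_{k''}, m'_{k'}, m_k$ stay fixed, so $H_D$ will be a tri-module over the \emph{same} algebras and $\tilde i^D$ a morphism over the identities $\mathrm{id}_{C''},\mathrm{id}_{C'},\mathrm{id}_C$. I would build $\{n^H_{k'',k',k}\}$ and $\{\tilde i^D_{k'',k',k}\}$ simultaneously, then check (a) the $A_\infty$ tri-module relations for $H_D$, (b) the $A_\infty$ tri-module morphism relations for $\tilde i^D$, (c) $\mathbb{G}$-gappedness with the normalizations $\overline n^H_{0,0,0}=\delta_D$ and $\tilde i^D_{0,0,0,\beta_0}=i_D$, and (d) unitality when the contraction is strong.

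\emph{Construction.} The first route is an explicit tree sum. I would call a \emph{caterpillar} a planar rooted tree with a distinguished spine from the root to one ``module leaf,'' carrying a linearly ordered chain of corollas; to each corolla one attaches a tuple of $C''$-inputs on its left and tuples of $C'$- and $C$-inputs on its right, arranged so that the external leaves, read in planar order, appear as all $C''$-leaves, then the module leaf, then all $C'$-leaves, then all $C$-leaves. One decorates each corolla by an operation $n_{k'',k',k,\beta}$ of matching arity, each internal spine edge by $h_D$, the module leaf by $i_D$, and --- for the transferred structure --- the root by $p_D$, while for the morphism the root stays bare. Summing the resulting composites over all caterpillars, with the Koszul signs dictated by the definitions of $A_\infty$ tri-module and of $A_\infty$ tri-module morphism, and collecting powers of $T$, defines $n^H_{k'',k',k}$ and $\tilde i^D_{k'',k',k}$; the empty caterpillar contributes $i_D$ to $\tilde i^D_{0,0,0,\beta_0}$, and $\mathbb{G}$-gappedness and naturality are manifest. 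Equivalently I would construct both families by induction on the energy monoid $\mathbb{G}$ as in \cite[Theorem 4.4]{yuannonarchi}: the base case $\beta=\beta_0$ is the classical transfer of the $R$-reduced tri-module along $(i_D,p_D,h_D)$, and in the inductive step the obstruction to the morphism relation in energy $\beta$ is an expression $R$ built from strictly lower energy data and the $A_\infty$ relations, so one sets the energy-$\beta$ components of $\tilde i^D$ and of $n^H$ to be $\pm h_D R$ and $\mp p_D R$, the relation then being forced by the chain-homotopy identity $\mathrm{Id}-i_D p_D=d_D h_D+h_D d_D$ of \eqref{chhmtyeqn}. If $(i_D,p_D,h_D)$ is only a contraction I would first replace it, without altering $\overline H_D$, by a strong one via the standard trick.

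\emph{Verification.} To check that $H_D$ is an $A_\infty$ tri-module and $\tilde i^D$ a morphism I would push the total differential through a caterpillar composite and sort the output into three families: terms produced by \eqref{chhmtyeqn} at an internal spine edge (the $\mathrm{Id}$-part splices two adjacent corollas, the $i_D p_D$-part exposes the output of an $n^H$-corolla); terms produced by the $A_\infty$ tri-module relation at a single corolla (the $n\circ n$ summands cancelling against the $\mathrm{Id}$-part above, and the $n(\ldots,m''_{\ast},\ldots)$, $n(\ldots,m'_{\ast},\ldots)$, $n(\ldots,m_{\ast},\ldots)$ summands reproducing the three algebra-action terms of the desired relation); and terms from $i_D$ being a chain map at the module leaf and from the algebra differentials acting on the external algebra leaves. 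Matching these is the same bookkeeping as in the $A_\infty$ algebra case and as in the $A_\infty$ bimodule transfer of \cite{FOOO}, the only new feature being three mutually decoupled algebra strands; at $\beta=\beta_0$ one recovers the classical transferred tri-module with $\overline n^H_{0,0,0}=\delta_D$ and $\overline{\tilde i^D}_{0,0,0}=i_D$ a chain map, and since $p_D\circ i_D=\mathrm{Id}$ the tri-module relations for $n^H$ also follow by pulling back those of $D$ along $\tilde i^D$.

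\emph{Unitality, and the main obstacle.} Assume $D$ unital for $(e'',e',e)$ and $(i_D,p_D,h_D)$ strong. In a caterpillar one of whose corollas eats a unit, that corolla vanishes unless it is one of the primitive operations $n_{1,0,0,\beta_0}(e'';-)$, $n_{0,1,0,\beta_0}(-;e')$, $n_{0,0,1,\beta_0}(-;e)$, each of which is $\pm\,\mathrm{Id}_{\overline D}$; such an identity corolla sits between two of $\{i_D,h_D,p_D\}$, so the strong-contraction relations $h_D\circ h_D=h_D\circ i_D=p_D\circ h_D=0$ kill every caterpillar containing a further corolla, while the single-corolla caterpillar contributes $p_D\circ\mathrm{Id}\circ i_D=p_D i_D=\mathrm{Id}_{\overline H_D}$. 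This gives $n^H_{1,0,0}(e'';y)=y=(-1)^{||y||}n^H_{0,1,0}(y;e')=(-1)^{||y||}n^H_{0,0,1}(y;e)$ together with the vanishing of $n^H$ on all non-primitive unit insertions, i.e.\ $(e'',e',e)$ is a strict unit for $H_D$; the same computation shows $\tilde i^D$ is unital. The step I expect to need the most care is, as usual, the sign bookkeeping: tracking the Koszul signs $*_1,\dots,*_4$ of the tri-module (morphism) relations as the differential is pushed through a caterpillar, so that the cancellations above are exact rather than merely up to sign; once the conventions are pinned down compatibly with \cite{Fukaya-corr}, everything else is a routine transcription of the algebra case.
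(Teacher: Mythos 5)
Your proposal is essentially the paper's proof: the paper defines $\tilde i^D_{k'',k',k,\beta}$ and $n^H_{k'',k',k,\beta}$ by exactly the recursion in your inductive formulation ($h_D\circ n_{k''_1,k'_1,k_1,\beta_1}$, resp.\ $p_D\circ n_{k''_1,k'_1,k_1,\beta_1}$, applied to identities on the three algebra slots and a lower-order $\tilde i^D$ on the module slot, with base case $\tilde i^D_{0,0,0,\beta_0}=i_D$), deferring the verification to the $A_\infty$ bimodule transfer of \cite{FOOO} and deducing unitality from the strong-contraction identities just as you do. One small slip in your caterpillar description: the root of a caterpillar computing the morphism $\tilde i^D$ must be decorated by $h_D$ rather than left bare (only the single-leaf caterpillar contributes the bare $i_D$), which is what your own inductive step $\tilde i^D_{\beta}=\pm h_D R$ correctly prescribes; a bare root would not satisfy the morphism relation.
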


\begin{proof}
    The construction of $\Tilde{i}^D$ and $\{n^H_{k'', k', k}\}$ are analogous to those for $A_\infty$ bi-modules as constructed in the proof of \cite[Theorem 5.4.18]{FOOO} (in which although they assumed that $C, C'$ are canonical $A_\infty$ algebras and the bimodule contraction is for canonical model, the same formulae hold without these assumptions). Therefore, we will just provide the following inductive formulae for  $\Tilde{i}^D_{k'',k', k, \beta}$ and $n^H_{k'', k', k, \beta}$ for $(k, k', k'', \beta)\neq (0, 0, 0, \beta_0)$ respectively:
 \begin{align*}\sum_{\substack{k_i, k'_i, k''_i \geq 0\\k_1 + k_2=k\\k'_1 + k'_2=k'\\k''_1 + k''_2=k''}}
    \sum_{\substack{\beta_1+\beta_2 = \beta\\(k_1, k'_1, k''_1, \beta_1) \\ \neq (0, 0, 0, \beta_0)}}h_D \circ n_{k''_1, k'_1, k_1, \beta_1} (Id_{C''}^{\otimes k''_1} \otimes \Tilde{i}^D_{k''_2, k'_2, k_2, \beta_2} \otimes Id_{C'}^{\otimes k'_1} \otimes Id_{C}^{\otimes k_1}),\end{align*}
\begin{align*}\sum_{\substack{k_i, k'_i, k''_i \geq 0\\k_1 + k_2=k\\k'_1 + k'_2=k'\\k''_1 + k''_2=k''}} 
    \sum_{\substack{\beta_1+\beta_2 = \beta\\(k_1, k'_1, k''_1, \beta_1) \\ \neq (0, 0, 0, \beta_0)}}p_D \circ n_{k''_1, k'_1, k_1, \beta_1} (Id_{C''}^{\otimes k''_1} \otimes \Tilde{i}^D_{k''_2, k'_2, k_2, \beta_2} \otimes Id_{C'}^{\otimes k'_1} \otimes Id_{C}^{\otimes k_1}),\end{align*}
and $\Tilde{i}^D_{0, 0, 0, \beta_0} \coloneqq i_D; n^H_{0, 0, 0, \beta_0} \coloneqq \delta_D$. Compare \cite[Formula 5.4.5, 5.4.6]{FOOO} for the case of $A_\infty$ bimodules. Unitality follows from the fact that $(i_D, p_D, h_D)$ is a strong contraction (See e.g. \cite[Proposition 4.7 (iii)]{yuannonarchi} for the case of $A_\infty$ algebras, which also uses strong contraction properties and inductive formulae).
\end{proof}

Combining Propositions \ref{hpltrimod},\ref{algebratransfer} and Definition \ref{pullbacktrimod} yields the following: 

\begin{corollary}
\label{hpltrimodbasechange}
    If in addition, we are given a contraction
    $(i, p, h)$ (resp. $(i', p', h')$, $ (i'', p'', h'')$) of $(\overline{C}, \overline{H})$ (resp. $(\overline{C}', \overline{H'})$, $(\overline{C}'', \overline{H''})$), 
inducing $A_\infty$ algebras $(H^\bullet, \{m^H_k\})$ (resp. $(H'^\bullet, \{m'^H_k\})$, $(H''^\bullet, \{m''^H_k\})$) and $A_\infty$ morphisms $\Tilde{i}$ (resp. $\Tilde{i'}$, $\Tilde{i''}$)
as in Proposition \ref{algebratransfer}, then the pullback $A_\infty$ tri-module $(\Tilde{i''}, \Tilde{i'}, \Tilde{i})^*H_D$ is a $\mathbb{G}$-gapped filtered left $H''$, right $(H', H)$ $A_\infty$ tri-module, and the pullback $A_\infty$ tri-module morphism $(\Tilde{i''}, \Tilde{i'}, \Tilde{i})^* \Tilde{i}_D$ is a  $\mathbb{G}$-gapped filtered $A_\infty$ tri-module morphism over $(\Tilde{i''}, \Tilde{i'}, \Tilde{i})$.

    If in addition $C$ (resp. $C', C''$) has a strict unit $e$ (resp. $e', e''$) such that $D$ is unital, and all the contractions are strong contractions such that 
    $$i(p(e)) = e; i'(p'(e')) = e'; i''(p''(e'')) = e'',$$
    then $(\Tilde{i''}, \Tilde{i'}, \Tilde{i})^*H_D$ is unital with respect to $(e''_H, e'_H, e_H)$.
\end{corollary}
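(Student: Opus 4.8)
The plan is to assemble the three cited results mechanically, paying attention to the order in which the transfers are performed and to which hypotheses each step consumes. The key point to get right is that the tri-module transfer of Proposition~\ref{hpltrimod} keeps the acting algebras as $C''$, $C'$, $C$, and it is the pullback of Proposition~\ref{pullbacktrimod} that replaces them by $H''$, $H'$, $H$.

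First I would run Proposition~\ref{hpltrimod} on the contraction $(i_D,p_D,h_D)$ and the tri-module $D$. This outputs the $\mathbb{G}$-gapped filtered left-$C''$, right-$(C',C)$ $A_\infty$ tri-module $H_D$ and the $\mathbb{G}$-gapped filtered $A_\infty$ tri-module morphism $\tilde i_D\colon H_D\to D$ with $\tilde i^D_{0,0,0,\beta_0}=i_D$; and if in addition $D$ is unital with respect to $(e'',e',e)$ and $(i_D,p_D,h_D)$ is strong, the same proposition records that $H_D$ is unital with respect to $(e'',e',e)$. In parallel, the three contractions $(i,p,h)$, $(i',p',h')$, $(i'',p'',h'')$ feed Proposition~\ref{algebratransfer}, producing the $\mathbb{G}$-gapped filtered $A_\infty$ algebras $H$, $H'$, $H''$ and the $\mathbb{G}$-gapped filtered $A_\infty$ morphisms $\tilde i\colon H\to C$, $\tilde i'\colon H'\to C'$, $\tilde i''\colon H''\to C''$ with $\tilde i_{1,\beta_0}=i$ and so on; when the three contractions are strong and satisfy $i(p(e))=e$, $i'(p'(e'))=e'$, $i''(p''(e''))=e''$, these morphisms are unital with units $e_H=p(e)$, $e'_H=p'(e')$, $e''_H=p''(e'')$.

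Next I would feed all of this into Proposition~\ref{pullbacktrimod}, taking $(C_1,C'_1,C''_1)=(H,H',H'')$, $(C_2,C'_2,C''_2)=(C,C',C'')$, $(g,g',g'')=(\tilde i,\tilde i',\tilde i'')$, and $H_D$ in the role of the ambient tri-module. This gives at once that $(\tilde i'',\tilde i',\tilde i)^*H_D$ is a filtered left-$H''$, right-$(H',H)$ $A_\infty$ tri-module, which is (1); its $\mathbb{G}$-gappedness follows because the pullback formula is a sum over decompositions $\beta=\beta_1+\cdots$ staying inside the submonoid $\mathbb{G}$, built from the $\mathbb{G}$-gapped morphisms $\tilde i,\tilde i',\tilde i''$ and the $\mathbb{G}$-gapped operators of $H_D$, so it is defined over $\Lambda^{\mathbb{G}}_0$ with the expected $R$-reduction. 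For (3), since $\tilde i,\tilde i',\tilde i''$ are unital and $H_D$ is unital with respect to $(e'',e',e)$, the unitality clause of Proposition~\ref{pullbacktrimod} gives unitality of $(\tilde i'',\tilde i',\tilde i)^*H_D$ with respect to $(e''_H,e'_H,e_H)$. For (2), the morphism $(\tilde i'',\tilde i',\tilde i)^*\tilde i_D$ is defined by the same substitution recipe with each tri-module operator of $H_D$ replaced by the corresponding component of $\tilde i_D$; that this collection satisfies the $A_\infty$ tri-module morphism relations into $(\tilde i'',\tilde i',\tilde i)^*D$ is the evident morphism analogue of the identity behind Proposition~\ref{pullbacktrimod} (the bimodule version being standard, cf.\ \cite{FOOO}), and, composed with the identity-over-$(\tilde i'',\tilde i',\tilde i)$ on $(\tilde i'',\tilde i',\tilde i)^*D$, this is by definition a morphism over $(\tilde i'',\tilde i',\tilde i)$; gappedness, and under the extra hypotheses unitality, are inherited exactly as in (1) and (3).

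I do not expect a genuine obstacle: there is no new analytic or homotopical content, only bookkeeping. The one place needing care is aligning the roles correctly — transfer the module first so that $C,C',C''$ stay as coefficients, then transfer the algebras, then pull the module and the comparison morphism $\tilde i_D$ down along $\tilde i,\tilde i',\tilde i''$ — together with tracking which strong-contraction hypotheses are used where: strongness of $(i_D,p_D,h_D)$ alone gives unitality of $H_D$, whereas the conditions $i(p(e))=e$, $i'(p'(e'))=e'$, $i''(p''(e''))=e''$ (plus strongness of those three contractions) are what make $\tilde i,\tilde i',\tilde i''$ unital and hence the final pullback unital.
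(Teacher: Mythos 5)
Your proposal is correct and follows exactly the route the paper intends: the paper gives no separate proof beyond the phrase ``Combining Propositions \ref{hpltrimod}, \ref{algebratransfer} and \ref{pullbacktrimod} yields the following corollary,'' and your assembly — transfer the tri-module first so the coefficients stay $C'',C',C$, transfer the algebras, then pull back $H_D$ and $\tilde i_D$ along $(\tilde i'',\tilde i',\tilde i)$, tracking which strong-contraction hypothesis feeds which unitality claim — is precisely that combination, correctly bookkept.
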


Similarly, one can construct canonical models for any $A_\infty$ tri-modules:

\begin{corollary} \label{canonicaltrimod}
    For any $\mathbb{G}$-gapped, unital filtered left $C''$, right $(C', C)$ $A_\infty$ tri-module $(D^\bullet, \{n_{k'', k', k}= \sum_{\beta\in\mathbb{G}}n_{k'', k', k, \beta}T^{E(\beta)} \})$, there exists a $\mathbb{G}$-gapped, unital filtered left $H(C'')$, right $(H(C'), H(C))$ $A_\infty$ tri-module on $H(D)$
    $$(H^\bullet(D)\coloneqq H^\bullet(\overline{D}, \overline{n}_{0, 0, 0}; \Lambda_0), \{n^H_{k'', k', k}= \sum_{\beta\in\mathbb{G}}n^H_{k'', k', k, \beta}T^{E(\beta)} \})$$ 
    called a canonical model of $D$, where $H(C'')$ (resp. $H(C'), H(C)$) is a canonical model of $C''$ (resp. $C', C$) defined in Corollary \ref{canonicalalg}. 
\end{corollary}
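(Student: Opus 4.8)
The plan is to imitate the construction of canonical models for $A_\infty$ algebras (Corollary \ref{canonicalalg}), combining the transfer theorem for tri-modules (Proposition \ref{hpltrimod}) with the base-change statement (Corollary \ref{hpltrimodbasechange}).

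\emph{Choosing the contractions.} First apply Proposition \ref{strongcontractionconstr} to the $R$-cochain complex $(\overline{D}^\bullet, \overline{n}_{0,0,0})$ to obtain a strong contraction $(i_D, p_D, h_D)$ of $(\overline{D}, \overline{H}_D)$ with $\overline{H}_D \coloneqq (H^\bullet(\overline{D}, \overline{n}_{0,0,0}), 0)$. Similarly apply Proposition \ref{strongcontractionconstr} to $\overline{C}, \overline{C}', \overline{C}''$ with their lowest-energy differentials $\overline{m}_{1,\beta_0}, \overline{m}'_{1,\beta_0}, \overline{m}''_{1,\beta_0}$; exactly as in the proof of Corollary \ref{canonicalalg}, we may (assuming WLOG that the three cohomologies are nonzero) choose the Hodge-type decompositions so that $e, e', e''$ lie in the respective harmonic summands, so that the resulting strong contractions $(i,p,h), (i',p',h'), (i'',p'',h'')$ satisfy $i(p(e)) = e$, $i'(p'(e')) = e'$, $i''(p''(e'')) = e''$. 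By Corollary \ref{canonicalalg} these give the canonical $\mathbb{G}$-gapped, unital filtered $A_\infty$ algebra structures on $H(C), H(C'), H(C'')$ together with the unital $\mathbb{G}$-gapped $A_\infty$ morphisms $\tilde i, \tilde i', \tilde i''$ of Proposition \ref{algebratransfer}.

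\emph{Transferring and base-changing.} Feeding $(i_D, p_D, h_D)$ into Proposition \ref{hpltrimod} (which applies since $\overline{n}_{0,0,0} = d_D$ and $D$ is $\mathbb{G}$-gapped and unital with respect to $(e'', e', e)$) produces a $\mathbb{G}$-gapped filtered left-$C''$, right-$(C', C)$ $A_\infty$ tri-module structure $\{n^H_{k'',k',k}\}$ on $H_D$ with $\overline{n}^H_{0,0,0} = 0$, a tri-module morphism $\tilde i_D \colon H_D \to D$ with $\tilde i^D_{0,0,0,\beta_0} = i_D$, and --- because the contraction is strong --- unitality of $H_D$ with respect to $(e'', e', e)$. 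Now invoke Corollary \ref{hpltrimodbasechange} with the algebra contractions of the previous step: the pullback $(\tilde i'', \tilde i', \tilde i)^* H_D$ is a $\mathbb{G}$-gapped filtered left-$H(C'')$, right-$(H(C'), H(C))$ $A_\infty$ tri-module, its underlying $\Lambda_0$-module is $\overline{H}_D \hat{\otimes} \Lambda_0 = H^\bullet(\overline{D}, \overline{n}_{0,0,0}; \Lambda_0) = H^\bullet(D)$, and it is unital with respect to $([e''], [e'], [e])$ precisely because the three algebra contractions are strong and satisfy $i(p(e)) = e$, etc. This is the desired canonical model of $D$. (Composing $\tilde i_D$ with the identity morphism over $(\tilde i'', \tilde i', \tilde i)$, cf. the Example following Proposition \ref{pullbacktrimod}, moreover records a unital $\mathbb{G}$-gapped $A_\infty$ tri-module morphism from the canonical model to $D$ over the canonical algebra morphisms.)

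\emph{Main obstacle.} There is no genuine difficulty beyond assembling the pieces in the right order; the single point requiring attention is the unitality in the last step, which forces one to arrange already in the first step that $e, e', e''$ sit in the harmonic summands, so that the hypotheses $i(p(e)) = e$, $i'(p'(e')) = e'$, $i''(p''(e'')) = e''$ of Corollary \ref{hpltrimodbasechange} hold --- this is the only place the \emph{strong} contraction property (not just a contraction) is essential. When $R$ is a $2\mathbb{Z}_{\geq 0}$-graded algebra rather than a field, one replaces Proposition \ref{strongcontractionconstr} by an explicit choice of splittings, the same caveat already implicit in Corollary \ref{canonicalalg}.
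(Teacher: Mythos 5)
Your proposal is correct and follows exactly the route the paper intends: build strong contractions via Proposition \ref{strongcontractionconstr} (arranging the units in the harmonic summands so that $i(p(e))=e$, etc.), transfer the tri-module structure by Proposition \ref{hpltrimod}, and then pull back along $(\tilde{i}'',\tilde{i}',\tilde{i})$ using Corollary \ref{hpltrimodbasechange} to land over the canonical models $H(C''), H(C'), H(C)$. Nothing is missing.
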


\begin{remark}\label{trimodlowdeg}
    Observe that $\overline{n}^H_{1,0,0}$ is the cohomological left $H(\overline{C}'')$-module action on $H(\overline{D})$ induced from $\overline{n}_{1,0,0}$, which is therefore independent of the choice of strong contractions. Similarly for $\overline{n}^H_{0,1,0}$ and $\overline{n}^H_{0,0,1}$.
\end{remark}

\subsection{Lagrangian Floer Theory} \label{lft}

In this section, we recall the de Rham model of the (Lagrangian) Floer complex $CF(L)$ associated to a Lagrangian $L$ in a symplectic manifold $X$. It was first due to Fukaya in \cite{fukcyclic}, with further details on the Kuranishi structures and virtural fundamental chains (with an application to constructing the de Rham model of the Floer complex) in \cite{foookuranishi}. It is further generalised to unobstructed immersed Lagrangian correspondences in \cite{Fukaya-corr}, for which we will mainly follow. The main theorem for this section is as follows.

\begin{theorem} \cite[Theorem 3.14]{Fukaya-corr} \label{floercpx}
Given a closed (or tame at infinity) symplectic manifold $(X,\omega)$ and a $V$-relatively spin \footnote{i.e. an  oriented real vector bundle $V$ over $X$ such that $V|_L \oplus TL$ is spin.}, closed, connected, embedded Lagrangian submanifold $L$ of $(X,\omega)$. The completed de Rham complex $CF(L)\coloneqq \Omega(L;\Lambda_0)$, admits a (strictly) unital, $\mathbb{G}_L$-gapped filtered $A_{\infty}$ algebra structure \\
$(CF(L),\{m_k\}, e)$ for some discrete submonoid $\mathbb{G}_L\subseteq \mathbb{R}_{\geq 0}$.
\end{theorem}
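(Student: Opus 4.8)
The plan is to construct the filtered $A_\infty$ algebra on $CF(L) = \Omega(L;\Lambda_0)$ out of the moduli spaces of stable pseudo-holomorphic discs with boundary on $L$. First I would fix a compatible almost complex structure $J$ on $(X,\omega)$ and, for each $\beta \in H_2(X,L;\Z)$, consider the moduli space $\mathcal{M}_{k+1}(\beta)$ of stable maps from genus-zero bordered Riemann surfaces with one boundary component and $k+1$ cyclically ordered boundary marked points, representing the class $\beta$. Gromov compactness plus the relative spin hypothesis (used for orientations of these moduli spaces, following Fukaya--Oh--Ohta--Ono) gives compact spaces carrying Kuranishi structures with corners; the relative spin condition yields coherent orientations on all of them. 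The energies $\omega(\beta)$ that actually occur, together with $\{0\}$, generate a discrete submonoid $\mathbb{G}_L \subseteq \mathbb{R}_{\geq 0}$ (discreteness from Gromov compactness: only finitely many classes below any energy bound), and we will build a $\mathbb{G}_L$-gapped structure.

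Next I would define the operations. Using the evaluation maps $\mathrm{ev} = (\mathrm{ev}_1,\ldots,\mathrm{ev}_k): \mathcal{M}_{k+1}(\beta) \to L^k$ at the non-zeroth marked points and $\mathrm{ev}_0: \mathcal{M}_{k+1}(\beta) \to L$ at the zeroth one, set
$$ m_{k,\beta}(\alpha_1,\ldots,\alpha_k) \coloneqq (\mathrm{ev}_0)_! \left( \mathrm{ev}_1^*\alpha_1 \wedge \cdots \wedge \mathrm{ev}_k^*\alpha_k \right), $$
the pushforward (integration along the fiber) of the pulled-back differential forms against the virtual fundamental chain of $\mathcal{M}_{k+1}(\beta)$; this is where the Kuranishi-theoretic machinery of continuous family perturbations and virtual fundamental chains from \cite{foookuranishi} is needed to make $(\mathrm{ev}_0)_!$ well-defined as a map of smooth forms (de Rham model: one perturbs to make $\mathrm{ev}_0$ a submersion on the zero set of a multisection). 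Then $m_k \coloneqq \sum_{\beta \in \mathbb{G}_L} m_{k,\beta} T^{\omega(\beta)}$ (with $m_{0,0} = 0$ and $m_{1,0} = \pm d$ the de Rham differential, up to sign/degree-shift conventions), and $m_0(1) \in CF(L)^2_+$ records the constant-disc-free contributions, landing in $\Lambda_+$ as all positive-energy. The degree count $\dim \mathcal{M}_{k+1}(\beta) = n + \mu(\beta) + k - 2$ together with the codimension-one boundary description $\partial \mathcal{M}_{k+1}(\beta) = \bigsqcup \mathcal{M}(\beta_1) \times_L \mathcal{M}(\beta_2)$ gives the $A_\infty$ relations after applying Stokes' theorem for fiber integration; the Koszul signs come out of the orientation bookkeeping on these fiber products.

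For unitality, I would take $e = 1 \in \Omega^0(L)$, the constant function; because constant maps and forgetful maps interact with $1$ in the expected way ($\mathrm{ev}_i^* 1 = 1$ and a marked point carrying $1$ can be forgotten, changing the moduli space by a fibration with one-dimensional fibers whose pushforward of $1$ vanishes except in the $m_{2,0}$ case), one gets $m_{2,0}(e,x) = x = (-1)^{|x|} m_{2,0}(x,e)$ and $m_{k,\beta}(\ldots,e,\ldots) = 0$ otherwise — but making this precise requires the forgetful maps to be compatible with the chosen Kuranishi structures and perturbations, i.e. one uses the canonical model / homotopy-unit trick of FOOO to promote a homotopy unit to a strict unit (this is cited as available, e.g. via the unital transfer in Proposition \ref{algebratransfer}, or built in to \cite{fukcyclic}). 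The main obstacle, as always in this construction, is the analytic/Kuranishi input: constructing Kuranishi structures on the disc moduli that are compatible at all the boundary fiber products simultaneously (so the $A_\infty$ relations close up), choosing continuous-family multisection perturbations that preserve this compatibility and the cyclic symmetry while making all $\mathrm{ev}_0$ into submersions, and arranging strict unitality. Since the statement is quoted verbatim as \cite[Theorem 3.14]{Fukaya-corr}, the honest plan is to invoke that construction wholesale; the conceptual content above (moduli of stable discs, fiber-product boundary, Stokes for fiber integration, relative spin for orientations, homotopy-to-strict unit) is the skeleton, and all the genuinely hard work lives in the Kuranishi-structure and virtual-chain technology of \cite{foookuranishi} that Fukaya's theorem packages.
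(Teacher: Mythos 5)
Your proposal follows essentially the same route as the paper, which itself only recalls Fukaya's construction: oriented Kuranishi structures on the compactified disc moduli forming a tree-like K-system, a compatible system of CF-perturbations making $\mathrm{ev}_0$ strongly submersive so that fiber integration of pulled-back forms defines the $m_{k,\beta}$, Gromov compactness for discreteness of $\mathbb{G}_L$, and the constant function $1$ as the unit. The one small correction is that in the de Rham model the constant function is already a \emph{strict} unit (\cite[Proposition 3.35]{Fukaya-corr}), so no homotopy-unit promotion is needed.
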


\begin{remark}
    Actually, in loc. cit. the above statement holds for immersed Lagrangian $\widetilde{L}\rightarrow L\subseteq X$ with clean self-intersections (i.e. $\widetilde{L} \times_X \widetilde{L}$ is a clean fiber product). We focus on embedded Lagrangians for simplicity. 
\end{remark}

More precisely, for each $\beta\in \mathfrak{G}\coloneqq H_2(X,L;\mathbb{Z})$, an $\omega$-compatible almost complex structure $J$ and $k\in \mathbb{Z}_{\geq 0}$, there exists an oriented Kuranishi structure $\widehat{\mathcal{U}}$ on the compactified moduli space of pseudo-holomorphic disks $\mathcal{M}_{k+1}(L; \beta) \coloneqq \mathcal{M}_{k+1}(X,L; J, \beta)$. By \cite{fooodisk2}, these K-spaces form a tree-like K-system 
$$(\{\mathcal{M}_{k+1}(L, \beta), ev=(ev_0, \cdots, ev_k): \mathcal{M}_{k+1}(L; \beta) \rightarrow L^{k+1}, E: \mathfrak{G} \rightarrow \mathbb{R} \}_{k\geq0;\beta\in \mathfrak{G}}$$
which, after choosing a compatible system of CF-perturbations \\
$\{\mathfrak{S}_{k+1}(L;\beta)\}_{k\geq0;\beta\in \mathfrak{G}}$, gives rise to a $\mathbb{G}_L$-gapped filtered $A_\infty$ algebra structure $\{m_k\}_{k\geq0}$ on $CF(L)$ by \cite{foookuranishi}, where $\mathbb{G}_L \subseteq \mathbb{R}_{\geq 0} $ is the submonoid generated by
$$\mathbb{G}^0_L\coloneqq\{E(\beta)| \beta \in \mathfrak{G}; \mathcal{M}_{k+1}(L, \beta) \neq \phi\}$$
which is discrete by Gromov's Compactness Theorem.

Moreover, by \cite[Proposition 3.35]{Fukaya-corr}, the constant one function $1\in\Omega^0(L)$ defines a (strict) unit $e$ of the $A_\infty$ algebra $(CF(L), \{m_k\})$.

Furthermore, after choosing a harmonic contraction $(i, p, h)$ of \\
$(\Omega^\bullet(L; \R), H^\bullet(L; \mathbb{R}))$ by Definition \ref{harmcontr}, we apply Proposition \ref{algebratransfer} to $CF(L)$ to obtain a canonical model $CF_{can}(L)$ as a unital, $\mathbb{G}_L$-gapped filtered $A_{\infty}$ algebra.

\begin{remark}
    Note that from the Remark \ref{hplmcspace}, the weak Maurer-Cartan spaces of $CF(L)$ and $CF_{can}(L)$ are (canonically) isomorphic. Therefore, by an abuse of notations, we will denote both of them as  $MC_{weak}(L)$.
\end{remark}

\subsection{Lagrangian correspondences and their compositions} \label{subsec-lagcorrcomp}
In this subsection, we review the concept of Lagrangian correspondences and their compositions, especially the notion of clean compositions which appears naturally later in our theory of equivariant correspondence tri-modules.

Throughout this subsection, $(M, \omega_M)$ denotes a smooth manifold $M$ with a symplectic form $\omega_M$. Also, we denote by $M^- := (M, -\omega_M)$ the symplectic manifold endowed with the negative symplectic form $-\omega_M$.
\begin{defn}
A Lagrangian correspondence $L$ from $(M, \omega_M)$ to $(N, \omega_N)$, denoted as $M \xrightarrow{L} N$, is a Lagrangian submanifold in $(M^- \times N, -\omega_M\oplus\omega_N)$.
\end{defn}

\begin{defn}
Given two Lagrangian correspondences $P \xrightarrow{L'} M$ and $M \xrightarrow{L} N$, their geometric composition $ L \circ L'$ is a subset of $P \times N$ defined as
\begin{equation}
L \circ L' = pr_{PN}((L' \times L)\cap (P \times \Delta_{M} \times N)) = pr_{PN}(L' \times_{M} L),
\end{equation}
where $pr_{PN} : P \times M \times M \times N \rightarrow P \times N$ is the natural projection.\\
If $L \circ L' \subseteq P^- \times N$ is a Lagrangian submanifold, we say the pair $(L', L)$ is composable, and regard $L \circ L'$ as a Lagrangian correspondence $P \xrightarrow{L \circ L'} N$.
\end{defn}


\begin{remark}
    Alternatively, one can define $L \circ L'$ as
\begin{equation}
L \circ L' = pr_{12}((P\times N)\times L' \times L)\cap \Delta_{PMN}),
\end{equation}
where $pr_{12} : P\times N \times P \times M \times M \times N \rightarrow P \times N$ is the projection to first two factors.\\

The equivalence of these two definitions follows from the fact that under the canonical bijection $P \times \Delta_{M} \times N \cong \Delta_{PMN}$, $L' \times_{M} L$ is identified with $((P\times N)\times L' \times L)\cap \Delta_{PMN}$. While the former definition is cleaner, the latter definition has the advantage of having the following equality:
$$((P\times N)\times L' \times L)\cap \Delta_{PMN} = ((L \circ L') \times L' \times L)\cap \Delta_{PMN},$$
which is more consistent with the construction of correspondence tri-modules in later sections. We will use both definitions interchangeably.
\end{remark}

A priori, $L' \times_{M} L$ needs not be smooth, and even if $L' \times_{M} L$ is smooth, its projection $L \circ L'$ needs not be smooth. We recall the following notion of clean composition as follows:

\begin{defn} \label{cleancomp}
    We say $L \circ L'$ is a clean composition (or $(L', L)$ is cleanly composable)  if the following are satisfied:
    \begin{enumerate}
        \item $L' \times L$ intersects cleanly with $P \times \Delta_{M} \times N$ in $P \times M \times M \times N$, i.e. 
        $$L' \times_{M} L = (L' \times L)\cap (P \times \Delta_{M} \times N) \subseteq P \times M \times M \times N$$ is a smooth submanifold with 
        $$T(L' \times_{M} L) = T(L' \times L) \cap T (P \times \Delta_{M} \times N).$$ 

        \item $L \circ L' \subseteq P^- \times N$ is a smooth submanifold.

        \item $pr_{PN}$ restricts to a smooth fibration $\pi_{L\circ L'}: L' \times_{M} L \rightarrow L \circ L'$.
    \end{enumerate}

\end{defn} 

\begin{remark}
    It turns out once $L \circ L'$ is a clean composition, the Lagrangian property of $L \circ L' \subseteq P^- \times N$ is automatically satisfied. See e.g. \cite[Lemma 2.1.7, (ii)]{mcduff-salamon} in the context of linear coisotropic reduction.
\end{remark}

Two important special cases in which the above hold are as follows:

\begin{defn}
      We say $L \circ L'$ is 
      \begin{enumerate}
          \item a transversal composition if it is a clean composition with (1) strengthened to \\
      (1)' $L' \times L$ intersects transversely with $P \times \Delta_{M} \times N$ in $P \times M \times M \times N$.
      
\item an embedded composition if (1)' is satisfied and $pr_{PN}|_{L' \times_{M} L}: L' \times_{M} L \rightarrow P \times N$ is a smooth embedding.
\end{enumerate}

\end{defn}

\begin{remark}
    Note that for an embedded composition $L \circ L'$, it is smooth with $\pi_{L \circ L'}$ being a diffeomorphism, and so (2) and (3) of Definition \ref{cleancomp} 
    are satisfied.
\end{remark}

\subsection{Correspondence tri-module} \label{subsec-corrtrimod}
In this section, we review a generalisation of Lagrangian Floer theory to Lagrangian correspondences, pioneered by Wehrheim-Woodward in their study of quilted Floer theory (see e.g. \cite{wwquilted} and the reference therein). We will follow Fukaya's construction of the correspondence tri-module $CF(L''; L', L)$ associated to a triple of Lagrangian correspondences $(L, L', L'')$ in \cite{Fukaya-corr} stated below:

\begin{theorem} \cite[Proposition 8.7]{Fukaya-corr} \label{corrtrimod}
    Given three symplectic manifolds \\
    $P, M, N$ which are closed or tame at infinity, and three relatively spin, closed, connected, embedded Lagrangian correspondences $P \xrightarrow{L''}N, P \xrightarrow{L'}M, M \xrightarrow{L}N $ such that the following intersection is clean:
    $$\mathcal{I}_{L'', L', L} \coloneqq (L'' \times L' \times L)\cap \Delta_{PMN} \subseteq P \times N \times P\times M \times M \times N.$$ 
    Then the completed de Rham complex $CF(L''; L', L)= \Omega(\mathcal{I}_{L'', L', L}; \Lambda_0)$ admits a strictly unital, $\mathbb{G}_{L'', L', L}$-gapped filtered left $CF(L'')$, right \\
    $(CF(L'), CF(L))$ $A_{\infty}$ tri-module structure $\{n_{k'',k',k}\}$ for some $\mathbb{G}_{L'', L', L}$.

\end{theorem}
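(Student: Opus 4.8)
The plan is to run the argument parallel to the $A_\infty$-algebra construction recalled above (Theorem 3.14 of \cite{Fukaya-corr}), with the disc moduli of a single Lagrangian replaced by moduli of pseudo-holomorphic \emph{quilted} domains adapted to the triple $(L,L',L'')$. The underlying $\Lambda_0$-module is the completed de Rham complex $CF(L'';L',L)=\Omega(\mathcal{I}_{L'',L',L};\Lambda_0)$, which is well defined precisely because the clean intersection hypothesis makes $\mathcal{I}_{L'',L',L}$ a smooth compact manifold. For each $(k'',k',k)$ and each compatible tuple $\vec\beta$ of relative homotopy classes in $(P,L'')$, $(P,M,L')$ and $(M,N,L)$ glued along the seams, one forms the moduli space of $J$-holomorphic maps from a quilted domain consisting of three patches mapping to $P$, $M$, $N$, glued along three seam arcs carrying the Lagrangian conditions $L''$, $L'$, $L$ with $k''$, $k'$, $k$ marked points respectively, together with two distinguished ``triple'' points $z_-,z_+$ at which all three seams meet and the evaluation map takes values in $\mathcal{I}_{L'',L',L}$ (this is exactly the fiber product cut out cleanly by the three-factor analogue of condition (1) of Definition \ref{cleancomp}). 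After Gromov compactification and the choice of a compatible system of CF-perturbations, $n_{k'',k',k}(x'';y;x';x)$ is defined by pulling back $x'',x',x$ along the seam evaluations and $y$ along $\mathrm{ev}_{z_-}$, integrating over the perturbed moduli space, and pushing forward along $\mathrm{ev}_{z_+}$ to $\mathcal{I}_{L'',L',L}$, with coefficients $T^{E(\vec\beta)}$ recording energy. These are the quilted counterparts of the constructions of Ma'u--Wehrheim--Woodward \cite{mau} and Wehrheim--Woodward \cite{wwquilted}.

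The core of the argument is then twofold. First, one equips the compactified quilted moduli spaces with oriented Kuranishi structures that, together with the disc K-systems of $L,L',L''$, assemble into a ``tree-like K-system'' in the sense of \cite{fooodisk2}, to which the de Rham formalism of \cite{foookuranishi} applies; here the clean intersection hypothesis is what makes the triple-point evaluations well defined and the K-spaces of the expected virtual dimension, while the relatively spin hypotheses on $L,L',L''$ furnish coherent orientations (and an induced orientation on $\mathcal{I}_{L'',L',L}$). Second, one identifies the codimension-one boundary of these K-spaces: it consists of disc/sphere bubbling along each of the three seams, giving the three families of terms carrying $m''_{k''_2}$, $m'_{k'_2}$, $m_{k_2}$; splitting of the quilted domain along an arc through $z_-$ or $z_+$, giving all the terms $\sum(-1)^{*_1}n_{k''_1,k'_1,k_1}(\cdots;n_{k''_2,k'_2,k_2}(\cdots);\cdots)$ (the $n_{0,0,0}^2$ term included); interior sphere bubbling, of codimension $\ge 2$ and killed by the perturbation; and the delicate figure-eight (strip-shrinking) stratum, in which a quilted patch collapses near a triple point. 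Compatibility of the CF-perturbations and vanishing of the signed boundary count then yield the $A_\infty$ tri-module relation with the Koszul signs $*_1,\ldots,*_4$, the sign bookkeeping being as in \cite{Fukaya-corr}.

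Strict unitality follows as in \cite[Proposition 3.35]{Fukaya-corr}: $1$ is the strict unit of each of $CF(L'')$, $CF(L')$, $CF(L)$, and the forgetful-map argument (a moduli space with a seam marked point decorated by $1$ pulls back from the one without it, except in the stabilising cases) forces $n_{1,0,0}(e'';-)=\mathrm{id}$, $n_{0,1,0}(-;e')=\pm\,\mathrm{id}$, $n_{0,0,1}(-;e)=\pm\,\mathrm{id}$ and the vanishing of every higher $n_{k'',k',k}$ into which some $1$ is inserted. Discreteness of $\mathbb{G}_{L'',L',L}$ — the submonoid of $\mathbb{R}_{\ge 0}$ generated by the energies $E(\vec\beta)$ over nonempty quilted moduli spaces — comes from Gromov compactness, and the $T$-adic convergence and gappedness of the $n_{k'',k',k}$, together with completeness of $\Omega(\mathcal{I}_{L'',L',L};\Lambda_0)$ over $\Lambda_0$, are formal. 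The principal obstacle is the figure-eight degeneration in the boundary analysis: one must show either that the combinatorics of the quilted domains (with their fixed triple points) excludes it, or invoke a removal-of-singularity theorem for figure-eight bubbles. Since the statement is quoted verbatim from \cite{Fukaya-corr}, the natural course is to adopt Fukaya's resolution of this point, in which — once more — the clean intersection hypothesis is what keeps the seam geometry near the collapse under control.
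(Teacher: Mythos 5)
Your proposal follows essentially the same route as the paper, which itself only recalls Fukaya's construction rather than reproving it: the operations $n_{k'',k',k}$ are defined by pull–push over compactified moduli of quilted maps with three patches targeting $P,M,N$, three seams labelled $L',L,L''$ carrying the algebra inputs, and two ends evaluating into $\mathcal{I}_{L'',L',L}$ (your ``triple points'' $z_\pm$ are exactly the two asymptotic ends of Fukaya's quilted drum $\mathbb{S}^1\times\mathbb{R}$ after compactification), with Kuranishi structures, CF-perturbations, and the de Rham pushforward of \cite{foookuranishi}.

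Two corrections of emphasis. First, the ``figure-eight / strip-shrinking stratum'' that you single out as the principal obstacle does not occur in the boundary analysis for this statement: in the quilted drum the three patches have fixed width $[i-1,i]\times\mathbb{R}$, so no patch collapses, and the codimension-one boundary consists only of breaking at the two ends (producing the $n\circ n$ terms) and disc/sphere bubbling on the seams (producing the three families of $m$-terms). Figure-eight bubbles arise only when one shrinks a patch to compare the tri-module with the geometric composition $L\circ L'$ (the Y-diagram arguments behind Theorem \ref{embcorrtrimod} and Corollary \ref{compstrict}), and a central point of Fukaya's approach is precisely to organize those later arguments so that strip-shrinking is never needed. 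Second, your remark that ``$T$-adic convergence and gappedness are formal'' glosses over a genuine step: the geometric construction only yields an $A_\infty$ tri-module structure modulo $T^{E_0}$ for each energy cutoff $E_0$ (the CF-perturbations are chosen for $E<E_0$ and cannot in general be made compatible for all energies at once), and one must then run the algebraic pseudo-isotopy and inverse-limit argument of \cite[Theorem 5.25]{Fukaya-corr} to promote these truncated structures to an honest filtered $A_\infty$ tri-module. Neither point invalidates your outline, but the first mislocates the difficulty and the second omits a step the paper explicitly records.
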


\begin{remark}
    Actually, in \cite{Fukaya-corr}, Fukaya proved the above for immersed Lagrangian correspondences $\widetilde{L}, \widetilde{L'}, \widetilde{L''}$ with clean self-intersections such that      
    $$\widetilde{\mathcal{I}}_{L'', L', L}\coloneqq (\widetilde{L''} \times \widetilde{L'} \times \widetilde{L})\times_{(P\times M \times N)^2} \Delta_{PMN}$$ is clean.
    We focus on embedded Lagrangian correspondences for simplicity. 
\end{remark}

\begin{remark}
    Following \cite{Fukaya-corr}, the relative spin structure on $L$ (resp. $L'$, $L''$) is with respect to
 $pr^*_M(V_M \oplus TM) \oplus pr^*_NV_N$ (resp.  $pr^*_P(V_P \oplus TP) \oplus pr^*_MV_M; pr^*_P(V_P \oplus TP) \oplus pr^*_NV_N)$
 for some oriented real vector bundles $V_P\rightarrow P; V_M\rightarrow M; V_N\rightarrow N$.
\end{remark}

\begin{remark}
    Remark of conventions in the case where $P = pt$: In \cite[Section 5]{Fukaya-corr}, Fukaya wrote the tri-module as $CF(L', L; L'')$, and treated it as a left $(CF(L'), CF(L))$, right $CF(L'')$ $A_{\infty}$ tri-module, which is opposite to the above. It is for the purpose of showing the compatibility of compositions via ``Y-diagrams". See \cite[Section 9]{Fukaya-corr} for details. 
    
    Meanwhile, for the simplicity of the exposition, we will not make such a distinction, i.e. we always consider $CF(L''; L', L)$ as a left $CF(L'')$, right $(CF(L'), CF(L))$ $A_{\infty}$ tri-module, regardless of whether $P = pt$ or not. Alternatively, one can adopt Fukaya's convention and prove analogous statements by replacing left $CF(L'')$, right $(CF(L'), CF(L))$ tri-module by left $(CF(L'), CF(L))$, right $CF(L'')$ tri-module when $P = pt$.      
\end{remark}


In what follows, we briefly recall Fukaya's construction in \cite[Section 8.2]{Fukaya-corr}, and refer the reader to there for details. We follow his convention that quilts are $-J$ holomorphic (and hence have nonpositive energies $-E$) to ensure that the quilts are compatible with the anti-holomorphic maps that appeared in the moduli space of Y-diagrams. See \cite[Remark 9.4]{Fukaya-corr} for details. 

Roughly speaking, $\{n_{k'',k',k}\}$ counts the moduli space of quilted drums $\{\mathcal{M}_{k'',k',k}(L''; L',  L; E)\}_{E\geq 0}$. Its interior $\mathring{\mathcal{M}}_{k'',k',k}(L''; L',  L; E)$ consists of quilted drums $u = (u_P, u_M, u_N): \Sigma_1 \times \Sigma_2 \times \Sigma_3 \rightarrow P \times M \times N$, where 
\begin{itemize}
    \item The quilted drum $W \coloneqq \mathbb{S}^1\times \mathbb{R} \cong ([0,3]/\sim) \times \mathbb{R}$
    is a quilted cylinder with three patches $W_i \coloneqq [i-1,i] \times \mathbb{R}$  and three seams $\sigma_{i} \coloneqq \{i\} \times \mathbb{R} = W_i \cap W_{i+1} $ for $1\leq i \leq 3$ (with convention $W_4 = W_1$).
    \item $\Sigma$ is a bordered Riemann surface as the union of $W$ with trees of sphere components whose roots are not on the seams $\{\sigma_{i}\}$. Similarly for $\Sigma_i$ by replacing $W$ with $W_i$ above.
    \item $z^{(i)} = (z^{(i)}_1, \cdots z^{(i)}_{k_i})$ are marked points on $\sigma_i$, where $1 \leq i \leq 3$ and $k_1 = k', k_2 = k, k_3 = k''$.
    \item $u_P: \Sigma_1 \rightarrow P, u_M: \Sigma_2 \rightarrow M, u_N: \Sigma_3 \rightarrow N$ are  $-J_P$ (resp. $-J_M$, $-J_N$)-holomorphic maps satisfying the following seam conditions, asymptotic conditions, an energy condition and stability conditions.
    \item{[Seam conditions]} $$(u_P, u_M)|_{\sigma_1}: \sigma_1 \rightarrow L' \subseteq P \times M,$$
    $$(u_M, u_N)|_{\sigma_2}: \sigma_2 \rightarrow L \subseteq M \times N,$$
    $$(u_P, u_N)|_{\sigma_3}: \sigma_3 \rightarrow L'' \subseteq P \times N.$$
    \item {[Asymptotic conditions]} For any $t\in [0,1]$, the limits\\
    $\lim_{\tau\rightarrow \pm \infty}u_P(t, \tau)$ exist and are independent of $t$. Denote the limits as $p_{-\infty},  p_{+\infty} \in P$ respectively. Similarly, assume the following limits exist: $$\lim_{\tau\rightarrow \pm \infty}u_M(t, \tau) = m_{\pm \infty},$$
    $$\lim_{\tau\rightarrow \pm \infty}u_N(t, \tau) = n_{\pm \infty}.$$
    It follows from the seam conditions that 
    $$(p_{+\infty}, n_{+\infty}, p_{+\infty}, m_{+\infty}, m_{+\infty}, n_{+\infty})\in \mathcal{I}_{L'', L', L},$$
    $$(p_{-\infty}, n_{-\infty}, p_{-\infty}, m_{-\infty}, m_{-\infty}, n_{-\infty})\in \mathcal{I}_{L'', L', L}.$$
    
    \item{[Energy condition]} $$E(u)\coloneqq \int_{\Sigma_1}u^*_P\omega_P + \int_{\Sigma_2}u^*_M\omega_M + \int_{\Sigma_3}u^*_N\omega_N = -E.$$

    \item{[Stability conditions]} 
    The automorphism group $Aut(u)$ (in the sense as in \cite[Definition 8.18]{Fukaya-corr}) is finite.
\end{itemize}

The evaluation map at the marked points $\{z^{(2)}_j\}$ on the seam $\sigma_2$ with target $L$, $ev_L = (ev_1, \cdots, ev_k): \mathring{\mathcal{M}}_{k'',k',k}(L''; L',  L; E) \rightarrow L^k$, is defined as  
$$ev_L(u) = ((u_M(z^{(2)}_1), u_N(z^{(2)}_1)), \cdots, ((u_M(z^{(2)}_k), u_N(z^{(2)}_k)).$$ 
Similarly for the other evaluation maps $ev_{L'}$, $ev_{L''}$.

Moreover, the asymptotic conditions induce the evaluation maps at the infinity ends $ev_{\pm\infty}: \mathring{\mathcal{M}}_{k'',k',k}(L''; L',  L; E) \rightarrow \mathcal{I}_{L'', L', L}$, defined as 
$$ev_{+\infty}(u) \coloneqq (p_{+\infty}, n_{+\infty}, p_{+\infty}, m_{+\infty}, m_{+\infty}, n_{+\infty}),$$
$$ev_{-\infty}(u) \coloneqq (p_{-\infty}, n_{-\infty}, p_{-\infty}, m_{-\infty}, m_{-\infty}, n_{-\infty}).$$

By \cite[Proposition 8.19]{Fukaya-corr}, $\mathring{\mathcal{M}}_{k'',k',k}(L''; L',  L; E)$ can be compactified to a Kuranishi space with corners $\mathcal{M}_{k'',k',k}(L''; L',  L; E)$ such that
$ev_L, ev_L', ev_L'',$ \\
$ev_{\pm\infty}$ extend to strongly smooth maps with $ev_{+\infty}$ being weakly submersive. Moreover, by \cite[Proposition 8.20]{Fukaya-corr}, for each fixed $E_0>0$, for all $E<E_0$, $\mathcal{M}_{k'',k',k}(L''; L',  L; E)$ admits a system of CF-perturbations $\widehat{\mathfrak{S}}$ such that 
\begin{itemize}
    \item $\widehat{\mathfrak{S}}$ are outer collaring of the thickenings of $\mathfrak{S}$.
\item $\widehat{\mathfrak{S}}$ is transversal to 0.
\item $ev_{\pm\infty}$ are strongly submersive.
\end{itemize}
After these setup, for each $E_0$, we define $n^{E_0, \epsilon}_{k'',k',k}$ as a map $$CF(L'')^{\otimes k''} \otimes CF(L''; L', L) \otimes CF(L')^{\otimes k'}\otimes CF(L)^{\otimes k} \rightarrow CF(L''; L', L)$$
by $n^{E_0, \epsilon}_{k'',k',k} = \sum_{E<E_0}T^E n^{E, \epsilon}_{k'',k',k}$, where
\begin{itemize}
\item $n^{E, \epsilon}_{k'',k',k}(x''; y; x', x) \coloneqq ev_{+\infty !}(ev_{L''}^*x'' \wedge ev^*_{-\infty}y \wedge ev_{L'}^*x' \wedge ev_L^*x; \widehat{\mathfrak{S}}^\epsilon).$
    \item $x = (x_1, \cdots, x_k) \in \Omega(L)^{\otimes k}$. Similarly for $x'$ and $x''$.
   \item $ev_L^*x \coloneqq (ev_1)^*x_1 \wedge \cdots (ev_k)^*x_k$. Similarly for $ev_{L'}^*x', ev_{L''}^*x''$.
   \item $\widehat{\mathfrak{S}}^\epsilon$ is the restriction of $\widehat{\mathfrak{S}}$ to a particular $\epsilon > 0$.
    
\end{itemize}

It follows from \cite[Lemma 8.21]{Fukaya-corr} that $(CF(L''; L', L), \{n^{E_0, \epsilon}_{k'',k',k}\}) $ defines a filtered unital $A_\infty$ tri-module structure modulo  $T^{E_0}$. After an algebraic argument involving pseudo-isotopy between $A_\infty$ tri-modules modulo various $E_0$ and taking their limits as in the last step of proof of \cite[Theorem 5.25]{Fukaya-corr}, one obtains a filtered unital $A_\infty$ tri-module $(CF(L''; L', L), \{n_{k'',k',k}\}_{k'', k', k\geq 0})$. The gapping monoid $\mathbb{G}_{L'', L', L}\subseteq \mathbb{R}_{\geq 0}$ is the submonoid generated by $G^0_{L''}, G^0_{L'},$ \\
$ G^0_{L}$ and 
$G^0_{L'', L', L} \coloneqq \{E \in \mathbb{R}_{\geq 0}|  \mathcal{M}_{k'',k',k}(L''; L',  L; E) \neq \phi \},$ 
which is discrete by Gromov's compactness theorem.

Moreover, in the case when $L'' = L \circ L'$ is an embedded composition, $CF(L \circ L'; L', L)$ admits a canonical cyclic element as follows:

\begin{theorem} \cite[Lemma 8.10]{Fukaya-corr}\label{embcorrtrimod} In the context of Theorem \ref{corrtrimod}, assume $L'' = L \circ L'$ is an embedded composition, then the constant one function $const_1: L \circ L'\cong ((L \circ L')\times L' \times L)\cap \Delta \rightarrow \mathbb{R}$ induces a left cyclic element $\mathbf{1} \in CF^0(L \circ L'; L', L)$. 
\end{theorem}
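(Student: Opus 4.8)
The plan is to verify directly the two defining conditions of a left-$C''$ cyclic element from the geometry of the energy-zero (i.e. $\beta_0$) part of the correspondence tri-module $D = CF(L\circ L';L',L)$, where $C'' = CF(L\circ L')$. First I would record the geometric input coming from embeddedness. Writing a point of $\mathcal{I}_{L\circ L',L',L} = ((L\circ L')\times L'\times L)\cap\Delta_{PMN}$ as a triple $\big((p,n),(p,m),(m,n)\big)$ with $(p,m)\in L'$, $(m,n)\in L$ (so that $(p,n)\in L\circ L'$ automatically), the projection $\mathcal{I}_{L\circ L',L',L}\to L'\times_M L$ forgetting the first factor is a diffeomorphism, and composing with $\pi_{L\circ L'}$ identifies $\mathcal{I}_{L\circ L',L',L}$ diffeomorphically with $L\circ L'$; this identification is precisely the evaluation map $ev_{L''}\colon \mathcal{I}_{L\circ L',L',L}\to L\circ L'=L''$ at the marked point on the seam $\sigma_3$. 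Under it the constant function $const_1$ on $L\circ L'$ corresponds to the constant function on $\mathcal{I}_{L\circ L',L',L}$, so $\mathbf{1} = const_1\in CF^0(L\circ L';L',L)=\Omega^0(\mathcal{I}_{L\circ L',L',L};\Lambda_0)$ is a well-defined $\mathbb{G}$-gapped degree-$0$ element.

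For condition (1), $\overline{n}_{0,0,0}$ is the $E=0$ operation, counting zero-energy quilted drums with no inputs; since a zero-energy quilted drum — including any of its sphere components — is constant on each patch, there is no quantum correction, and in Fukaya's de Rham model $\overline{n}_{0,0,0}$ is, up to sign, the de Rham differential $d$ on $\Omega^\bullet(\mathcal{I}_{L\circ L',L',L};\mathbb{R})$, just as $\overline{m}_1=\pm d$ for the $A_\infty$ algebra $CF(L)$. Consequently $\overline{n}_{0,0,0}(\overline{\mathbf{1}}) = \pm\,d(const_1)=0$.

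For condition (2), by $\mathbb{G}$-gappedness it suffices to show that the $R$-reduction $\overline{n}_{1,0,0}(-;\overline{\mathbf{1}})\colon \Omega^\bullet(L\circ L';\mathbb{R})\to \Omega^\bullet(\mathcal{I}_{L\circ L',L',L};\mathbb{R})$ is an isomorphism of $\mathbb{Z}$-graded $R$-modules. The key step is to analyse the $E=0$ moduli space $\mathcal{M}_{1,0,0}(L\circ L';L',L;0)$ carrying one marked point on the seam $\sigma_3$: the marked point rigidifies the $\mathbb{R}$-translation of the quilted cylinder, zero energy forces the quilted drum to be constant on each patch, and the constancy data on $\Sigma_1,\Sigma_2,\Sigma_3$ is exactly a point of $\mathcal{I}_{L\circ L',L',L}$. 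Thus this moduli space is canonically identified with $\mathcal{I}_{L\circ L',L',L}$, with both $ev_{+\infty}$ and $ev_{-\infty}$ the identity and $ev_{L''}$ the diffeomorphism from the first paragraph. Feeding this into the defining formula $n^{E,\epsilon}_{1,0,0}(x'';y) = ev_{+\infty!}\big(ev_{L''}^*x''\wedge ev_{-\infty}^*y;\widehat{\mathfrak{S}}^\epsilon\big)$ at $E=0$ with $y=const_1$ (so $ev_{-\infty}^*y=1$) and $ev_{+\infty}$ the identity (so fibre integration is trivial), one obtains $\overline{n}_{1,0,0}(x'';\overline{\mathbf{1}}) = \pm\,ev_{L''}^*x''$, the pullback along a diffeomorphism, hence an isomorphism. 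Combining with (1), $\mathbf{1}$ is left-$C''$ cyclic.

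I expect the main obstacle to be the moduli-theoretic content of condition (2): one must check that the compatible system of CF-perturbations used in Fukaya's construction does not disturb the $E=0$ stratum — that no stable sphere or strip bubbles contribute there, that the automorphism groups of the constant drums are finite (indeed trivial, once the distinct seam labels $L',L,L''$ are taken into account), and that $ev_{+\infty}$ remains strongly submersive — and that the resulting identification is orientation-compatible, so that $\overline{n}_{1,0,0}(-;\overline{\mathbf{1}})$ equals $\pm\,ev_{L''}^*$ on the nose rather than merely up to chain homotopy. Equivalently, the crux is that for an \emph{embedded} composition the $E=0$ part of the left action is the naive operation $x''\otimes y\mapsto \pm\,ev_{L''}^*x''\wedge y$; this already fails for a general clean composition, where $ev_{L''}$ is only the fibration $\pi_{L\circ L'}$ and the $E=0$ action involves a genuine fibre integration, which is precisely why the embeddedness hypothesis is needed.
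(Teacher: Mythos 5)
Your argument is correct and is essentially the standard one: the paper itself gives no proof of this statement but simply cites \cite[Lemma 8.10]{Fukaya-corr}, and your reconstruction — identifying the zero-energy part of $n_{0,0,0}$ with $\pm d$ (so it annihilates $const_1$) and the zero-energy part of $n_{1,0,0}(-;\mathbf{1})$ with $\pm\,ev_{L''}^*$ along the diffeomorphism $\mathcal{I}_{L\circ L',L',L}\cong L\circ L'$ supplied by embeddedness, then invoking $\mathbb{G}$-gappedness to upgrade the leading-order isomorphism to condition (2) — is exactly how Fukaya's lemma is proved. Your closing remarks correctly locate both the technical burden (compatibility of the CF-perturbations with the $E=0$ stratum) and the precise role of the embeddedness hypothesis versus the merely clean case.
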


\begin{remark}
    More precisely, all the Lagrangian correspondences are endowed with their relative spin structures, and (2) is an equality of such. We refer the reader to the original paper \cite{Fukaya-corr} for the precise treatment.
\end{remark}

\begin{corollary} \cite[Theorem 8.2]{Fukaya-corr} \label{compstrict}
    If $L'' = L \circ L'$, then there exists a map 
\begin{equation} \label{compdegone}    CF^{odd}_+(L) \times CF^{odd}_+(L') \xrightarrow{\circ} CF^{odd}_+(L''),   \end{equation}  
    $$(b,b') \mapsto b'' \coloneqq b \circ b',$$
    characterised by the equation $n^{b'',b',b}_{0,0,0}(\mathbf{1})=0$. \\
    
    Moreover, (\ref{compdegone})   restricts to a map between their (strict) Maurer-Cartan sets
    \begin{equation} \label{compstricteqn}
    \widehat{MC}(L) \times \widehat{MC}(L') \xrightarrow{\circ} \widehat{MC}(L''),
    \end{equation} 
which respects gauge equivalence relation. Therefore, (\ref{compstricteqn}) further descends to a map between their (strict) Maurer-Cartan spaces 
    \begin{equation} 
    MC(L) \times MC(L') \xrightarrow{\circ} MC(L'').
    \end{equation}
\end{corollary}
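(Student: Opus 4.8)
The plan is to obtain this statement as a direct specialization of the abstract composition result Proposition~\ref{algcompdegoneprop} to the geometric correspondence tri-module of Theorem~\ref{corrtrimod}, using the cyclic element supplied by Theorem~\ref{embcorrtrimod}. Here $L'' = L\circ L'$ is understood as an embedded composition, so that Theorem~\ref{embcorrtrimod} applies. First I would check that the triple $(L\circ L', L', L)$ satisfies the hypotheses of Theorem~\ref{corrtrimod}, i.e. that $\mathcal{I}_{L\circ L', L', L} = ((L\circ L')\times L'\times L)\cap\Delta_{PMN}$ is a clean intersection. When $L'' = L\circ L'$ is an embedded composition this follows from Definition~\ref{cleancomp}: $L'\times_M L$ is cut out cleanly in $P\times M\times M\times N$ and $pr_{PN}$ embeds it onto $L\circ L'$, so under the identification $((L\circ L')\times L'\times L)\cap\Delta_{PMN}\cong L'\times_M L$ recorded in the remark following the definition of geometric composition, the intersection $\mathcal{I}_{L\circ L', L', L}$ is clean. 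Theorem~\ref{corrtrimod} then provides the strictly unital, $\mathbb{G}_{L'',L',L}$-gapped filtered left-$CF(L'')$, right-$(CF(L'),CF(L))$ $A_\infty$ tri-module $(CF(L'';L',L),\{n_{k'',k',k}\})$ with $L''=L\circ L'$.

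Next I would invoke Theorem~\ref{embcorrtrimod} ($=$\cite[Lemma~8.10]{Fukaya-corr}): since $L''=L\circ L'$ is embedded, the constant-one function $const_1$ on $L\circ L'\cong\mathcal{I}_{L'',L',L}$ defines a left-$CF(L'')$ cyclic element $\mathbf{1}\in CF^0(L'';L',L)$, which is tautologically $\mathbb{G}_{L'',L',L}$-gapped. With this in hand, the final step is purely algebraic: apply Proposition~\ref{algcompdegoneprop} with $(C'',C',C)=(CF(L''),CF(L'),CF(L))$, $D=CF(L'';L',L)$, $\mathbb{G}=\mathbb{G}_{L'',L',L}$, and the left cyclic element $\mathbf{1}$. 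This produces verbatim the asserted composition map $CF^{odd}_+(L)\times CF^{odd}_+(L')\xrightarrow{\circ}CF^{odd}_+(L'')$, its characterization by $n^{b'',b',b}_{0,0,0}(\mathbf{1})=0$, the restriction to strict Maurer--Cartan sets $\widehat{MC}(L)\times\widehat{MC}(L')\to\widehat{MC}(L'')$, the compatibility with gauge equivalence, and the induced map $MC(L)\times MC(L')\to MC(L'')$ on strict Maurer--Cartan spaces.

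The step I expect to require the most care is establishing that $\mathbf{1}$ is genuinely a cyclic element in the sense of the definition preceding Proposition~\ref{algcompdegoneprop}, specifically condition (2): that $n_{1,0,0}(-;\mathbf{1})\colon CF(L'')\to CF(L'';L',L)$ is an isomorphism of $\mathbb{Z}/2$-graded gapped $\Lambda_0$-modules. By $\mathbb{G}$-gappedness this reduces to the equivalent condition (2)$'$ that the $R$-reduction $\overline{n}_{1,0,0}(-;\overline{\mathbf{1}})$ is an isomorphism, and geometrically this amounts to identifying the energy-zero (constant-quilt) contribution of $n_{1,0,0}$ with the pullback of differential forms along the diffeomorphism $pr_{PN}\colon\mathcal{I}_{L'',L',L}\xrightarrow{\ \sim\ }L''=L\circ L'$ --- this is precisely the content of \cite[Lemma~8.10]{Fukaya-corr}, which we are citing. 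A secondary bookkeeping point is that Theorem~\ref{corrtrimod} is applied to the triple in which the ``left'' correspondence is literally $L\circ L'$, so one must keep track of the conventions (our ordering $CF(L'';L',L)$ versus Fukaya's $CF(L',L;L'')$ when $P=pt$) when matching up indices. Since everything downstream of the first step is packaged inside Proposition~\ref{algcompdegoneprop}, no further analytic input is needed beyond what Theorems~\ref{corrtrimod} and~\ref{embcorrtrimod} already provide.
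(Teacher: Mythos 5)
Your proposal is correct and follows exactly the route the paper intends: the corollary is obtained by feeding the left cyclic element of Theorem \ref{embcorrtrimod} (Fukaya's Lemma 8.10, for the embedded composition $L''=L\circ L'$) into the abstract composition result Proposition \ref{algcompdegoneprop} applied to the correspondence tri-module of Theorem \ref{corrtrimod}. Your additional checks (cleanness of $\mathcal{I}_{L\circ L',L',L}$ and the cyclicity condition (2) via its $R$-reduction) are exactly the points delegated to the cited results, so nothing is missing.
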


It is natural to ask whether (\ref{compdegone}) also descends to weak Maurer-Cartan sets, and how their disk potentials are related. Applying Proposition \ref{algcompweak} to the correspondence tri-module $CF(L''; L', L)$, we obtain the following:

\begin{corollary} \label{weakunobcomp}
    (\ref{compdegone}) descends to a map between weak Maurer-Cartan sets
    \begin{equation} \label{compweakeqn} \widehat{MC}_{weak}(L) \times \widehat{MC}_{weak}(L') \xrightarrow{\circ} \widehat{MC}_{weak}(L''),
    \end{equation}
    $$(b,b') \mapsto b'' \coloneqq b \circ b',$$
    in which their disk potentials satisfy
    \begin{equation} \label{potentialeqn}
    W_L(b) + W_{L'}(b') = W_{L''}(b'').
     \end{equation}

     Moreover, (\ref{compweakeqn}) descends to a map between their weak Maurer-Cartan spaces 
    \begin{equation} 
    MC_{weak}(L) \times MC_{weak}(L') \xrightarrow{\circ} MC_{weak}(L'').
    \end{equation}
\end{corollary}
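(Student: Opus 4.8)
The plan is to deduce this corollary as a direct instance of the algebraic Proposition~\ref{algcompweak}, applied to the correspondence tri-module $CF(L''; L', L)$. As a first step I would arrange a common gapping monoid: the $A_\infty$ algebras $CF(L), CF(L'), CF(L'')$ are $\mathbb{G}_L$-, $\mathbb{G}_{L'}$-, $\mathbb{G}_{L''}$-gapped respectively, and the tri-module $CF(L'';L',L)$ is $\mathbb{G}_{L'',L',L}$-gapped, where by construction $\mathbb{G}_{L'',L',L}$ is the submonoid of $\mathbb{R}_{\geq 0}$ generated by $G^0_{L''}, G^0_{L'}, G^0_{L}$ together with the quilted-drum energies $G^0_{L'',L',L}$. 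Hence all four objects may be regarded as $\mathbb{G}$-gapped over $\mathbb{G} \coloneqq \mathbb{G}_{L'',L',L}$, and all four carry strict units (the constant-one functions), so the hypotheses of Proposition~\ref{algcompweak} are literally in force.

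Next, since $L'' = L \circ L'$ is an embedded composition, Theorem~\ref{embcorrtrimod} furnishes a left-$CF(L'')$ cyclic element $\mathbf{1} \in CF^0(L \circ L'; L', L)$, induced by the constant-one function on $L \circ L' \cong ((L \circ L') \times L' \times L) \cap \Delta$. Applying Proposition~\ref{algcompweak} with $C = CF(L)$, $C' = CF(L')$, $C'' = CF(L'')$, $D = CF(L''; L', L)$ and this $\mathbf{1}$ then produces exactly the asserted map
$$\widehat{MC}_{weak}(L) \times \widehat{MC}_{weak}(L') \xrightarrow{\circ} \widehat{MC}_{weak}(L''), \qquad (b,b') \mapsto b'' \coloneqq b \circ b',$$
characterised by $n^{b'',b',b}_{0,0,0}(\mathbf{1}) = 0$, together with the potential identity $W_L(b) + W_{L'}(b') = W_{L''}(b'')$ and the further descent to a map $MC_{weak}(L) \times MC_{weak}(L') \to MC_{weak}(L'')$ of weak Maurer--Cartan spaces, the latter because the composition respects gauge equivalence (inherited from Proposition~\ref{algcompdegoneprop}).

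Finally I would check that this is compatible with Corollary~\ref{compstrict}: the composition map there is defined by the same characterising equation $n^{b'',b',b}_{0,0,0}(\mathbf{1}) = 0$, and the map it builds is precisely the one of Proposition~\ref{algcompdegoneprop} that Proposition~\ref{algcompweak} extends, so $(\ref{compdegone})$ is literally being restricted to the weakly-unobstructed loci rather than replaced. Since the substantive work — solving $n_{0,0,0}(\mathbf{1}) = 0$ by induction over the Novikov filtration using that $n_{1,0,0}(-;\mathbf{1})$ is a gapped isomorphism, and reading off the potential relation from the $A_\infty$ tri-module relation applied to $\mathbf{1}$ (with the $m_0$-terms converted to unit multiples via unitality) — is already carried out in Proposition~\ref{algcompweak}, the only genuine task remaining here is the bookkeeping: verifying that $CF(L'';L',L)$ meets the unitality, $\mathbb{G}$-gappedness, and left-cyclicity hypotheses over the common monoid $\mathbb{G}$. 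That verification, rather than any new estimate or construction, is the main (and comparatively minor) obstacle.
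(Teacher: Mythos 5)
Your proposal matches the paper's argument exactly: the corollary is obtained by applying Proposition \ref{algcompweak} to the correspondence tri-module $CF(L'';L',L)$ with the left cyclic element furnished by Theorem \ref{embcorrtrimod}, the hypotheses (common gapping monoid $\mathbb{G}_{L'',L',L}$, strict units, cyclicity) being satisfied by construction. The extra bookkeeping you spell out is correct but the paper treats it as immediate.
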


 In particular, (\ref{compweakeqn}) restricts to (\ref{compstricteqn}) for $b,b'$ with $W_L(b) = 0 = W_{L'}(b')$.

\section{Equivariant de Rham model} \label{sec:equiv}

In this section, we construct an equivariant extension of the Floer complex $CF_G(L)$, called the equivariant de Rham model.  In \cite{KLZ}, based on classical Borel construction, Kim, the first-named author and Zheng constructed the equivariant Floer theory and the disc potential of $L_G \subset Y_G$ for a symplectic $G$-action on $Y$ and a $G$-invariant Lagrangian $L$.  When the $G$-action is Hamiltonian, $Y_G$ can be taken as a symplectic quotient of $Y \times T^*EG$.  Cazassus \cite{Cazassus} studied equivariant Floer homology in this case later.  

The first four subsections study Borel spaces and their symplectic analogues: in Subsection \ref{classifyingspace}, we recall the notion of classifying spaces; in Subsection \ref{symplborelspace} (resp. \ref{symplborelspace}), we review the notions of symplectic (resp. Lagrangian) Borel spaces;  in Subsection \ref{subsec-lagcorrborelspace}, we generalise the Borel construction to Lagrangian correspondences.

The latter four subsections investigate equivariant Lagrangian Floer theory in details: in Subsection \ref{subsec-equivfloercpx}, we define $CF_G(L)$ as a canonical model of an inverse limit of de Rham models as $A_\infty$ algebras, whose algebraic counterparts are developed in Subsection \ref{subsec-invlimit} and \ref{subsec-hptinvlimit}. Lastly, in Subsection \ref{sectionequivmorsemodel}, we will also recall the original construction of equivariant Morse model $CF^{Morse}_{G}(L)$ in \cite{KLZ}.

\subsection{Classifying spaces}
\label{classifyingspace}
In this subsection, we revisit the notion of classifying spaces of  a compact Lie group $G$  along with their approximation spaces.

Let $EG$ be the universal principal $G$-bundle over the classifying  space $BG$. Formally, it induces a  Hamiltonian space $((T^*EG, \omega_{can}), G, \mu_{G})$ for which $G$ acts freely on $\mu_{G}^{-1}(0)$ with symplectic quotient $$(T^*EG \sslash G = \mu_{G}^{-1}(0)/G, \omega_{red}) \cong (T^*BG,\omega_{can}).$$
 Moreover, $EG$ embedds as the zero section $0_{EG} \subseteq T^*EG$, which is a $G$-invariant Lagrangian lying inside $\mu_{G}^{-1}(0)$ with quotient $\overline{0}_{EG} \coloneqq 0_{EG}/G \cong 0_{BG} \subseteq T^*BG$.

   In practice, we approximate $EG$ and $BG$ by finite dimensional smooth closed manifolds (see e.g. \cite[Appendix A.10]{tuequiv}):
\begin{equation} \label{egemb}
\begin{tikzcd} 
  G = EG_0 \arrow[r, hook] \arrow[d]
    & EG_1 \arrow[r, hook] \arrow[d]
    &\cdots
    &\arrow[r, hook]
    &EG_l \arrow[r, hook] \arrow[d]
    &\cdots\\
pt = BG_0 \arrow[r, hook] 
    & BG_1 \arrow[r, hook] 
    &\cdots
    &\arrow[r, hook]
    &BG_l \arrow[r, hook] &\cdots 
    \end{tikzcd}
\end{equation}
For each $l$,  $EG_l$ is an $(l-1)$-connected principal $G$-bundle over $BG_l$ satisfying
\begin{equation} \label{egbgexactsq}
	EG_{l-1} \cong EG_{l} \times_{BG_{l} } BG_{l-1}.
\end{equation}

 Similarly, it induces a Hamiltonian space $((T^*EG_l, \omega_{can}), G, \mu_{l})$ with symplectic quotient $(T^*BG_l,\omega_{can})$. Similarly, $EG_l$ embeds as a $G$-invariant Lagrangian $0_{EG_l} \subseteq \mu_{l}^{-1}(0) \subseteq T^*EG_l$ with quotient $\overline{0}_{EG_l} \cong 0_{BG_l} \subseteq T^*BG_l$.\\

    For future purpose, for each $l\in \mathbb{Z}_{\geq 0}$, we choose a $G$-invariant metric on $EG_l$, inducing the quotient metric  on $BG_l$, such that embeddings in (\ref{egemb}) are isometric embeddings. These metrics lift to $\omega_{can}$-compatible metrics (known as Sasaki metrics) on $T^*EG_l$ and $T^*BG_l$, inducing almost K\"{a}hler structures $(T^*EG_l, \omega_{can}, g_{E_l}, J_{E_l})$ and $(T^*BG_l, \omega_{can}, g_{B_l}, J_{B_l})$ with the following canonical isomorphisms:
    $$(T^*EG_l \sslash G = \mu_{l}^{-1}(0)/G, \omega_{red}, {g}_{red}, {J}_{red} ) \cong (T^*BG_l,\omega_{can}, g_{B_l}, J_{B_l}).$$
    
    Observe that the metrics split $T^*EG_l$ and $T^*BG_l$ compatibly, inducing the following sequences of almost K\"{a}hler embeddings
\begin{equation} \label{tegemb}
    \begin{tikzcd}
  T^*G = T^*EG_0 \arrow[r, hook] \arrow[d]
    & T^*EG_1 \arrow[r, hook] \arrow[d]
    &\cdots
    &\arrow[r, hook]
    &T^*EG_l \arrow[r, hook] \arrow[d]
    &\cdots\\
pt = T^*BG_0 \arrow[r, hook] 
    & T^*BG_1 \arrow[r, hook] 
    &\cdots
    &\arrow[r, hook]
    &T^*BG_l \arrow[r, hook] &\cdots 
    \end{tikzcd}
\end{equation}

Note that $0_{EG_l}$ and $0_{BG_l}$ are compatible with the embeddings in (\ref{tegemb}). 

\subsection{Symplectic Borel spaces}
\label{symplborelspace}


In this subsection, we recall the notion of a symplectic Borel space associated to a Hamiltonian space $((Y, \omega_Y), G, \mu_Y)$.

Formally we consider the diagonal Hamiltonian $G$ action on $(Y \times T^*EG, \omega_Y \oplus \omega_G)$ with moment map $\mu_Y + \mu_G$. Since $G$ acts freely on $(\mu_Y + \mu_G)^{-1}(0)$, it has a free symplectic quotient called the symplectic Borel space, i.e.
\begin{equation} \label{yg}
(Y_G \coloneqq (Y \times T^*EG)\sslash G = (\mu_Y + \mu_G)^{-1}(0)/G, \omega_G \coloneqq \omega_{red}).
\end{equation}
Again in practice, we will approximate $(Y_G, \omega_G)$ using $EG_l$. Namely, for each $l\in \mathbb{Z}_{\geq 0}$, we replace $(T^*EG, \omega_{can})$ above by $(T^*EG_l, \omega_{can})$ and define 
\begin{equation} \label{yn}
(Y_l \coloneqq (Y \times T^*EG_l)\sslash G = (\mu_Y + \mu_l)^{-1}(0)/G, \omega_l \coloneqq \omega_{red}).
\end{equation}
Note that (\ref{tegemb}) induces a sequence of $G$-equivariant symplectic embeddings among $Y \times T^*EG_l$ preserving the moment maps $\mu_Y + \mu_l$, and hence gives rise to the following sequence of symplectic embeddings
\begin{equation} \label{ygemb}
\begin{tikzcd}
  &Y = Y_0 \arrow[r, hook] 
     &Y_1 \arrow[r, hook] 
    &\cdots
    &\arrow[r, hook]
    &Y_l \arrow[r, hook] 
    &\cdots
    \end{tikzcd}
\end{equation}
Recall from \cite[Proposition 4.7]{Cazassus} that for each $l\in\mathbb{Z}_{\geq 0}$, after choosing metrics $g_{E_l}$ as above, there is a canonical symplectic fibration
\begin{equation} \label{ynbgnfib}
(Y,\omega) \xrightarrow{\iota_l} (Y_l, \omega_l) \xrightarrow{\pi_l} (T^*BG_l,\omega_{can}) 
\end{equation}
such that the following commutative diagrams of fibrations with fiber $Y$ hold:
\begin{equation} 
    \begin{tikzcd}
  Y = Y_0 \arrow[r, hook] \arrow[d, "\pi_0"]
    & Y_1 \arrow[r, hook] \arrow[d, "\pi_1"]
    &\cdots
    &\arrow[r, hook]
    &Y_l \arrow[r, hook] \arrow[d, "\pi_l"]
    &\cdots\\
pt = T^*BG_0 \arrow[r, hook] 
    & T^*BG_1 \arrow[r, hook] 
    &\cdots
    &\arrow[r, hook]
    &T^*BG_l \arrow[r, hook] &\cdots 
    \end{tikzcd}
\end{equation}

Moreover, each $G$-invariant almost K\"{a}hler structure $(Y, \omega_Y, J_Y, g_Y)$ on $Y$ induces such structure $(Y \times T^*EG_l , \omega_Y \oplus \omega_{can}, J_Y\oplus J_{E_l}, g_Y\oplus g_{E_l})$ on $Y \times T^*EG_l$, which descends to an almost K\"{a}hler structure $(Y_l, \omega_l, g_l, J_l)$ on $Y_l$ such that both $\iota_l$ and $\pi_l$ are $J$-holomorphic, and (\ref{ygemb}) is a sequence of almost K\"{a}hler embeddings.
\subsection{Lagrangian Borel spaces}
\label{lagborelspace}
In this subsection, we recall the notion of Lagrangian Borel space associated to a $G$-invariant Lagrangian $L \subseteq Y$.

By the Hamiltonian equations of $\mu_Y$, a connected Lagrangian $L \subseteq Y$ is $G$-invariant if and only if $L$ lies in $\mu_Y^{-1}(c)$ for some (unique) $c \in\mathfrak{g}^*$. In such a case, the $G$-equivariance of $\mu_Y$ implies that $c$ is a central element.  Without loss of generality, we assume $c=0$ (by shifting $\mu_Y$ by $c$).  $L$ gives rise to a $G$-invariant Lagrangian $L \times 0_{EG} \subseteq Y \times T^*EG$. Its reduction $$L_G \coloneqq L \times_G 0_{EG} \subseteq (Y \times T^*EG) \sslash G = Y_G$$
is called the Lagrangian Borel space.

We have a finite dimensional approximation of $L_G$.  Namely, for each $l \in \mathbb{Z}_{\geq 0}$, consider $G$-Lagrangian $L \times 0_{EG_l} \subseteq Y \times T^*EG_l$ and its reduction $L_l \coloneqq L \times_G 0_{EG_l} \subseteq (Y \times T^*EG_l) \sslash G = Y_l$. This gives rise to a sequence of Lagrangians $\{L_l\}$ approximating $L_G$, compatible under the embedding (\ref{ygemb}), i.e.
\begin{equation} \label{yglgemb}
\begin{tikzcd}
  &Y = Y_0 \arrow[r, hook]
     &Y_1 \arrow[r, hook]
    &\cdots
    &\arrow[r, hook]
    &Y_l \arrow[r, hook] 
    &\cdots\\
    &L = L_0 \arrow[r, hook] \arrow[u,hook] 
    & L_1 \arrow[r, hook] \arrow[u,hook]  
    &\cdots
    &\arrow[r, hook]
    &L_l \arrow[r, hook] \arrow[u,hook] &\cdots
    \end{tikzcd}
\end{equation}

Besides, observe that under the symplectic fibration (\ref{ynbgnfib}), $L_l$ is a fibered Lagrangian over the base Lagrangian $ 0_{BG_l}$ with the fiber Lagrangian $L$, i.e.

\begin{equation} \label{lnfib}
\begin{tikzcd}
&(Y,\omega) \arrow{r}{\iota _l} &(Y_l, \omega_l) \arrow{r}{\pi _l} &(T^*BG_l,\omega_{can})\\
&L \arrow[u,hook] \arrow[r] &L_l \arrow[u,hook] \arrow[r] &0_{BG_l}\arrow[u,hook] 
\end{tikzcd}
\end{equation}

Combining  (\ref{lgbgemb}) and (\ref{ynbgnfib}), we obtain fiber bundles with fiber $L$:
\begin{equation} 
\label{lgbgemb}
	\begin{tikzcd}
		&L = L_0 \arrow[r, hook] \arrow[d]
		& L_1 \arrow[r, hook] \arrow[d]  
		&\cdots
		&\arrow[r, hook]
		&L_l \arrow[r, hook] \arrow[d] 
		&\cdots\\
		&pt = 0_{BG_0} \arrow[r, hook]
		& 0_{BG_1} \arrow[r, hook]
		&\cdots
		&\arrow[r, hook]
		&0_{BG_l}\arrow[r, hook] 
		&\cdots
	\end{tikzcd}
\end{equation}

Moreover, it follows from (\ref{egbgexactsq}) that  for each $l \in \mathbb{Z}_{>0}$,
\begin{equation} \label{lgexactsq}
	L_{l-1} \cong L_{l} \times_{0_{BG_{l}} } 0_{BG_{l-1}}.
\end{equation}

Furthermore, we construct relative spin structures on $L_l$ from a $G$-equivariant relative spin structure on $L$ defined as follows:

\begin{defn} \label{eqspin}
    A $V$-relative spin structure on $L$ is $G$-equivariant if

    \begin{enumerate}
        \item $V$ is a $G$-equivariant vector bundle, i.e. $G$-action on $Y$ lifts equivariantly to $V$ by linear isomorphisms.
        \item The spin structure of $V|_{L}\oplus TL$ is $G$-equivariant, i.e. $G$-action on the oriented orthogonal frame bundle $P_{SO}(V|_{L}\oplus TL)$ lifts equivariantly to its double cover $P_{Spin}(V|_{L}\oplus TL)$.
    \end{enumerate}
\end{defn}
Given a $G$-equivariant $V$-relative spin structure on $L$, for each $l$, it induces a $G$-equivariant $V\oplus p_l^*T(EG_l)$ \footnote{where $p_l :T^*(EG_l) \rightarrow EG_l$ is the cotangent bundle projection.}-relative spin structure on $L \times 0_{EG_l}$, which descends to a $V_l$-relative spin structure on $\bar{L}$, where $V_l\rightarrow Y_l$ is defined as
$$V_l \coloneqq ((V\oplus p_l^*T(EG_l))|_{(\mu_Y + \mu_l)^{-1}(0)})/G.$$

\subsection{Lagrangian correspondence Borel spaces}
\label{subsec-lagcorrborelspace}

In this subsection, we generalise the Lagrangian Borel construction to Lagrangian correspondences.



Given two Hamiltonian spaces $((M, \omega_M), G, \mu_M)$ and $((N, \omega_N), H, \mu_N)$, formally we consider the Hamiltonian space $$((M^- \times N) \times (T^*EG^- \times T^*EH), G \times H, (-\mu_M + \mu_{N},-\mu_G + \mu_{H}))$$
which is canonically isomorphic to the following Hamiltonian space 
$$((M \times T^*EG)^- \times (N \times T^*EH), G \times H, \mu_{G\times H} \coloneqq (-(\mu_M + \mu_{G}), \mu_N + \mu_{H})).$$ 
Then $G \times H$ acts freely on $\mu_{G\times H}^{-1}(0, 0)$ with symplectic quotient $M_G^- \times N_H$. 





\begin{defn}
    Given Hamiltonian spaces $(M, G, \mu_M)$ and $(N, H, \mu_N)$, a $(G\times H)$-Lagrangian correspondence $M \xrightarrow{L} N$ is a $(G\times H)$-Lagrangian $$L \subseteq ((M^- \times N, -\omega_M \oplus \omega_N), G\times H, (-\mu_M, \mu_N)).$$
\end{defn}
Under such a setup, formally $L$ gives rise to a $(G \times H)$ Lagrangian $L \times (0_{EG} \times 0_{EH}) \subseteq (M^- \times N) \times ((T^*EG)^- \times T^*EH)$, which corresponds to another $(G \times H)$-Lagrangian in $(M \times T^*EG)^- \times (N \times T^*EH)$. Its reduction $M_G \xrightarrow{L_{G\times H}} N_H$ is called the Lagrangian correspondence Borel space.

As before, we replace $EG, EH$ by $EG_l, EH_l$ to obtain its finite-dimensional approximation $M_{l} \xrightarrow{L_{l}} N_{l}$ as a sequence of Lagrangian correspondences.

Moreover, a $G\times H$-equivariant $pr^*_M(V_M \oplus TM) \oplus pr^*_NV_N$-relative spin structure of $L$ induces a $pr^*_{M_l}(V_{M_l} \oplus TM_l) \oplus pr^*_{N_l} V_{N_l}$-relative spin structure of $L_l$ for each $l$, similarly as in Subsection \ref{lagborelspace}.

The following proposition asserts that the Borel construction of Lagrangian correspondences is compatible with their geometric compositions.

\begin{prop} \label{equilagcorrcomp}
Given Hamiltonian spaces $(P, K, \mu_P)$, $(M, G, \mu_M)$ and $(N, H, \mu_N)$. For any  $(K\times G)$-Lagrangian correspondence $P \xrightarrow{L'} M$ and $(G\times H)$-Lagrangian correspondence $M \xrightarrow{L} N$ which are composable, then 
\begin{enumerate}
    \item Their composition $P \xrightarrow{L \circ L'} N$ is a $(K\times H)$-Lagrangian correspondence.
    \item For each $l\in \mathbb{Z}_{\geq 0}$, we have $L_l \circ L'_l = (L\circ L')_{l} \subseteq P^-_l \times N_l$.
\end{enumerate} 
\end{prop}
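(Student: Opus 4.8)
The plan is to deduce part (1) from the $(K\times H)$-equivariance of the projection defining a geometric composition, and to prove part (2) by unwinding the Borel spaces as symplectic quotients and comparing the two subsets of $P^-_l\times N_l$ representative-by-representative, using the freeness of the $G$-action on $EG_l$ to rigidify the matching condition over $M_l$.

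For part (1), I would consider the $(K\times G\times H)$-action on $P\times M\times M\times N$ in which $K$ acts on $P$, $G$ acts diagonally on $M\times M$, and $H$ acts on $N$. Since $L'$ is $(K\times G)$-invariant in $P^-\times M$ and $L$ is $(G\times H)$-invariant in $M^-\times N$, the product $L'\times L$ is $(K\times G\times H)$-invariant, and $P\times\Delta_M\times N$ is $(K\times G\times H)$-invariant because $\Delta_M$ is preserved by the diagonal $G$-action. As $pr_{PN}$ intertwines this action with the $(K\times H)$-action on $P\times N$, the composition $L\circ L'=pr_{PN}((L'\times L)\cap(P\times\Delta_M\times N))$ is $(K\times H)$-invariant. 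For the moment-map condition, $(K\times G)$-invariance of the (connected) $L'$ forces $\mu_P(p)=0=\mu_M(m)$ for every $(p,m)\in L'$ after the standing normalization of the central value to $0$, and similarly $\mu_M(m)=0=\mu_N(n)$ for every $(m,n)\in L$; hence every $(p,n)\in L\circ L'$ satisfies $\mu_P(p)=0=\mu_N(n)$. Combined with composability, which gives that $L\circ L'$ is a Lagrangian submanifold, this exhibits $L\circ L'$ as a $(K\times H)$-Lagrangian correspondence $P\to N$.

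For part (2), I would describe points of $L'_l\subseteq P^-_l\times M_l=((P\times T^*EK_l)^-\times(M\times T^*EG_l))\sslash(K\times G)$ as $(K\times G)$-orbits $[(p,\eta),(m,\zeta)]$ with $(p,m)\in L'$, $\eta\in 0_{EK_l}$, $\zeta\in 0_{EG_l}$ — the moment-map constraints being automatic since the cotangent moment maps vanish on the zero sections while $\mu_P,\mu_M$ vanish on $L'$ in the relevant slots — and similarly points of $L_l$ as $(G\times H)$-orbits $[(m,\zeta),(n,\theta)]$ with $(m,n)\in L$. A pair $(x',x)\in L'_l\times L_l$ lies in $P_l\times\Delta_{M_l}\times N_l$ exactly when its two $M_l$-coordinates agree, i.e. $[(m_1,\zeta_1)]_G=[(m_2,\zeta_2)]_G$. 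The key step is to normalize: this equality yields a unique $g\in G$ with $g\zeta_2=\zeta_1$ and $gm_2=m_1$; acting on the representative of $x$ by $(g,1)\in G\times H$ and using that $L$ is $G$-invariant in the $M$-slot, one may assume $m_1=m_2=:m$ and $\zeta_1=\zeta_2=:\zeta$, so that the constraint reduces to $(p,m)\in L'$, $(m,n)\in L$, i.e. $(p,n)\in L\circ L'$. Applying $pr_{P_lN_l}$ then gives $([(p,\eta)]_K,[(n,\theta)]_H)$, a typical point of $(L\circ L')_l=(L\circ L')\times_{K\times H}(0_{EK_l}\times 0_{EH_l})$; conversely, given $(p,n)\in L\circ L'$ with witness $m\in M$, one picks any $\zeta\in 0_{EG_l}$ and checks that $[(p,\eta),(m,\zeta)]\in L'_l$ and $[(m,\zeta),(n,\theta)]\in L_l$ have matching $M_l$-coordinates and project to the given point. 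This yields the set equality $L_l\circ L'_l=(L\circ L')_l$; since the right-hand side is a Lagrangian correspondence by part (1) and the fact that the Lagrangian Borel construction of a Lagrangian correspondence is again one, it follows in particular that $(L'_l,L_l)$ is composable.

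The step I expect to be the main obstacle is precisely this normalization in part (2): making rigorous that ``matching in $M_l$'' can be upgraded to ``matching on the nose in $M\times 0_{EG_l}$'' requires carefully tracking which copy of $G$ acts on which slot through the canonical identifications (such as $M^-_l\times N_l\cong((M\times T^*EG_l)^-\times(N\times T^*EH_l))\sslash(G\times H)$ and its analogue for $L'_l$), and using both the freeness of $G$ on $EG_l$ and the $G$-invariance of $L$ and of $L'$ in the appropriate factor. A secondary point worth spelling out is that the moment-map level sets cutting out $L'_l$, $L_l$, and $(L\circ L')_l$ inside these quotients are the correct ones, which reduces to the vanishing of the cotangent moment maps on the zero sections $0_{EK_l},0_{EG_l},0_{EH_l}$ together with the normalization of the central values of $L$, $L'$, and (via part (1)) $L\circ L'$ to zero.
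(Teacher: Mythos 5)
Your proof is correct and follows essentially the same route as the paper: part (1) is a direct check from the definitions, and part (2) is proved by the same representative-by-representative comparison, using the group element $g$ witnessing the matching in $M_l$ to normalize the representatives (the paper acts by $(1,g)$ on the $L'$-side rather than on the $L$-side, which is an immaterial difference) and, for the reverse inclusion, choosing an arbitrary point of $0_{EG_l}$ over the witness $m$.
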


\begin{proof}
    (1) follows directly from definition; for (2),
we first show that $L_l \circ L'_l \subseteq (L\circ L')_{l}$: for any $([p, a], [n, c])\in L_l \circ L'_l$, there exists $[m, b] = [m', b']\in M_l$ such that 
$$((p, m') , (a, b')) \in L' \times (0_{EK_l} \times 0_{EG_l}) ; ((m, n) , (b, c)) \in L \times (0_{EG_l} \times 0_{EH_l}). $$
Note that from $[m, b] = [m', b']$, there exists $g\in G$ such that $g \cdot (m', b') = (m, b)$, therefore, $(1, g)\cdot ((p, m') , (a, b')) = ((p, m) , (a, b)) \in L' \times (0_{EK_l} \times 0_{EG_l})$. Hence, $(p, n) \in L \circ L'$ and $(a,c) \in 0_{EK_l} \times 0_{EG_l}$, so $([p, a], [n, c])\in(L\circ L')_{l}$.

Conversely, given $([p, a], [n, c])\in (L\circ L')_{l}$, there exists $m \in M$ such that $(p, m)\in L'$ and $(m, n) \in L$. Choose any $b\in 0_{EG_l}$, we have 
$$((p, m) , (a, b)) \in L' \times (0_{EK_l} \times 0_{EG_l}) ; ((m, n) , (b, c)) \in L \times (0_{EG_l} \times 0_{EH_l}). $$
Therefore, $([p, a], [n, c])\in L_l \circ L'_l$.
\end{proof}

\begin{remark}
    Note that we do not assume $L_l \circ L'_l$ is a clean composition, but still it is a Lagrangian by the equality $L_l \circ L'_l = (L\circ L')_{l}$. Later in application, we will assume that they are cleanly composable in a compatible way in Definition \ref{cleancompseqdef}.
\end{remark}


\subsection{Inverse Limit}
\label{subsec-invlimit}
In this section, 
we first recall several notions in the theory of (classical) inverse limit. Then
we study the inverse limit of a tower of $A_\infty$ algebras (resp. tri-modules) related by strict $A_\infty$ morphisms.

\subsubsection{Classical inverse limits}
For our purpose, we will only consider the inverse limit associated to (countable) towers of ($\mathbb{Z}_{\geq 0}$-)graded abelian groups (or  objects with further structures later) , i.e. sequences of the form 
$$ (C_l) = (C_0 \xleftarrow{g_1} C_1\xleftarrow{g_2} \dots \leftarrow C_{l-1} \xleftarrow{g_l} C_{l}\leftarrow  \dots)$$
where $C_l = \displaystyle{\bigoplus_{m \geq 0}}\, C^m_l$ are graded abelian groups and $g_l$ are group morphisms of degree 0. Note that this  induces towers of abelian groups degree-wise: for each $m\in \mathbb{Z}_{\geq 0}$, there exists a tower of abelian groups
$$ (C^m_l) = (C^m_0 \xleftarrow{g_1} C^m_1\xleftarrow{g_2} \dots \leftarrow C^m_{l-1} \xleftarrow{g_l} C^m_{l}\leftarrow  \dots).$$

\begin{defn}
    Given a tower of graded abelian groups $(C_l)_{l\in\mathbb{Z}_{\geq 0}}$, its inverse limit $\varprojlim C_l$ is defined as a graded abelian group by 
    $$\varprojlim  C_l \coloneqq \bigoplus_{m \geq 0} \, (\varprojlim  C^m_l) = \{x = \sum_{m=1}^{N} x^{(m)}|\,  
    g_l(x^{(m)}_l) = x^{(m)}_{l-1}, \forall l\in\mathbb{Z}_{>0}\}$$
    endowed with entrywise addition and additive unit.\\
    The projections $\{\pi_l: \varprojlim C_l \rightarrow C_l\}_{l \in \mathbb{Z}_{\geq 0}}$ are the natural projections to $l$-th entry.
\end{defn}

\begin{remark}
    Observe that $\varprojlim  C_l$ as a graded abelian group above is an abelian subgroup of $\varprojlim  C_l$ as an (ungraded) abelian group, so for simplicity, we will denote an element of the former by $x = (x_l)$, where $x_l = \displaystyle{\sum_{m=1}^{N}} x^{(m)}_l$.
    
    Nevertheless, note that they are not the same in general. For example, consider the sequence of truncated polynomial rings $C_l = R[x]/(x^{l+1})$ with $\deg x = 2$ , then as a graded abelian group $\varprojlim  C_l = R[x]$ is the polynomial ring, while  as an ungraded abelian group $\varprojlim  C_l = R \llbracket x \rrbracket$ is the formal power series ring.
\end{remark}

\begin{remark}
    The definitions of inverse limits of graded modules (resp. graded algebras) over graded ring and their (co)chain complexes are defined similarly.
\end{remark}
We consider the inverse limits of modules over inverse limits of rings below:
\begin{prop} \label{invlimmod}
    Given towers of graded rings $(C_l)$ and abelian groups $(D_l)$
    $$ (C_l) = (C_0 \xleftarrow{g_1} C_1\xleftarrow{g_2} \dots \leftarrow C_{l-1} \xleftarrow{g_l} C_{l}\leftarrow  \dots),$$
    $$ (D_l) = (D_0 \xleftarrow{f_1} D_1\xleftarrow{f_2} \dots \leftarrow D_{l-1} \xleftarrow{f_l} D_l\leftarrow  \dots),$$
    assume that $(D_l)$ is a graded left $(C_l)$-module, in the sense that  for each $l\in\mathbb{Z}_{\geq 0}$, $D_l$ is a graded left $C_l$ module with module structure $\lambda_l: C_l \times D_l \rightarrow D_l$ such that for any $l>0$, $f_l$ is a module morphism of degree 0 over $g_l$, i.e. for any $x_l \in C_l, y_l \in D_l$, 
    $$f_l(\lambda_l(x_l, y_l)) = \lambda_{l-1}(g_l(x_l), f_l(y_l)).$$
    Then $\varprojlim D_l$ has a natural graded left $\varprojlim C_l$-module structure $$\lambda_\infty: \varprojlim C_l \times \varprojlim D_l \rightarrow \varprojlim D_l$$
    defined by the entrywise module structure, i.e. $(\lambda_\infty(x, y)\coloneqq (\lambda_l(x_l, y_l))$.\\
    Moreover, $\pi^D_l: \varprojlim D_l \rightarrow D_l$ are degree 0 module morphisms over $\pi_l: \varprojlim C_l \rightarrow C_l$.
\end{prop}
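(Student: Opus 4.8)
The plan is to \emph{define} $\lambda_\infty$ by the stated entrywise formula and then verify, in order: (i) that $\lambda_\infty$ is well-defined, i.e.\ that $\lambda_\infty(x,y)$ actually lies in $\varprojlim D_l$; (ii) that it endows $\varprojlim D_l$ with the structure of a graded left $\varprojlim C_l$-module; and (iii) that the projections $\pi^D_l$ are degree-$0$ module morphisms over the ring morphisms $\pi_l$. None of these steps requires more than a coordinatewise check once the compatibility hypothesis on the transition maps is unwound, so I do not anticipate a genuine obstacle; the only point meriting a line of care is the grading issue flagged in the remark preceding the statement.

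For (i) I would take $x = (x_l)\in\varprojlim C_l$ and $y = (y_l)\in\varprojlim D_l$, so that $g_l(x_l) = x_{l-1}$ and $f_l(y_l) = y_{l-1}$ for all $l>0$, and check that the sequence $\bigl(\lambda_l(x_l,y_l)\bigr)_l$ is compatible with the $f_l$, namely $f_l(\lambda_l(x_l,y_l)) = \lambda_{l-1}(x_{l-1},y_{l-1})$. This is immediate: the hypothesis gives $f_l(\lambda_l(x_l,y_l)) = \lambda_{l-1}(g_l(x_l),f_l(y_l))$, and then one substitutes $g_l(x_l)=x_{l-1}$ and $f_l(y_l)=y_{l-1}$. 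For the grading, since each $\lambda_l$ is a (degree-$0$) module action, $\lambda_l(C_l^p\times D_l^q)\subseteq D_l^{p+q}$; hence a homogeneous $x$ of degree $p$ and homogeneous $y$ of degree $q$ produce a homogeneous element $\lambda_\infty(x,y)$ of degree $p+q$ in the graded inverse limit. Extending biadditively — and observing that $\varprojlim D_l$ as a graded abelian group sits inside its ungraded counterpart, where the formula visibly makes sense — yields a degree-$0$ biadditive map $\lambda_\infty\colon \varprojlim C_l\times\varprojlim D_l\to\varprojlim D_l$.

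For (ii), each module axiom holds in every $D_l$ by assumption — left and right distributivity, the associativity constraint $\lambda_l(x_l x'_l,y_l)=\lambda_l(x_l,\lambda_l(x'_l,y_l))$, and unitality $\lambda_l(1_l,y_l)=y_l$ when $(C_l)$ is unital with $1=(1_l)$ — and since addition and multiplication on $\varprojlim C_l$ and addition on $\varprojlim D_l$ are defined entrywise, each corresponding identity for $\lambda_\infty$ is verified coordinate by coordinate. For (iii), by construction $\pi^D_l(\lambda_\infty(x,y)) = \lambda_l(x_l,y_l) = \lambda_l(\pi_l(x),\pi^D_l(y))$, which is precisely the assertion that $\pi^D_l$ is a degree-$0$ module morphism over $\pi_l$. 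The essential content of the proposition is thus exactly that the compatibility hypothesis on the transition maps is what allows the entrywise action to pass to the limit; everything else is bookkeeping.
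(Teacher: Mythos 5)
Your proof is correct and is precisely the routine entrywise verification the paper has in mind: the paper simply states that "the proof follows directly from definitions" and omits it. Your write-up — checking compatibility of $(\lambda_l(x_l,y_l))_l$ with the $f_l$, verifying the module axioms coordinatewise, and noting the grading — is exactly that omitted check.
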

The proof follows directly from definitions, and therefore is omitted.
\begin{remark}
    Similar statements hold for right, bi- and tri-modules.
\end{remark}

\begin{remark}
    The same statement holds if for all $l$, $C_l, D_l$ are
\begin{itemize}
    \item graded $R$-algebras and $R$-modules such that $(D_l)$ is a graded $(C_l)$-module.
    \item dg algebras and dg modules such that $(D_l)$ is a left dg module over $(C_l)$.
\end{itemize}
\end{remark}

\begin{example}
    Given towers of graded rings $(C_l)$ and $(D_l)$, assume that there is a tower of ring morphisms of degree 0 from $(C_l)$ to $(D_l)$, i.e. a sequence of ring morphisms $(\varphi_l: C_l \rightarrow D_l)_{l\in\mathbb{Z}_{\geq 0}}$ of degree 0 such that for each $l>0$, $\varphi_{l-1} \circ g_l = f_l \circ \varphi_l$. Then $(D_l)$ is a graded left $(C_l)$-module via $(\varphi_l)$. In this case, the left $\varprojlim C_l$-module structure on $\varprojlim D_l$ obtained from (\ref{invlimmod}) is the same as the one induced from the ring morphism between inverse limits $(\varphi_l):\varprojlim C_l \rightarrow \varprojlim D_l$.
\end{example}

\begin{example}
    Given a tower of dg modules $(D_l)$ as a left dg module over a tower of dg algebras $(C_l)$, inducing \begin{itemize}
        \item $\varprojlim D_l$ as a left dg module over $\varprojlim C_l$ by Proposition \ref{invlimmod}. After taking cohomologies, $H^\bullet(\varprojlim D_l)$ is a graded left $H^\bullet(\varprojlim C_l)$-module.
        \item A tower of graded $R$-modules $(H^\bullet(D_l))$ as a graded left module over a tower of graded $R$-algebras $(H^\bullet(C_l))$. Therefore,  $\varprojlim H^\bullet(D_l)$ is a  graded left $\varprojlim H^\bullet(C_l)$-module by Proposition \ref{invlimmod}.
     \end{itemize}
     The projection maps $\pi^D_l$  induces   $([\pi^D_l]): H^\bullet(\varprojlim D_l) \rightarrow \varprojlim H^\bullet(D_l)$ as a module morphism of degree 0 over $([\pi_l]): H^\bullet(\varprojlim C_l) \rightarrow \varprojlim H^\bullet(C_l)$.
\end{example}

We also recall the following statement comparing the inverse limit of cohomologies with the cohomology of inverse limit:

\begin{prop} \cite[Variant of Theorem 3.5.8]{weibel} \label{invlimcohom}
    Given a tower of cochain complexes of $R$-modules $(C_l)$, inducing a tower of graded $R$-modules $(H(C_l))$. Assume that $(C_l)$ satisfies the Mittag-Leffler condition (see e.g. \cite[Definition 3.5.6]{weibel}).  Then for each $m\in\mathbb{Z}$, there is a short exact sequence of $R$-modules
\begin{equation*}
    \begin{tikzcd}
  0 \arrow[r] & \varprojlim^1 H^{m-1}(C_l)  \arrow[r] &  H^{m}(\varprojlim C_l)  \arrow[r, "(\mathrm{[}\pi_l\mathrm{]})"] & \varprojlim H^{m}(C_l)  \arrow[r] & 0 
\end{tikzcd}
\end{equation*}
where  $\varprojlim^1 \cong R^1 \varprojlim$ is the first derived functor of $\varprojlim$ (see e.g. \cite[Corollary 3.5.4]{weibel}).
In particular, if $\varprojlim^1 H^{m-1}(C_l) = 0$, then $H^{m}(\varprojlim C_l) \xrightarrow{([\pi_l])} \varprojlim H^{m}(C_l)$ is an isomorphism, i.e. ``Taking inverse limit commutes with taking cohomology".
    
\end{prop}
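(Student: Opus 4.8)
The plan is to follow the argument of \cite[Theorem 3.5.8]{weibel}, adapted to the cohomological grading convention used here and to the present hypothesis that the tower $(C_l)$ is Mittag-Leffler in each degree. First I would introduce the towers of cocycles $Z^m_l \coloneqq \mathrm{Ker}(d\colon C^m_l \to C^{m+1}_l)$ and of coboundaries $B^m_l \coloneqq \mathrm{Im}(d\colon C^{m-1}_l \to C^m_l)$; the transition maps of $(C_l)$ commute with $d$, so these are again towers of $R$-modules, and they fit into two short exact sequences of towers
\begin{equation*}
0 \to Z^m_l \to C^m_l \xrightarrow{d} B^{m+1}_l \to 0, \qquad 0 \to B^m_l \to Z^m_l \to H^m(C_l) \to 0 .
\end{equation*}
Applying the six-term exact sequence relating $\varprojlim$ and $\varprojlim^1$ (\cite[\S 3.5]{weibel}) to each of these, together with the vanishing $\varprojlim^1 C^m_l = 0$ furnished by the Mittag-Leffler hypothesis, I would extract in order: (i) from the first sequence, that $\varprojlim^1 B^m_l = 0$ for every $m$, using only right-exactness of $\varprojlim^1$; (ii) from the second sequence combined with (i), that the natural map $\varprojlim Z^m_l \to \varprojlim H^m(C_l)$ is surjective with kernel $\varprojlim B^m_l$, and that $\varprojlim^1 Z^m_l \cong \varprojlim^1 H^m(C_l)$; (iii) from the first sequence with the index shifted down by one, that $\mathrm{coker}\bigl(\varprojlim C^{m-1}_l \xrightarrow{d} \varprojlim B^m_l\bigr) \cong \varprojlim^1 Z^{m-1}_l$.

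Next I would compute the cohomology of the limit complex directly. Because $\varprojlim$ is left exact it commutes with kernels, so $Z^m(\varprojlim C_l) = \varprojlim Z^m_l$. The differential of $\varprojlim C_l$ is computed entrywise, so it factors as $\varprojlim C^{m-1}_l \xrightarrow{d} \varprojlim B^m_l \hookrightarrow \varprojlim Z^m_l \hookrightarrow \varprojlim C^m_l$ (the middle inclusion because $B^m_l \subseteq Z^m_l$); hence $B^m(\varprojlim C_l) = \mathrm{Im}(\varprojlim C^{m-1}_l \to \varprojlim B^m_l) \subseteq \varprojlim B^m_l \subseteq \varprojlim Z^m_l$, and therefore $H^m(\varprojlim C_l) = \varprojlim Z^m_l \big/ B^m(\varprojlim C_l)$. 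The chain of submodules $B^m(\varprojlim C_l) \subseteq \varprojlim B^m_l \subseteq \varprojlim Z^m_l$ then yields a short exact sequence
\begin{equation*}
0 \to \varprojlim B^m_l \big/ B^m(\varprojlim C_l) \to H^m(\varprojlim C_l) \to \varprojlim Z^m_l \big/ \varprojlim B^m_l \to 0 ,
\end{equation*}
whose left-hand term is $\varprojlim^1 Z^{m-1}_l \cong \varprojlim^1 H^{m-1}(C_l)$ by (iii) and by (ii) applied in degree $m-1$, and whose right-hand term is $\varprojlim H^m(C_l)$ by (ii). This is precisely the claimed exact sequence, and the final assertion is the special case $\varprojlim^1 H^{m-1}(C_l) = 0$.

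The point requiring care is step (i)--(ii). One might hope to argue that the cocycle tower $(Z^m_l)$, being a subtower of the Mittag-Leffler tower $(C^m_l)$, is itself Mittag-Leffler and hence $\varprojlim^1$-acyclic, but this is false in general. What rescues the argument is that one never needs $\varprojlim^1 Z^m_l$ to vanish: it suffices that $\varprojlim^1 B^m_l = 0$, which follows from $\varprojlim^1 C^m_l = 0$ by pure right-exactness of $\varprojlim^1$; the price is that $B^m(\varprojlim C_l)$ is in general strictly smaller than $\varprojlim B^m_l$, and the discrepancy between them is exactly the $\varprojlim^1 H^{m-1}(C_l)$ that appears in the sequence. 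Everything else is a routine diagram chase with the six-term sequence and the left-exactness of $\varprojlim$.
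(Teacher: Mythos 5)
Your argument is correct and complete. Note that the paper does not actually prove this proposition — it is quoted as a variant of \cite[Theorem 3.5.8]{weibel} — so the relevant comparison is with Weibel's own proof, and there your route is genuinely different in technique. Weibel's proof uses the defining four-term sequence $0 \to \varprojlim A_l \to \prod A_l \to \prod A_l \to \varprojlim^1 A_l \to 0$ together with the Mittag-Leffler vanishing $\varprojlim^1 C^m_l = 0$ to produce a short exact sequence of complexes $0 \to \varprojlim C_l \to \prod C_l \to \prod C_l \to 0$, and then reads off the claim from the long exact cohomology sequence and the identity $H^m(\prod C_l) = \prod H^m(C_l)$. You instead decompose each complex into its cocycle and coboundary towers and run the six-term $\varprojlim$--$\varprojlim^1$ sequence three times; this is the other standard proof, slightly longer but more transparent about \emph{where} the $\varprojlim^1 H^{m-1}$ term comes from (the gap between $B^m(\varprojlim C_l)$ and $\varprojlim B^m_l$). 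Your closing caveat is well taken and correctly handled: the cocycle subtower of a Mittag-Leffler tower need not be Mittag-Leffler, and your argument only ever uses $\varprojlim^1 B^m_l = 0$, which follows from right-exactness of $\varprojlim^1$ alone. The only step left implicit is that the surjection $H^m(\varprojlim C_l) \to \varprojlim Z^m_l/\varprojlim B^m_l \cong \varprojlim H^m(C_l)$ you construct coincides with $([\pi_l])$; this is immediate since both send a limit cocycle $(z_l)$ to the compatible system of classes $([z_l])_l$, but it is worth a sentence since the statement names the map.
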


\begin{remark}
    Similar statements hold in the following setting:
    \begin{itemize}
        \item $(C_l)$ is a tower of dg algebras.
        \item $(D_l)$ is a tower of dg modules over a tower of dg algebras $(C_l)$.
    \end{itemize}
\end{remark}

\begin{example}
    Given a sequence of closed manifolds $(S_l)$ with closed embeddings $\iota_l: S_{l-1} \rightarrow S_l$ between them. This induces a tower of de Rham dg algebras $(\Omega(S_l))$ connected by pull-backs $g_l \coloneqq \iota^*_l: \Omega(S_l) \rightarrow \Omega(S_{l-1})$, which are surjective since $\iota_l$ are proper. It follows that $(\Omega(S_l))$ satisfies the Mittag-Leffler condition. Moreover, the cohomological sequence $(H(S_l))$ is a tower of finite dimensional $\mathbb{R}$-vector spaces since $S_l$ are closed. Therefore, by \cite[Exercise 3.5.2]{weibel} $\varprojlim^1 H(S_l) = 0$, and hence $H(\varprojlim \Omega(S_l)) \cong \varprojlim H(S_l)$ by Proposition \ref{invlimcohom}.
\end{example}

Hence, one may ask when $\varprojlim^1 H(C_l) = 0$ occurs. We recall the following notion:

\begin{defn}
    A tower of graded cochain complexes of $R$-modules $(C_l)$ satisfies homological stability if the associated cohomological sequence stabilizes degree-wise: for any $m\in \mathbb{Z}_{\geq 0}$, there exists $l_0 = l_0(m)$ such that $H^m(C_l)$ stabilizes for $l\geq l_0$:
    $$ \dots \xleftarrow{[g_{l_0-1}]} H^m(C_{l_0-1}) \xleftarrow[\sim]{[g_{l_0}]} H^m(C_{l_0+1})\xleftarrow[\sim]{[g_{l_0+1}]} H^m(C_{l_0+1}) \xleftarrow[\sim]{} \dots $$
    In particular, $H^m(C_{l})\xleftarrow{\pi^H_l}\varprojlim H^m(C_{l})$ is an isomorphism for all $l\geq l_0$. 
\end{defn}


\begin{example}
    Given a compact Lie group $G$, recall that we approximate the universal bundle $EG$ by a sequence of closed manifolds $(EG_l)$ as in (\ref{egemb}). For each $l\in\mathbb{Z}_{\geq 0}$, since $EG_l$ is $(l-1)$-connected, $H^{m}(EG_l) = 0$ for all $0<m<l$. This implies $(\Omega(EG_l))$ satisfies homological stability.
    
    Moreover, for any closed $G$-manifold $L$, its Borel space $L_G \coloneqq L \times_{G} EG$ is approximated by a sequence of closed manifolds $(L_l \coloneqq L \times_{G} EG_l)$. It follows that for each $m$,  $H^{m}(L_l)$ stabilizes to $H^{m}(L \times_{G} EG)$ for all $l\geq m$ (see e.g. \cite[Theorem A.7(b)]{tuequiv}). Therefore, $(\Omega(L_l))$ also satisfies homological stability.
\end{example}

\begin{prop} \label{homstabml}
    Given a tower of graded cochain complexes of $R$-modules $(C_l)$, assume that $(\overline{C}_l)$  satisfies homological stability and Mittag-Leffler condition, then $H(\varprojlim C_l) \xrightarrow{([\pi_l])} \varprojlim H(C_l)$ is an isomorphism of graded $R$-modules.
\end{prop}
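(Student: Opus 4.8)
The plan is to deduce the statement from Proposition \ref{invlimcohom} together with the vanishing of the relevant $\varprojlim^1$ terms. I would first fix a degree $m$ and apply Proposition \ref{invlimcohom} to the tower $(C_l)$; this is legitimate since $(C_l)$ satisfies the Mittag-Leffler condition (the hypothesis is phrased in terms of the $R$-reduction $(\overline{C}_l)$, from which it passes to $(C_l)$ verbatim, the transition maps being the same up to an exact base change). This produces, for each $m$, a short exact sequence of $R$-modules
\[
0 \longrightarrow {\varprojlim}^1 H^{m-1}(C_l) \longrightarrow H^m(\varprojlim C_l) \xrightarrow{\ ([\pi_l])\ } \varprojlim H^m(C_l) \longrightarrow 0 .
\]
In particular $([\pi_l])$ is automatically surjective in every degree, and to upgrade it to an isomorphism it suffices to show ${\varprojlim}^1 H^{m-1}(C_l) = 0$ for every $m$.

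The remaining input is homological stability. By definition, for each fixed degree there is an index $l_0$ beyond which the transition maps $[g_l]\colon H^{m-1}(C_l) \to H^{m-1}(C_{l-1})$ are isomorphisms. Hence the cofinal subtower $(H^{m-1}(C_l))_{l \ge l_0}$ has bijective — in particular surjective — structure maps, so it satisfies the Mittag-Leffler condition and therefore has vanishing first derived limit. Since $\varprojlim^1$ is unchanged under passing to a cofinal subtower, ${\varprojlim}^1 H^{m-1}(C_l) = 0$. Feeding this back into the exact sequence above shows that $([\pi_l])$ is an isomorphism in degree $m$; as $m$ was arbitrary, $([\pi_l])$ is an isomorphism of graded $R$-modules, which is the claim.

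I do not anticipate a substantial obstacle here: once Proposition \ref{invlimcohom} is invoked the argument is purely formal. The only steps needing a moment's care are the two bookkeeping points used above — verifying that the Mittag-Leffler hypothesis on $(\overline{C}_l)$ indeed transfers to $(C_l)$ so that Proposition \ref{invlimcohom} applies, and recalling the standard facts from the theory of derived inverse limits that a tower with surjective structure maps has trivial $\varprojlim^1$ and that $\varprojlim^1$ depends only on a cofinal tail (see \cite[\S 3.5]{weibel}). With these recalled, the proof is complete.
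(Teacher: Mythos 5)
Your proof is correct and follows essentially the same route as the paper's: homological stability forces the cohomological tower to satisfy the Mittag-Leffler condition (hence $\varprojlim^1 H^{m-1}(C_l)=0$ by \cite[Proposition 3.5.7]{weibel}), and then Proposition \ref{invlimcohom} applied to $(C_l)$ gives the isomorphism. The extra bookkeeping you supply (the transfer of the Mittag-Leffler hypothesis from $(\overline{C}_l)$ to $(C_l)$ and the cofinality of the stable tail) is implicit in the paper's one-line argument and does not change the approach.
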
 
\begin{proof}
    Homological stability of $(C_l)$ implies its cohomological sequence $(H(C_l))$  satisfies Mittag-Leffler condition, and hence by \cite[Proposition 3.5.7]{weibel}, \\
    $\varprojlim^1 H(C_l)=0$. 
    Applying Proposition \ref{invlimcohom} to $(C_l)$, which satisfies Mittag-Leffler condition by assumption, yields the desired conclusion.
\end{proof}
\subsubsection{$A_\infty$ inverse limits}

We study the inverse limit of a tower of $A_\infty$ algebras.
\begin{prop} \label{invlimitalg}
 Given a tower of graded $\Lambda_0$-modules $(C_l)$, assume that for each $l$, $C_l = (C_l, \{m^{(l)}_k\}, e_l)$ is a $\mathbb{G}$-gapped unital $A_\infty$ algebra and $g_l$ is a $\mathbb{G}$-gapped unital strict $A_\infty$ morphism, 
 then the inverse limit $C_\infty \coloneqq \varprojlim  C_l$ 
 admits a $\mathbb{G}$-gapped unital $A_\infty$ structure $(C_\infty, \{m^{(\infty)}_k\}, e_\infty)$ defined by 
 $$m^{(\infty)}_k(x_1,\dots,x_k) \coloneqq (m^{(l)}_k(x^{(l)}_1,\dots,x^{(l)}_k))_{l \geq 0},$$
 where $x_i = (x^{(l)}_i)_{l \geq 0} \in C_\infty$ for $1 \leq i \leq k$. The strict unit is given by $e_\infty \coloneqq (e_l)_{l \geq 0}$.\\
 Moreover, the projection 
 $\pi_l: C_\infty \rightarrow C_l$
 is a $\mathbb{G}$-gapped unital strict $A_\infty$ morphism.
\end{prop}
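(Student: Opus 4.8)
The plan is a direct entrywise verification; the only genuinely load-bearing hypothesis is the \emph{strictness} of the connecting morphisms $g_l$. First I would note that, $g_l$ being a strict $A_\infty$ morphism, its only nonzero component is the linear one, so $g_l\circ m^{(l)}_k = m^{(l-1)}_k\circ g_l^{\otimes k}$ as maps $(C_l[1])^{\hat{\otimes}k}\to C_{l-1}[1]$ for every $k\geq 0$. Hence, for $x_1,\dots,x_k\in C_\infty$ with $x_i=(x_i^{(l)})_{l\geq 0}$, the sequence $\bigl(m^{(l)}_k(x_1^{(l)},\dots,x_k^{(l)})\bigr)_{l\geq 0}$ is compatible under the $g_l$, hence defines an element of $C_\infty=\varprojlim C_l$; this is precisely what makes the entrywise formula for $m^{(\infty)}_k$ meaningful. (Were the $g_l$ allowed higher-order components, the naive entrywise definition would fail, and a genuinely different construction would be needed.) Likewise, unitality of $g_l$ gives $g_l(e_l)=e_{l-1}$, so $e_\infty\coloneqq (e_l)_{l\geq 0}$ is a well-defined element of $C_\infty$.

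Next I would check the formal properties of $\{m^{(\infty)}_k\}$. Writing $\overline{C}_\infty\coloneqq\varprojlim\overline{C}_l$ (an inverse limit of $\mathbb{Z}$-graded $R$-modules along the degree-zero maps $\overline{g}_l$), one has that $\overline{C}_\infty$ is $\mathbb{Z}$-graded and, using $\mathbb{G}$-gappedness, $C_\infty=\overline{C}_\infty\hat{\otimes}\Lambda_0$, so $C_\infty$ has the required shape of a completed $\Lambda_0$-module. Degrees $\!\pmod 2$ and the condition $m^{(\infty)}_0(1)\in C_{\infty,+}^2$ are inherited entrywise; filtered $\Lambda_0$-linearity holds because the $\Lambda_0$-module structure on $C_\infty$ is the entrywise one. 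For $\mathbb{G}$-gappedness, set $m^{(\infty)}_{k,\beta}\coloneqq\bigl(m^{(l)}_{k,\beta}\bigr)_{l\geq 0}$, which is well-defined by the same compatibility argument applied $\beta$-by-$\beta$; then $m^{(\infty)}_k=\sum_{\beta\in\mathbb{G}}m^{(\infty)}_{k,\beta}T^{E(\beta)}$, and the $R$-reduction $(\overline{C}_\infty,\{\overline{m}^{(\infty)}_k\coloneqq m^{(\infty)}_{k,\beta_0}\})$ is the entrywise classical $A_\infty$ algebra on $\overline{C}_\infty$, which is $\mathbb{Z}$-graded with $\overline{m}^{(\infty)}_k$ of degree $1$.

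The $A_\infty$ relations and unitality relations for $(C_\infty,\{m^{(\infty)}_k\},e_\infty)$ are then immediate: for fixed $k$ each such relation is a \emph{finite} sum of compositions of the maps $m^{(\infty)}_j$ with $j\leq k+1$, and evaluating it on a tuple in $C_\infty$ produces, in the $l$-th entry, exactly the corresponding relation for $(C_l,\{m^{(l)}_k\},e_l)$, which holds by hypothesis; since the zero of $C_\infty$ is the entrywise zero, the relation holds in $C_\infty$. Finally, that $\pi_l\colon C_\infty\to C_l$ is a $\mathbb{G}$-gapped unital strict $A_\infty$ morphism is built into the construction: $\pi_l\circ m^{(\infty)}_k=m^{(l)}_k\circ\pi_l^{\otimes k}$ and $\pi_l\circ m^{(\infty)}_{k,\beta}=m^{(l)}_{k,\beta}\circ\pi_l^{\otimes k}$ hold by definition, $\pi_l(e_\infty)=e_l$, and the $R$-reduction of $\pi_l$ is the projection $\overline{C}_\infty\to\overline{C}_l$. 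I expect no real obstacle; the single point deserving care is the compatibility argument of the first paragraph, where strictness of the $g_l$ is essential.
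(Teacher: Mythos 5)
Your proposal is correct and follows essentially the same route as the paper, which simply observes that the $A_\infty$ and unitality relations hold entrywise and that $\pi_l$ is a strict morphism by construction. Your explicit remark that strictness of the $g_l$ is what makes the entrywise sequence $(m^{(l)}_k(x_1^{(l)},\dots,x_k^{(l)}))_l$ land in the inverse limit is exactly the (implicit) load-bearing point of the paper's argument.
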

\begin{proof}
    Both $A_\infty$ and unitality relations  follow directly from their entrywise equations. That $\pi_l$ satisfies the stated property also follows from definition.
\end{proof}

The analogous statement for $A_\infty$ tri-modules is as follows:
\begin{prop} \label{invlimittrimod}
 Given three sequences of $\mathbb{G}$-gapped unital $A_\infty$ algebras $$((C_l, \{m^{(l)}_k\}, e_l)), ((C'_l, \{m'^{(l)}_{k'}\}, e'_l)), ((C''_l, \{m''^{(l)}_{k''}\}, e''_l))$$ with $\mathbb{G}$-gapped unital strict $A_\infty$ morphisms  
 $$(\dots \leftarrow C_{l-1} \xleftarrow{g_l} C_{l}\leftarrow  \dots); (\dots  \xleftarrow{g'_l} C'_{l}\leftarrow  \dots); (\dots \xleftarrow{g''_l} C''_{l}\leftarrow  \dots)$$
 with inverse limits $(C_\infty, \{m^{(\infty)}_k\}), (C'_\infty, \{m'^{(\infty)}_{k'}\}), (C''_\infty, \{m''^{(\infty)}_{k''}\})$.\\
 Given also a sequence of $\mathbb{G}$-gapped unital   $A_\infty$ tri-modules $((D_{l}, \{n^{(l)}_{k'', k', k}\})$ ,in which $D_l$ is a unital left $C_l$, right $(C'_l, C''_l)$ $A_\infty$ tri-module, with $\mathbb{G}$-gapped unital strict $A_\infty$ tri-module morphisms along $((g_l), (g'_l), (g''_l))$. 
$$ D_0 \xleftarrow{f_1} D_1\xleftarrow{f_2} \dots \leftarrow D_{l-1} \xleftarrow{f_l} D_l\leftarrow  \dots$$
 Then the inverse limit $D_\infty \coloneqq \varprojlim  D_l$ 
 admits a $\mathbb{G}$-gapped unital left $C_\infty$, right $(C'_\infty, C''_\infty)$ $A_\infty$ tri-module structure $(D_\infty, \{n^{(\infty)}_{k'', k', k}\})$ defined by 
 $$n^{(\infty)}_{k'',k',k}(x''_1, \dots, x''_{k''}; y; x'_1, \dots, x'_{k'}; x_1, \dots, x_{k}) $$
 $$= (n^{(l)}_{k'',k',k}(x''^{(l)}_1, \dots, x''^{(l)}_{k''}; y^{(l)}; x'^{(l)}_1, \dots, x'^{(l)}_{k'}; x^{(l)}_1, \dots, x^{(l)}_{k})),$$
 where $x_i = (x^{(l)}_i) \in C_\infty$, $x'_i = (x'^{(l)}_i) \in C'_\infty$, $x''_i = (x''^{(l)}_i) \in C''_\infty$, $y=(y^{(l)}) \in D_\infty$.\\
 Moreover, the projection 
 $\pi^D_l: D_\infty \rightarrow D_l$
 is a $\mathbb{G}$-gapped unital strict $A_\infty$ tri-module morphism along $(\pi_l, \pi'_l, \pi''_l)$.
\end{prop}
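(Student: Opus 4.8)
The plan is to follow the proof of Proposition~\ref{invlimitalg} almost verbatim: construct every structure map on $D_\infty$ entrywise and verify each defining identity by projecting to level $l$. First I would record the underlying-module statement. Since $\overline{D}_\infty \coloneqq \varprojlim \overline{D}_l$ is a $\Z/2$-graded $R$-module and each connecting morphism $f_l$ is $\mathbb{G}$-gapped, hence preserves the $T^{E(\beta)}$-summands, one has $D_\infty = \varprojlim D_l = \overline{D}_\infty \hat{\otimes} \Lambda_0$ as a $\mathbb{G}$-gapped completed $\Lambda_0$-module. Applying Proposition~\ref{invlimmod} to the three towers of module structures equips $D_\infty$ with its entrywise left-$C_\infty$ and right-$(C'_\infty, C''_\infty)$ actions, compatible with the projections; the proposed operators $n^{(\infty)}_{k'',k',k}$ are then $\Lambda_0$-multilinear of degree $1$ (mod $2$) because each $n^{(l)}_{k'',k',k}$ is, and the projections $\pi^D_l$ preserve degree.

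The one point that genuinely requires attention is \emph{well-definedness}: that for compatible tuples $x_i = (x^{(l)}_i)_l$, $x'_i = (x'^{(l)}_i)_l$, $x''_i = (x''^{(l)}_i)_l$ and $y = (y^{(l)})_l$ the tuple
\[
\big( n^{(l)}_{k'',k',k}(x''^{(l)}_1,\dots,x''^{(l)}_{k''};\, y^{(l)};\, x'^{(l)}_1,\dots,x'^{(l)}_{k'};\, x^{(l)}_1,\dots,x^{(l)}_k) \big)_{l\ge 0}
\]
actually lies in $\varprojlim D_l$, i.e.\ is sent to itself by the $f_l$. This is exactly where the strictness of the connecting tri-module morphisms is used: since $f_l$ is a strict unital $A_\infty$ tri-module morphism along the strict algebra morphisms $g_l, g'_l, g''_l$, its unique nonzero component intertwines the tri-module operators,
\[
f_l\big( n^{(l)}_{k'',k',k}(x'';y;x';x) \big) = n^{(l-1)}_{k'',k',k}\big( g''_l(x''); f_l(y); g'_l(x'); g_l(x) \big),
\]
and feeding in the $l$-th entries of the compatible tuples (so $g_l(x^{(l)}_i) = x^{(l-1)}_i$, likewise for the primes, and $f_l(y^{(l)}) = y^{(l-1)}$) yields precisely the desired compatibility. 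A non-strict connecting morphism would pollute this identity with homotopy terms, so this is the step to be careful about; everything else is formal.

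It then remains to check the $A_\infty$ tri-module relations, the $\mathbb{G}$-gappedness of the $R$-reduction, and unitality with respect to $(e''_\infty, e'_\infty, e_\infty) \coloneqq ((e''_l)_l, (e'_l)_l, (e_l)_l)$. Each of these is a $\Lambda_0$-linear identity (or family of identities) among iterated compositions of the $n$'s and $m$'s; applying $\pi^D_l$ to the candidate identity for $n^{(\infty)}$ turns it into the corresponding identity for $n^{(l)}$, which holds because $D_l$ is an $A_\infty$ tri-module. Since an element of an inverse limit vanishes iff all of its projections do, the identity holds in $D_\infty$. The $\mathbb{G}$-gapped pieces $n^{(l)}_{k'',k',k,\beta}$ assemble entrywise (again using that $f_l, g_l, g'_l, g''_l$ are $\mathbb{G}$-gapped) into $n^{(\infty)}_{k'',k',k,\beta}$ whose $\beta_0$-component is the $R$-reduction, and unitality of $D_\infty$ follows from that of each $D_l$ together with $\pi^D_l(e_\infty) = e_l$, etc. Finally, $\pi^D_l : D_\infty \to D_l$ being a strict $\mathbb{G}$-gapped unital $A_\infty$ tri-module morphism along $(\pi_l, \pi'_l, \pi''_l)$ is immediate from the entrywise definitions. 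Beyond the well-definedness bookkeeping above I do not anticipate any real obstacle.
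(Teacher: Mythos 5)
Your proposal is correct and coincides with the paper's approach: the paper omits the proof of this proposition as analogous to Proposition \ref{invlimitalg}, where the $A_\infty$ and unitality relations are verified entrywise and the projections are strict morphisms by definition. Your explicit identification of where strictness of the connecting morphisms $f_l, g_l, g'_l, g''_l$ enters (namely, well-definedness of the entrywise operators as elements of the inverse limit) is exactly the point the paper relies on implicitly.
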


\subsection{Homological Perturbation Theory and Inverse limits}
\label{subsec-hptinvlimit}
In this subsection, we study the homological perturbation theory of the inverse limits of $A_\infty$ algebras and tri-modules.
\subsubsection{$A_\infty$ Algebras}
We first consider the case of $A_\infty$ algebras:

\begin{setup}
    Given a sequence of $\mathbb{G}$-gapped unital $A_\infty$ algebras and strict $A_\infty$ morphisms  
  $C_0 \xleftarrow{g_1} C_1\xleftarrow{g_2} \dots \leftarrow C_{l-1} \xleftarrow{g_l} C_{l}\leftarrow  \dots$ with the inverse limit $(C_\infty, \{m^{(\infty)}_k\}, e_\infty)$. For each $0\leq l \leq \infty$, apply Corollary \ref{canonicalalg} to obtain a strong contraction $(i_l, p_l, h_l)$ and a canonical model $(H^\bullet(C_l) \coloneqq H^\bullet(\overline{C}_l, \overline{m}^{(l)}_{1}; \Lambda_0), \{m^{(l), H}_k\},  [e_l])$. 
  \end{setup}

Notice that the choice of strong contraction for each $l$ (and hence the canonical model) is independent from each other. However, we have the following:

\begin{prop} \label{canalgmor}
For each $l\in\mathbb{Z}_{>0}$, the induced cohomological maps 
$$ H(\overline{C}_{l-1})\xleftarrow{[g_l]}H(\overline{C}_{l})\xleftarrow{[\pi_l]}H(\overline{C}_\infty)$$
are unital algebra morphisms with respect to $(H^\bullet(\overline{C}_l), \overline{m}^{(l),H}_{2}, [e_l])$.
\end{prop}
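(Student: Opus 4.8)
The plan is to reduce the statement to the already-established functoriality of the degree-two part of canonical models. Recall from Remark \ref{alglowdeg} that for any filtered $A_\infty$ algebra, the induced product $\overline{m}^H_2$ on cohomology is the usual cohomological cup product, independent of the choice of strong contraction. Hence the content of the proposition is really the classical fact that a strict $A_\infty$ morphism induces a ring morphism on cohomology with respect to the cup product, applied to the two maps $g_l \colon C_l \to C_{l-1}$ and $\pi_l \colon C_\infty \to C_l$ (both of which are $\mathbb{G}$-gapped unital strict $A_\infty$ morphisms by Proposition \ref{invlimitalg}).

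First I would recall that for a strict $A_\infty$ morphism $\varphi = \{\varphi_k\}$ with $\varphi_k = 0$ for $k \geq 2$, the $A_\infty$ morphism relations in arity one and two reduce to: $\varphi_1$ is a chain map ($\varphi_1 \circ m_1 = m_1 \circ \varphi_1$), and $\varphi_1(m_2(x,y)) = m_2(\varphi_1(x), \varphi_1(y))$ up to the relevant Koszul sign. Passing to cohomology, $[\varphi_1]$ is therefore an algebra morphism (up to sign) with respect to $[\overline{m}_2]$; and since the unit $e$ satisfies $\varphi_1(e) = e'$ by unitality of a strict morphism, $[\varphi_1]$ is unital. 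Applying this to $g_l$ and to $\pi_l$ gives the two arrows $[g_l] \colon H(\overline{C}_l) \to H(\overline{C}_{l-1})$ and $[\pi_l] \colon H(\overline{C}_\infty) \to H(\overline{C}_l)$ as unital algebra morphisms, where the source and target algebra structures are the cup-product structures $(H^\bullet(\overline{C}_\bullet), \overline{m}^{(\bullet),H}_2, [e_\bullet])$ by Remark \ref{alglowdeg}. This is exactly the claim, and it is independent of the (mutually unrelated) choices of strong contractions because the product $\overline{m}^H_2$ is.

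One small point to check carefully is that $[g_l]$ and $[\pi_l]$ are well-defined as maps between the chosen \emph{canonical models} $H^\bullet(C_l) = H^\bullet(\overline{C}_l, \overline{m}^{(l)}_1; \Lambda_0)$, not merely between abstract cohomologies: but by construction in Corollary \ref{canonicalalg}, the underlying graded module of the canonical model of $C_l$ \emph{is} $H^\bullet(\overline{C}_l, \overline{m}^{(l)}_1; \Lambda_0)$, and since $g_l, \pi_l$ are strict and chain maps on the nose, they descend verbatim to maps of these cohomology modules. The degree-$0$ (grading-preserving) property is immediate as $g_l, \pi_l$ have degree $0$.

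I do not expect a genuine obstacle here; the statement is essentially bookkeeping. The only mild subtlety — and the place one must be slightly attentive — is the sign in the multiplicativity relation $[\varphi_1(m_2(x,y))] = \pm[m_2(\varphi_1(x),\varphi_1(y))]$ and whether the paper's conventions absorb it into the definition of "algebra morphism" (cf. the "$\pm$" appearing in Proposition \ref{ringisom}). Since $\varphi_1$ has degree $0$ and $e$ is an even strict unit, the Koszul signs all trivialize on the cohomology ring, so $[g_l]$ and $[\pi_l]$ are genuine (sign-free) unital algebra morphisms; I would simply remark this and invoke Remark \ref{alglowdeg} to conclude.
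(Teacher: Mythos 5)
Your proposal is correct and follows the same route as the paper, whose entire proof is "this follows immediately from Remark \ref{alglowdeg}": the product $\overline{m}^{(l),H}_2$ on the canonical model is the cup product independent of the choice of strong contraction, and the strict unital $A_\infty$ morphisms $g_l$, $\pi_l$ descend to unital ring morphisms on cohomology. Your write-up just makes explicit the bookkeeping the paper leaves implicit.
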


\begin{proof}
    This follows immediately from Remark \ref{alglowdeg}.
\end{proof}


\begin{corollary}
    The induced cohomological sequence $$H(\overline{C}_{0}) \xleftarrow{[g_1]} H(\overline{C}_{1})\xleftarrow{[g_2]} \dots \leftarrow H(\overline{C}_{l-1}) \xleftarrow{[g_l]} H(\overline{C}_{l})\leftarrow  \dots$$ 
    is a tower of graded $R$-algebras with respect to $(H^\bullet(\overline{C}_l), \overline{m}^{(l),H}_{2}, [e_l])$, inducing the inverse limit $\varprojlim H^\bullet(\overline{C}_l)$ as a graded $R$-algebras. Moreover, The projection maps $\pi_l$  induces a graded algebra morphism $([\pi_l]): H^\bullet(\overline{C}_\infty) \rightarrow \varprojlim H^\bullet(\overline{C}_l)$.
\end{corollary}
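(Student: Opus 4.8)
The plan is to deduce the statement directly from Proposition \ref{canalgmor} together with the formal behaviour of inverse limits in the category of graded $R$-algebras. First I would record that, by Proposition \ref{canalgmor}, for every $l\in\mathbb{Z}_{>0}$ the connecting map $[g_l]\colon H^\bullet(\overline{C}_l)\to H^\bullet(\overline{C}_{l-1})$ is a unital algebra morphism of degree $0$ with respect to the cohomological products $\overline{m}^{(l),H}_2$ and $\overline{m}^{(l-1),H}_2$. Consequently the sequence $\bigl(H^\bullet(\overline{C}_l),[g_l]\bigr)$ is a tower in the category of graded $R$-algebras, and by the remark following Proposition \ref{invlimmod} its inverse limit $\varprojlim H^\bullet(\overline{C}_l)$ inherits a graded $R$-algebra structure with the entrywise product $([x_l])\cdot([y_l])\coloneqq([x_l]\cdot[y_l])$ and unit $([e_l])$: the graded algebra axioms hold because they hold entrywise, and the product is well defined precisely because each $[g_l]$ is multiplicative and unital.

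The one point that deserves care --- and which I regard as essentially the only content of the argument --- is why the $[g_l]$ are multiplicative at all, given that the canonical models $(H^\bullet(\overline{C}_l),\{m^{(l),H}_k\},[e_l])$ were built from strong contractions chosen independently for each $l$. Here I would invoke Remark \ref{alglowdeg}: the degree-two operation $\overline{m}^{(l),H}_2$ of any canonical model is the product induced on cohomology by $\overline{m}^{(l)}_2$, hence is intrinsic and independent of the chosen contraction. Since $g_l$ is a strict $A_\infty$ morphism it commutes with the degree-two products on the chain level, so the induced $[g_l]$ commutes with the cohomological products; this is exactly what Proposition \ref{canalgmor} asserts, so I would simply cite it rather than re-derive it. (By contrast, the higher operations $m^{(l),H}_k$, $k\geq 3$, need not be compatible with $[g_l]$, which is why only the algebra structure, not the full $A_\infty$ structure, survives in the limit.)

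Finally, for the statement about the projections I would use that $g_l\circ\pi_l=\pi_{l-1}$ --- an immediate consequence of the definition of the inverse limit, with $\pi_l$ a strict $A_\infty$ morphism by Proposition \ref{invlimitalg} --- which passes to cohomology as $[g_l]\circ[\pi_l]=[\pi_{l-1}]$. Combined with Proposition \ref{canalgmor}, each $[\pi_l]\colon H^\bullet(\overline{C}_\infty)\to H^\bullet(\overline{C}_l)$ is then a unital graded algebra morphism compatible with the tower maps, and by the universal property of the inverse limit in graded $R$-algebras these assemble into a single graded algebra morphism $([\pi_l])\colon H^\bullet(\overline{C}_\infty)\to\varprojlim H^\bullet(\overline{C}_l)$, which would complete the argument.
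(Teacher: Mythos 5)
Your proposal is correct and follows exactly the route the paper intends: the corollary is left without explicit proof there precisely because it is the immediate combination of Proposition \ref{canalgmor} (multiplicativity and unitality of $[g_l]$ and $[\pi_l]$, which in turn rests on Remark \ref{alglowdeg}) with the entrywise inverse-limit formalism of Proposition \ref{invlimmod} and the compatibility $g_l\circ\pi_l=\pi_{l-1}$. Nothing further is needed.
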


\subsubsection{$A_\infty$ tri-modules} We then consider the case of $A_\infty$ tri-modules:

\begin{setup} \label{trimodinvlim}
There are three sequences of gapped unital $A_\infty$ algebra morphisms  
 $$(\dots \leftarrow C_{l-1} \xleftarrow{g_l} C_{l}\leftarrow  \dots); (\dots \xleftarrow{g'_l} C'_{l}\leftarrow  \dots); (\dots \xleftarrow{g''_l} C''_{l}\leftarrow  \dots)$$
with inverse limits $C_\infty, C'_\infty, C''_\infty$ respectively.

 Also, there is a sequence of $\mathbb{G}$-gapped unital strict $A_\infty$ tri-module morphisms 
 $D_0 \xleftarrow{f_1} D_1\xleftarrow{f_2} \dots \leftarrow D_{l-1} \xleftarrow{f_l} D_l\leftarrow  \dots$
 with the inverse limit $D_\infty$.\\
For each $0\leq l \leq \infty$, we apply Corollary \ref{canonicalalg} to $C_l$ to obtain a strong contraction $(i_l, p_l, h_l)$ and a canonical $A_\infty$ algebra $H(C_l)$. Similarly for $C'_l$ and $C''_l$. Also, we apply Corollary \ref{canonicaltrimod} to $D_l$ to obtain a strong contraction $(i^D_l, p^D_l, h^D_l)$ and a canonical $A_\infty$ tri-module $(H^\bullet(\overline{D}_l, \overline{n}^{(l)}_{0, 0, 0}; \Lambda_0), \{n^{(l), H}_{k'', k', k}\})$.
\end{setup}

Again, these canonical $A_\infty$ tri-modules are a priori unrelated to each other. However, we have the following: 

\begin{prop} \label{cantrimodmor}
For each $l\in\mathbb{Z}_{>0}$, the induced cohomological maps 
$$ H(\overline{D}_{l-1})\xleftarrow{[f_l]}H(\overline{D}_{l})\xleftarrow{[\pi^D_l]}H(\overline{D}_\infty)$$
are unital tri-module morphisms w.r.t. $(H^\bullet(\overline{D}_l), \overline{n}^{(l), H}_{1, 0, 0}, \overline{n}^{(l), H}_{0, 1, 0}, \overline{n}^{(l), H}_{0, 0, 1})$ (along the algebra morphisms $([g''_l], [g'_l], [g_l])$ and $([\pi''_{l}],[\pi'_{l}], [\pi_{l}])$ from Proposition \ref{canalgmor}).
\end{prop}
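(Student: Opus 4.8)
The plan is to obtain this in exactly the same way Proposition \ref{canalgmor} was obtained from Remark \ref{alglowdeg}: the degree $(1,0,0)$, $(0,1,0)$ and $(0,0,1)$ parts of a canonical $A_\infty$ tri-module structure are \emph{intrinsic} — they are the classical cohomological module actions — so a strict $A_\infty$ tri-module morphism automatically induces a morphism of these classical structures. Fix $l\in\mathbb{Z}_{>0}$. First I would recall from Remark \ref{trimodlowdeg} that, for each $0\le l\le\infty$, the operators $\overline{n}^{(l),H}_{1,0,0}$, $\overline{n}^{(l),H}_{0,1,0}$, $\overline{n}^{(l),H}_{0,0,1}$ are precisely the cohomological left-$H(\overline{C}''_l)$, right-$H(\overline{C}'_l)$, right-$H(\overline{C}_l)$ actions on $H(\overline{D}_l)=H(\overline{D}_l,\overline{n}^{(l)}_{0,0,0})$ induced from $\overline{n}^{(l)}_{1,0,0}$, $\overline{n}^{(l)}_{0,1,0}$, $\overline{n}^{(l)}_{0,0,1}$; in particular they do not depend on the strong contractions chosen in Setup \ref{trimodinvlim}. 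Similarly, by Remark \ref{alglowdeg} and Proposition \ref{canalgmor}, the maps $[g_l],[g'_l],[g''_l]$ (and $[\pi_l],[\pi'_l],[\pi''_l]$) are the genuine cohomology ring homomorphisms. Hence it suffices to check that $[f_l]$ intertwines these classical actions along $([g''_l],[g'_l],[g_l])$, and likewise for $[\pi^D_l]$ along $([\pi''_l],[\pi'_l],[\pi_l])$.

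The next step is a reduction modulo $\Lambda_+$. Since $f_l\colon D_l\to D_{l-1}$ is a $\mathbb{G}$-gapped unital \emph{strict} $A_\infty$ tri-module morphism over the $\mathbb{G}$-gapped strict $A_\infty$ algebra morphisms $g''_l,g'_l,g_l$, its $R$-reduction $\overline{f}_l\coloneqq (f_l)_{0,0,0,\beta_0}$ is, by definition of $\mathbb{G}$-gappedness, a strict (classical) $A_\infty$ tri-module morphism over $(\overline{g}''_l,\overline{g}'_l,\overline{g}_l)$. For a strict morphism the defining relations degenerate: unwinding them in the relevant low degrees shows that $\overline{f}_l$ is a chain map for the differentials $\overline{n}^{(\bullet)}_{0,0,0}$ satisfying $\overline{f}_l(\overline{n}^{(l)}_{1,0,0}(x'';y))=\overline{n}^{(l-1)}_{1,0,0}(\overline{g}''_l(x'');\overline{f}_l(y))$ on the nose (up to the universal Koszul sign, which is harmless), and analogously for the $(0,1,0)$- and $(0,0,1)$-actions along $\overline{g}'_l$ and $\overline{g}_l$. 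Passing to cohomology and invoking the identifications of the previous paragraph, this says exactly that $[f_l]\colon H(\overline{D}_l)\to H(\overline{D}_{l-1})$ is a tri-module morphism with respect to $(\overline{n}^{(\bullet),H}_{1,0,0},\overline{n}^{(\bullet),H}_{0,1,0},\overline{n}^{(\bullet),H}_{0,0,1})$ and along $([g''_l],[g'_l],[g_l])$; unitality (compatibility with the units $[e_l],[e'_l],[e''_l]$ of the canonical models, which are the classes of the strict units) follows from unitality of $f_l,g_l,g'_l,g''_l$. The same argument applies verbatim to $\pi^D_l\colon D_\infty\to D_l$, which is a $\mathbb{G}$-gapped unital strict $A_\infty$ tri-module morphism over $(\pi''_l,\pi'_l,\pi_l)$ by Proposition \ref{invlimittrimod}.

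I do not expect a genuine obstacle here: the statement is a formal consequence of Remarks \ref{alglowdeg} and \ref{trimodlowdeg} together with the definitional fact that the $R$-reduction of a $\mathbb{G}$-gapped strict $A_\infty$ tri-module morphism is a classical tri-module morphism. The only point that needs some care is bookkeeping — keeping the left/right slots $(1,0,0)$, $(0,1,0)$, $(0,0,1)$ aligned across $f_l$, the algebra morphisms $g''_l,g'_l,g_l$, and the three module actions, and checking that the units of the canonical models really are $[e_l]$, $[e'_l]$, $[e''_l]$ so that the adjective ``unital'' is justified. If one wished to avoid even this, one could phrase the argument via functoriality of the canonical-model construction restricted to low-degree data, but the direct $R$-reduction argument above is the most economical.
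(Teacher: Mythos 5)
Your argument is correct and is exactly the route the paper takes: the paper's proof of Proposition \ref{cantrimodmor} simply cites Remark \ref{trimodlowdeg}, and your write-up is a fleshed-out version of that one-line proof (identify the low-degree canonical-model operations with the classical cohomological actions, then note that the $R$-reduction of a strict gapped tri-module morphism intertwines them). No gaps.
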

\begin{proof}
    This follows immediately from Remark \ref{trimodlowdeg}.
\end{proof}

\begin{corollary}
    The induced cohomological sequence $$H(\overline{D}_{0}) \xleftarrow{[f_1]} H(\overline{D}_{1})\xleftarrow{[f_2]} \dots \leftarrow H(\overline{D}_{l-1}) \xleftarrow{[f_l]} H(\overline{D}_{l})\leftarrow  \dots$$ 
    is a unital left $(H(\overline{C}''_{l}))$, \\
    right $((H(\overline{C}'_{l})), (H(\overline{C}_{l})))$ tri-module, inducing the inverse limit $\varprojlim H^\bullet(\overline{D}_l)$ as a unital left $\varprojlim H(\overline{C}''_{l})$, right $(\varprojlim H(\overline{C}'_{l}), \varprojlim H(\overline{C}_{l}))$ tri-module.\\
    
    Moreover, the projection maps $\pi^D_l$  induces   $$([\pi^D_l]): H^\bullet( \overline{D}_\infty) \rightarrow \varprojlim H^\bullet(\overline{D}_l)$$ as a tri-module morphism along $(([\pi''_{l}]),([\pi'_{l}]), ([\pi_{l}]))$.
\end{corollary}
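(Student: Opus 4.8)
The plan is to deduce the corollary from the tower-level compatibilities recorded in Proposition~\ref{cantrimodmor}, together with the tri-module version of Proposition~\ref{invlimmod}; no new $A_\infty$ computation is required, only bookkeeping. First I would check that the hypotheses of Proposition~\ref{invlimmod} (in the tri-module form indicated by the remarks following it) are in place. By Corollary~\ref{canonicaltrimod} each $H(\overline{D}_l)$ is a genuine unital tri-module over the graded $R$-algebras $H(\overline{C}''_l),H(\overline{C}'_l),H(\overline{C}_l)$, with the three actions $\overline{n}^{(l),H}_{1,0,0},\overline{n}^{(l),H}_{0,1,0},\overline{n}^{(l),H}_{0,0,1}$, and by the preceding corollary (the algebra analogue of the present statement) these three sequences of algebras are towers of graded $R$-algebras. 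Proposition~\ref{cantrimodmor} says that each transition map $[f_l]:H(\overline{D}_l)\to H(\overline{D}_{l-1})$ is a degree-$0$ unital tri-module morphism along $([g''_l],[g'_l],[g_l])$; this is exactly the data of a tower of tri-modules over towers of algebras.

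Next I would invoke the tri-module form of Proposition~\ref{invlimmod} to equip $\varprojlim H^\bullet(\overline{D}_l)$ with the entrywise tri-module structure over $\varprojlim H(\overline{C}''_l),\varprojlim H(\overline{C}'_l),\varprojlim H(\overline{C}_l)$, namely $\overline{n}^H_{1,0,0}((x''_l);(y_l)):=(\overline{n}^{(l),H}_{1,0,0}(x''_l;y_l))$ and similarly for the two right actions. The tri-module axioms and the unital relations hold because they hold in every entry, the unit of $\varprojlim H(\overline{C}''_l)$ being $([e''_l])$ and likewise for the other two; this gives the first assertion.

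For the ``moreover'' part I would use that $\pi^D_l:D_\infty\to D_l$ is a strict unital $A_\infty$ tri-module morphism along $(\pi_l,\pi'_l,\pi''_l)$ (Proposition~\ref{invlimittrimod}), so that by the second half of Proposition~\ref{cantrimodmor} each $[\pi^D_l]:H^\bullet(\overline{D}_\infty)\to H^\bullet(\overline{D}_l)$ is a unital tri-module morphism along $([\pi''_l],[\pi'_l],[\pi_l])$. Since $f_l\circ\pi^D_l=\pi^D_{l-1}$, these assemble into a single map $([\pi^D_l]):H^\bullet(\overline{D}_\infty)\to\varprojlim H^\bullet(\overline{D}_l)$, and its being a tri-module morphism along the algebra morphisms $([\pi''_l]):H^\bullet(\overline{C}''_\infty)\to\varprojlim H^\bullet(\overline{C}''_l)$, etc. (which are algebra morphisms by the preceding corollary), is checked entrywise: the $l$-th component of $([\pi^D_l])(\overline{n}^{(\infty),H}_{1,0,0}(x'';y))$ equals $[\pi^D_l](\overline{n}^{(\infty),H}_{1,0,0}(x'';y))=\overline{n}^{(l),H}_{1,0,0}([\pi''_l](x'');[\pi^D_l](y))$, which is the $l$-th component of $\overline{n}^H_{1,0,0}(([\pi''_l])(x'');([\pi^D_l])(y))$, and likewise for the two right actions.

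I do not expect a genuine obstacle; the delicate points are purely organizational. The one thing to be careful about is not to conflate the canonical model $H^\bullet(\overline{D}_\infty)$ of the inverse limit $D_\infty$ with the inverse limit $\varprojlim H^\bullet(\overline{D}_l)$ of the canonical models: these are a priori distinct objects, linked by $([\pi^D_l])$, which in general need not be an isomorphism here (that it is one under homological stability and the Mittag--Leffler condition is the separate content of Proposition~\ref{homstabml}). The other is simply to keep the three towers $(g''_l),(g'_l),(g_l)$ and the three projections $(\pi''_l),(\pi'_l),(\pi_l)$ consistently paired with their respective actions throughout.
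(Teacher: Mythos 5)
Your proposal is correct and matches the paper's (essentially unwritten) argument: the corollary is a direct assembly of Proposition \ref{cantrimodmor} with the tri-module form of Proposition \ref{invlimmod}, exactly as you describe. Your closing caution about not conflating $H^\bullet(\overline{D}_\infty)$ with $\varprojlim H^\bullet(\overline{D}_l)$ is also the right point to flag, since their identification is deferred to the homological stability and Mittag--Leffler hypotheses later on.
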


In particular, we have the following commutative diagrams:

\begin{corollary}\label{cyclicdiagram}
    Fix an element $\overline{\mathbf{1}}_\infty = (\overline{\mathbf{1}}_l)\in H^0(\overline{D}_\infty)$, then for each $l\in\mathbb{Z}_{>0}$, the following commutative diagrams hold:
    \begin{equation}
	\begin{tikzcd}
		&H^\bullet(\overline{C}''_{l-1}) \arrow[d, swap, "\overline{n}^{(l-1), H}_{1, 0, 0}(-; \overline{\mathbf{1}}_{l-1})"] 
		&H^\bullet(\overline{C}''_{l}) \arrow[l, swap, "\mathrm{[}g''_l\mathrm{]}"]\arrow{d}{\overline{n}^{(l), H}_{1, 0, 0}(-; \overline{\mathbf{1}}_{l})}&\varprojlim H^\bullet(\overline{C}''_{l}) \arrow[l, swap, "\pi^{H''}_l"]\arrow{d}{(\overline{n}^{(l), H}_{1, 0, 0}(-; \overline{\mathbf{1}}_{l}))}&H^\bullet(\overline{C}''_{\infty}) \arrow[l, swap, "(\mathrm{[}\pi''_{l}\mathrm{]})"]\arrow{d}{\overline{n}^{(\infty), H}_{1, 0, 0}(-; \overline{\mathbf{1}}_{\infty})}\\
		&H^\bullet(\overline{D}_{l-1})
     &\arrow[l, swap, "\mathrm{[}f_l\mathrm{]}"] H^\bullet(\overline{D}_{l})&\arrow[l, swap, "\pi^{H_D}_l"] \varprojlim H^\bullet(\overline{D}_{l})&H^\bullet(\overline{D}_{\infty}) \arrow[l, swap, "(\mathrm{[}\pi^D_{l}\mathrm{]})"]
	\end{tikzcd}
\end{equation}

Similarly for $H(\overline{C}'_l)$ and $H(\overline{C}_l)$.
\end{corollary}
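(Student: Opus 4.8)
The plan is to verify the three squares in the displayed diagram one at a time; each will follow immediately from Corollary~\ref{cantrimodmor} once the inverse‑limit bookkeeping is spelled out, so no genuinely new ingredient is needed. The one preliminary point I would settle first is the meaning of $\overline{\mathbf{1}}_\infty=(\overline{\mathbf{1}}_l)$: it records that $\overline{\mathbf{1}}_l\coloneqq[\pi^D_l](\overline{\mathbf{1}}_\infty)$, where $\pi^D_l\colon D_\infty\to D_l$ are the projections from Proposition~\ref{invlimittrimod}. Since $\pi^D_{l-1}=f_l\circ\pi^D_l$, passing to induced maps on cohomology gives $[\pi^D_{l-1}]=[f_l]\circ[\pi^D_l]$, hence $[f_l](\overline{\mathbf{1}}_l)=\overline{\mathbf{1}}_{l-1}$ for every $l>0$: the $\overline{\mathbf{1}}_l$ form a compatible system. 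I would also recall, from Remark~\ref{trimodlowdeg}, that $\overline{n}^{(l),H}_{1,0,0}$, $\overline{n}^{(l),H}_{0,1,0}$, $\overline{n}^{(l),H}_{0,0,1}$ are honest (associative up to sign) module actions of the cohomology rings, independent of the choice of strong contraction, so that a ``unital tri‑module morphism'' in the sense of Corollary~\ref{cantrimodmor} is simply a linear map intertwining these three actions along the prescribed ring morphisms, with no higher terms to track.

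For the leftmost square, I would invoke Corollary~\ref{cantrimodmor}: $[f_l]\colon H^\bullet(\overline{D}_l)\to H^\bullet(\overline{D}_{l-1})$ intertwines the left actions $\overline{n}^{(l),H}_{1,0,0}$ of $H^\bullet(\overline{C}''_l)$ and $\overline{n}^{(l-1),H}_{1,0,0}$ of $H^\bullet(\overline{C}''_{l-1})$ along the ring morphism $[g''_l]$. Evaluating this on $\overline{\mathbf{1}}_l$ gives, for all $x''\in H^\bullet(\overline{C}''_l)$,
\begin{equation*}
[f_l]\bigl(\overline{n}^{(l),H}_{1,0,0}(x'';\overline{\mathbf{1}}_l)\bigr)=\overline{n}^{(l-1),H}_{1,0,0}\bigl([g''_l](x'');[f_l](\overline{\mathbf{1}}_l)\bigr)=\overline{n}^{(l-1),H}_{1,0,0}\bigl([g''_l](x'');\overline{\mathbf{1}}_{l-1}\bigr),
\end{equation*}
which is precisely commutativity of the left square. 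The very same identity shows that the middle vertical arrow $(\overline{n}^{(l),H}_{1,0,0}(-;\overline{\mathbf{1}}_l))$ is a well‑defined map $\varprojlim H^\bullet(\overline{C}''_l)\to\varprojlim H^\bullet(\overline{D}_l)$, since it carries a compatible tuple to a compatible tuple; and the middle square then commutes tautologically, because $\pi^{H''}_l$ and $\pi^{H_D}_l$ are the projections onto the $l$‑th entry while the middle arrow is defined entrywise.

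For the rightmost square, I would repeat the argument with $[\pi^D_l]\colon H^\bullet(\overline{D}_\infty)\to H^\bullet(\overline{D}_l)$, which by Corollary~\ref{cantrimodmor} intertwines $\overline{n}^{(\infty),H}_{1,0,0}$ and $\overline{n}^{(l),H}_{1,0,0}$ along $[\pi''_l]$: for all $x''\in H^\bullet(\overline{C}''_\infty)$,
\begin{equation*}
[\pi^D_l]\bigl(\overline{n}^{(\infty),H}_{1,0,0}(x'';\overline{\mathbf{1}}_\infty)\bigr)=\overline{n}^{(l),H}_{1,0,0}\bigl([\pi''_l](x'');[\pi^D_l](\overline{\mathbf{1}}_\infty)\bigr)=\overline{n}^{(l),H}_{1,0,0}\bigl([\pi''_l](x'');\overline{\mathbf{1}}_l\bigr).
\end{equation*}
Since two maps into $\varprojlim H^\bullet(\overline{D}_l)$ agree as soon as their composites with every projection $\pi^{H_D}_l$ agree, and since $\pi^{H_D}_l\circ([\pi^D_l])=[\pi^D_l]$ and $\pi^{H''}_l\circ([\pi''_l])=[\pi''_l]$, this identity is exactly commutativity of the right square. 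The diagrams for $H(\overline{C}'_l)$ and $H(\overline{C}_l)$ would be obtained verbatim, replacing the left action by the right actions $\overline{n}^{(l),H}_{0,1,0}(\overline{\mathbf{1}}_l;-)$ and $\overline{n}^{(l),H}_{0,0,1}(\overline{\mathbf{1}}_l;-)$ and invoking the corresponding clauses of Corollary~\ref{cantrimodmor}.

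As the whole statement is bookkeeping on top of Corollary~\ref{cantrimodmor}, there is no real obstacle here. The only subtlety worth flagging is the role of the identification $\overline{\mathbf{1}}_l=[\pi^D_l](\overline{\mathbf{1}}_\infty)$ together with the compatibility $[f_l](\overline{\mathbf{1}}_l)=\overline{\mathbf{1}}_{l-1}$ it yields: it is this that both makes the middle column well defined and renders harmless the fact that $H^\bullet(\overline{D}_\infty)$ need not coincide with $\varprojlim H^\bullet(\overline{D}_l)$, everything being phrased through the canonical comparison map rather than through an isomorphism.
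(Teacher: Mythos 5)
Your proposal is correct and is essentially the argument the paper intends: the corollary is stated as an immediate consequence of Proposition \ref{cantrimodmor} (the statement that $[f_l]$ and $[\pi^D_l]$ intertwine the cohomological module actions along $[g''_l]$ and $[\pi''_l]$), and your verification of the three squares, together with the observation that $\overline{\mathbf{1}}_l=[\pi^D_l](\overline{\mathbf{1}}_\infty)$ and $[f_l](\overline{\mathbf{1}}_l)=\overline{\mathbf{1}}_{l-1}$ make the middle column well defined, is exactly the bookkeeping the paper leaves implicit. No gaps.
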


\subsubsection{Cyclic Property}
We construct a cyclic element via Corollary \ref{cyclicdiagram}:

\begin{prop} \label{invlimcyclic}
    Under the setup \ref{trimodinvlim}, assume further that the sequences 
    $(\overline{C}''_l)$,  $(\overline{D}_l)$ satisfy homological stability and Mittag-Leffler condition, then for any $\mathbb{G}$-gapped element $\mathbf{1}_\infty = (\mathbf{1}_l)\in H^0(D_\infty)$ satisfying the following property: 
    \begin{itemize}
        \item There exists a sequence of integers $(r(l))_l$, increasing to $+\infty$ as $l\rightarrow +\infty$, such that for each $l$, the following is an isomorphism for all $m\leq r(l)$:
        $$\overline{n}^{(l), H}_{1,0,0}(-; \overline{\mathbf{1}}_l): H^m(\overline{C}''_{l}) \rightarrow H^m(\overline{D}_{l}).$$ 
    \end{itemize}
    Then $\mathbf{1}_\infty$ is left cyclic.
\end{prop}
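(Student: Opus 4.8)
The plan is to check directly the two conditions defining a left cyclic element for the canonical model $H(D_\infty)$, viewed as a left-$H(C''_\infty)$, right-$(H(C'_\infty),H(C_\infty))$ tri-module. Condition (1), that $\overline{n}^{(\infty),H}_{0,0,0}(\overline{\mathbf{1}}_\infty)=0$, is automatic, since by the construction of the canonical tri-module (Corollary \ref{canonicaltrimod}) the operator $\overline{n}^{(\infty),H}_{0,0,0}$ is the zero differential of the cohomology $H^\bullet(\overline{D}_\infty,\overline{n}^{(\infty)}_{0,0,0})$. So the task reduces to condition (2); by $\mathbb{G}$-gappedness this is equivalent to its $R$-reduction (2)$'$, namely that
\[
\overline{n}^{(\infty),H}_{1,0,0}(-;\overline{\mathbf{1}}_\infty)\colon H^\bullet(\overline{C}''_\infty)\longrightarrow H^\bullet(\overline{D}_\infty)
\]
is an isomorphism of $\Z$-graded $R$-modules.

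The first step is to replace the two sides by inverse limits computed term by term. Writing $\overline{C}''_\infty=\varprojlim_l\overline{C}''_l$ and $\overline{D}_\infty=\varprojlim_l\overline{D}_l$, with the differentials and structure maps inherited from the inverse limit constructions (Propositions \ref{invlimitalg} and \ref{invlimittrimod}), the hypotheses of homological stability and the Mittag--Leffler condition on $(\overline{C}''_l)$ and $(\overline{D}_l)$ let me apply Proposition \ref{homstabml} to conclude that the projections
\[
([\pi''_l])\colon H^\bullet(\overline{C}''_\infty)\xrightarrow{\ \sim\ }\varprojlim_l H^\bullet(\overline{C}''_l),\qquad
([\pi^D_l])\colon H^\bullet(\overline{D}_\infty)\xrightarrow{\ \sim\ }\varprojlim_l H^\bullet(\overline{D}_l)
\]
are isomorphisms. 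Because $\overline{\mathbf{1}}_l=[\pi^D_l](\overline{\mathbf{1}}_\infty)$, the family $(\overline{\mathbf{1}}_l)$ is compatible, so the rightmost commuting square of Corollary \ref{cyclicdiagram} identifies $\overline{n}^{(\infty),H}_{1,0,0}(-;\overline{\mathbf{1}}_\infty)$, up to these two isomorphisms, with the entrywise map $\bigl(\overline{n}^{(l),H}_{1,0,0}(-;\overline{\mathbf{1}}_l)\bigr)\colon\varprojlim_l H^\bullet(\overline{C}''_l)\to\varprojlim_l H^\bullet(\overline{D}_l)$. It therefore suffices to show that this entrywise map is an isomorphism.

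I would verify that degree by degree. Fix $m\geq 0$; since $r(l)\to+\infty$ there is $N=N(m)$ with $r(l)\geq m$ for all $l\geq N$, so by hypothesis $\overline{n}^{(l),H}_{1,0,0}(-;\overline{\mathbf{1}}_l)\colon H^m(\overline{C}''_l)\to H^m(\overline{D}_l)$ is an isomorphism for every $l\geq N$. Since the inverse limit of a tower indexed by $\mathbb{Z}_{\geq 0}$ coincides with that of its cofinal tail indexed by $\{\,l\geq N\,\}$, on which the morphism of towers $(\overline{n}^{(l),H}_{1,0,0}(-;\overline{\mathbf{1}}_l))_l$ is a term-wise isomorphism, the induced map $\varprojlim_l H^m(\overline{C}''_l)\to\varprojlim_l H^m(\overline{D}_l)$ is an isomorphism. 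Summing over $m$ finishes the argument, so $\mathbf{1}_\infty$ is left cyclic.

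The step I expect to demand the most care is the middle one: making the identifications $\overline{C}''_\infty\cong\varprojlim_l\overline{C}''_l$ and $\overline{D}_\infty\cong\varprojlim_l\overline{D}_l$ precise at the level of differentials and tri-module operators, so that Proposition \ref{homstabml} and the square of Corollary \ref{cyclicdiagram} genuinely apply to the objects in play; once those compatibilities are pinned down, the remaining reasoning is purely formal.
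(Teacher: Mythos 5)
Your proof is correct and follows essentially the same route as the paper's: both reduce, via Proposition \ref{homstabml} and the commutative square of Corollary \ref{cyclicdiagram}, to the hypothesis that $\overline{n}^{(l),H}_{1,0,0}(-;\overline{\mathbf{1}}_l)$ is an isomorphism in degree $m$ once $r(l)\geq m$. The only cosmetic difference is the final step, where the paper evaluates at a single sufficiently large $l$ (using homological stability to make the projection $\pi^{H}_l$ from the inverse limit an isomorphism in degree $m$), whereas you pass to the cofinal tail $\{l\geq N(m)\}$ and use that a term-wise isomorphism of towers induces an isomorphism of inverse limits — both are valid.
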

\begin{proof}
    It suffices to show that for any $m\in\mathbb{Z}$, $\overline{n}^{(\infty), H}_{1,0,0}(-; \overline{\mathbf{1}}_\infty): H^{m}(\overline{C}'') \rightarrow H^{m}(\overline{D})$ is an isomorphism of $R$-modules. Apply Proposition \ref{homstabml} to $(\overline{C}''_l)$ and $(\overline{D}_l)$ imply that $([\pi''_l])$ and  $([\pi^D_l])$ are isomorphisms; Moreover, by homological stability, there exists $l_0$ such that for all $l\geq l_0$, both $\pi^{H''}_l$ and $\pi^{H_D}_l$ are isomorphisms at degree $m$; then choose $l$ sufficiently large such that $m \leq r(l)$, hence $\overline{n}^{(l), H}_{1, 0, 0}$ is an isomorphism at degree $m$. Therefore, the result follows from Corollary \ref{cyclicdiagram} at degree $m$. 
\end{proof}

\subsection{Equivariant de Rham Model}
\label{subsec-equivfloercpx}
In this subsection, we define the equivariant de Rham model $CF_{G}(L)$ of a (closed, connected, $G$-equivariantly $V$-relatively spin) $G$-invariant Lagrangian $L$ in a Hamiltonian space $((Y, \omega_Y), G, \mu_Y)$. 

Formally, we define it as the (canonical model of the) Floer complex of its Borel space $CF(L_G)$; in practice, we first consider the sequence of Floer complexes of its approximation $\{CF(L_{l})\}_{l \geq 0}$ and its inverse limit 
$\varprojlim  CF(L_{l})$.
In order to endow it with a natural $A_\infty$ structure, by Proposition \ref{invlimitalg}, it suffices show that $\{(CF(L_{l}), \{m^{(l)}_k\}, e_l)\}_{l \geq 0}$ forms a sequence of unital $A_\infty$ algebras with (strict) $A_\infty$ algebra morphisms $g_l$:
$$ CF(L_{0}) \xleftarrow{g_1} CF(L_{1})\xleftarrow{g_2} \dots \leftarrow CF(L_{l-1}) \xleftarrow{g_l} CF(L_{l})\leftarrow  \dots$$

This motivates the following proposition:

\begin{prop} \label{invlimitlag}
    For each $l\in \mathbb{Z}_{>0}$, the pullback of the inclusion map $g_l\coloneqq \iota^*_l: CF(L_{l-1}) \leftarrow CF(L_{l})$ is a $\mathbb{G}$-gapped unital strict filtered $A_\infty$ algebra morphism, i.e.
    $$g_l(e_l)=e_{l-1},$$
    $$m^{(l-1)}_{k}(g_l(x_1), \dots, g_l(x_{k})) = g_l(m^{(l)}_{k}(x_1, \dots, x_{k})),$$
    for any $x_1,\dots,x_{k} \in CF(L_{l})$.
\end{prop}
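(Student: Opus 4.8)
The plan is to reduce everything to one geometric observation: inside the moduli of stable $J_l$-holomorphic discs bounding $L_l$, the sublocus of discs whose boundary meets $L_{l-1}$ coincides with the moduli of discs bounding $L_{l-1}$ inside $Y_{l-1}$. Granting this (and an inductive compatibility of the virtual perturbation data), the identity $\iota_l^*\circ m^{(l)}_k = m^{(l-1)}_k\circ(\iota_l^*)^{\otimes k}$ becomes a formal consequence of the base-change property of integration along the fibre, while unitality and strictness are immediate.

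\textbf{Geometric step.} First I would prove: for every stable $J_l$-holomorphic map $u$ from a genus-$0$ bordered surface into $Y_l$ with boundary on $L_l$, the composite $\pi_l\circ u$ is constant. Indeed, $\pi_l$ is $(J_l,J_{B_l})$-holomorphic, so $\pi_l\circ u$ is a stable holomorphic map into $(T^*BG_l, 0_{BG_l})$; its boundary lies in the zero section, on which the canonical Liouville form vanishes, so by Stokes each disc component has zero $\pi_l^*\omega_{can}$-area, and $T^*BG_l$ being exact it carries no nonconstant holomorphic spheres; since the surface is connected, $\pi_l\circ u\equiv b$ for some $b\in 0_{BG_l}$. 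If moreover a boundary point of $u$ lies in $L_{l-1}$, then $b\in\pi_l(L_{l-1})=0_{BG_{l-1}}$, hence $u$ has image in $\pi_l^{-1}(0_{BG_{l-1}})$. By construction $\iota_l$ identifies $Y_{l-1}$ with $\pi_l^{-1}(T^*BG_{l-1})$ and $L_{l-1}$ with $L_l\cap\pi_l^{-1}(0_{BG_{l-1}})$ (this is the content of \eqref{egbgexactsq}, \eqref{lgexactsq}, and \eqref{lnfib}), and $J_l$ restricts to $J_{l-1}$ under $\iota_l$; therefore $u=\iota_l\circ u'$ for a unique stable $J_{l-1}$-holomorphic disc $u'$ bounding $L_{l-1}$, in a class $\beta'$ with $\iota_{l*}\beta'=\beta$ and $E(\beta')=E(\beta)$, all boundary evaluations being preserved. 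Conversely, post-composition with $\iota_l$ sends discs bounding $L_{l-1}$ to such discs bounding $L_l$. This yields, for each $\beta$, an identification of Kuranishi spaces
$$\mathcal{M}_{k+1}(L_l;\beta)\,\times_{ev_0,\,L_l}\,L_{l-1}\ \cong\ \coprod_{\iota_{l*}\beta'=\beta}\mathcal{M}_{k+1}(L_{l-1};\beta'),$$
intertwining all evaluation maps via $ev^{(l)}_j\circ\iota_l=\iota_l\circ ev^{(l-1)}_j$ on this sublocus.

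\textbf{Algebraic step.} I would choose the Kuranishi structures and compatible systems of CF-perturbations on $\{\mathcal{M}_{k+1}(L_l;\beta)\}$ inductively in $l$, so that for every $l$ they restrict, under the identification above, to those already fixed for $\{\mathcal{M}_{k+1}(L_{l-1};\beta')\}$; this is possible because $ev^{(l)}_0$ is weakly submersive and $\iota_l(L_{l-1})\subseteq L_l$ is a submanifold, so the fibre product is again a Kuranishi space of the expected dimension. With $m^{(l)}_k(\alpha_1,\dots,\alpha_k)=\sum_{\beta}T^{E(\beta)}\,(ev^{(l)}_0)_!\big(ev^{(l)*}_1\alpha_1\wedge\cdots\wedge ev^{(l)*}_k\alpha_k;\widehat{\mathfrak{S}}^\epsilon\big)$, the projection (base-change) formula for integration along the fibre of a weakly submersive strongly smooth map over the submanifold $\iota_l(L_{l-1})\subseteq L_l$ gives
$$\iota_l^*\big(m^{(l)}_k(\alpha_1,\dots,\alpha_k)\big)=\sum_{\beta'}T^{E(\beta')}\,(ev^{(l-1)}_0)_!\big(ev^{(l-1)*}_1\iota_l^*\alpha_1\wedge\cdots\wedge ev^{(l-1)*}_k\iota_l^*\alpha_k;\widehat{\mathfrak{S}}^\epsilon\big)=m^{(l-1)}_k(\iota_l^*\alpha_1,\dots,\iota_l^*\alpha_k),$$
after re-indexing the sum by $\beta'$ using $E(\beta')=E(\beta)$; the case $k=0$ reads $\iota_l^*m^{(l)}_0(1)=m^{(l-1)}_0(1)$. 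Strictness is clear since $g_l=\iota_l^*$ is a single degree-$0$ map with no higher components; $\mathbb{G}$-gappedness follows as $\iota_l^*$ is $\Lambda_0$-linear with $\beta_0$-reduction the usual pullback of de Rham forms, and $\mathbb{G}_{L_{l-1}}\subseteq\mathbb{G}_{L_l}$ (all relevant energies lie in a common discrete submonoid); unitality $g_l(e_l)=e_{l-1}$ holds because $\iota_l^*$ sends the constant function $1$ on $L_l$ to the constant function $1$ on $L_{l-1}$.

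\textbf{Main obstacle.} The delicate point is the inductive compatibility of the virtual chains: one must check that the CF-perturbation scheme of \cite{foookuranishi,fooodisk2} on the tree-like K-system of $(Y_l,L_l)$ can be arranged so as to restrict, along the embeddings $\iota_l$, to the corresponding scheme for $(Y_{l-1},L_{l-1})$ — simultaneously for all $(k,\beta)$ and compatibly with all boundary strata — while preserving transversality. This is exactly the inductive-construction part carried out for the Borel approximations $L_l\subset Y_l$ in \cite{KLZ}; once it is in place, the identities above are purely formal.
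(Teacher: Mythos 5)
Your proposal is correct and follows essentially the same route as the paper: both hinge on showing that $\pi_l\circ u$ is constant so that the moduli spaces fiber over $0_{BG_l}$ and restrict over $L_{l-1}$ to the moduli spaces for $(Y_{l-1},L_{l-1})$, then on choosing Kuranishi structures and CF-perturbations inductively in $l$ (the paper does this via explicit product charts $U_p\times U_b$ with pulled-back obstruction bundles), and finally on the push-pull compatibility of $(ev_0)_!$ with $\iota_l^*$ from \cite{foookuranishi}. The obstacle you flag at the end is precisely the part the paper resolves with that explicit product-chart construction.
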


\begin{corollary}
    The inverse limit 
     $\varprojlim CF(L_{l})$
    has a natural $\mathbb{G}$-gapped unital $A_\infty$ algebra structure $(\varprojlim  CF(L_{l}), \{m^{(\infty)}_k\}, e)$.
\end{corollary}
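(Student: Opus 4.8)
The plan is to obtain the corollary as a direct consequence of the inverse-limit machinery already set up, namely Propositions~\ref{invlimitlag} and~\ref{invlimitalg}. First I would recall that, for each $l\geq 0$, the de Rham complex $CF(L_l)=\Omega(L_l;\Lambda_0)$ carries the unital filtered $A_\infty$ structure $(CF(L_l),\{m^{(l)}_k\},e_l)$ from Section~\ref{lft}, with $e_l$ the constant function $1$ and with gapping monoid $\mathbb{G}_{L_l}$. To invoke Proposition~\ref{invlimitalg} one needs all of these defined over a \emph{single} discrete monoid $\mathbb{G}$, so the next step is to produce one: since the almost Kähler embeddings of \eqref{yglgemb} are $J$-holomorphic and symplectic, a stable holomorphic disc in $(Y_{l-1},L_{l-1})$ includes with unchanged energy as one in $(Y_l,L_l)$, giving $\mathbb{G}^0_{L_{l-1}}\subseteq\mathbb{G}^0_{L_l}$ and hence $\mathbb{G}_{L_{l-1}}\subseteq\mathbb{G}_{L_l}$; thus $\mathbb{G}\coloneqq\bigcup_{l}\mathbb{G}_{L_l}$ is a submonoid of $\mathbb{R}_{\geq 0}$, and it is discrete (indeed it stabilizes) because disc energies factor through $H_2(Y_l,L_l)$, which is eventually constant by the homological stability of the tower $(Y_l,L_l)$ discussed in Section~\ref{subsec-invlimit}. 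Regarding each $CF(L_l)$ as $\mathbb{G}$-gapped is harmless.

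With this common $\mathbb{G}$ fixed, Proposition~\ref{invlimitlag} says exactly that the pullbacks $g_l=\iota_l^*\colon CF(L_l)\to CF(L_{l-1})$ are $\mathbb{G}$-gapped unital strict filtered $A_\infty$ morphisms, i.e. $g_l(e_l)=e_{l-1}$ and $m^{(l-1)}_k\circ g_l^{\otimes k}=g_l\circ m^{(l)}_k$ for all $k$. Hence $(CF(L_l))_{l\geq 0}$ together with the $g_l$ is precisely a tower of $\mathbb{G}$-gapped unital $A_\infty$ algebras connected by $\mathbb{G}$-gapped unital strict $A_\infty$ morphisms, which is the hypothesis of Proposition~\ref{invlimitalg}.

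Applying Proposition~\ref{invlimitalg} to this tower then yields on $\varprojlim CF(L_l)$ the $\mathbb{G}$-gapped unital $A_\infty$ structure $(\varprojlim CF(L_l),\{m^{(\infty)}_k\},e)$ with $m^{(\infty)}_k(x_1,\dots,x_k)=\bigl(m^{(l)}_k(x^{(l)}_1,\dots,x^{(l)}_k)\bigr)_{l\geq 0}$ and strict unit $e=(e_l)_{l\geq 0}$, together with the statement that each projection $\pi_l\colon\varprojlim CF(L_l)\to CF(L_l)$ is a $\mathbb{G}$-gapped unital strict $A_\infty$ morphism; the $A_\infty$ and unitality relations, as well as gappedness, all hold entrywise, so nothing further needs to be checked.

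The proof is therefore essentially bookkeeping. The only non-formal point — and the step I expect to demand the most care — is the construction of a \emph{single} discrete gapping monoid valid for all levels $l$ simultaneously; this is where one must actually use the geometry of the Borel approximation (compatibility of the almost Kähler embeddings and homological stability of $(Y_l,L_l)$), rather than pure algebra. Everything downstream of that is an immediate application of Propositions~\ref{invlimitlag} and~\ref{invlimitalg}.
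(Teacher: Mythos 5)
Your proposal is correct and follows the paper's route exactly: the corollary is the formal composite of Proposition~\ref{invlimitlag} (the pullbacks $g_l=\iota_l^*$ form a tower of $\mathbb{G}$-gapped unital strict $A_\infty$ morphisms) with Proposition~\ref{invlimitalg} (entrywise structure on the inverse limit). The only point where you diverge is the common gapping monoid: the paper does not take a union and argue stabilization, but instead has already shown (via the lemma on $(\iota_l)_*$, using that any holomorphic disc in $Y_l$ projects to a constant in $T^*BG_l$ and hence lies in a fiber $\cong Y$) that $(\iota_l)_*$ is a bijection on effective disc classes preserving energy, so $\mathbb{G}_{L_l}$ is canonically identified with $\mathbb{G}_L$ for every $l$; your appeal to ``homological stability of $H_2(Y_l,L_l)$'' is not quite the right justification, since constancy of $H_2$ alone does not control which classes are effective, but the paper's lemma supplies the needed fact.
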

\begin{defn}
\label{equivfloercpx}
    Equivariant de Rham model $(CF_{G}(L), \{m^{G}_k\}, e^G)$ is defined as a canonical model of $\varprojlim  CF(L_{l}) $ using Corollary \ref{canonicalalg}.
\end{defn}

\begin{remark}
    Basically the same statements were proved in \cite[Proposition 3.8]{KLZ} in singular/Morse models. We prove them using de Rham model.
\end{remark}

\begin{remark}
    The gapping monoid $\mathbb{G}$ will be described in the proof.
\end{remark}

\begin{remark} \label{severallag}
    More generally, for several $G$-invariant Lagrangians with pairwise clean intersections, we carry about Borel construction and inductive construction with finite-dimensional approximations as above, using Floer theory with clean intersections \cite{Fukaya-corr} for the finite-dimensional approximations. Thus, $CF_G(L^{(1)},L^{(2)})$ is defined similarly.
\end{remark}

Before proving Proposition \ref{invlimitlag}, we recall the following lemma comparing the background datum underlying the Lagrangian Floer theory of $L$ and $L_l$:

\begin{lemma}\cite[Proposition  3.1]{KLZ}; \cite[Proposition 4.7]{Cazassus}\\
The (almost K\"{a}hler) embedding $(Y,\omega, J_Y) \xrightarrow{\iota _l} (Y_l, \omega_l, J_l)$ in (\ref{ynbgnfib}) induces an isomorphism of relative homology groups
$$(\iota _l)_*: H_2(Y,L;\mathbb{Z}) \rightarrow H_2(Y_l,L_l;\mathbb{Z})$$
which respects the energy functional and the Maslov indices, i.e.
$$E(\iota _l(\beta))=E(\beta) ; MI(\iota _l(\beta))=MI(\beta). $$
In particular, the gapping monoid $\mathbb{G}_{L_l}$ is canonically identified with $\mathbb{G}_L$, which will all be denoted as $\mathbb{G}$ by abuse of notations.\\

Moreover, $(\iota _l)_*$ restricts to the subspaces of effective disk classes:
$$(\iota _l)_*: H^{\rm{eff}}_2(Y,L; J_Y) \rightarrow H^{\rm{eff}}_2(Y_l,L_l; J_l).$$
\end{lemma}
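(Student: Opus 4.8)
The plan is to read off each assertion from the symplectic fibration $(Y,\omega,J_Y)\xrightarrow{\iota_l}(Y_l,\omega_l,J_l)\xrightarrow{\pi_l}(T^*BG_l,\omega_{can})$ of (\ref{ynbgnfib}), using that $\iota_l$ is an almost-K\"ahler embedding onto a fibre, that $\pi_l$ is $(J_l,J_{B_l})$-holomorphic, and that $(T^*BG_l,\omega_{can})$ is exact with $0_{BG_l}$ an exact Lagrangian. For the topological statement, since $(T^*EG_l,\omega_{can})$ $G$-equivariantly deformation retracts onto its zero section, $(Y_l,L_l)$ deformation retracts onto $(\pi_l^{-1}(0_{BG_l}),L_l)$, i.e.\ onto the Borel pair $(Y\times_G EG_l,\,L\times_G EG_l)$, with $\iota_l$ becoming the inclusion of the fibre over a base point (this is \cite[Proposition 4.7]{Cazassus}). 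The isomorphism $(\iota_l)_*\colon H_2(Y,L;\Z)\xrightarrow{\sim}H_2(Y_l,L_l;\Z)$ then follows from the relative Serre spectral sequence of $(Y,L)\hookrightarrow(Y\times_G EG_l,L\times_G EG_l)\to BG_l$, using $H_1(BG_l)=0$ (as $G$ is connected), $H_0(Y,L)=0$ (as $Y,L$ are connected), and the high connectivity of $EG_l$, exactly as in \cite[Proposition 3.1]{KLZ}.

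Next, for the energy and Maslov compatibility I would use that (\ref{ygemb}) consists of almost-K\"ahler embeddings, so $\iota_l^*\omega_l=\omega$ and $\iota_l$ is $(J_Y,J_l)$-holomorphic. The first identity gives $E((\iota_l)_*\beta)=\int_{D^2}u^*\iota_l^*\omega_l=\int_{D^2}u^*\omega=E(\beta)$ for any smooth disc $u$ representing $\beta$. For the Maslov index, observe that $\iota_l(Y)=\pi_l^{-1}(b_0)$ is a fibre, so $\iota_l^*TY_l\cong TY\oplus(Y\times T_{b_0}(T^*BG_l))$, the second summand being trivial (pulled back from the point $b_0$), and along $L$ the Lagrangian tangent bundle $TL_l$ restricts to $TL\oplus(L\times T_{b_0}(0_{BG_l}))$, a trivial Lagrangian subbundle; hence $(\iota_l^*TY_l,\,TL_l|_L)$ differs from $(TY,TL)$ only by a trivial symplectic bundle pair, whose relative Chern number vanishes, so $MI((\iota_l)_*\beta)=MI(\beta)$. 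Once the effective classes are matched, this gives the canonical identification $\mathbb{G}_{L_l}\cong\mathbb{G}_L$.

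Finally, for the statement on effective disc classes the key point is that every $J_l$-holomorphic stable disc $v$ in $(Y_l,L_l)$ lies in a single fibre of $\pi_l$: composing with the holomorphic projection, $\pi_l\circ v$ is a $J_{B_l}$-holomorphic stable disc in $T^*BG_l$ with boundary on $0_{BG_l}$, and writing $\omega_{can}=d\lambda$ with $\lambda|_{0_{BG_l}}=0$, Stokes' theorem forces the area of every disc and sphere component of $\pi_l\circ v$ to vanish, so $\pi_l\circ v$ is constant, say equal to $b_0\in 0_{BG_l}$ (the boundary condition puts $b_0$ in $0_{BG_l}$). Hence $v$ factors through the inclusion $(\pi_l^{-1}(b_0),L_l\cap\pi_l^{-1}(b_0))\cong(Y,L)$ and descends, componentwise, to a $J_Y$-holomorphic stable disc bounding $L$; conversely $\iota_l$ sends $J_Y$-holomorphic discs to $J_l$-holomorphic ones, so $(\iota_l)_*$ restricts to a bijection $H^{\mathrm{eff}}_2(Y,L;J_Y)\to H^{\mathrm{eff}}_2(Y_l,L_l;J_l)$, and with energy preservation this gives $\mathbb{G}^0_{L_l}=\mathbb{G}^0_L$, hence $\mathbb{G}_{L_l}=\mathbb{G}_L$. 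The main obstacle is making this last step fully rigorous at the level of the compactified Kuranishi moduli spaces: one must check that the compactification $\mathcal{M}_{k+1}(L_l;\beta)$ is carried by $\iota_l$ onto $\mathcal{M}_{k+1}(L;(\iota_l)_*^{-1}\beta)$, so that not only smooth but all stable $J_l$-holomorphic discs with boundary on $L_l$ are fibrewise and descend — this is the only genuinely analytic input, and is where I would lean on \cite[Proposition 3.1]{KLZ} and \cite[Proposition 4.7]{Cazassus}.
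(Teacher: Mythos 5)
Your proposal is correct and follows essentially the same route as the paper: the topological isomorphism comes from the deformation retract of $T^*BG_l$ onto $0_{BG_l}$ together with the fibration structure of $(Y_l,L_l)$ over $(T^*BG_l,0_{BG_l})$; energy and Maslov preservation come from $\iota_l$ being an almost-K\"ahler embedding onto a fibre; and surjectivity onto effective classes is exactly the paper's argument that the $J_{B_l}$-holomorphic projection $\pi_l\circ u_l$ has zero area (by exactness of the base) and is therefore constant, so $u_l$ lies in a fibre. Your closing worry about compactified Kuranishi moduli is not needed for this lemma (which concerns only disc classes); that issue belongs to the subsequent corollary on the fibre-bundle structure of the moduli spaces.
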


\begin{proof}
The first assertion follows from the diagram (\ref{lnfib}) and the fact that $0_{BG_l}$ is a deformation retract of $T^*BG_l$; for the last assertion, since $\iota_l$ is almost K\"{a}hler, $(\iota _l)_*$ restricts to an injection $H^{\rm{eff}}_2(Y,L; J_Y) \xrightarrow{(\iota _l)_*}H^{\rm{eff}}_2(Y_l,L_l; J_l)$; also, for any $[u_l]\in H^{\rm{eff}}_2(Y_l,L_l)$, since $\pi_l$ is pseudo-holomorphic, $\pi_l \circ u_l$ is $J_{BG_l}$-holomorphic with $[\pi_l \circ u_l] = 0$, hence is constant. Therefore, $u_l$ maps into a fiber, i.e. $[u_l]\in H^{\rm{eff}}_2(Y,L)$.\\
The energy and index identities follow from $\iota_l$ being symplectic embedding.
\end{proof}

For any $\beta\in H_2(Y,L; J_Y)$ (or $H^{\rm{eff}}_2(Y,L; J_Y)$), denote its image as $\beta_l \coloneqq(\iota_l)_*(\beta)\in H_2(Y_l,L_l; J_l)$ (or $H^{\rm{eff}}_2(Y_l,L_l; J_l)$). The proof above shows the following:

\begin{corollary}   
    There exists a (topological) fiber bundle 
\begin{equation}\label{modulifibbdl}
    \mathcal{M}_{k+1}(Y,L,\beta) \rightarrow \mathcal{M}_{k+1}(Y_l,L_l,\beta_l)\xrightarrow{\pi_l}0_{BG_l},
    \end{equation}
    where $\pi_l(u_l)\in 0_{BG_l}$ is the constant determined by $\pi_l \circ u_l$.
    
\end{corollary}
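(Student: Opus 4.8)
The plan is to realise $\mathcal{M}_{k+1}(Y_l,L_l,\beta_l)$ as the associated bundle $EG_l\times_G\mathcal{M}_{k+1}(Y,L,\beta)$ over $0_{BG_l}\cong BG_l$, and then deduce local triviality from that of the finite-dimensional universal bundle $EG_l\to BG_l$.

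First I would verify that $\pi_l$ descends to a well-defined continuous map on moduli. By the argument preceding the statement, every $u_l\in\mathcal{M}_{k+1}(Y_l,L_l,\beta_l)$ — including its sphere bubbles, which are also $J_l$-holomorphic and so collapse under the $J$-holomorphic map $\pi_l$ — has image contained in a single fibre $\pi_l^{-1}(q)$; since $\partial u_l$ lies on $L_l$, which $\pi_l$ maps into $0_{BG_l}$ by the fibration \eqref{lnfib}, we have $q\in 0_{BG_l}$. The assignment $u_l\mapsto q$ equals the composition $\mathcal{M}_{k+1}(Y_l,L_l,\beta_l)\xrightarrow{\ev_0}L_l\to 0_{BG_l}$ of the continuous evaluation at the zeroth boundary marked point with the (continuous) bundle projection $L_l\to 0_{BG_l}$ of \eqref{lgbgemb}, hence is continuous and agrees with the map in the statement.

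Next I would identify the fibre. Over $0_{BG_l}$ the symplectic fibration \eqref{ynbgnfib} restricts to the Borel bundle $Y_l|_{0_{BG_l}}\cong EG_l\times_G Y$ with $L_l|_{0_{BG_l}}\cong EG_l\times_G L$, and the restriction of $J_l$ to each fibre is the $G$-invariant almost complex structure $J_Y$; this is exactly the defining feature of the finite-dimensional Borel construction. Since $L$ is $G$-invariant and $J_Y$ is $G$-invariant, the $G$-action on $Y$ induces a $G$-action on the Gromov-compactified moduli space $\mathcal{M}_{k+1}(Y,L,\beta)$ by post-composition on disks, marked points, and all bubble trees, and $(\iota_l)_*\beta=\beta_l$ restricts to $\beta$ fibrewise. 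A point of $\mathcal{M}_{k+1}(Y_l,L_l,\beta_l)$ lying over $p\in 0_{BG_l}$, together with a choice of lift $e\in EG_l$ of $p$, determines via the trivialisation $(Y_l)_p\cong Y$ given by $e$ an element of $\mathcal{M}_{k+1}(Y,L,\beta)$; changing $e$ changes this element by the free $G$-action on $EG_l$ and hence by the corresponding $G$-action on $\mathcal{M}_{k+1}(Y,L,\beta)$. This produces a homeomorphism
$$\mathcal{M}_{k+1}(Y_l,L_l,\beta_l)\;\cong\;EG_l\times_G\mathcal{M}_{k+1}(Y,L,\beta)$$
over $0_{BG_l}\cong BG_l$: surjectivity is the statement (from the preceding proof) that every effective class in $(Y_l,L_l)$ comes from a fibre, and injectivity uses freeness of the $G$-action on $EG_l$.

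Finally, since $EG_l\to BG_l$ is a locally trivial principal $G$-bundle, over any trivialising open $U\subseteq BG_l$ one has $\bigl(EG_l\times_G\mathcal{M}_{k+1}(Y,L,\beta)\bigr)|_U\cong U\times\mathcal{M}_{k+1}(Y,L,\beta)$, so the associated bundle — and therefore $\mathcal{M}_{k+1}(Y_l,L_l,\beta_l)\xrightarrow{\pi_l}0_{BG_l}$ — is a locally trivial fibre bundle with fibre $\mathcal{M}_{k+1}(Y,L,\beta)$, as claimed. The step requiring the most care is the second one: one must check that the $G$-action on $(Y,L,J_Y)$ genuinely lifts to a \emph{continuous} action on the entire Gromov-compactified moduli space and that the comparison map above is a homeomorphism rather than merely a fibrewise bijection; this is precisely where $G$-invariance of both $L$ and $J_Y$ and the naturality of Gromov compactification enter.
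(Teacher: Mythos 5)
Your proof is correct and takes essentially the same route the paper intends: the paper derives this corollary directly from the preceding lemma's observation that every $J_l$-holomorphic disc (and sphere bubble) projects to a constant under the pseudo-holomorphic fibration $\pi_l$, and then tacitly identifies the moduli space with the associated bundle $EG_l\times_G\mathcal{M}_{k+1}(Y,L,\beta)$ — exactly the identification you spell out, and the one the paper later uses when it invokes "local bundle trivialisations of \eqref{modulifibbdl}" in the proof of Proposition \ref{invlimitlag}. Your filling-in of the continuity of $\pi_l$ via $\ev_0$ and of the fibrewise comparison using $G$-invariance of $(L,J_Y)$ is the right level of detail and introduces no gap.
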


Moreover, from the diagram (\ref{yglgemb}) and the fact the inclusions are almost K\"{a}hler, we have the following sequence of (topological) fiber bundles with fiber $\mathcal{M}_{k+1}(Y,L,\beta)$:
\begin{equation}\label{moduliseq}
	\begin{tikzcd}
		&\cdots
		&\mathcal{M}_{k+1}(Y_{l-1},L_{l-1},\beta_{l-1})\arrow{r}{i_l}\arrow[d] 
		&\mathcal{M}_{k+1}(Y_l,L_l,\beta_l) \arrow[d] 
		&\cdots\\
		&\cdots  
		&0_{BG_{l-1}}\arrow[r, hook]
     &0_{BG_l} 
		&\cdots
	\end{tikzcd}
\end{equation}

It follows from (\ref{lgexactsq}) that above are pull-back diagrams, i.e.  for each $l \in \mathbb{Z}_{>0}$,
\begin{equation} \label{modulibgexactsq}
	\mathcal{M}_{k+1}(Y_{l-1},L_{l-1},\beta_{l-1}) \cong \mathcal{M}_{k+1}(Y_l,L_l,\beta_l) \times_{0_{BG_{l}} } 0_{BG_{l-1}}.
\end{equation}

Furthermore, (\ref{moduliseq}) is compatible with the evaluation maps as follows:

\begin{corollary}
\label{incluevalcompat}
For each $l\in\mathbb{Z}_{>0}$, $k\in\mathbb{Z}_{\geq0}$ and $0\leq i \leq k$, the evaluation maps at the $i$-th marked point of $\mathcal{M}_{k+1}(Y_l,L_l,\beta_l)$ are compatible with (\ref{moduliseq}), i.e. 
\begin{equation}\label{moduliev}
	\begin{tikzcd}
		&\mathcal{M}_{k+1}(Y_{l-1},L_{l-1},\beta_{l-1})\arrow{r}{i_l}\arrow{d}{ev_i} 
		&\mathcal{M}_{k+1}(Y_l,L_l,\beta_l)\arrow{d}{ev_i}\\
		&L_{l-1}\arrow[r, hook]
     &L_l
	\end{tikzcd}
\end{equation}
\end{corollary}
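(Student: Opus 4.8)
The plan is to unwind how the inclusion $i_l$ of (\ref{moduliseq}) acts on underlying stable maps; once that is pinned down, the commutativity of (\ref{moduliev}) is essentially tautological, and only a density/continuity argument is needed to pass from the open stratum to the compactified moduli space.

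First I would recall that $i_l$ is induced by the almost K\"ahler embedding $Y_{l-1}\hookrightarrow Y_l$ of (\ref{ygemb}), which restricts to the embedding $L_{l-1}\hookrightarrow L_l$ of (\ref{yglgemb}). Concretely, by the discussion surrounding (\ref{modulifibbdl}) and (\ref{modulibgexactsq}), a point of $\mathcal{M}_{k+1}(Y_{l-1},L_{l-1},\beta_{l-1})$ is represented by a stable $J_{l-1}$-holomorphic map $u$ from a bordered nodal Riemann surface $(\Sigma,\partial\Sigma)$ into $(Y_{l-1},L_{l-1})$ whose image lies in a single fiber of $\pi_{l-1}$ over a point of $0_{BG_{l-1}}$, together with boundary marked points $(z_0,\ldots,z_k)$, and $i_l$ sends it to the composite of $u$ with the embedding $Y_{l-1}\hookrightarrow Y_l$, retaining the same domain and the same marked points. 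This composite is $J_l$-holomorphic since the embedding is $(J_{l-1},J_l)$-holomorphic, it satisfies the Lagrangian boundary condition in $L_l$, it represents the class $\beta_l=(\iota_l)_*\beta_{l-1}$, and stability is preserved because the embedding is injective; this prescription is continuous, compatible with nodal degenerations, and the Kuranishi structures on the tower were chosen so that it descends to a morphism of the compactified K-spaces, which is precisely what (\ref{modulibgexactsq}) records.

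With this description in hand, for any $[u;z_0,\ldots,z_k]\in\mathcal{M}_{k+1}(Y_{l-1},L_{l-1},\beta_{l-1})$ one has
\[
ev_i\bigl(i_l([u;z_0,\ldots,z_k])\bigr)=ev_i\bigl(\bigl[(Y_{l-1}\hookrightarrow Y_l)\circ u;\ z_0,\ldots,z_k\bigr]\bigr)=\bigl((Y_{l-1}\hookrightarrow Y_l)\circ u\bigr)(z_i)=(L_{l-1}\hookrightarrow L_l)\bigl(u(z_i)\bigr),
\]
and the right-hand side equals $(L_{l-1}\hookrightarrow L_l)\bigl(ev_i([u;z_0,\ldots,z_k])\bigr)$, i.e.\ the bottom-then-right composite in (\ref{moduliev}). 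Since $ev_i$ on the compactified moduli space is the strongly smooth extension of this assignment from the open stratum and $i_l$ is the continuous extension of post-composition with the embedding, the identity propagates from the open stratum to all of $\mathcal{M}_{k+1}(Y_{l-1},L_{l-1},\beta_{l-1})$ by continuity and density.

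I do not expect a genuine obstacle here: the statement is formal once one grants that $i_l$ is ``post-compose the underlying stable map with the almost K\"ahler embedding $Y_{l-1}\hookrightarrow Y_l$''. The only point deserving a little care is confirming that the identification (\ref{modulibgexactsq}) is realized by exactly this prescription on underlying maps --- rather than by some abstract isomorphism of Kuranishi spaces that need not intertwine the evaluation maps --- which in turn follows from $\pi_{l-1}$ and $\pi_l$ being pseudo-holomorphic and the embedding $Y_{l-1}\hookrightarrow Y_l$ covering the inclusion $0_{BG_{l-1}}\subseteq 0_{BG_l}$.
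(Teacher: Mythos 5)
Your proposal is correct and matches the paper's (implicit) argument: the paper states this corollary without separate proof precisely because $i_l$ is, by construction, post-composition of the underlying stable map with the almost K\"ahler embedding $Y_{l-1}\hookrightarrow Y_l$ (which restricts to $L_{l-1}\hookrightarrow L_l$), so the commutativity of the square is immediate on the open stratum and extends to the compactification as you describe. Your closing caveat about the identification (\ref{modulibgexactsq}) being realized by this concrete prescription is exactly the point the paper's inductive construction of Kuranishi structures is designed to guarantee.
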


Again, it follows from (\ref{modulibgexactsq}) that  for each $l \in \mathbb{Z}_{>0}$,
\begin{equation} \label{modulilgexactsq}
	\mathcal{M}_{k+1}(Y_{l-1},L_{l-1},\beta_{l-1}) \cong \mathcal{M}_{k+1}(Y_l,L_l,\beta_l) \times_{L_{l}} L_{l-1}.
\end{equation}

Therefore, after fixing a tree-like $K$ system $(\{\mathcal{M}_{k+1}(L, \beta), ev, E, MI)\}$ on $\{\mathcal{M}_{k+1}(L, \beta)\}_{k;\beta}$ and a compatible system of CF-perturbations \\
$\{\mathfrak{S}_{k+1}(L;\beta)\}_{\beta\in \mathfrak{G}}$, one could construct those for each $L_l$ such that (\ref{modulilgexactsq}) holds as Kuranishi spaces. This is  crucial in showing that the integration along fibers of $ev_0$ commutes with pullbacks of differential forms, which in turn implies $g_l$ is an $A_\infty$ algebra morphism. More details are provided in the following proof.

\begin{proof}[Proof of Proposition \ref{invlimitlag}] Unitality follows immediately from definition; to prove the $A_\infty$ morphism formula, 
following the strategy in \cite{KLZ}, for each $k\geq0$, we construct Kuranishi structure inductively (over $l$) of $\{\mathcal{M}_{k+1}(L_l)\}$ such that they are compatible under inclusions and evaluation maps. Roughly speaking, this is possible because once we fixed a Kuranishi structure of $\mathcal{M}_{k+1}(L)$, by (\ref{ynbgnfib}), we can construct a Kuranishi structure of $\mathcal{M}_{k+1}(L_l)$ canonically. Compatibility would follow from the exact squares in (\ref{lgbgemb}). Similarly for the construction and compatibility of the CF-perturbations of the Kuranishi structures.\\
More precisely, we perform the following constructions:\\
\begin{enumerate}
    \item We construct the following tree-like $K$ system on \\
    $\{\mathcal{M}_{k+1}(L, \beta)\}_{k\geq0;\beta\in \mathfrak{G}}$ by \cite[Theorem 2.5]{fooodisk2}:
    $$(\{(\mathcal{M}_{k+1}(L, \beta), \widehat{U}_{k+1}(L, \beta)), ev, E, MI)\}_{k\geq0; \beta\in \mathfrak{G}}.$$ 
    Then we construct a system of $\tau$-collared Kuranishi structures and $\tau$-collared CF-perturbations $\{(\widehat{\mathcal{U}}^+_{k+1}(L, \beta), \widehat{\mathfrak{S}}_{k+1}(L, \beta))\}_{\beta\in \mathfrak{G}}$ on \\
    $\{\mathcal{M}_{k+1}(L, \beta)\}_{k\geq0;\beta\in \mathfrak{G}}$ by \cite[Proposition 22.3]{foookuranishi} 
    , inducing a strictly-unital $\mathbb{G}$-gapped filtered $A_\infty$ algebra $(CF(L), \{m_k\}, e)$.
    \item For each $l\in \mathbb{Z}_{\geq 0}$, for each $k\in \mathbb{Z}_{\geq 0}$ and $\beta \in \mathfrak{G}$ (inducing $\beta_l \in \mathfrak{G}_l \coloneqq H_2(Y_l,L_l;\mathbb{Z})$), fix local bundle trivialisations of (\ref{modulifibbdl}) with base charts $\{U_b|b\in 0_{BG_l}\}$, we construct a Kuranishi structure on $\mathcal{M}_{k+1}(Y_l,L_l,\beta_l)$ such that for any $(p,b)\in \mathcal{M}_{k+1}(Y_l,L_l,\beta_l)$ with $p\in \mathcal{M}_{k+1}(Y,L,\beta)$ and $b\in 0_{BG_l}$, its Kuranishi neighbourhood $\mathcal{U}_{p,b}$ has the form $$(U_p \times U_b,pr_1^*E_p, pr_1^*s_p, \psi_p \times Id_{U_b}).$$
    \begin{itemize}
        \item $\mathcal{U}_{p} = (U_p ,E_p, s_p, \phi_p)$ is the Kuranishi neighbourhood of $p$.
        \item $pr_1: U_p \times U_b \rightarrow U_p$ is the projection to the first factor.
        \item $pr_1^*E_p$ is the pullback bundle of $E_p \rightarrow U_p$.
        \item $pr_1^*s_p$ is the pullback section of $s_p$.
        \item $\psi_p \times Id_{U_b}: U_p \times U_b \rightarrow \mathcal{M}_{k+1}(Y,L,\beta) \times U_b \subseteq \mathcal{M}_{k+1}(Y_l,L_l,\beta_l)$ is a homeomorphism onto the image $Im(\psi_p)\times U_b$.
    \end{itemize}
    \item Inductively on $l\in\mathbb{Z}_{>0}$, by shrinking the bundle charts if necessary, we require that for any $b_{l-1}\in 0_{BG_{l-1}}$ and its image $b_l\in 0_{BG_l}$ under the embedding $0_{BG_{l-1}} \hookrightarrow 0_{BG_l}$, $U_{b_{l-1}}$ is exactly the the preimage of $U_{b_l}$, i.e.
    \begin{equation} \label{bdlchartexactsq}
	U_{b_{l-1}} \cong U_{b_l} \times_{0_{BG_{l}} } 0_{BG_{l-1}}.
\end{equation}
\item It follows from (\ref{bdlchartexactsq}) that both (\ref{modulibgexactsq}) and (\ref{modulilgexactsq}) are isomorphisms of Kuranishi spaces, where the Kuranishi structures on the right are the fiber product Kuranishi structures (see e.g. \cite[Definition 4.9]{foookuranishi}).
\item For each $l\in\mathbb{Z}_{\geq 0}$,  $(\{\mathcal{M}_{k+1}(Y_l,L_l,\beta_l), ev, E, MI)\}$ is a tree-like $K$ system on $\{\mathcal{M}_{k+1}(Y_l,L_l,\beta_l)\}$.  
Moreover, there is an induced compatible system of $\tau$-collared Kuranishi structures and $\tau$-collared CF-perturbations $\{(\widehat{\mathcal{U}}^+_{k+1}(L_l, \beta_l), \widehat{\mathfrak{S}}_{k+1}(L_l, \beta_l))\}$ from that of $L$ such that $ev_0$ is again strongly submersive. By construction, these systems are compatible in $l$ in the sense that $\{(\widehat{\mathcal{U}}^+_{k+1}(L_{l-1}, \beta_{l-1}), \widehat{\mathfrak{S}}_{k+1}(L_{l-1}, \beta_{l-1}))\}_{\beta_{l-1}\in \mathfrak{G}_{l-1}}$ are restrictions of \\
$\{(\widehat{\mathcal{U}}^+_{k+1}(L_l, \beta_l), \widehat{\mathfrak{S}}_{k+1}(L_l, \beta_l))\}$ under $i_l$.
\end{enumerate}

To show $g_l$ is a $\mathbb{G}$-gapped strict $A_\infty$ morphism, it suffices to show that 
$$m^{(l-1)}_{k, \beta_{l-1}}(g_l(x_1), \dots, g_l(x_{k})) = g_l(m^{(l)}_{k, \beta_{l}}(x_1, \dots, x_{k}))$$
for each $\beta\in\mathfrak{G}$. Recall that 
$$m^{(l)}_{k, \beta_{l}}(x_1, \dots, x_{k}) = (-1)^*Corr_{\mathfrak{M}_{k+1}(\beta_l)}(pr^*_1(x_1)\wedge\cdots\wedge pr^*_k(x_k); \widehat{\mathfrak{S}}_{k+1}(L_l, \beta_l)),$$
\begin{align*}
&m^{(l-1)}_{k, \beta_{l-1}}(g_l(x_1), \dots, g_l(x_{k}))\\
&= (-1)^*Corr_{\mathfrak{M}_{k+1}(\beta_{l-1})}(pr^*_1(g_l(x_1))\wedge\cdots\wedge pr^*_k(g_l(x_k)); \widehat{\mathfrak{S}}_{k+1}(L_{l-1}, \beta_{l-1}))\\
&= (-1)^*Corr_{\mathfrak{M}_{k+1}(\beta_{l-1})}(g_l(pr^*_1(x_1)\wedge\cdots\wedge pr^*_k(x_k)); \widehat{\mathfrak{S}}_{k+1}(L_{l-1}, \beta_{l-1}))
\end{align*}
where $(\mathfrak{M}_{k+1}(\beta_l); (ev_1,\cdots, ev_k); ev_0)$ is the smooth correspondence from $L_l^k$ to $L_l$ induced from $\{(\widehat{\mathcal{U}}^+_{k+1}(L_l, \beta_l), \widehat{\mathfrak{S}}_{k+1}(L_l, \beta_l))\}_{\beta_l\in \mathfrak{G}_l}$, $pr_i: L^k_l \rightarrow L_l$ is the projection to the $i$-th factor, and $* = \sum_{i=1}^{k}i(\deg{x_i}+1)+1$.

Therefore, it suffices to show that 
for any $y \in \Omega(L^k_l)$, 
$$g_l(Corr_{\mathfrak{M}_{k+1}(\beta_l)}(y; \widehat{\mathfrak{S}}_{k+1}(L_l, \beta_l))) = Corr_{\mathfrak{M}_{k+1}(\beta_{l-1})}(g_l(y); \widehat{\mathfrak{S}}_{k+1}(L_{l-1}, \beta_{l-1})).$$
Now by Corollary \ref{incluevalcompat} applied to $(ev_1,\cdots, ev_k)$,

\begin{align*}
 Corr_{\mathfrak{M}_{k+1}(\beta_l)}(y; \widehat{\mathfrak{S}}_{k+1}( \beta_l))) =& (ev_0)_!((ev_1,\cdots, ev_k)^*y; \widehat{\mathfrak{S}}_{k+1}( \beta_l)),\\
Corr_{\mathfrak{M}_{k+1}(\beta_{l-1})}(g_l(y); \widehat{\mathfrak{S}}_{k+1}(\beta_{l-1}))
=& (ev_0)_!((ev_1,\cdots, ev_k)^*(g_l(y)); \widehat{\mathfrak{S}}_{k+1}( \beta_{l-1}))\\
=& (ev_0)_!(i^*_l((ev_1,\cdots, ev_k)^*(y)); \widehat{\mathfrak{S}}_{k+1}( \beta_{l-1})).
\end{align*}

Therefore, it suffices to show that 
$$g_l((ev_0)_!(w; \widehat{\mathfrak{S}}_{k+1}(L_l, \beta_l))) = (ev_0)_!(i^*_l(w); \widehat{\mathfrak{S}}_{k+1}(L_{l-1}, \beta_{l-1}))$$
for any differential form $w$ on $\mathcal{M}_{k+1}(Y_l,L_l,\beta_l)$. This follows from \cite[Proposition 10.26]{foookuranishi} and Corollary \ref{incluevalcompat} applied to $ev_0$ (or by \cite[Proposition 10.24]{foookuranishi} and that (\ref{modulilgexactsq}) holds as Kuranishi spaces with CF-perturbations). \end{proof}

\subsection{Equivariant Morse Model}
\label{sectionequivmorsemodel}
In this subsection, we recall the original construction of equivariant Morse model $CF^{Morse}_{G}(L)$ in \cite{KLZ}.

While we define $CF_G(L)$ as a canonical model of $\varprojlim CF(L_{l})$, $CF^{Morse}_{G}(L)$ has an advantage of having a natural $H_{G}(pt)$-linear extension of the $A_\infty$ structure:

\begin{theorem}\label{klzequivmorse}
\cite[Theorem 3.12]{KLZ}
There exists a $\mathbb{G}$-gapped, (strictly) unital $A_\infty$ algebra  $(CF^{Morse}_{G}(L), \{m^{Morse}_{k, G}\}, e)$, called the $G$-equivariant Morse model, which is an $A_\infty$ algebra over $\Lambda_0(H_{G}(pt))$.
\end{theorem}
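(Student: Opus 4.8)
The plan is to realise $CF^{Morse}_{G}(L)$ by transferring the $A_\infty$ structure of the tower $(CF(L_l))$ along a compatible system of Witten--Morse contractions adapted to the fibre bundles $L_l \to 0_{BG_l}$, and then to recognise the resulting canonical model as a filtered $A_\infty$ algebra over $\Lambda_0(H_{G}(\mathrm{pt}))$ in the sense of the discussion of Novikov coefficients with $R$ a $2\mathbb{Z}_{\geq 0}$-graded algebra.

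First I would fix, once and for all, a Morse--Smale pair $(f_L, g_L)$ on the fibre $L$ together with its Witten--Morse contraction $(i_L, p_L, h_L)$ of $(\Omega^\bullet(L), CM^\bullet(f_L))$ from Corollary \ref{wittenmorsecontr}, so that $i_L(p_L(\mathbf 1)) = \mathbf 1$. On the base side, choose Morse--Smale pairs on the approximations $BG_l$ that are compatible under the isometric inclusions in \eqref{egemb}, so that their critical points and Morse complexes stabilise in each fixed degree; passing to the limit and, if necessary, performing one further harmonic transfer on the base factor over $\mathbb{Q}$, one obtains $\varprojlim CM^\bullet(BG_l) = H^\bullet(BG;\mathbb{Q}) = H_{G}(\mathrm{pt})$. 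Using a $G$-connection on $L_l \to 0_{BG_l}$, one lifts these data to Witten--Morse contractions $(i_l, p_l, h_l)$ of $(\Omega^\bullet(L_l), CM^\bullet(f_L)\otimes CM^\bullet(BG_l))$ which, thanks to the pull-back squares \eqref{lgexactsq} and \eqref{modulilgexactsq}, can be arranged to commute strictly with the restriction maps $\iota^*_l$, as required to apply Proposition \ref{algebratransfer} compatibly across the tower.

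Next, applying Proposition \ref{algebratransfer} to each $\mathbb{G}$-gapped unital $A_\infty$ algebra $(CF(L_l), \{m_k^{(l)}\}, e_l)$ along $(i_l, p_l, h_l)$ transfers the structure to $CM^\bullet(f_L)\otimes CM^\bullet(BG_l)\hat{\otimes}\Lambda_0$, and strict unitality is preserved because $i_l(p_l(\mathbf 1)) = \mathbf 1$. Compatibility of the contractions makes each $\iota^*_l$ restrict to a $\mathbb{G}$-gapped unital strict $A_\infty$ morphism between these Morse models, so by Proposition \ref{invlimitalg} the inverse limit carries a $\mathbb{G}$-gapped unital $A_\infty$ structure. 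Its underlying module is $CM^\bullet(f_L)\,\hat{\otimes}\,\Lambda_0\!\big(\varprojlim CM^\bullet(BG_l)\big) = \overline{C}^\bullet\,\hat{\otimes}\,\Lambda_0(R)$ with $\overline{C}^\bullet = CM^\bullet(f_L)$ and $R = H_{G}(\mathrm{pt})$, which we take as $(CF^{Morse}_{G}(L), \{m^{Morse}_{k, G}\}, e)$. The gapping monoid is the common $\mathbb{G} = \mathbb{G}_L$ produced by the identification of $H_2(Y,L)$ with $H_2(Y_l,L_l)$.

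The step that genuinely uses the geometry rather than formal algebra, and that I expect to be the main obstacle, is verifying that the transferred operations $m^{Morse}_{k, G}$ are $H_{G}(\mathrm{pt})$-\emph{multilinear}, not merely defined on an $H_{G}(\mathrm{pt})$-module; equivalently, that the homological-perturbation iteration respects the tensor decomposition $CM^\bullet(f_L)\otimes CM^\bullet(BG_l)$. This follows because every structure map is assembled from fibre integration over the disc moduli $\mathcal{M}_{k+1}(Y_l,L_l,\beta_l)$, and by \eqref{modulifibbdl} and \eqref{modulibgexactsq} these are fibre bundles over $0_{BG_l}$ pulled back from $\mathcal{M}_{k+1}(Y,L,\beta)$; hence, exactly as in the proof of Proposition \ref{invlimitlag}, the pushforwards $(\mathrm{ev}_0)_!$ and the homotopies $h_l$ commute with wedging by forms pulled back from the base, which is precisely $H_{G}(\mathrm{pt})$-linearity. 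Keeping the compatible Kuranishi structures and CF-perturbations of \cite{foookuranishi} consistent across the whole tower while preserving this fibred structure is the technical heart, and here the argument coincides with the construction carried out in \cite[Theorem 3.12]{KLZ}.
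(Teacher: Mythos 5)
Your proposal is correct and follows essentially the same route as the paper, which itself only recalls the construction and defers details to \cite{KLZ}: build Morse models of the approximation spaces $L_l$ via (Witten--)Morse contractions adapted to the fibre bundles $L_l \to 0_{BG_l}$, pass to the inverse limit using Proposition \ref{invlimitalg}, and identify the result as $CM^\bullet(f_L)\hat\otimes\Lambda_0(H_G(\mathrm{pt}))$ with $H_G(\mathrm{pt})$-linearity coming from the fibred structure of the disc moduli as in \eqref{modulifibbdl}. Your identification of $H_G(\mathrm{pt})$-multilinearity as the genuinely geometric step, argued via compatibility of fibre integration with pullbacks from the base, matches the paper's remark replacing the singular-to-Morse contraction of \cite{KLZ} by Witten--Morse contractions for consistency with the de Rham model.
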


\begin{defn}
 The $G$-equivariant weak Maurer-Cartan space and disc potential (for Morse model) of $L$
$$MC^{Morse}_{G}(L) \coloneqq MC_{weak}(CF^{Morse}_{G}(L)),$$
$$W^{Morse}_{L, G}: MC^{Morse}_{G}(L) \rightarrow \Lambda_0(H_{G}(pt))$$ are defined as the weak Maurer-Cartan space and potential function associated to the $A_\infty$ algebra $CF^{Morse}_{G}(L)$.
\end{defn}

In particular, when $L$ has minimal Maslov index $0$ and is weakly unobstructed, then it is shown in \cite[Corollary 3.15]{KLZ} that $W^{Morse}_{L, G}$ has the form
$$W^{Morse}_{L, G}(b)= W_L(b) + \sum_{i=1}^{k}\lambda_i h^{i}_L(b),$$
where $b\in MC_{weak}(L)$, $\lambda_1,\ldots,\lambda_k$ are the degree-two equivariant parameters of $G$ (with rank $k$), and $h^i_L: MC_{weak}(L)\rightarrow \Lambda_0(\R)$.

We briefly recall the construction of $CF^{Morse}_{G}(L)$ in Theorem \ref{klzequivmorse}, and refer the reader to  \cite{KLZ} for details.

The underlying vector space of $CF^{Morse}_{G}(L)$ is $C(f)\otimes_\R H^\bullet_G(pt; \R)$, where $f$ is a Morse function on $L$. Its $A_\infty$ structure is constructed by realising it as an inverse limit of a sequence of Morse models of the approximation spaces $(C(f_l),  \{m^{(l), Morse}_{k}\}, e_l)$, where $(f_l:L_l \rightarrow \R)$ is a sequence of Morse functions satisfying additional properties as in \cite[Definition 3.6]{KLZ}, and then apply Proposition \ref{invlimitalg}. The $A_\infty$ structure of each $C(f_l)$ is obtained from a singular cochain model on $L_l$ via Proposition \ref{algebratransfer} applying to a singular-to-Morse contraction $(i_{sing}, p_{sing}, h_{sing})$ defined in \cite[Section 2.3]{KLZ}.

\begin{remark}
For the sake of consistency with the de Rham model we are using, we replace $(i_{sing}, p_{sing}, h_{sing})$ by a family of Witten-Morse contractions $(i_{t}, p_{t}, h_{t})_{t>t_0}$ on $\Omega(L_l)$ (with fixed $\lambda_0 = 1$ and the corresponding $t_0$) to obtain a family of $A_\infty$ structures $\{m^{(l), t}_k\}$ on $C(f_l)$. Then we define $\{m^{(l), Morse}_{k}\}$ as the limit of $\{m^{(l), t}_k\}$ as $t\rightarrow \infty$, which can be identified with the usual $A_\infty$ structure on the Morse complex $C(f_l)$ by counting pearly trajectories. See \cite{CLMfukconj} for details in the case of de Rham dga $(\Omega(L_l), d, \wedge)$.
\end{remark}

 The underlying complex  of $CF^{Morse}_{G}(L)$ is  $(C(f)\otimes_\R H^\bullet_G(pt; \R), d_G)$, where $d_G$ is an $H^\bullet_G(L; \R)$-linear differential such that $H^\bullet (C(f)\otimes_\R H^\bullet_G(pt; \R), d_G) = H^\bullet_G(L; \R)$, Therefore, we could apply Proposition \ref{strongcontractionconstr} to obtain an $\R$-linear strong contraction $(i_G, p_G, h_G)$ from $(C(f)\otimes_\R H^\bullet_G(pt; \R), d_G)$ to $H^\bullet_G(L; \R)$ as complexes over $\R$.
 
 Therefore, we first perform a restriction of scalars of the (gapped, unital) $A_\infty$ algebra  $(CF^{Morse}_{G}(L), \{m^{Morse}_{k, G}\}, e)$ to $\Lambda_0(\R)$ coefficient (via $\R \cong H^0_G(pt; \R) \subseteq H^\bullet_G(pt; \R)$) to obtain an $A_\infty$ algebra  $(CF^{Morse}_{G, \R}(L), \{m^{Morse,  \R}_{k, G}\}, e)$ over $\Lambda_0(\R)$. Then we apply Proposition \ref{algebratransfer} to $CF^{Morse}_{G, \R}(L)$ to obtain an $A_\infty$ algebra structure on $H^\bullet_G(L)$ which is homotopic to $(H^\bullet_G(L), \{m^{G}_{k}\}, e^G)$. Apply Proposition \ref{restrscamcisom} and Remark \ref{hplmcspace} yields the following corollary:

 \begin{corollary} \label{equivcfmcisom}
    There exists a bijection 
    $$MC_{weak}(H^\bullet_G(L; \Lambda_0); \Lambda_+)\xrightarrow{h} MC_{weak}(CF^{Morse}_{G}(L); \Lambda_+(H^\bullet_G(pt))) \times_{\Lambda_0(H^\bullet_G(pt))} \Lambda_0$$ defined as $h([b]) = (exp(i_G)([b]), W^{Morse}_{L, G}(exp(i_G)([b])))$. Moreover, $h$ intertwines the potential function and the natural projection, i.e. 
$$W_H([b]) = pr_2(h([b]))=W^{Morse}_{L, G}(exp(i_G)([b])).$$
 \end{corollary}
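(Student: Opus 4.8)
The plan is to obtain $h$ as the composition of two bijections already at our disposal: the homological perturbation bijection of Remark \ref{hplmcspace} and the restriction of scalars bijection of Proposition \ref{restrscamcisom}. Set $C \coloneqq CF^{Morse}_{G}(L)$, a unital gapped filtered $A_\infty$ algebra over $\Lambda_0(R)$ with $R = H^\bullet_G(pt;\R)$ (a $2\Z_{\geq 0}$-graded commutative algebra, so all the formalism of Subsection \ref{ainftyalg} applies); let $S = \R$ and $\varphi\colon \R \cong H^0_G(pt;\R)\hookrightarrow R$, so that $C_S = CF^{Morse}_{G,\R}(L)$; and let $(i_G,p_G,h_G)$ and $\tilde i_G\colon H^\bullet_G(L)\to C_S$ be the strong contraction and the $A_\infty$ morphism with leading term $i_G$ used to define the transferred structure on $H^\bullet_G(L;\Lambda_0(\R))$. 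Since this transferred structure is $A_\infty$-homotopy equivalent to $(H^\bullet_G(L),\{m^G_k\},e^G)$, Remark \ref{hplmcspace} identifies their weak Maurer-Cartan spaces, so it is harmless to work with the transferred one throughout.

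First I would invoke Remark \ref{hplmcspace} in the strong contraction case: since $i_G$ is a chain homotopy equivalence, $\tilde i_G$ is an $A_\infty$ homotopy equivalence, so $exp(\tilde i_G)$ descends to a bijection $MC_{weak}(H^\bullet_G(L;\Lambda_0(\R));\Lambda_+(\R)) \xrightarrow{\sim} MC_{weak}(C_S;\Lambda_+(\R))$ with $W_{C_S}(exp(\tilde i_G)([b])) = W_H([b])$. Next I would invoke Proposition \ref{restrscamcisom}: the map $f(b) = (b, W_{C_S}(b))$ is a bijection between $\widehat{MC}_{weak}(C_S;\Lambda_+(\R))$ and $\widehat{MC}_{weak}(C;\Lambda_+(R))\times_{\Lambda_0(R)}\Lambda_0(\R)$, with inverse $(b,a)\mapsto b$. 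One then checks that $f$ respects the gauge equivalence relations and so descends to a bijection of weak Maurer-Cartan spaces: in one direction gauges over $C_S$ push forward along the unital strict $A_\infty$ morphism $Id\colon C_S\to C$ of Corollary \ref{basechangemc} and $W_{C_S}$ is gauge invariant, while the converse amounts to lifting a gauge over $C$ between two $\R$-gapped weak bounding cochains of equal $\Lambda_0(\R)$-potential value to an $\R$-gapped gauge.

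Setting $h \coloneqq f\circ exp(\tilde i_G)$ then produces the asserted bijection. Unwinding the definitions, $h([b]) = \bigl(exp(\tilde i_G)([b]),\, W_{C_S}(exp(\tilde i_G)([b]))\bigr) = \bigl(exp(i_G)([b]),\, W_H([b])\bigr)$, where $exp(i_G)([b])$ denotes the class of $exp(\tilde i_G)([b])$ viewed in $MC_{weak}(C;\Lambda_+(R))$ via $Id$. The potential compatibility $\varphi(W_{C_S}(b)) = W(b)$ from Corollary \ref{basechangemc}, combined with the previous identity, identifies $W_H([b])$ with the value $W^{Morse}_{L,G}(exp(i_G)([b]))$ (which indeed lies in the image of $\varphi$), giving both the displayed formula for $h$ and the intertwining relation $W_H([b]) = pr_2(h([b])) = W^{Morse}_{L,G}(exp(i_G)([b]))$.

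The only genuinely non-formal step is the gauge equivalence descent for $f$ in the second paragraph; everything else is a direct application of Remark \ref{hplmcspace}, Proposition \ref{restrscamcisom}, and Corollary \ref{basechangemc}. I expect that descent to be the main obstacle, to be dealt with either by an explicit path-lifting argument or by observing that Fukaya's gauge equivalence class of a weak bounding cochain $b$ together with the value $W_{C_S}(b)\in\Lambda_0(\R)$ already determines $b$ up to $\R$-gapped gauge.
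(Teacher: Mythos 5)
Your proposal is correct and follows essentially the same route as the paper, which likewise obtains $h$ by composing the homological perturbation bijection of Remark \ref{hplmcspace} (applied to the strong contraction $(i_G,p_G,h_G)$ on the restriction of scalars $CF^{Morse}_{G,\R}(L)$) with the restriction-of-scalars bijection of Proposition \ref{restrscamcisom}. The one point where you are more careful than the paper is the descent of $f$ to gauge equivalence classes (Proposition \ref{restrscamcisom} is stated only at the level of weak Maurer--Cartan sets), which the paper leaves implicit.
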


    

\section{Equivariant Lagrangian Correspondence} \label{sec:equivCor}
In this section, we establish a Floer theory for equivariant Lagrangian correspondences and address Teleman's conjecture. 

We first construct an equivariant extension of correspondence tri-module in Subsection \ref{subsec-equivcorrtrimod} and cyclic property in \ref{subsec-equivcyclicprop}. We then prove relations between the (equivariant) Lagrangian Floer theory of Hamiltonian $G$-manifolds $Y$ and their symplectic quotients $X$ in Subsection \ref{subsec-lftquotient}. These relations are applied to settle (a Floer-theoretic version of) a conjecture of Teleman in \cite{Teleman} on constructing mirrors of $X$ from that of $Y$ in Subsection \ref{subsec-telconj}. Finally, we discuss potential generalisations to singular quotients to study conjectures by Lekili and Segal \cite{LS} in Subsection \ref{singlevel}.

\subsection{Equivariant Correspondence Tri-module}
\label{subsec-equivcorrtrimod}

In this subsection, we construct correspondence tri-module for equivariant Lagrangian correspondences.

\begin{setup} \label{hamtriple}
Consider closed or tame Hamiltonian spaces 
$$((P, \omega_P), K, \mu_P), ((M, \omega_M), G, \mu_M), ((N, \omega_N), H, \mu_N)$$ and closed, connected $G\times H$ (resp. $K\times G, K\times H$)-invariant Lagrangian correspondences 
 $$M \xrightarrow{L} N; P\xrightarrow{L'} M; P \xrightarrow{L''} N.$$
 Moreover, we fix a $G\times H$ (resp. $K\times G, K\times H$)-equivariant relative spin structure on $L$ (resp. $L'$, $L''$) with respect to
 $pr^*_M(V_M \oplus TM) \oplus pr^*_NV_N$ (resp.  $pr^*_P(V_P \oplus TP) \oplus pr^*_MV_M; pr^*_P(V_P \oplus TP) \oplus pr^*_NV_N)$
 for some oriented real vector bundles $V_P\rightarrow P; V_M\rightarrow M; V_N\rightarrow N.$
\end{setup}

Under this setup, we study their Lagrangian correspondence Borel spaces 
$$M_G \xrightarrow{L_{G\times H}} N_H, P_K\xrightarrow{L'_{K\times G}} M_G, P_K \xrightarrow{L''_{K\times H}} N_H$$
via their finite dimensional approximations.  For each $l\in \mathbb{Z}_{\geq 0}$, we have
$$M_l \xrightarrow{L_l} N_l, P_l\xrightarrow{L'_l} M_l, P_l \xrightarrow{L''_l} N_l.$$

We would like to define the equivariant correspondence tri-module \\
$CF_{eq}(L''; L', L)$ by the correspondence tri-module $CF(L''_{K\times H}; L'_{K\times G}, L_{G\times H})$. In practice, we consider a sequence of correspondence tri-modules of their approximations $\{CF(L''_{l}; L'_{l}, L_{l})\}_{l \geq 0}$ and define it as the inverse limit 
$$CF_{eq}(L''; L', L) \coloneqq \varprojlim  CF(L''_{l}; L'_{l}, L_{l}).$$ 

It is endowed with a natural unital $A_\infty$ tri-module structure: by Proposition \ref{invlimittrimod}, it suffices to show that $\{(CF(L''_{l}; L'_{l}, L_{l}), \{n^{(l)}_{k'', k', k}\}\}_{l \geq 0}$ is a sequence of unital $A_\infty$ tri-modules with (strict) $A_\infty$ tri-module morphisms $f_l$,
$$ CF(L''_{0}; L'_{0}, L_{0}) \xleftarrow{f_1} \dots \leftarrow CF(L''_{l-1}; L'_{l-1}, L_{l-1}) \xleftarrow{f_l} CF(L''_{l}; L'_{l}, L_{l})\leftarrow  \dots$$
This motivates the following proposition:

\begin{prop} \label{invlimitlagcorr}
    For each $l\in \mathbb{Z}_{>0}$, assume that the following intersection is clean:
    $$I_l \coloneqq (L''_{l} \times L'_{l} \times L_{l})\cap \Delta_{P_l M_l N_l}  \subseteq P_l \times N_l \times P_l \times M_l \times M_l \times N_l .$$
    then the pullback of the inclusion map
    $CF(L''_{l-1}; L'_{l-1}, L_{l-1}) \xleftarrow{f_l} CF(L''_{l}; L'_{l}, L_{l})$ 
is a $\mathbb{G}$-gapped strict $A_\infty$ tri-module morphism, i.e. 
    $$n^{(l-1)}_{k'',k',k}(g''_l(x''_1), \dots, g''_l(x''_{k''}); f_l(y); g'_l(x'_1), \dots, g'_l(x'_{k'}); g_l(x_1), \dots, g_l(x_{k}))$$
$$= f_l(n^{(l)}_{k'',k',k}(x''_1, \dots, x''_{k''}; y; x'_1, \dots, x'_{k'}; x_1, \dots, x_{k}))$$
    for any $(x''_j) \in CF(L''_{l})^{\otimes k''}$, $(x'_j) \in CF(L'_{l})^{\otimes k'}$, $(x_j) \in CF(L_{l})^{\otimes k}$, $y \in CF(L''_{l}; L'_{l}, L_{l})$, where $g_l, g'_l, g''_l$ are the $A_{\infty}$-algebra morphisms defined in Proposition \ref{invlimitlag}.
\end{prop}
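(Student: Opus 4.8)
The plan is to follow, essentially verbatim, the proof of Proposition~\ref{invlimitlag}, with the disc moduli spaces there replaced by Fukaya's quilted-drum moduli spaces $\mathcal{M}_{k'',k',k}(L''_l; L'_l, L_l; E)$ recalled in Section~\ref{subsec-corrtrimod}. Here $f_l$ is the pullback of differential forms along the closed embedding $\jmath_l\colon \mathcal{I}_{L''_{l-1}, L'_{l-1}, L_{l-1}}\hookrightarrow \mathcal{I}_{L''_{l}, L'_{l}, L_{l}}$ induced by the inclusions $P_{l-1}\hookrightarrow P_l$, $M_{l-1}\hookrightarrow M_l$, $N_{l-1}\hookrightarrow N_l$, and $g_l, g'_l, g''_l$ are the corresponding pullbacks along $L_{l-1}\hookrightarrow L_l$, $L'_{l-1}\hookrightarrow L'_l$, $L''_{l-1}\hookrightarrow L''_l$, which are strict $A_\infty$ algebra morphisms by Proposition~\ref{invlimitlag}. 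Recall also that $n^{(l)}_{k'',k',k}$ is defined by pushing forward along $\ev_{+\infty}$ (integration along its fibres, against a CF-perturbation) a wedge product of the pullbacks $\ev_{L''_l}^*x''$, $\ev_{-\infty}^*y$, $\ev_{L'_l}^*x'$, $\ev_{L_l}^*x$. As in the algebra case it therefore suffices to produce a compatible system of Kuranishi structures and CF-perturbations on $\{\mathcal{M}_{k'',k',k}(L''_l; L'_l, L_l; E)\}_l$ such that (a) for every $l>0$ the square
\[
\begin{CD}
\mathcal{M}_{k'',k',k}(L''_{l-1}; L'_{l-1}, L_{l-1}; E) @>>> \mathcal{M}_{k'',k',k}(L''_{l}; L'_{l}, L_{l}; E)\\
@V \ev_{+\infty} VV @VV \ev_{+\infty} V\\
\mathcal{I}_{L''_{l-1}, L'_{l-1}, L_{l-1}} @>\jmath_l>> \mathcal{I}_{L''_{l}, L'_{l}, L_{l}}
\end{CD}
\]
is a fibre product of Kuranishi spaces with CF-perturbations, and (b) the seam and end evaluation maps are compatible with the inclusions, i.e.\ the quilted analogue of Corollary~\ref{incluevalcompat} holds. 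Granting (a) and (b), the morphism identity follows from $\jmath_l^*\circ \ev_{L''_l}^* = \ev_{L''_{l-1}}^*\circ g''_l$ and its analogues for the other evaluation maps, together with base change for integration along fibres, \cite[Proposition~10.24 and Proposition~10.26]{foookuranishi}.

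The geometric input behind (a) and (b) is that any quilted drum $u = (u_P, u_M, u_N)$ into $P_l\times M_l\times N_l$ satisfying the seam, asymptotic, energy and stability conditions factors through a single fibre of each of the symplectic fibrations $\pi^l_K\colon P_l\to T^*BK_l$, $\pi^l_G\colon M_l\to T^*BG_l$, $\pi^l_H\colon N_l\to T^*BH_l$ of \eqref{ynbgnfib}. Indeed, since $L'_l$ (resp.\ $L_l$, $L''_l$) is fibred over the product $0_{BK_l}\times 0_{BG_l}$ (resp.\ $0_{BG_l}\times 0_{BH_l}$, $0_{BK_l}\times 0_{BH_l}$) of zero sections, the seam conditions force $\pi^l_K\circ u_P$ to take boundary values on $\sigma_1\cup\sigma_3$ in $0_{BK_l}$, $\pi^l_G\circ u_M$ to take boundary values on $\sigma_1\cup\sigma_2$ in $0_{BG_l}$, and $\pi^l_H\circ u_N$ on $\sigma_2\cup\sigma_3$ in $0_{BH_l}$; being $(-J)$-holomorphic and asymptotically constant with image in a cotangent bundle whose zero section is exact, each of these base components has zero energy by Stokes and hence is constant, by the same reasoning showing that a pseudo-holomorphic disc in $Y_l$ with boundary on $L_l$ lies in a single fibre of $\pi_l$ (cf.\ \eqref{modulifibbdl} and its proof); attached sphere bubbles, whose roots lie off the seams, project to constant holomorphic spheres for the same exactness reason. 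Consequently there is a (topological) fibre bundle
\[
\mathcal{M}_{k'',k',k}(L''; L', L; E)\longrightarrow \mathcal{M}_{k'',k',k}(L''_l; L'_l, L_l; E)\xrightarrow{\ \pi_l\ } 0_{BK_l}\times 0_{BG_l}\times 0_{BH_l},
\]
and likewise $\mathcal{I}_{L''_l, L'_l, L_l}$ fibres over $0_{BK_l}\times 0_{BG_l}\times 0_{BH_l}$ with fibre $\mathcal{I}_{L'', L', L}$; in particular the cleanness of $I_l$ that we assume (needed for $CF(L''_l; L'_l, L_l)$ to be defined via Theorem~\ref{corrtrimod}) is equivalent to cleanness of $\mathcal{I}_{L'', L', L}$.

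With this fibration structure in hand, I would build the Kuranishi data inductively in $l$ exactly as in the proof of Proposition~\ref{invlimitlag}. First fix a tree-like $K$-system and a compatible system of $\tau$-collared CF-perturbations on $\{\mathcal{M}_{k'',k',k}(L''; L', L; E)\}$ by \cite[Propositions~8.19 and 8.20]{Fukaya-corr}. Then, for each $l$, choose local trivialisations of the above fibre bundle and declare the Kuranishi neighbourhood of a point lying over $(b_K, b_G, b_H)\in 0_{BK_l}\times 0_{BG_l}\times 0_{BH_l}$ to be the product of the neighbourhood of the corresponding non-equivariant quilt with base charts $U_{b_K}\times U_{b_G}\times U_{b_H}$, with obstruction bundle and section pulled back from the fibre; and, shrinking the base charts if necessary, require each $U_{b_{l-1}}$ to be exactly the preimage of $U_{b_l}$ under the closed embeddings $0_{BK_{l-1}}\hookrightarrow 0_{BK_l}$, etc., using \eqref{egbgexactsq}. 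This forces the displayed square to be a fibre product of Kuranishi spaces equipped with CF-perturbations, giving (a), while (b) is immediate since all evaluation maps are fibrewise; one keeps $\ev_{+\infty}$ strongly submersive throughout, and the relative spin structures and orientations are matched under $\jmath_l$ as in \cite{Fukaya-corr, KLZ}. The tri-module morphism identity then follows as explained in the first paragraph.

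The step I expect to be the main obstacle is (a): checking that the three seam conditions---for $L'_l$, $L_l$, $L''_l$, each a Lagrangian fibred over a \emph{product} of two zero sections---are mutually compatible with a single product decomposition $\Sigma_1\times\Sigma_2\times\Sigma_3$ of the quilt, and that the inductive shrinking of base charts can be carried out simultaneously for all three classifying-space directions and all $(k'',k',k,E)$ so that the fibre-product identities hold strictly as Kuranishi spaces \emph{equipped with CF-perturbations}, not merely up to pseudo-isotopy. This is the quilted analogue of the bookkeeping in \cite{KLZ}; no genuinely new analytic phenomenon arises, but coordinating three independent base directions while preserving transversality and the submersivity of $\ev_{+\infty}$ requires care. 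By contrast, the energy-vanishing argument for the base components of the quilt is a routine exactness estimate.
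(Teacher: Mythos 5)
Your proposal follows essentially the same route as the paper's proof: the energy/Stokes argument showing that the projections of a quilted drum to the classifying-space directions are constant, the resulting fibre-bundle structure on $\mathcal{M}_{k'',k',k}(L''_l;L'_l,L_l;E)$ over (what the paper calls) $I^{K,G,H}_l$, the inductive construction of product Kuranishi structures and CF-perturbations compatible with the inclusions and with all evaluation maps, and the final application of \cite[Propositions~10.24, 10.26]{foookuranishi}. The argument is correct and matches the paper's in all essentials.
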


The rest of this subsection is to prove this proposition.

\begin{remark}
    The strict $A_\infty$ tri-module morphism $f_l$ is automatically unital, since the higher terms of $f_l$ are zero by definition. \end{remark}

    \begin{remark}
The gapping monoid $\mathbb{G}$ will be described in the course of its proof.
\end{remark}

\begin{corollary}
    The inverse limit 
  $\varprojlim  CF(L''_{l}; L'_{l}, L_{l})$
    has a natural $\mathbb{G}$-gapped unital left $\varprojlim CF(L''_l)$, right $(\varprojlim CF(L'_l), \varprojlim CF(L_l)$) $A_\infty$ tri-module structure $\{n_{k'', k', k}\}$.
\end{corollary}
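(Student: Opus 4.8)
The plan is to invoke Proposition \ref{invlimittrimod} with the three towers of $A_\infty$ algebras taken to be $(CF(L''_l))$, $(CF(L'_l))$, $(CF(L_l))$, whose transition maps $g''_l, g'_l, g_l$ are the strict $A_\infty$ morphisms furnished by Proposition \ref{invlimitlag}, and the tower of tri-modules taken to be $(CF(L''_l; L'_l, L_l))$. The only input of Proposition \ref{invlimittrimod} that is not yet available is that the pull-back maps $f_l$ form a sequence of $\mathbb{G}$-gapped unital strict $A_\infty$ tri-module morphisms along $((g''_l),(g'_l),(g_l))$; that is precisely the content of Proposition \ref{invlimitlagcorr}, together with the preceding remark that any strict tri-module morphism is automatically unital (its higher components vanish). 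So the proof is a one-line reduction: assuming the cleanness hypotheses needed for each $CF(L''_l; L'_l, L_l)$ to be defined (Theorem \ref{corrtrimod}) and for each $f_l$ to be a strict morphism (Proposition \ref{invlimitlagcorr}), apply Proposition \ref{invlimittrimod} to conclude.

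Concretely, I would first observe that the cleanness of $I_l = (L''_l\times L'_l\times L_l)\cap\Delta_{P_lM_lN_l}$ assumed in Proposition \ref{invlimitlagcorr} guarantees, via Theorem \ref{corrtrimod}, that $CF(L''_l; L'_l, L_l)=\Omega(I_l;\Lambda_0)$ carries a strictly unital $\mathbb{G}_{L''_l,L'_l,L_l}$-gapped left-$CF(L''_l)$, right-$(CF(L'_l),CF(L_l))$ $A_\infty$ tri-module structure $\{n^{(l)}_{k'',k',k}\}$. Next I would note that, by the same identifications of disk classes and energies across the Borel approximations used in Section \ref{sec:equiv} (and the analogous identifications for quilted drum moduli), the gapping monoids $\mathbb{G}_{L''_l,L'_l,L_l}$ stabilise and can all be absorbed into a single monoid $\mathbb{G}$, so that the whole tower is simultaneously $\mathbb{G}$-gapped; this is the role of the ``gapping monoid will be described in the course of the proof'' remark attached to Proposition \ref{invlimitlagcorr}. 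Then Proposition \ref{invlimitlagcorr} gives that each $f_l=(g''_l,g'_l,g_l)^*$(inclusion) is a $\mathbb{G}$-gapped strict $A_\infty$ tri-module morphism along $(g''_l,g'_l,g_l)$, and strictness forces unitality. Feeding these data into Proposition \ref{invlimittrimod} produces the desired $\mathbb{G}$-gapped unital left-$\varprojlim CF(L''_l)$, right-$(\varprojlim CF(L'_l),\varprojlim CF(L_l))$ $A_\infty$ tri-module structure $\{n_{k'',k',k}\}$ on $\varprojlim CF(L''_l; L'_l, L_l)=CF_{eq}(L''; L', L)$, with the projections $\pi^D_l$ strict tri-module morphisms along $(\pi''_l,\pi'_l,\pi_l)$.

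There is essentially no obstacle at the level of this corollary itself: it is a formal consequence of Propositions \ref{invlimittrimod} and \ref{invlimitlagcorr}. The real work has been displaced into Proposition \ref{invlimitlagcorr}, whose proof (promised in the text to follow) must verify that integration along fibres of the quilted-drum evaluation maps commutes with the pull-backs $\iota_l^*$ — the tri-module analogue of the argument in the proof of Proposition \ref{invlimitlag}, relying on the fact that the approximation moduli spaces of quilted drums form pull-back diagrams $\mathcal{M}_{k'',k',k}(L''_{l-1}; L'_{l-1}, L_{l-1})\cong \mathcal{M}_{k'',k',k}(L''_l; L'_l, L_l)\times_{I_l} I_{l-1}$ compatibly with CF-perturbations, so that \cite[Proposition 10.24 or 10.26]{foookuranishi} applies. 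Thus, in writing up this corollary I would keep it short — a direct citation of Proposition \ref{invlimittrimod} — and reserve the detailed geometric verification for the proof of Proposition \ref{invlimitlagcorr} that comes next.
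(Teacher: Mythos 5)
Your proposal is correct and follows exactly the paper's route: the corollary is a formal application of Proposition \ref{invlimittrimod} to the tower $(CF(L''_l;L'_l,L_l))$ with transition maps $f_l$, whose strictness (hence automatic unitality, as the paper's remark notes) and $\mathbb{G}$-gappedness are supplied by Proposition \ref{invlimitlagcorr}, the identification of the gapping monoids coming from the quilted-drum fibration over $I^{K,G,H}_l$. Nothing to add.
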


\begin{defn}
Denote $(CF_{eq}(L''; L', L), \{n^{eq}_{k'', k', k}\})$ the equivariant correspondence tri-module
 defined by the canonical tri-module of  $\varprojlim  CF(L''_{l}; L'_{l}, L_{l})$, as a $\mathbb{G}$-gapped unital left $CF_{K\times H}(L'')$, right $(CF_{K\times G}(L'), CF_{G\times H}(L))$ $A_\infty$ tri-module.
\end{defn}

The proof of Proposition \ref{invlimitlagcorr} is similar to that of Proposition \ref{invlimitlag}: fix a system of Kuranishi structures  $\mathcal{U} = \{(\mathcal{U}_{k'',k',k }(L'', L', L; E)\}$ on $\{\mathcal{M}_{k'', k', k}(L''; L', L; E)\}$, we inductively construct Kuranishi structures on the moduli spaces of quilted drums $\mathcal{M}_{k'', k', k}(L''_l; L'_l, L_l; E)$ with respect to fiber bundles defined as follows:
\begin{prop}
For each $l\in\mathbb{Z}_{\geq 0}$, there exists a topological fiber bundle
\begin{equation}\label{quilteddrumfibbdl}
    \mathcal{M}_{k'', k', k}(L''; L', L; E) \rightarrow \mathcal{M}_{k'', k', k}(L''_l; L'_l, L_l; E)\xrightarrow{\pi_l}I^{K,G,H}_l
    \end{equation}
    
    where $\Delta: T^*BK \times T^*BG \times T^*BH \rightarrow (T^*BK \times T^*BG \times T^*BH)^2$ is the diagonal map, and $I^{K,G,H}_l \coloneqq \Delta^{-1} (0_{B(G\times H)_l}\times 0_{B(K\times G)_l}\times 0_{B(K\times H)_l})$.
\end{prop}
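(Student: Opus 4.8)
The plan is to imitate the proof of the analogous fiber-bundle statement \eqref{modulifibbdl} for ordinary disk moduli. Recall from \eqref{ynbgnfib} and \eqref{lnfib} that $P_l,M_l,N_l$ carry pseudo-holomorphic symplectic fibrations $\pi^{P}_l\colon P_l\to T^*BK_l$, $\pi^{M}_l\colon M_l\to T^*BG_l$, $\pi^{N}_l\colon N_l\to T^*BH_l$ with fibers $P,M,N$ (endowed with their original almost-K\"ahler structures), and that the fiber of $L_l$ (resp.\ $L'_l$, $L''_l$) over $0_{B(G\times H)_l}$ (resp.\ $0_{B(K\times G)_l}$, $0_{B(K\times H)_l}$) is $L$ (resp.\ $L'$, $L''$). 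Given a quilted drum $u=(u_{P_l},u_{M_l},u_{N_l})$ representing a point of $\mathcal{M}_{k'',k',k}(L''_l;L'_l,L_l;E)$, I would first form the base quilt $\bar u\coloneqq\bigl(\pi^{P}_l\circ u_{P_l},\,\pi^{M}_l\circ u_{M_l},\,\pi^{N}_l\circ u_{N_l}\bigr)$, which is a $(-J)$-holomorphic quilted drum in $T^*BK_l\times T^*BG_l\times T^*BH_l$ whose seam conditions land on the zero sections $0_{B(K\times G)_l},0_{B(G\times H)_l},0_{B(K\times H)_l}$ and whose asymptotic limits lie in $I^{K,G,H}_l$, and then define $\pi_l(u)$ to be this common limit.

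\textbf{The base quilt is constant.} This is the crux. Each $T^*B(\,\cdot\,)_l$ deformation retracts onto its zero section, and the induced retraction of $(T^*BK_l\times T^*BG_l\times T^*BH_l)^2$ carries $I^{K,G,H}_l$ into the product of zero sections; hence, rel its seam and asymptotic data, each patch component of $\bar u$ is homotopic to a map landing in a zero section, so its symplectic area — a relative pairing, since $\omega_{can}$ is closed, restricts to $0$ on the zero sections, and the asymptotic ends are constant — vanishes, and likewise for every sphere bubble. A $J$-holomorphic map from a strip or a sphere with zero energy is constant, so $\bar u$ is a single point $q$; the seam conditions force $q\in I^{K,G,H}_l$, and $q$ agrees with the image of $ev_{+\infty}(u)$ (equivalently $ev_{-\infty}(u)$) under the base projections. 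Along the way one records, exactly as in the comparison lemma preceding \eqref{modulifibbdl}, that the fiber inclusions $\iota_l$ identify the energies and effective classes of quilted drums in $(P_l,M_l,N_l)$ with those in $(P,M,N)$, so that the gapping monoid $\mathbb{G}$ is unchanged.

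\textbf{Local trivialization.} Once $\bar u$ is the constant $q$, the quilted drum $u$ maps entirely into the fiber of $P_l\times M_l\times N_l$ over (the point corresponding to) $q$, which over a chart $U_q\subseteq I^{K,G,H}_l$ is identified, via a local symplectic trivialization of the three fibrations along which $J_l$ is a product, with $P\times M\times N$ carrying the Lagrangian data $(L'',L',L)$; under this identification the seam, asymptotic, energy, and stability conditions on $u$ become precisely those defining $\mathcal{M}_{k'',k',k}(L'';L',L;E)$. This yields a homeomorphism $\pi_l^{-1}(U_q)\cong\mathcal{M}_{k'',k',k}(L'';L',L;E)\times U_q$, compatible on overlaps of charts, hence the asserted topological fiber bundle; the conclusion needs no cleanness hypothesis on $I_l$.

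\textbf{Main obstacle.} I expect the real work to lie in the second step, carried out carefully over the whole Gromov compactification $\mathcal{M}_{k'',k',k}(L''_l;L'_l,L_l;E)$: one must check that under every degeneration that occurs — broken quilted trajectories at the two ends, and disk or sphere bubbling on the patches or on the seams — each component of the limiting configuration still has vanishing base area, hence constant base component, so that $\pi_l$ extends continuously to all boundary strata; and one must set up the almost-complex structures and the families of local trivializations of the three fibrations coherently so that the product identification in the previous paragraph is genuine rather than merely fiberwise. Those are the points requiring attention, the Lagrangian and homeomorphism bookkeeping being formal once the base component is known to be constant.
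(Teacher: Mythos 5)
Your proposal is correct and follows essentially the same route as the paper: project the quilted drum to the bases $T^*BK_l\times T^*BG_l\times T^*BH_l$, show the projected quilt has zero energy and is therefore constant with value in $I^{K,G,H}_l$, and identify the fiber of $\pi_l$ with $\mathcal{M}_{k'',k',k}(L'';L',L;E)$ via the fibrations of Borel approximations. The only cosmetic difference is that the paper establishes vanishing of the base energy directly by Stokes' theorem with the Liouville primitives (using that the signed combinations of $\alpha$'s vanish on the zero-section seam Lagrangians), whereas you invoke homotopy invariance of the area under the fiberwise retraction onto the zero sections — two phrasings of the same exactness argument.
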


\begin{proof}
    Given a quilted drum $u_l = (u_{P_l}, u_{M_l}, u_{N_l})$ in $\mathcal{M}_{k'', k', k}(L''_l; L'_l, L_l; E)$, consider its projection 
    $\overline{u_l} \coloneqq (u_{K_l}, u_{G_l}, u_{H_l}) \coloneqq (\pi_{P_l}\circ u_{P_l}, \pi_{M_l}\circ u_{M_l}, \pi_{N_l}\circ u_{N_l})$
    as a quilted drum with patches targeting $(T^*BK_l, T^*BG_l, T^*BH_l)$ and seams targeting $(0_{B(G\times H)_l}\times 0_{B(K\times G)_l}\times 0_{B(K\times H)_l})$. We claim that $\overline{u_l}$ is constant by showing $E(\overline{u_l})=0$: 

\begin{align*}
&\int_{\Sigma_1}u^*_{K_l}\omega_{K_l} + \int_{\Sigma_2}u^*_{G_l}\omega_{G_l} + \int_{\Sigma_3}u^*_{H_l}\omega_{H_l}=(\int_{\sigma_1}u^*_{K_l}\alpha_{K_l} - \int_{\sigma_3}u^*_{K_l}\alpha_{K_l}) \\
&+ (\int_{\sigma_2}u^*_{G_l}\alpha_{G_l} - \int_{\sigma_1}u^*_{G_l}\alpha_{G_l}) + (\int_{\sigma_3}u^*_{H_l}\alpha_{H_l} - \int_{\sigma_2}u^*_{H_l}\alpha_{H_l})\\
&=\int_{\sigma_1}(u_{K_l}, u_{G_l})^*(\alpha_{K_l},  -\alpha_{G_l}) + \int_{\sigma_2}(u_{G_l}, u_{H_l})^*(\alpha_{G_l},  -\alpha_{H_l}) \\
&+\int_{\sigma_3}(u_{K_l}, u_{H_l})^*(-\alpha_{K_l},  \alpha_{H_l})= 0 + 0 + 0 =0,
\end{align*}
where the first equality is by Stoke's Theorem, and the third equality is by the seam conditions and that 
$$(-\alpha_{K_l},  \alpha_{H_l})|_{0_{B(K\times H)_l}}=0; (-\alpha_{K_l},  \alpha_{G_l})|_{0_{B(K\times G)_l}}=0; (-\alpha_{G_l},  \alpha_{H_l})|_{0_{B(G\times H)_l}}=0.$$ 
Therefore, $\overline{u_l} \equiv (a, b, c)$ is a constant map. Seam conditions imply 
$$(a,c)\in 0_{B(K\times H)_l}, (a,b)\in 0_{B(K\times G)_l}, (b,c)\in 0_{B(G\times H)_l}.$$
Hence $(a, b, c) \in I^{K,G,H}_l$. Define $\mathcal{M}_{k'', k', k}(L''_l; L'_l, L_l; E)\xrightarrow{\pi_l}I^{K,G,H}_l$ by 
$$\pi_l(u_l) = \overline{u_l} \equiv (a, b, c).$$
The fiber of $\pi_l$ at $(a, b, c)$ consists of quilted drums $u_l = (u_{P_l}, u_{M_l}, u_{N_l})$ such that 
$$\pi_{P_l}\circ u_{P_l} \equiv a; \pi_{M_l}\circ u_{M_l} \equiv b; \pi_{N_l}\circ u_{N_l} \equiv c,$$
implying $u_{P_l}: \Sigma_1 \rightarrow (\pi_{P_l})^{-1}(a) \cong P $. Similarly, $u_{M_l}: \Sigma_1 \rightarrow M$ and $u_{N_l}: \Sigma_1 \rightarrow N$. \\
Moreover, $(a,b)\in 0_{B(K\times H)_l}$ implies $(u_{P_l}, u_{M_l})|_{\sigma_1}: \sigma_1 \rightarrow \pi_{L'_l}^{-1}(a,b)\cong L'$. Similarly, $(u_{M_l}, u_{N_l})_{\sigma_2}: \sigma_2 \rightarrow L$ and  $(u_{P_l}, u_{N_l})_{\sigma_3}: \sigma_3 \rightarrow L''$. Therefore, $\pi_l^{-1}(a,b,c) \cong \mathcal{M}_{k'', k', k}(L''; L', L; E)$.    
\end{proof}

In particular, the gapping monoid $\mathbb{G}_{L''_l, L'_l, L_l}$ can be canonically identified with  $\mathbb{G}_{L'', L', L}$, and will all be denoted by $\mathbb{G}$ by abuse of notations.

Therefore, using the induced charts of $I^{K,G,H}_l$ from \\
$0_{B(G\times H)_l}, 0_{B(K\times G)_l}, 0_{B(K\times H)_l}$, $\mathcal{M}_{k'', k', k}(L''_l; L'_l, L_l; E)$ admits a fiber bundle Kuranishi structure $\mathcal{U}_l = \{(\mathcal{U}_{k'',k',k }(L''_l, L'_l, L_l; E)\}$. 

    It follows that these bundles are related by inclusions in the following sense:
    \begin{equation}
    \label{quilteddrumseq}
	\begin{tikzcd}
		&\cdots
		&\mathcal{M}_{k'', k', k}(L''_{l-1}; L'_{l-1}, L_{l-1}; E)\arrow{r}{i_l}\arrow[d] 
		&\mathcal{M}_{k'', k', k}(L''_l; L'_l, L_l; E) \arrow[d]\\
		&\cdots
		&I^{K,G,H}_{l-1}\arrow[r, hook]
     &I^{K,G,H}_l
	\end{tikzcd}
\end{equation}
Then after shrinking the bundle charts if necessary, we may assume that
\begin{equation} \label{quilteddrumexactsq}
\mathcal{M}_{k'', k', k}(L''_{l-1}; L'_{l-1}, L_{l-1}; E) \cong\mathcal{M}_{k'', k', k}(L''_l; L'_l, L_l; E) \times_{I^{K,G,H}_{l-1}}I^{K,G,H}_l
\end{equation}
    are isomorphisms of Kuranishi spaces.\\
    Moreover, the evaluation maps $ev_{L_l}$ are compatible with (\ref{quilteddrumseq}), i.e. 
\begin{equation}
\label{quilteddrumseamevexactsq}
	\begin{tikzcd}
		&\mathcal{M}_{k'', k', k}(L''_{l-1}; L'_{l-1}, L_{l-1}; E)\arrow{r}{i_l}\arrow{d}{ev_i} 
		&\mathcal{M}_{k'', k', k}(L''_l; L'_l, L_l; E)\arrow{d}{ev_i}\\
		&L_{l-1}\arrow[r, hook]
     &L_l
	\end{tikzcd}
\end{equation}
Similarly for $ev_{L'_l}$ and $ev_{L''_l}$. Also, the inclusions $i_l, \iota_l$ are compatible with the evaluation maps at infinity ends $ev^{(l)}_{\pm\infty}$ of $\mathcal{M}_{k'', k', k}(L''_l; L'_l, L_l; E)$, i.e. 
\begin{equation}
\label{quilteddrumasympevexactsq}
	\begin{tikzcd}
		&\mathcal{M}_{k'', k', k}(L''_{l-1}; L'_{l-1}, L_{l-1}; E)\arrow{r}{i_l}\arrow{d}{ev^{(l)}_{\pm\infty}} 
		&\mathcal{M}_{k'', k', k}(L''_l; L'_l, L_l; E)\arrow{d}{ev^{(l)}_{\pm\infty}}\\
		&I_{l-1}\arrow{r}{\iota_l}
     &I_l
	\end{tikzcd}
\end{equation}
Again, it follows from (\ref{quilteddrumexactsq}) that the above pullback diagram
\begin{equation} 
\label{quilteddrumasympevexactsq2}
\mathcal{M}_{k'', k', k}(L''_{l-1}; L'_{l-1}, L_{l-1}; E) \cong\mathcal{M}_{k'', k', k}(L''_l; L'_l, L_l; E) \times_{ev^{(l)}_{\pm\infty}, I_{l-1}}I_l
\end{equation}
are isomorphisms of Kuranishi spaces.\\
Therefore, after fixing a system of collared Kuranishi structures and collared CF-perturbations 
$$(\widehat{\mathcal{U}}^+, \widehat{\mathfrak{S}})= \{(\widehat{\mathcal{U}}^+_{k'',k',k }(L'', L', L; E), \widehat{\mathfrak{S}}_{k'',k',k }(L'', L', L; E)\}$$ on $\{\mathcal{M}_{k'', k', k}(L''; L', L; E)\}$, there is an induced compatible system of collared Kuranishi structures and collared CF-perturbations $$(\widehat{\mathcal{U}}_l^+, \widehat{\mathfrak{S}}_l) = \{(\widehat{\mathcal{U}}^+_{k'',k',k }(L''_l, L'_l, L_l; E), \widehat{\mathfrak{S}}_{k'',k',k }(L''_l, L'_l, L_l; E)\}$$ on $\{\mathcal{M}_{k'', k', k}(L''_l; L'_l, L_l; E)\}$ for each $l$ such that $ev^{(l)}_{\pm \infty}$ are strongly submersive. By construction, these systems are compatible in $l$ in the sense that $(\widehat{\mathcal{U}}_{l-1}^+, \widehat{\mathfrak{S}}_{l-1})$ are restrictions of $(\widehat{\mathcal{U}}_{l}^+, \widehat{\mathfrak{S}}_{l})$ under $i_l$.

\begin{proof}[Proof of Proposition \ref{invlimitlagcorr}] To show $f_l$ is a $\mathbb{G}$-gapped strict $A_\infty$ tri-module morphism, it suffices to show that for each fixed $E_0$, for all $E<E_0$, $\epsilon > 0$ 
$$n^{(l-1), E, \epsilon}_{k'',k',k}(g''_l(x''_1), \dots, g''_l(x''_{k''}); f_l(y); g'_l(x'_1), \dots, g'_l(x'_{k'}); g_l(x_1), \dots, g_l(x_{k}))$$
$$= f_l(n^{(l), E, \epsilon}_{k'',k',k}(x''_1, \dots, x''_{k''}; y; x'_1, \dots, x'_{k'}; x_1, \dots, x_{k})).$$

Recall that the RHS is defined as 
$$f_l(ev^{(l)}_{+\infty !}((ev_{L''_l}^*x'') \wedge ((ev^{(l)}_{-\infty})^*y) \wedge (ev_{L'_l}^*x') \wedge (ev_{L_l}^*x); \widehat{\mathfrak{S}}^\epsilon_l)),$$

and the LHS is defined as 
\begin{align*}
    &ev^{(l-1)}_{+\infty !}((ev_{L''_{l-1}}^*g''_l(x'')) \wedge ((ev^{(l-1)}_{-\infty})^*f_l(y)) \wedge (ev_{L'_{l-1}}^*g'_l(x')) \wedge (ev_{L_{l-1}}^*g_l(x)); \widehat{\mathfrak{S}}^\epsilon_{l-1})\\
    =&ev^{(l-1)}_{+\infty !}(i^*_{l}(ev_{L''_{l}}^*(x'')) \wedge (i^*_{l}((ev^{(l)}_{-\infty})^* y)) \wedge (i^*_{l}(ev_{L'_{l}}^*(x')) \wedge (i^*_{l}(ev_{L_{l}}^*(x)); \widehat{\mathfrak{S}}^\epsilon_{l-1})\\
    =&ev^{(l-1)}_{+\infty !}(i^*_{l}((ev_{L''_{l}}^*(x'')) \wedge ((ev^{(l)}_{-\infty})^* y) \wedge (ev_{L'_{l}}^*(x')) \wedge (ev_{L_{l}}^*(x))); \widehat{\mathfrak{S}}^\epsilon_{l-1}),
\end{align*}

where the first equality is by (\ref{quilteddrumseamevexactsq}) and (\ref{quilteddrumasympevexactsq}) respectively. It suffices to show that 
$$ev^{(l-1)}_{+\infty !}(i^*_{l} (w); \widehat{\mathfrak{S}}^\epsilon_{l-1}) = f_l(ev^{(l)}_{+\infty !}(w; \widehat{\mathfrak{S}}^\epsilon_l))$$
for any differential form $w$ on $\mathcal{M}_{k'', k', k}(L''_l; L'_l, L_l; E)$. This follows from \cite[Proposition 10.26]{foookuranishi} and (\ref{quilteddrumasympevexactsq}) applied to $ev_{+\infty}$ (or alternatively \cite[Proposition 10.24]{foookuranishi} and that (\ref{quilteddrumasympevexactsq2}) holds as Kuranishi spaces with CF-perturbations). 
\end{proof}

\begin{remark}
\label{equivcorrmodoverequivmorsemodel}
In spirit of Subsection \ref{sectionequivmorsemodel}, while we have defined the equivariant correspondence tri-module $(CF_{eq}(L''; L', L), \{n^{eq}_{k'', k', k}\})$ as a left
$CF_{K\times H}(L'')$, right $(CF_{K\times G}(L'), CF_{G\times H}(L))$ $A_\infty$ tri-module, we could replace $CF_{G\times H}(L)$ by any ``intermediate model" (e.g. the equivariant Morse model) between $\varprojlim CF(L_l)$ and $CF_{G\times H}(L)$ by applying Corollary \ref{hpltrimodbasechange} to the corresponding contraction. Similarly for $CF_{K\times G}(L')$ and $CF_{K\times H}(L'')$. 
\end{remark}

\subsection{Equivariant Cyclic Property} 
\label{subsec-equivcyclicprop}
In this subsection, we develop an equivariant extension of cyclic property under the following setup:
\begin{setup} \label{equivcyclicsetup}
    Under the Setup \ref{hamtriple}, assume further that $L, L'$ are cleanly composable, and $L'' = L\circ L'$ as $K \times H$-Lagrangian correspondence. 
\end{setup}

By Proposition \ref{equilagcorrcomp}, for each $l\in\mathbb{Z}_{\geq 0}$, the correspondence tri-module $CF((L \circ L')_l; L'_l, L_l))$ is naturally identified with $CF(L_l \circ L'_l; L'_l, L_l)$ for which the constant one function $const^{(l)}_1: L_l \circ L'_l\cong ((L_l \circ L'_l)\times L'_l \times L_l)\cap \Delta \rightarrow \mathbb{R}$ defines an $\overline{n}^{(l)}_{0,0,0}$-closed element $const^{(l)}_1 \in CF^0(L_l \circ L'_l; L'_l, L_l)$ and hence an element $\mathbf{1}_l \in CF^0_{can}(L_l \circ L'_l; L'_l, L_l)$. Running over all $l$, we obtain an $\overline{n}^{(\infty)}_{0,0,0}$-closed element $(const^{(l)}_1) \in \varprojlim CF^0(L_l \circ L'_l; L'_l, L_l)$ and hence an element $\mathbf{1}_\infty \coloneqq (\mathbf{1}_l) \in CF^0_{eq}(L \circ L'; L', L)$. Note that a priori these $\mathbf{1}_l$ need not be left cyclic, as $L_l \circ L'_l$ need not be a transverse composition. In view of this, we introduce the following definition:

\begin{defn}
\label{cleancompseqdef}
    Under the setup \ref{equivcyclicsetup}, we say the sequence $(L_l \circ L'_l)$ is  cleanly composable if each $L_l \circ L'_l$ is cleanly composable and the corresponding fibrations are compatible over $l$, i.e. we have the following commutative diagrams
    \begin{equation} \label{cleancompseq}
	\begin{tikzcd}
		&F = F_0 \arrow[r, hook] \arrow[d]
		& F_1\arrow[r, hook] \arrow[d]  
		&\cdots
		&\arrow[r, hook]
		&F_l  \arrow[d] \\
  &I = I_0 \arrow[r, hook] \arrow[d, "p_{L\circ L'}"]
		& I_1\arrow[r, hook] \arrow[d, "p_{L_1\circ L'_1}"]  
		&\cdots
		&\arrow[r, hook]
		&I_l  \arrow[d, "p_{L_l\circ L'_l}"] \\
		&L\circ L' = L_0\circ L'_0  \arrow[r, hook]
		& L_1\circ L'_1 \arrow[r, hook]
		&\cdots
		&\arrow[r, hook]
		&L_l\circ L'_l 
	\end{tikzcd}
\end{equation}
where $I_l \coloneqq ((L_{l} \circ L'_{l})  \times L'_{l} \times L_{l})\cap \Delta_{P_l M_l N_l}$.
\end{defn}

\begin{theorem}
\label{equivcyclictheorem}
Under the setup \ref{equivcyclicsetup}, assume the sequence $(L_l \circ L'_l)$ is  cleanly composable and in addition the following:
\begin{enumerate}
    \item The sequence $(\dots \leftarrow \Omega(I_{l-1})\xleftarrow{f_l}\Omega(I_l)\leftarrow\dots)$ is homologically stable.
    \item For each $l\in\mathbb{Z}_{\geq 0}$, there exists $r(l)\in\mathbb{Z}$, increasing to $+\infty$ as $l\rightarrow +\infty$, such that $H^{m}(F_l)=0$ for all $0<m\leq r(l)$.
    
\end{enumerate}
Then $\mathbf{1}_\infty \in CF^0_{eq}(L \circ L'; L', L)$ is left cyclic (i.e.``equivariant cyclic").
\end{theorem}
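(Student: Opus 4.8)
The plan is to verify the hypotheses of Proposition \ref{invlimcyclic} applied to the tower of correspondence tri-modules $(CF(L_l \circ L'_l; L'_l, L_l))$ together with the element $\mathbf{1}_\infty = (\mathbf{1}_l)$. By Proposition \ref{invlimitlagcorr} this tower satisfies Setup \ref{trimodinvlim}: the connecting maps $f_l = \iota_l^*$ are $\mathbb{G}$-gapped unital strict $A_\infty$ tri-module morphisms along the $A_\infty$ algebra morphisms $(g_l'', g_l', g_l)$ from Proposition \ref{invlimitlag}, and their inverse limit is $CF_{eq}(L \circ L'; L', L)$. So the task reduces to checking: (a) the reduction towers $(\overline{C}''_l) = (\Omega(L''_l))$ and $(\overline{D}_l) = (\Omega(I_l))$ satisfy homological stability and the Mittag–Leffler condition; and (b) the existence of an increasing-to-infinity sequence $r(l)$ such that $\overline{n}^{(l),H}_{1,0,0}(-;\overline{\mathbf{1}}_l) \colon H^m(\Omega(L''_l)) \to H^m(\Omega(I_l))$ is an isomorphism for all $m \le r(l)$.

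For (a): The Mittag–Leffler condition for $(\Omega(L''_l))$ and $(\Omega(I_l))$ is automatic since the connecting maps are pullbacks $\iota_l^*$ along closed embeddings of closed manifolds, hence surjective (as in the worked example after Proposition \ref{invlimcohom}); surjectivity of a tower trivially implies Mittag–Leffler. Homological stability of $(\Omega(L''_l)) = (\Omega((L\circ L')_l))$ follows from the fact that $(L \circ L')_l$ is a Borel approximation space $L\circ L' \times_{K\times H} E(K\times H)_l$, so $H^m((L\circ L')_l)$ stabilizes to $H^m_{K\times H}(L\circ L')$ for $l \ge m$, exactly as in the Borel-space example following Proposition \ref{homstabml}. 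Homological stability of $(\Omega(I_l))$ is precisely hypothesis (1) of the theorem. So (a) is handled by hypothesis (1) together with standard facts.

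For (b): This is where hypothesis (2) and the clean-composability of the sequence enter. For each $l$, the tri-module $CF(L_l \circ L'_l; L'_l, L_l)$ carries the element $\mathbf{1}_l$ induced by the constant function on $L_l \circ L'_l$, and by Fukaya's result (Theorem \ref{embcorrtrimod}) and Remark \ref{trimodlowdeg}, the classical map $\overline{n}^{(l),H}_{1,0,0}(-;\overline{\mathbf{1}}_l)$ is the cohomological module-action map, which at the chain level is given by pushing forward along the fibration $p_{L_l \circ L'_l}\colon I_l \to L_l\circ L'_l$ with fiber $F_l$ (pullback by $\pi_{L_l\circ L'_l}$ composed with the natural inclusion/projection identifications coming from the clean composition). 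Since $H^m(F_l) = 0$ for $0 < m \le r(l)$ by hypothesis (2), the Leray–Serre spectral sequence of the fiber bundle $F_l \hookrightarrow I_l \xrightarrow{p} L_l\circ L'_l$ degenerates in the relevant range, giving that the pullback $p^*\colon H^m(L_l\circ L'_l) \to H^m(I_l)$ is an isomorphism for $m \le r(l)$ (one gets injectivity always and surjectivity in the range where the fiber cohomology vanishes in positive degrees $\le r(l)$; a more careful bookkeeping may force $m < r(l)$ or $m \le r(l)-1$, but since $r(l) \to \infty$ this is harmless and one can replace $r(l)$ by $r(l)-1$). One must check that $\overline{n}^{(l),H}_{1,0,0}(-;\overline{\mathbf{1}}_l)$ coincides with this pullback up to an automorphism of $H^\bullet(I_l)$, which is the content of unwinding the geometric definition of $n_{1,0,0}$ in the embedded-composition case together with the compatibility of the $\mathbf{1}_l$ over $l$ (guaranteed by the commuting diagram \eqref{cleancompseq}); this identification is essentially Fukaya's Lemma \ref{embcorrtrimod} relativized over the Borel base.

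With (a) and (b) in hand, Proposition \ref{invlimcyclic} applies directly and yields that $\mathbf{1}_\infty$ is left cyclic in $CF^0_{eq}(L\circ L'; L', L)$, which is the assertion. The main obstacle I anticipate is step (b): making precise the identification of the classical tri-module action $\overline{n}^{(l),H}_{1,0,0}(-;\overline{\mathbf{1}}_l)$ with the fiber-integration/pullback along $p_{L_l\circ L'_l}$ — this requires carefully tracing through Fukaya's construction of the correspondence tri-module in the clean (not transverse) composition setting and verifying that the low-energy ($\beta_0$) part of $n_{1,0,0}$ reduces to the topological pushforward, uniformly in $l$. The homological-stability input (a) is comparatively routine once one invokes the Borel-space stabilization results already quoted in the excerpt.
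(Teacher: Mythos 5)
Your proposal is correct and follows essentially the same route as the paper: reduce to Proposition \ref{invlimcyclic} by identifying $\overline{n}^{(l),H}_{1,0,0}(-;\overline{\mathbf{1}}_l)$ with the pullback $p^*_{L_l\circ L'_l}\colon H^m(L_l\circ L'_l)\to H^m(I_l)$, use hypothesis (2) (via Leray--Serre) to get an isomorphism in degrees $m\le r(l)$, and combine hypothesis (1) with homological stability of $(\Omega((L\circ L')_l))$ for the remaining hypotheses. Your write-up is simply a more detailed version of the paper's two-sentence proof, including the Mittag--Leffler check and the spectral-sequence bookkeeping that the paper leaves implicit.
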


\begin{proof}
    (2) implies $n^{(l), H}_{1,0,0, \beta_0}(-; \mathbf{1}_l) = p^*_{L_l\circ L'_l}: H^m(L_l\circ L'_l)\rightarrow H^m(I_l)$ is an isomorphism for all $m\leq r(l)$. The result follows from Proposition \ref{invlimcyclic} and that the sequence $(\dots \leftarrow \Omega((L\circ L')_{l-1})\xleftarrow{g''_l}\Omega((L\circ L')_{l})\leftarrow\dots)$ is homologically stable.
\end{proof}

Therefore, together with Proposition \ref{algcompweak} implies the following construction of ``composition of equivariant deformation cochains" as follows:

\begin{corollary}\label{compequivdefcochain}

There exists a map 
\begin{equation}
\label{equivcomp}
    CF^{odd}_{G\times H, +}(L) \times CF^{odd}_{K\times G, +}(L') \xrightarrow{\circ} CF^{odd}_{K \times H, +}(L''),   \end{equation}  
    $$(b,b') \mapsto b'' \coloneqq b \circ b',$$

    characterised by the equation $n^{eq, b'',b',b}_{0,0,0}(\mathbf{1}_\infty)=0$. 
    
    Moreover, (\ref{equivcomp}) restricts to a map between equivariant weak Maurer-Cartan sets
    \begin{equation} 
    \widehat{MC}^{G\times H}_{weak}(L) \times \widehat{MC}^{K\times G}_{weak}(L') \xrightarrow{\circ} \widehat{MC}^{K\times H}_{weak}(L'')
    \end{equation} 
in which their equivariant disk potentials satisfy
    \begin{equation}
    \label{equivmccomp}
    W^{G\times H}_L(b) + W^{K\times G}_{L'}(b') = W^{K\times H}_{L''}(b'').
     \end{equation}

     Also, (\ref{equivmccomp}) descends to a map between equivariant weak Maurer-Cartan spaces
    \begin{equation} 
    MC^{G\times H}_{weak}(L) \times MC^{K\times G}_{weak}(L') \xrightarrow{\circ} MC^{K\times H}_{weak}(L'').
    \end{equation}
    
\end{corollary}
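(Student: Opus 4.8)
The plan is to obtain Corollary~\ref{compequivdefcochain} as a direct application of the general algebraic machinery developed in Section~\ref{sec:corr} to the equivariant correspondence tri-module constructed in the preceding subsection, using the equivariant cyclic element supplied by Theorem~\ref{equivcyclictheorem}. First I would record that under Setup~\ref{equivcyclicsetup} together with hypotheses (1) and (2), Theorem~\ref{equivcyclictheorem} produces a gapped left cyclic element $\mathbf{1}_\infty \in CF^0_{eq}(L\circ L'; L',L)$ for the tri-module $CF_{eq}(L''; L', L)$, viewed as a $\mathbb{G}$-gapped unital filtered left $CF_{K\times H}(L'')$, right $(CF_{K\times G}(L'), CF_{G\times H}(L))$ $A_\infty$ tri-module. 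Note that here $L'' = L\circ L'$, so the three $A_\infty$ algebras acting are precisely the equivariant de Rham models of $L, L', L''$ (or, if preferred, their canonical models, via Remark~\ref{equivcorrmodoverequivmorsemodel}), and the equivariant weak Maurer--Cartan sets/spaces in the statement are exactly their $\widehat{MC}_{weak}$, resp.\ $MC_{weak}$, by definition.

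Next I would invoke Proposition~\ref{algcompdegoneprop} with $(C, C', C'') = (CF_{G\times H}(L), CF_{K\times G}(L'), CF_{K\times H}(L''))$, $D = CF_{eq}(L\circ L'; L', L)$ and the left cyclic element $\mathbf{1}_\infty$. This immediately yields the composition map
$$ CF^{odd}_{G\times H,+}(L) \times CF^{odd}_{K\times G,+}(L') \xrightarrow{\ \circ\ } CF^{odd}_{K\times H,+}(L''), \qquad (b,b')\mapsto b'' := b\circ b', $$
characterised by $n^{eq,\,b'',b',b}_{0,0,0}(\mathbf{1}_\infty) = 0$, establishing the first displayed equation. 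Then I would apply Proposition~\ref{algcompweak} verbatim to the same data: this gives the restriction of $\circ$ to the weak Maurer--Cartan sets $\widehat{MC}_{weak}(C)\times\widehat{MC}_{weak}(C') \to \widehat{MC}_{weak}(C'')$, together with the potential identity $W_C(b) + W_{C'}(b') = W_{C''}(b'')$, which in the equivariant notation reads $W^{G\times H}_L(b) + W^{K\times G}_{L'}(b') = W^{K\times H}_{L''}(b'')$; the compatibility with gauge equivalence, hence the descent to $MC_{weak}(C)\times MC_{weak}(C') \to MC_{weak}(C'')$, is the last assertion of Proposition~\ref{algcompweak}. Translating through the definitions of equivariant weak Maurer--Cartan space and equivariant disc potential (Definition~\ref{equivfloercpx} and the surrounding discussion, and their analogues for tri-modules) gives exactly the three displayed statements of the corollary.

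The only genuine content to check, rather than cite, is that the algebraic input hypotheses of Propositions~\ref{algcompdegoneprop} and~\ref{algcompweak} are met: that $CF_{eq}(L\circ L'; L',L)$ is unital and $\mathbb{G}$-gapped (both established in the preceding subsection, since it is the canonical model of $\varprojlim CF(L_l\circ L'_l; L'_l, L_l)$, and unitality/gappedness of the inverse limit follow from Propositions~\ref{invlimittrimod} and~\ref{invlimitlagcorr}), and that $\mathbf{1}_\infty$ is genuinely left cyclic, i.e.\ $\overline{n}^{(\infty)}_{0,0,0}(\overline{\mathbf{1}}_\infty) = 0$ and $n^{eq}_{1,0,0}(-;\mathbf{1}_\infty)\colon CF_{K\times H}(L'') \to CF_{eq}(L\circ L'; L', L)$ is a gapped isomorphism. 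The first is automatic from the construction of $\mathbf{1}_\infty$ as the inverse limit of the classes $const^{(l)}_1$, each of which is $\overline{n}^{(l)}_{0,0,0}$-closed; the second is precisely the conclusion of Theorem~\ref{equivcyclictheorem}, which is why hypotheses (1) and (2) (homological stability of $(\Omega(I_l))$ and vanishing of $H^{\le r(l)}(F_l)$ in positive degrees) appear in the corollary's statement. I expect the main obstacle, conceptually, to be making the bookkeeping of the three distinct symmetry groups $K\times H$, $K\times G$, $G\times H$ consistent with the abstract left/right module conventions of Section~\ref{sec:corr}, but this is a matter of matching notation rather than new mathematics; there is a small subtlety in that one should either work with the canonical tri-module model or note, via Remark~\ref{equivcorrmodoverequivmorsemodel} and Corollary~\ref{hpltrimodbasechange}, that the cyclic-element argument is insensitive to which homologically-equivalent model of the acting algebras one uses, so that $b, b'$ may be taken in the equivariant (Morse or de Rham) models appearing in applications.
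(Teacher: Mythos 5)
Your proposal is correct and follows essentially the same route as the paper: the paper obtains this corollary immediately by combining the left cyclic element $\mathbf{1}_\infty$ from Theorem \ref{equivcyclictheorem} with Propositions \ref{algcompdegoneprop} and \ref{algcompweak} applied to the equivariant correspondence tri-module, exactly as you describe. Your additional verification of the unitality, gappedness, and cyclicity hypotheses is sound and simply makes explicit what the paper leaves implicit.
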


\subsection{Application to Floer Theory of Symplectic quotients}\label{subsec-lftquotient}
In this subsection, we relate the equivariant Lagrangian Floer theory of Hamiltonian $G$-manifolds and the Lagrangian Floer theory of their symplectic quotients via Theorem \ref{equivcyclictheorem}.
\begin{setup} \label{hamsmred}
Given a Hamiltonian space $((Y, \omega_Y), G, \mu_Y)$, assume that $\mu_Y$ is proper and $G$ acts freely on the moment level $Z\coloneqq\mu_{Y}^{-1}(0)$, then its symplectic quotient $$(X \coloneqq Y\sslash_0 G = Z/G, \omega_X \coloneqq \omega_{red})$$ is a closed symplectic manifold with a principal $G$ bundle $\pi: Z \rightarrow X$.
\end{setup}

We also fix the Lagrangians that we are interested in:

\begin{setup}
\label{hamlagsmred}
    Under the Setup \ref{hamsmred}, fix a closed, connected, $G$-equivariantly $V$-relatively spin $G$-invariant Lagrangian $L \subseteq Z \subseteq Y$, which descends to a closed, connected Lagrangian $\bar{L} \coloneqq L/G \subseteq X$ with the quotient $\bar{V}
    $-relatively spin structure, where $\bar{V}\rightarrow X$ is defined as 
    $\bar{V}\coloneqq (V|_Z)/G.$
    
    Also, consider the moment level Lagrangian $Y \xrightarrow{L^\pi} X$ as a closed, connected $G$-Lagrangian correspondence defined as the graph of $\pi$:
    $$L^\pi  = \{(y, \pi(y))| y\in Z\} \cong Z,$$
  with the canonical $G$-equivariant relatively spin structure with respect to $pr^*_Y(V \oplus TY) \oplus pr^*_X\bar{V}$ \footnote{which is obtained from identifying $(pr^*_Y(V \oplus TY) \oplus pr^*_X\bar{V})|_{L^\pi} \oplus TL^\pi \rightarrow L^\pi$  with $(V|_Z \oplus TZ)^{\oplus2} \oplus (Z\times \mathfrak{g}^*)\rightarrow Z$.}.

   Note that $\bar{L} = L^\pi \circ L$ is a clean composition of $L$ and $L^\pi$.
\end{setup}

We consider the equivariant correspondence tri-module $CF_{eq}(\bar{L}; L, L^\pi)$ with $K = 1 = H$, $G = G$. Therefore, it is a $\mathbb{G}$-gapped unital left $CF_{can}(\bar{L})$, right $(CF_{G}(L), CF_{G}(L^\pi))$ $A_\infty$ tri-module. To apply Theorem \ref{equivcyclictheorem}, we first show the following:

\begin{lemma}
    For each $l\in\mathbb{Z}_{\geq 0}$, $L^\pi_l \circ L_l$ is a clean composition. The corresponding fibration can be identified with the following:
    \begin{equation} 
\begin{tikzcd} 
& F_l  \arrow[r] &(X_l \times L_l\times L^\pi_l)\cap \Delta_{Y_l X_l}   \arrow{r} & L^\pi_l \circ L_l \\
& 0_{EG_l}  \arrow[u, phantom, sloped, "\cong"] \arrow[r] &L_l \arrow[u, phantom, sloped, "\cong"]\arrow{r} & \bar{L} \arrow[u, phantom, sloped, "="]
\end{tikzcd}
\end{equation}
    
\end{lemma}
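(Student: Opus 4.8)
The plan is to prove the lemma by unwinding the definitions of the finite-dimensional Borel approximations and directly analyzing the fiber product $L^\pi_l \times_{Y_l} L_l$ and its image in $X_l$. First I would recall from Setup \ref{hamlagsmred} that $\bar L = L^\pi \circ L$ is a clean (indeed embedded) composition at level $0$: the fiber product $L^\pi \times_Y L = \{(y,\pi(y),y) : y \in \mu_Y^{-1}(0),\, y \in L\}$ is diffeomorphic to $L$, and its projection to $X$ is $\pi|_L : L \to \bar L$, which is a principal $G$-bundle (since $G$ acts freely on $\mu_Y^{-1}(0) \supseteq L$). So at level $0$ the statement is just the description of $\bar L = L/G$ as a $G$-quotient, with fiber $G = EG_0 = 0_{EG_0}$.

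Next I would carry the same analysis to level $l$. By the definitions in subsection \ref{subsec-lagcorrborelspace}, $L_l = L \times_G 0_{EG_l} \subseteq Y_l = (Y \times T^*EG_l)\sslash G$ and $L^\pi_l$ is the Borel space of the moment-level correspondence $Y \xrightarrow{L^\pi} X$ for the group $G \times 1$ (with $G$ acting trivially on $X$); concretely $L^\pi_l = \{([y,a],\pi(y)) : y \in \mu_Y^{-1}(0),\, a \in 0_{EG_l}\} \subseteq Y_l^- \times X$, using that $X_l = X$ since $G$ acts trivially on $X$ (so $H = 1$, and the $EH_l$ factor is a point). I would then compute
$$
L^\pi_l \times_{Y_l} L_l = \{([y,a],\pi(y),[y,a]) : y \in \mu_Y^{-1}(0) \cap L,\ a \in 0_{EG_l}\}
$$
after using the $G$-action to match the two representatives, exactly as in the proof of Proposition \ref{equilagcorrcomp}; this is the smooth manifold $(X_l \times L_l \times L^\pi_l) \cap \Delta_{Y_l X_l}$, and it is diffeomorphic to $L_l = L \times_G 0_{EG_l}$. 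The projection to $X_l = X$ sends $([y,a],\pi(y),[y,a]) \mapsto \pi(y)$, which factors as $L_l = L\times_G 0_{EG_l} \to L/G = \bar L$; this is a fiber bundle with fiber $0_{EG_l}$ (the $G$-quotient map is a bundle with contractible-in-the-limit, but here finite-dimensional, fiber $0_{EG_l} \cong EG_l$). Checking conditions (1)--(3) of Definition \ref{cleancomp}: cleanness of the intersection $L^\pi_l \times L_l \pitchfork (Y_l \times \Delta_{Y_l} \times X_l)$ follows because the intersection is cut out by the graph condition which is clean (the tangent space computation reduces to the fiberwise statement over $0_{EG_l}$, where it is the level-$0$ clean composition); smoothness of $L^\pi_l \circ L_l = \bar L$ and the fibration property of $\pi_{L^\pi_l \circ L_l}$ are exactly the bundle structure just described. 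Finally I would record compatibility over $l$: the embeddings $0_{EG_l} \hookrightarrow 0_{EG_{l+1}}$ from (\ref{tegemb}) induce the vertical maps in (\ref{egbgexactsq})-style pullback squares, giving the commuting ladder (\ref{cleancompseq}) with $F_l = 0_{EG_l}$, $I_l = (X_l \times L_l \times L^\pi_l)\cap\Delta_{Y_l X_l} \cong L_l$, and $L_l\circ L^\pi_l = \bar L$ for all $l$.

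The main obstacle I anticipate is the cleanness verification in (1) of Definition \ref{cleancomp} — specifically showing $T(L^\pi_l \times_{Y_l} L_l) = T(L^\pi_l \times L_l) \cap T(Y_l \times \Delta_{Y_l} \times X_l)$ rather than merely containment. The clean way to handle this is to observe that everything is fibered over $0_{BG_l}$ via the symplectic fibration (\ref{ynbgnfib}): both $L_l$ and (the relevant incarnation of) $L^\pi_l$ are fibered Lagrangians over $0_{BG_l}$ with fibers $L$ and $L^\pi$ respectively, so the fiber product and its tangent spaces split as a base part (over $0_{BG_l}$, which is common and causes no transversality defect) plus a fiber part, where the fiber statement is precisely the level-$0$ clean (embedded) composition $\bar L = L^\pi \circ L$ from Setup \ref{hamlagsmred}. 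A secondary technical point is keeping track of the identification $X_l = X$ and of the group bookkeeping ($K = H = 1$, so only the $G$-Borel construction on the $Y$-side is active); once that is fixed, the diagram in the statement is read off directly, with the isometric/almost-Kähler compatibility of the $0_{EG_l}$ from subsection \ref{subsec-equivfloercpx} ensuring the squares commute on the nose.
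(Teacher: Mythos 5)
Your proposal is correct and follows essentially the same route as the paper: the paper's proof is exactly the explicit computation $(X_l\times L_l\times L^\pi_l)\cap\Delta_{Y_lX_l}=\{(\pi(y),[y,a],\pi(y),[y,a])\}\cong L_l$, $L^\pi_l\circ L_l=\bar L$, with the fibration identified as $L\times_G 0_{EG_l}\to \bar L$ with fiber $0_{EG_l}$. Your additional verification of the tangent-space cleanness condition via the fibration over $0_{BG_l}$ is a detail the paper leaves implicit, but it does not change the argument.
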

\begin{proof}
    Note that 
\begin{align*}
    (X_l \times L_l\times L^\pi_l)\cap \Delta_{Y_l X_l} &= \{(p_c(y), [y, a], p_c(y) , [y, a])| y\in L; a \in 0_{EG_l}\}\\
    &\cong \{[y, a])| y\in L; a \in 0_{EG_l}\} = L_l.
\end{align*}
Moreover, $L^\pi_l \circ L_l = \{p_c(y)| y\in L\} = \bar{L}$. The fibration can therefore be identified with the projection $L_l = L \times_{G} 0_{EG_l} \rightarrow \bar{L}$ with fiber $0_{EG_l}$.
\end{proof}

The above proof readily shows the following:

\begin{corollary} 
\label{redlagseq}
    The sequence of fibrations in Definition \ref{cleancompseq} can be identified with \begin{equation}
	\begin{tikzcd}
		&G \cong 0_{EG_0} \arrow[r, hook] \arrow[d]
		& 0_{EG_1}\arrow[r, hook] \arrow[d]  
		&\cdots
		&\arrow[r, hook]
		&0_{EG_l} \arrow[r, hook] \arrow[d] 
		&\cdots\\
  &L = L_0 \arrow[r, hook] \arrow[d, "p_{L}"]
		& L_1\arrow[r, hook] \arrow[d, "p_{L_1}"]  
		&\cdots
		&\arrow[r, hook]
		&L_l \arrow[r, hook] \arrow[d, "p_{L_l}"] 
		&\cdots\\
		&\bar{L}  \arrow[r, phantom, "="]
		&\bar{L} \arrow[r, phantom, "="]
		&\cdots
		&\arrow[r, phantom, "="]
		&\bar{L}\arrow[r, phantom, "="]
		&\cdots
	\end{tikzcd}
\end{equation}
In particular, the sequence is cleanly composable.
\end{corollary}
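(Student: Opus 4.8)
\textbf{Proof proposal for Corollary \ref{redlagseq}.}

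The plan is to extract everything from the description of the single-level composition $L^\pi_l \circ L_l$ established in the preceding lemma, and then check that the identifications are natural in $l$, so that they assemble into the claimed commutative diagram. First I would recall the explicit computation from the lemma: writing $p_c : \mu_Y^{-1}(0) \to X$ for the quotient map (extended to the level-$l$ setting via $\pi_l$), we have
$$(X_l \times L_l \times L^\pi_l)\cap \Delta_{Y_l X_l} = \{(p_c(y),[y,a],p_c(y),[y,a]) : y\in L,\ a\in 0_{EG_l}\} \cong \{[y,a] : y\in L,\ a\in 0_{EG_l}\} = L_l,$$
and $L^\pi_l \circ L_l = \{p_c(y) : y\in L\} = \bar L$. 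Thus the clean-composition fibration $p_{L_l\circ L'_l} : I_l \to L_l\circ L'_l$ of Definition \ref{cleancompseqdef} is, under these identifications, precisely the bundle projection $p_{L_l} : L_l = L\times_G 0_{EG_l} \to \bar L$ with fiber $0_{EG_l}$, and $F_l \cong 0_{EG_l}$. This gives the columns of the asserted diagram.

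Next I would verify compatibility of these identifications with the horizontal inclusion maps. The embeddings $0_{EG_{l-1}}\hookrightarrow 0_{EG_l}$, $L_{l-1}\hookrightarrow L_l$ (from \eqref{yglgemb}), and the equality $L^\pi_{l-1}\circ L_{l-1} = \bar L = L^\pi_l \circ L_l$ are all induced by the single sequence \eqref{tegemb} of $G$-equivariant almost-K\"ahler embeddings. Concretely, the isomorphism $I_l \cong L_l$ sends $(p_c(y),[y,a],p_c(y),[y,a]) \mapsto [y,a]$, which manifestly intertwines the inclusion $I_{l-1}\hookrightarrow I_l$ with $L_{l-1}\hookrightarrow L_l$; the projection $p_{L_l}$ is the descent of $(y,a)\mapsto p_c(y)$, which does not involve the $EG_l$-coordinate at all, so it is strictly compatible with the inclusions and reduces to $p_{L_{l-1}}$. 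Likewise $F_l\cong 0_{EG_l}$ is compatible with $F_{l-1}\hookrightarrow F_l$. These checks are routine diagram chases once the point-set identification of the lemma is in hand; there is no analytic content.

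Finally, to conclude that the sequence $(L_l\circ L'_l)$ is cleanly composable in the sense of Definition \ref{cleancompseqdef}, I would invoke the lemma to see that each $L^\pi_l\circ L_l$ is individually a clean composition, and the compatibility just established to see that the fibration squares commute; this is exactly the content of \eqref{cleancompseq} with $F_l = 0_{EG_l}$, $I_l$ as displayed, and the base $\bar L$ constant. The main (and only) obstacle is essentially bookkeeping: one must be careful that the identifications $I_l\cong L_l$ are the ones compatible with the \emph{relative spin structures} and orientations carried along in Fukaya's construction, not merely diffeomorphisms of underlying manifolds --- but since all spaces in sight are pulled back from the universal Borel construction along the canonical embeddings, this compatibility is automatic, and I would simply remark that it holds by naturality of the constructions in Section \ref{sec:equiv}. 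Hence the corollary follows.
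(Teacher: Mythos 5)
Your proposal is correct and matches the paper's argument: the paper deduces the corollary directly from the point-set identifications in the preceding lemma ($I_l\cong L_l$, $L^\pi_l\circ L_l=\bar L$, fiber $0_{EG_l}$), observing that these are manifestly compatible with the inclusions over $l$. Your additional naturality checks are the routine verifications the paper leaves implicit.
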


We are now ready to show $CF^0_{eq}(\bar{L}; L, L^\pi)$ admits a left cyclic element:

\begin{prop}
    $\mathbf{1}_\infty\coloneqq (\mathbf{1}_l) \in CF^0_{eq}(\bar{L}; L, L^\pi)$ is left cyclic.
\end{prop}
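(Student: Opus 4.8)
The plan is to apply Theorem~\ref{equivcyclictheorem} to the triple $(L, L^\pi, \bar L = L^\pi \circ L)$ with $K = H = 1$ and the middle group $G$, so it suffices to verify the two hypotheses of that theorem for the cleanly composable sequence $(L^\pi_l \circ L_l)$ identified in Corollary~\ref{redlagseq}. First I would record that the relevant sequences are exactly the ones appearing in that corollary: the intersection spaces $I_l \coloneqq ((L^\pi_l \circ L_l) \times L^\pi_l \times L_l) \cap \Delta$ are each diffeomorphic to $L_l = L \times_G 0_{EG_l}$, the composition $L^\pi_l \circ L_l$ is constantly $\bar L$, and the fibers $F_l$ of the clean-composition fibration $p_{L_l} : I_l \cong L_l \to \bar L$ are identified with $0_{EG_l}$. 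The connecting maps $f_l : \Omega(I_l) \to \Omega(I_{l-1})$ are pullbacks along the closed embeddings $L_{l-1} \hookrightarrow L_l$ from the diagram~\eqref{yglgemb}.

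Next I would check hypothesis (1), homological stability of $(\dots \leftarrow \Omega(I_{l-1}) \xleftarrow{f_l} \Omega(I_l) \leftarrow \dots)$. Under the identification $I_l \cong L_l = L \times_G EG_l$ this is precisely the sequence $(\Omega(L_l))$, whose homological stability is exactly the content of the example following Proposition~\ref{homstabml}: for each fixed degree $m$, $H^m(L_l)$ stabilizes to $H^m(L \times_G EG)$ for all $l \geq m$, by \cite[Theorem~A.7(b)]{tuequiv}, since $EG_l$ is $(l-1)$-connected. The Mittag-Leffler condition also holds because the $f_l$ are pullbacks along proper (in fact closed) embeddings, hence surjective on forms. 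Then I would check hypothesis (2): the fiber $F_l \cong 0_{EG_l} \cong EG_l$ is $(l-1)$-connected by construction in~\eqref{egemb}, so $H^m(F_l) = 0$ for all $0 < m < l$; taking $r(l) \coloneqq l - 1$ gives a sequence increasing to $+\infty$ with $H^m(F_l) = 0$ for all $0 < m \leq r(l)$, as required. (One should also note the elementary point that the target sequence $(\Omega((L^\pi \circ L)_l)) = (\Omega(\bar L))$ is constant, hence trivially homologically stable, as invoked in the proof of Theorem~\ref{equivcyclictheorem}.)

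With both hypotheses verified, Theorem~\ref{equivcyclictheorem} applies directly and yields that $\mathbf{1}_\infty = (\mathbf{1}_l) \in CF^0_{eq}(\bar L; L, L^\pi)$ is left cyclic. I do not expect any serious obstacle here: the whole argument is a matter of matching the abstract hypotheses of Theorem~\ref{equivcyclictheorem} against the concrete geometry recorded in the preceding lemma and corollary, together with the already-established connectivity and stability facts for the approximation spaces $EG_l$ and $L_l$. The only point requiring a little care is making sure the fibration data is the one the theorem expects — that is, that $p^*_{L_l \circ L_l} : H^m(L_l \circ L_l) \to H^m(I_l)$ is indeed the map $\overline{n}^{(l),H}_{1,0,0}(-;\mathbf{1}_l)$ in degrees $m \leq r(l)$, which follows from Theorem~\ref{embcorrtrimod} applied fiberwise (the composition $L^\pi_l \circ L_l$ being embedded with $\pi_{L^\pi_l \circ L_l}$ the fibration $L_l \to \bar L$) together with Remark~\ref{trimodlowdeg} identifying $\overline{n}^{(l),H}_{1,0,0}$ with the cohomological module action; since $F_l$ is $r(l)$-connected, this pullback is an isomorphism in the relevant range of degrees.
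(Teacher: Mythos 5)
Your proposal is correct and follows essentially the same route as the paper: the paper's proof likewise invokes Theorem~\ref{equivcyclictheorem}, verifying hypothesis (1) via Corollary~\ref{redlagseq} and the homological stability of $(\Omega(L_l))$, and hypothesis (2) via the $(l-1)$-connectedness of $EG_l$ with $r(l)=l-1$. Your extra checks (Mittag--Leffler surjectivity, the identification of $\overline{n}^{(l),H}_{1,0,0}(-;\mathbf{1}_l)$ with $p^*_{L^\pi_l\circ L_l}$) are consistent with how these facts are established earlier in the paper.
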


\begin{proof}
    By Theorem \ref{equivcyclictheorem}, it suffices to show those assumptions hold: (1) follows from Corollary \ref{redlagseq} and that $(\dots \leftarrow \Omega(L_{l-1})\xleftarrow{g_l}\Omega(L_l)\leftarrow\dots)$ satisfies homological stability; (2) follows from $EG_l$ being $r(l)$-connected with $r(l)=l-1$.
\end{proof}

Applying Corollary \ref{compequivdefcochain} yields the following:

\begin{corollary}
\label{compequiud}
 There exists a map 
\begin{equation} 
\label{udcomp}
    CF^{odd}_{G, +}(L^\pi) \times CF^{odd}_{G, +}(L)\xrightarrow{\circ} CF^{odd}_{can, +}(\bar{L}),  \end{equation}  
    $$(b_{L^\pi}, b_{L}) \mapsto b_{\bar{L}} \coloneqq b_{L^\pi} \circ b_{L},$$
    characterised by the equation $n^{eq, b_{\bar{L}} ,b_{L} ,b_{L^\pi}}_{0,0,0}(\mathbf{1}_\infty)=0$.
    
    Moreover, (\ref{udcomp}) restricts to a map between their weak Maurer-Cartan sets
    \begin{equation}
    \label{udmccomp}
    \widehat{MC}^G_{weak}(L^\pi) \times \widehat{MC}^G_{weak}(L) \xrightarrow{\circ} \widehat{MC}_{weak}(\bar{L})
    \end{equation}
    such that their potential functions satisfy
    \begin{equation} 
    W^G_{L^\pi}(b_{L^\pi}) + W^G_{L}(b_{L}) = W_{\bar{L}}(b_{\bar{L}}).
     \end{equation}
     Furthermore, (\ref{udmccomp}) descends to a map between their weak Maurer-Cartan spaces:
    \begin{equation}  MC^G_{weak}(L^\pi) \times MC^G_{weak}(L) \xrightarrow{\circ} MC_{weak}(\bar{L}).
    \end{equation}
\end{corollary}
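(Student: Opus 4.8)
The plan is to obtain Corollary~\ref{compequiud} as the instance of Corollary~\ref{compequivdefcochain} corresponding to $P=\mathrm{pt}$, $M=Y$, $N=X$, with the trivial groups $K=H=\{1\}$ and $G=G$, and the triple of correspondences $L''=\bar L$, $L'=L$, $L=L^\pi$. The first step is bookkeeping: one checks that under this substitution the three $A_\infty$ algebras appearing in Corollary~\ref{compequivdefcochain} become the ones in Corollary~\ref{compequiud}. Indeed $CF_{K\times G}(L')=CF_G(L)$ and $CF_{G\times H}(L)=CF_G(L^\pi)$ are the equivariant de Rham models of Section~\ref{sec:equiv}, while $CF_{K\times H}(L'')=CF_{\{1\}}(\bar L)$ is, by Definition~\ref{equivfloercpx} with trivial group (each approximation space $\bar L_l$ is just $\bar L$, so the inverse limit collapses to $CF(\bar L)$), precisely the canonical model $CF_{can}(\bar L)$ of the ordinary de Rham Floer algebra of $\bar L$; correspondingly $CF_{eq}(\bar L;L,L^\pi)$ is the $\mathbb{G}$-gapped unital left $CF_{can}(\bar L)$, right $(CF_G(L),CF_G(L^\pi))$ $A_\infty$ tri-module built in subsection~\ref{subsec-equivcorrtrimod}.

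The second step is to verify the hypotheses that license the invocation of Corollary~\ref{compequivdefcochain}, i.e.\ those of Theorem~\ref{equivcyclictheorem} for this triple. By Setup~\ref{hamlagsmred}, $\bar L=L^\pi\circ L$ is a clean composition of $G$-Lagrangian correspondences; by the Lemma preceding Corollary~\ref{redlagseq}, each finite-level composition $L^\pi_l\circ L_l$ is clean, with the fibration of Definition~\ref{cleancompseqdef} identified with the bundle projection $L_l=L\times_G 0_{EG_l}\to\bar L$ whose fiber is $0_{EG_l}$. Hence, per Corollary~\ref{redlagseq}, the sequence $(L^\pi_l\circ L_l)$ is cleanly composable, the total spaces $I_l$ are the $L_l$, and the fibers $F_l$ are the $0_{EG_l}$. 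Condition~(1) of Theorem~\ref{equivcyclictheorem}, homological stability of $(\Omega(I_l))=(\Omega(L_l))$, holds because $L_l$ is the Borel approximation of a closed $G$-manifold; condition~(2) holds with $r(l)=l-1$ since $0_{EG_l}\cong EG_l$ is $(l-1)$-connected. This is exactly the Proposition immediately above, which thereby furnishes the left cyclic element $\mathbf{1}_\infty=(\mathbf{1}_l)\in CF^0_{eq}(\bar L;L,L^\pi)$; equivalently, one may apply Proposition~\ref{invlimcyclic} directly to this tower.

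Given cyclicity, Corollary~\ref{compequivdefcochain} (whose proof is Theorem~\ref{equivcyclictheorem} combined with the purely algebraic Proposition~\ref{algcompweak} applied to $CF_{eq}(\bar L;L,L^\pi)$) yields, in the notation $C=CF_G(L^\pi)$, $C'=CF_G(L)$, $C''=CF_{can}(\bar L)$ and $b=b_{L^\pi}$, $b'=b_L$, $b''=b_{\bar L}$: the composition map \eqref{udcomp}, $(b_{L^\pi},b_L)\mapsto b_{\bar L}=b_{L^\pi}\circ b_L$, characterized by $n^{eq,\,b_{\bar L},b_L,b_{L^\pi}}_{0,0,0}(\mathbf{1}_\infty)=0$; its restriction \eqref{udmccomp} to weak Maurer--Cartan sets together with the potential identity $W^G_{L^\pi}(b_{L^\pi})+W^G_L(b_L)=W_{\bar L}(b_{\bar L})$; and the descent to weak Maurer--Cartan spaces via gauge equivalence. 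I do not anticipate a genuine obstacle in this corollary: all the analytic work is absorbed into the construction of $CF_{eq}$ and into the left-cyclicity Proposition, and what remains is the identification of models above plus a direct appeal to already-proven algebraic machinery. The only place that merits explicit care is the identification $CF_{\{1\}}(\bar L)=CF_{can}(\bar L)$, so that the target of \eqref{udcomp} is the canonical model of the non-equivariant Floer algebra of the reduced Lagrangian, and a quick sanity check that the left/right factors of the tri-module are matched to $(b_{L^\pi},b_L,b_{\bar L})$ in the correct order.
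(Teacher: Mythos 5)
Your proposal is correct and follows essentially the same route as the paper: the paper likewise establishes that the sequence $(L^\pi_l\circ L_l)$ is cleanly composable with fibrations $0_{EG_l}\to L_l\to\bar L$ (Corollary \ref{redlagseq}), verifies conditions (1) and (2) of Theorem \ref{equivcyclictheorem} via homological stability of $(\Omega(L_l))$ and the $(l-1)$-connectivity of $EG_l$ to get left cyclicity of $\mathbf{1}_\infty$, and then invokes Corollary \ref{compequivdefcochain} with $K=H=\{1\}$. Your explicit identification of $CF_{\{1\}}(\bar L)$ with $CF_{can}(\bar L)$ and the ordering check on $(b_{L^\pi},b_L,b_{\bar L})$ are the right points of care and match the paper's conventions.
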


Moreover, in this case $\mathbf{1}_l$ satisfies an extra cyclic property as follows:
\begin{prop}
For each $l$, 
$\mathbf{1}_l \in CF^0(\bar{L}; L_l, L^\pi_l)$ is right $CF_{can}(L_l)$-cyclic.
\end{prop}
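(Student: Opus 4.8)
The plan is to verify the two conditions defining right-$CF_{can}(L_l)$ cyclicity by a direct geometric computation, reducing everything to the identification of the intersection locus with $L_l$. First I would pass to canonical models, so that the tri-module $D$ in question has $\overline{D}=H^\bullet(\mathcal{I}_l)$, where $\mathcal{I}_l\coloneqq (\bar{L}\times L_l\times L^\pi_l)\cap\Delta$, and $\overline{C}'=H^\bullet(L_l)$. Since the differential $\overline{n}^{(l),can}_{0,0,0}$ of a canonical model vanishes, condition (1) in the definition of a cyclic element is automatic; and by the $\mathbb{G}$-gappedness remark following that definition, it then suffices to check condition $(2)'$, namely that $\overline{n}^{(l),can}_{0,1,0}(-;\overline{\mathbf{1}}_l)\colon H^\bullet(L_l)\to H^\bullet(\mathcal{I}_l)$ is an isomorphism of $\Z$-graded $R$-modules. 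By Remark \ref{trimodlowdeg} this map is the one induced on cohomology by the de Rham-level operation $\overline{n}^{(l)}_{0,1,0}(-;const^{(l)}_1)\colon\Omega(L_l)\to\Omega(\mathcal{I}_l)$, so the real task is to identify that operation.

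The geometric input I would use is the identification in the lemma just before Corollary \ref{redlagseq}: with $\mathcal{I}_l\coloneqq (\bar{L}\times L_l\times L^\pi_l)\cap\Delta$ the clean intersection locus of Definition \ref{cleancomp} for the composition $\bar{L}=L^\pi_l\circ L_l$, a point of $\mathcal{I}_l$ is determined by its $L_l$-component $[y,a]$ (with $y\in L$, $a\in 0_{EG_l}$), the remaining components being then forced. In particular the projection of $\mathcal{I}_l$ onto its $L_l$-factor is a \emph{diffeomorphism} $\iota\colon\mathcal{I}_l\xrightarrow{\sim}L_l$; this holds precisely because $L^\pi$ is the graph of $\pi|_{\mu_Y^{-1}(0)}$, so the $L^\pi_l$-factor of a point of $\mathcal{I}_l$ carries no extra data. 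This is the structural feature of the moment-level correspondence that drives the whole argument. (By contrast, the other projection $\mathcal{I}_l\to L''=\bar{L}$ is the fibration $p_{L_l}$ with fibre $0_{EG_l}$ from Corollary \ref{redlagseq}, which is exactly why $\mathbf{1}_l$ is \emph{not} left cyclic for finite $l$ and only becomes so after passing to the inverse limit.)

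Next I would compute $\overline{n}^{(l)}_{0,1,0}(-;const^{(l)}_1)$. Only the energy-zero part $n^{(l)}_{0,1,0,\beta_0}$ enters the $R$-reduction, and by Gromov compactness the relevant moduli space $\mathcal{M}_{0,1,0}(\bar{L};L_l,L^\pi_l;0)$ consists of constant quilted drums carrying a single marked point on the seam $\sigma_1$; it is therefore canonically $\mathcal{I}_l$, with $ev_{\pm\infty}=\mathrm{id}_{\mathcal{I}_l}$ and with the $\sigma_1$-evaluation $ev_{L_l}$ equal to the projection $\mathcal{I}_l\to L_l$, i.e. to $\iota$. Hence the operation sends $x'\in\Omega(L_l)$ to $(ev_{+\infty})_!\big(ev_{-\infty}^*(const^{(l)}_1)\wedge ev_{L_l}^*x'\big)=const^{(l)}_1\wedge\iota^*x'=\iota^*x'$, up to the sign conventions of Fukaya's correspondence formalism — that is, pullback along the diffeomorphism $\iota$. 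This is a cochain isomorphism $\Omega(L_l)\xrightarrow{\sim}\Omega(\mathcal{I}_l)$, hence induces an isomorphism on cohomology, giving $(2)'$. I view this as the mirror image, on the seam $\sigma_1$ rather than $\sigma_3$, of the constant-drum computation in the proof of \cite[Lemma 8.10]{Fukaya-corr} ($=$ Theorem \ref{embcorrtrimod}): there embeddedness is used only to force the $\sigma_3$-projection to be a diffeomorphism, whereas here the $\sigma_1$-projection is a diffeomorphism automatically.

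Together with condition (1), this shows $\mathbf{1}_l$ is right-$CF_{can}(L_l)$ cyclic. The part I expect to be the main obstacle is purely technical: justifying the energy-zero computation at the level of Kuranishi structures and CF-perturbations — checking that $\mathcal{M}_{0,1,0}(\bar{L};L_l,L^\pi_l;0)$ genuinely carries the trivial Kuranishi structure with the stated evaluation maps and that the single $\sigma_1$-marked point does not spoil stability or introduce perturbation-dependent corrections. This is, however, entirely parallel to Fukaya's treatment of the constant-drum contribution in \cite[Section 8.2]{Fukaya-corr}, and should transfer after interchanging the roles of the seams $\sigma_1$ and $\sigma_3$ (equivalently, of $L'$ and $L''$).
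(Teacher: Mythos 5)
Your proposal is correct and follows essentially the same route as the paper, whose entire proof is the observation that $\overline{n}^{(l),H}_{0,1,0}(\mathbf{1}_l;-)\colon CF_{can}(L_l)\to CF_{can}(\bar L;L_l,L^\pi_l)$ is the identity on $H(L_l)$ under the identification $\mathcal{I}_l\cong L_l$; you simply spell out why that identification holds (the $L_l$-projection of the intersection locus is a diffeomorphism because $L^\pi$ is a graph) and why the energy-zero constant-drum contribution realizes it as $\iota^*$. The only nit is the flipped argument order in your notation $\overline{n}^{(l),can}_{0,1,0}(-;\overline{\mathbf{1}}_l)$ versus the paper's convention $n_{0,1,0}(\mathbf{1};-)$, which does not affect the argument.
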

\begin{corollary}
$\mathbf{1}_\infty \in CF^0_{eq}(\bar{L}; L, L^\pi)$ is right $CF_G(L)$-cyclic.  
\end{corollary}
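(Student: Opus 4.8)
The plan is to deduce the right-$CF_G(L)$-cyclicity of $\mathbf{1}_\infty$ from the fibrewise statement just proved, via the same inverse-limit mechanism that produced the left-cyclic element. First I would record the relevant identifications. By the Lemma above, $I_l = (X_l \times L_l \times L^\pi_l)\cap \Delta_{Y_l X_l}$ is diffeomorphic to $L_l = L \times_G 0_{EG_l}$, compatibly with the embeddings $I_{l-1} \hookrightarrow I_l$, which therefore become the closed embeddings $L_{l-1} \hookrightarrow L_l$. Consequently the tower $(\overline{D}_l) = (\Omega(I_l))$ of de Rham complexes underlying the equivariant correspondence tri-module is isomorphic to the tower $(\Omega(L_l))$, which we have already observed satisfies homological stability and — being a tower of surjective de Rham restriction maps along proper embeddings — the Mittag-Leffler condition; the same holds for the middle tower $(\overline{C}'_l) = (\Omega(L_l))$ arising from $CF(L_l)$. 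One also checks the index bookkeeping: under the identification $CF(\bar{L}; L, L^\pi) \leftrightarrow CF(L''; L', L)$ with $(L'', L', L) = (\bar{L}, L, L^\pi)$ the middle algebra is $C' = CF_G(L)$, so "right $C'$-cyclic" is precisely "right $CF_G(L)$-cyclic".

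Next I would invoke the argument of Proposition \ref{invlimcyclic}, with $\overline{n}^{(l), H}_{1,0,0}(-; \overline{\mathbf{1}}_l)$ replaced throughout by $\overline{n}^{(l), H}_{0,1,0}(-; \overline{\mathbf{1}}_l)$; this replacement is legitimate because Corollary \ref{cyclicdiagram} provides the commutative diagrams in the $H(\overline{C}'_l)$-column as well. To run the argument I must supply an increasing sequence $r(l) \to +\infty$ with $\overline{n}^{(l), H}_{0,1,0}(-; \overline{\mathbf{1}}_l): H^m(\overline{CF_{can}(L_l)}) \to H^m(\overline{D}_l)$ an isomorphism for all $m \le r(l)$. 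Here the situation is easier than in the left-cyclic case: the preceding Proposition asserts that each $\mathbf{1}_l$ is right $CF_{can}(L_l)$-cyclic, so $\overline{n}^{(l), H}_{0,1,0}(-; \overline{\mathbf{1}}_l)$ is an isomorphism in \emph{every} degree, and any choice such as $r(l) = l$ works. I would also note the coherence $[f_l]([const^{(l)}_1]) = [const^{(l-1)}_1]$ required of the component sequence $\overline{\mathbf{1}}_\infty = ([const^{(l)}_1])_l \in H^0(\overline{D}_\infty)$, which is immediate since $f_l = \iota_l^*$ pulls the constant function back to the constant function; and that condition (1) of cyclicity, $\overline{n}^{(\infty), H}_{0,0,0}(\overline{\mathbf{1}}_\infty) = 0$, is automatic because $\mathbf{1}_\infty$ lies in the (minimal) canonical model.

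With these inputs, Proposition \ref{homstabml} gives that $([\pi'_l])$ and $([\pi^D_l])$ are isomorphisms, homological stability makes $\pi^{H'}_l$ and $\pi^{H_D}_l$ degreewise isomorphisms for large $l$, and the diagram of Corollary \ref{cyclicdiagram} (in the $C'$-column) then forces $\overline{n}^{(\infty), H}_{0,1,0}(-; \overline{\mathbf{1}}_\infty): H^m(\overline{CF_G(L)}) \to H^m(CF_{eq}(\bar{L}; L, L^\pi))$ to be an isomorphism for every $m$. Together with $\mathbb{G}$-gappedness this is condition (2)' — hence (2) — in the definition of a right-$C'$ cyclic element, so $\mathbf{1}_\infty$ is right $CF_G(L)$-cyclic. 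I do not expect a genuine obstacle: the only points requiring care are the index matching ($C'$ versus $C$) and confirming that the two towers in play are the single tower $(\Omega(L_l))$ up to the diffeomorphisms $I_l \cong L_l$, which is exactly what makes the already-verified homological-stability and Mittag-Leffler hypotheses available.
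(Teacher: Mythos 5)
Your proposal is correct and follows essentially the paper's route: the paper's own (terse) proof rests on the observation that each fibrewise map $\overline{n}^{(l),H}_{0,1,0}(\mathbf{1}_l;-)$ is the identity on $H(L_l)$, and the passage to the inverse limit is exactly the right-cyclic analogue of Proposition \ref{invlimcyclic} applied to the tower $(\Omega(I_l))\cong(\Omega(L_l))$, which you spell out in full. The only (purely notational) quibble is that the right-cyclic map takes the cyclic element in the first slot, $n_{0,1,0}(\mathbf{1};-)$, not $n_{0,1,0}(-;\mathbf{1})$.
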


 \begin{proof}
    It suffices to observe that  $\overline{n}^{(l), H}_{0,1,0}(\mathbf{1}_l; -) : CF_{can}(L_l)\rightarrow CF_{can}(\bar{L}; L_l, L^\pi_l)$ can be identified with the identity map on $H(L_l)$.
\end{proof}

Therefore, $\mathbf{1}_\infty$ is a bi-cyclic element, for which Corollaries \ref{bicycisom}, \ref{bicycchisom} and Proposition \ref{ringisom} can be applied to obtain the following corollary:
\begin{corollary}
\label{udisom}
Given $b_{L^\pi}\in CF^{odd}_{G, +}(L^\pi)$, there exists bijections
    \begin{equation}
    \label{updowndef}
    \begin{tikzcd}
    CF^{odd}_{G, +}(L)
    \arrow[r,rightarrow, yshift=0.3ex, "b_{L^\pi}\circ (-)"] \arrow[r,leftarrow, yshift=-0.3ex, "(-)\circ b_{L^\pi}"']
& CF^{odd}_{can, +}(\bar{L}),
\end{tikzcd}
\end{equation}
$$ b_L \leftrightarrow b_{\bar{L}},$$
characterised by the equation $n^{eq, def}_{0,0,0}(\mathbf{1}_\infty)=0$, where $n^{eq, def}_{\cdot,\cdot,\cdot}=n^{eq, b_{\bar{L}}, b_L, b_{L^\pi}}_{\cdot,\cdot,\cdot}$.\\ 
Moreover, it induces the following pre-chain isomorphisms (up to a sign)
\begin{equation}
\label{updownprechisom}
(CF_{G}(L), m^{G, b_L}_1) \xrightarrow[\sim]{\phi_L} (CF_{eq}(\bar{L}; L, L^\pi), n^{eq, def}_{0,0,0}) \xleftarrow[\sim]{\phi_{\bar{L}}} (CF_{can}(\bar{L}), m^{b_{\bar{L}}}_1), 
\end{equation}
where $\phi_L = n^{eq, def}_{0,1,0}(\mathbf{1}_\infty; -)$; $\phi_{\bar{L}} = n^{eq, def}_{1,0,0}(-; \mathbf{1}_\infty)$.\\
Therefore, the composition $\kappa \coloneqq (\phi_{\bar{L}})^{-1} \circ \phi_L$ is a pre-chain isomorphism.

If in addition $b_{L^\pi}\in \widehat{MC}^G_{weak}(L^\pi)$, then (\ref{updowndef}) restricts to
    \begin{equation} \label{updownmcisom}
    \begin{tikzcd}
    \widehat{MC}^G_{weak}(L)
    \arrow[r,rightarrow, yshift=0.3ex, "b_{L^\pi}\circ (-)"] \arrow[r,leftarrow, yshift=-0.3ex, "(-)\circ b_{L^\pi}"']
& \widehat{MC}_{weak}(\bar{L}),
\end{tikzcd}
\end{equation}
$$ b_L \leftrightarrow b_{\bar{L}}$$
such that their potentials satisfy the following equation:
\begin{equation}\label{www}
    W^G_{L^\pi}(b_{L^\pi})+ W^G_L(b_L) = W_{\bar{L}}(b_{\bar{L}}).
\end{equation}
Furthermore, (\ref{updownmcisom}) descends to
    \begin{equation} \label{updownmcspaceisom}
    \begin{tikzcd}
    MC^G_{weak}(L)
    \arrow[r,rightarrow, yshift=0.3ex, "b_{L^\pi}\circ (-)"] \arrow[r,leftarrow, yshift=-0.3ex, "(-)\circ b_{L^\pi}"']
& MC_{weak}(\bar{L}),
\end{tikzcd}
\end{equation}
$$ [b_L] \leftrightarrow [b_{\bar{L}}]$$
which depends only on the gauge equivalence class $[b_{L^\pi}]\in MC^G_{weak}(L^\pi)$.

Moreover, (\ref{updownprechisom}) are chain isomorphisms (up to a sign), which induces the following isomorphisms of Floer cohomologies as gapped $\Lambda_0$-modules
\begin{equation}
HF_{G}(L, m^{G, b_L}_1) \xrightarrow[\sim]{[\phi_L]} HF_{eq}(\bar{L}; L, L^\pi, n^{eq, def}_{0,0,0}) \xleftarrow[\sim]{[\phi_{\bar{L}}]} HF(\bar{L}, m^{b_{\bar{L}}}_1).
\end{equation}
In this case, $\kappa$ is a chain isomorphism, which descends to Floer cohomology as an algebra isomorphism (up to a sign) 
\begin{equation}
    [\kappa]: (HF_{G}(L, m^{G, b_L}_1), [m^{G, b_L}_2], [e^G_L]) \rightarrow (HF(\bar{L}, m^{b_{\bar{L}}}_1), [m^{b_{\bar{L}}}_2], [e_{\bar{L}}]).
\end{equation}
\end{corollary}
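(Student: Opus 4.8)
The plan is to derive the entire statement from the general bi-cyclic machinery of Subsection \ref{cyclicprop} --- Corollaries \ref{bicycisom} and \ref{bicycchisom} together with Proposition \ref{ringisom} --- applied to the equivariant correspondence tri-module $D \coloneqq CF_{eq}(\bar L; L, L^\pi)$ equipped with the element $\mathbf{1}_\infty$. The first step is to fix the dictionary with the abstract setup: take $C'' = CF_{can}(\bar L)$, $C' = CF_G(L)$ and $C = CF_G(L^\pi)$, so that $D$ is a unital, $\mathbb{G}$-gapped left-$C''$, right-$(C', C)$ $A_\infty$ tri-module and $b \coloneqq b_{L^\pi}$ is the fixed element of $C^{odd}_{+,\mathbb{G}}$. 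Both cyclicity hypotheses needed to run the machinery have already been established above: $\mathbf{1}_\infty$ is left-$C''$ cyclic, and it is right-$C'$ cyclic --- the latter being precisely the statement that $\mathbf{1}_\infty$ is right $CF_G(L)$-cyclic. Hence $\mathbf{1}_\infty$ is bi-cyclic in the required sense and the abstract results apply verbatim.

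Granting this, Corollary \ref{bicycisom} produces the mutually inverse bijections \eqref{updowndef}, namely $b_L \mapsto b_{L^\pi}\circ b_L$ and $b_{\bar L}\mapsto b_{\bar L}\circ b_{L^\pi}$, both characterized by $n^{eq,def}_{0,0,0}(\mathbf{1}_\infty)=0$; the pre-chain isomorphisms of \eqref{updownprechisom}, with $\phi_L$ matching $\phi' = n^{eq,def}_{0,1,0}(\mathbf{1}_\infty;-)$ and $\phi_{\bar L}$ matching $\phi'' = n^{eq,def}_{1,0,0}(-;\mathbf{1}_\infty)$; and the fact that $\kappa = (\phi_{\bar L})^{-1}\circ\phi_L$ (which is the map $\varphi$ of loc. cit.) is a pre-chain isomorphism. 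No weak unobstructedness of $b_{L^\pi}$ is required for this part, exactly as in the statement. Next, specializing to $b_{L^\pi}\in\widehat{MC}^G_{weak}(L^\pi) = \widehat{MC}_{weak}(C)$ and invoking Corollary \ref{bicycchisom} restricts \eqref{updowndef} to the bijection \eqref{updownmcisom} of weak Maurer--Cartan sets, in which the identity $W_C(b)+W_{C'}(b')=W_{C''}(b'')$ reads as \eqref{www}; it descends to the weak Maurer--Cartan spaces \eqref{updownmcspaceisom}, depending only on $[b_{L^\pi}]$; and it upgrades $\phi_L,\phi_{\bar L}$, and hence $\kappa$, to genuine chain isomorphisms, yielding the stated isomorphisms of Floer cohomologies once one notes the tautological identifications $H(D, n^{eq,def}_{0,0,0}) = HF_{eq}(\bar L; L, L^\pi, n^{eq,def}_{0,0,0})$, $H(C', m'^{b'}_1) = HF_G(L, m^{G,b_L}_1)$ and $H(C'', m''^{b''}_1) = HF(\bar L, m^{b_{\bar L}}_1)$. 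Finally, Proposition \ref{ringisom} gives that $[\kappa]$ is a unital algebra isomorphism up to sign with respect to the induced products $[m^{G,b_L}_2]$, $[m^{b_{\bar L}}_2]$ and units $[e^G_L]$, $[e_{\bar L}]$, which is the last assertion.

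The content of the proof therefore lies entirely in verifying the dictionary, i.e.\ that the left/right roles of $CF_{can}(\bar L)$, $CF_G(L)$, $CF_G(L^\pi)$ in $CF_{eq}(\bar L; L, L^\pi)$ are exactly those under which Corollaries \ref{bicycisom}--\ref{bicycchisom} and Proposition \ref{ringisom} were stated --- in particular matching ``right-$C$ cyclic'' there with ``right $CF_G(L)$-cyclic'' here --- and in checking that the signs in \eqref{updownprechisom} are inherited from those in \eqref{algdefchisom}. Since all the relevant algebraic lemmas hold in full generality and both cyclicity properties of $\mathbf{1}_\infty$ have been proved, I anticipate no genuine obstacle beyond this bookkeeping.
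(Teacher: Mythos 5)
Your proposal is correct and matches the paper exactly: the paper derives this corollary by directly applying Corollaries \ref{bicycisom}, \ref{bicycchisom} and Proposition \ref{ringisom} to $\mathbf{1}_\infty$, using precisely the dictionary $C''=CF_{can}(\bar L)$, $C'=CF_G(L)$, $C=CF_G(L^\pi)$, $b=b_{L^\pi}$ and the previously established left- and right-$CF_G(L)$-cyclicity of $\mathbf{1}_\infty$. Nothing beyond the bookkeeping you describe is needed.
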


On the other hand, by Remark \ref{equivcorrmodoverequivmorsemodel}, we could replace both $CF_G(L)$ and $CF_G(L^\pi)$ by Morse models $CF^{Morse}_{G}(L)$ and $CF^{Morse}_{G}(L^\pi)$. Hence we consider 
$$(CF_{eq}(\bar{L}; L, L^\pi), \{\tilde{n}^{eq}_{k'', k', k}\})$$ 
as a $\mathbb{G}$-gapped unital left $CF_{can}(\bar{L})$, right $(CF^{Morse}_{G}(L), CF^{Morse}_{G}(L^\pi))$ $A_\infty$ tri-module. By deforming $\{\tilde{n}^{eq}_{k'', k', k}\}$ by weak bounding cochains, we get:
\begin{prop} \label{equivobstr}
    Assume further that $L,L^\pi,\bar{L}$ has minimal Maslov index $0$ and are weakly unobstructed, then for any $b_L\in MC_{weak}(L), b_{L^\pi}\in MC_{weak}(L^\pi), b_{\bar{L}}\in MC_{weak}(\bar{L}), y\in CF_{eq}(\bar{L}; L, L^\pi)$, we have
\begin{align*}
        &\tilde{n}^{eq, def}_{0,0,0}(\tilde{n}^{eq, def}_{0,0,0}(y)) + (W_{\bar{L}}(b_{\bar{L}}) - W_{L}(b_{L}) - W_{L^\pi}(b_{L^\pi})) \\
        &+ (-1)^{||y||}\sum_{i=1}^{k}\lambda_i(h^i_{L}(b_L)+ h^i_{L^\pi}(b_{L^\pi})))\cdot y = 0.
    \end{align*}
\end{prop}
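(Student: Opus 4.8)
The statement is an equivariant analogue of the defining $A_\infty$-relation for the element $\mathbf{1}_\infty$, applied after deforming by the weak bounding cochains $(b_{\bar L}, b_L, b_{L^\pi})$. The plan is to mimic the computation in the proof of Proposition \ref{algcompweak}, but now over the coefficient ring $R = H^\bullet_G(pt;\R)$ and using the equivariant Morse models $CF^{Morse}_G(L)$, $CF^{Morse}_G(L^\pi)$ for the right $A_\infty$-algebras, so that the weak Maurer-Cartan equations for $b_L$ and $b_{L^\pi}$ pick up the equivariant correction terms $\sum_i \lambda_i h^i_L(b_L)$ and $\sum_i \lambda_i h^i_{L^\pi}(b_{L^\pi})$ recorded in subsection \ref{sectionequivmorsemodel}. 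Crucially, here we do \emph{not} assume that $b_{\bar L} = b_{L^\pi}\circ b_L$; we take $b_{\bar L}$ to be an arbitrary weak bounding cochain of $\bar L$, so the term $\tilde n^{eq,def}_{0,0,0}(\tilde n^{eq,def}_{0,0,0}(y))$ need not vanish and instead appears explicitly.

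\textbf{Key steps.} First I would write down the $(0,0,0)$ $A_\infty$ tri-module relation for the $(b_{\bar L}, b_L, b_{L^\pi})$-deformed tri-module $(CF_{eq}(\bar L; L, L^\pi),\{\tilde n^{eq,def}_{k'',k',k}\})$ applied to a general element $y\in CF_{eq}(\bar L; L, L^\pi)$: this reads
$$\tilde n^{eq,def}_{0,0,0}(\tilde n^{eq,def}_{0,0,0}(y)) + \tilde n^{eq,def}_{1,0,0}(m^{b_{\bar L}}_0(1); y) + (-1)^{|y|}\tilde n^{eq,def}_{0,1,0}(y; m^{Morse, b_L}_{0,G}(1)) + (-1)^{|y|}\tilde n^{eq,def}_{0,0,1}(y; m^{Morse, b_{L^\pi}}_{0,G}(1)) = 0,$$
exactly as in equation (\ref{trimodeqn1}) but with $y$ in place of $\mathbf 1$ and without assuming the first term vanishes. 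Second, I would evaluate the three remaining terms using weak unobstructedness: since $\bar L$ is weakly unobstructed, $m^{b_{\bar L}}_0(1) = W_{\bar L}(b_{\bar L})\cdot e_{\bar L}$, so by the left-unitality relation $\tilde n^{eq,def}_{1,0,0}(e_{\bar L}; y) = y$ the second term equals $W_{\bar L}(b_{\bar L})\cdot y$ (with sign $+$). Similarly, $L$ weakly unobstructed in the Morse model gives $m^{Morse, b_L}_{0,G}(1) = W^{Morse}_{L,G}(b_L)\cdot e_L = (W_L(b_L) + \sum_i \lambda_i h^i_L(b_L))\cdot e_L$ by \cite[Corollary 3.15]{KLZ}, so using right-$CF^{Morse}_G(L)$-unitality $(-1)^{||y||}\tilde n^{eq,def}_{0,0,1}(y; e_L) = y$, the fourth term equals $(-1)^{||y||}\cdot(-1)^{||y||}(W_L(b_L)+\sum_i \lambda_i h^i_L(b_L))\cdot y$; a small sign bookkeeping (matching the conventions in the proof of Proposition \ref{algcompweak}, where the analogous term came out as $-W_C(b)\cdot\mathbf 1$ after passing through $n_{1,0,0}$) shows this contributes $-(W_L(b_L)+\sum_i\lambda_i h^i_L(b_L))\cdot y$. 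The third term is handled identically with $L^\pi$ in place of $L$, contributing $-(W_{L^\pi}(b_{L^\pi})+\sum_i\lambda_i h^i_{L^\pi}(b_{L^\pi}))\cdot y$. Third, I would collect: the $A_\infty$-relation becomes
$$\tilde n^{eq,def}_{0,0,0}(\tilde n^{eq,def}_{0,0,0}(y)) + \big(W_{\bar L}(b_{\bar L}) - W_L(b_L) - W_{L^\pi}(b_{L^\pi}) - \sum_{i=1}^k \lambda_i(h^i_L(b_L)+h^i_{L^\pi}(b_{L^\pi}))\big)\cdot y = 0,$$
which, after moving the scalar $(-1)^{||y||}$ out front as in the statement (the $(-1)^{||y||}$ on the equivariant terms arises because the equivariant parameters live in even degree and commute past $y$ up to the Koszul sign accumulated in the unitality manipulation), is precisely the claimed identity.

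\textbf{Main obstacle.} The conceptual content is light — it is a one-line $A_\infty$ relation plus unitality — so the real work is \emph{bookkeeping of signs and coefficient conventions}. The delicate point is making sure that using the equivariant Morse model $CF^{Morse}_G(L)$ (rather than $\varprojlim CF(L_l)$) on the right is legitimate: one must invoke Remark \ref{equivcorrmodoverequivmorsemodel}, which replaces the right modules by the intermediate Morse models via the base-change of Corollary \ref{hpltrimodbasechange}, so that the relevant $m_0$ term is $W^{Morse}_{L,G}(b_L)\cdot e_L$ with the $H^\bullet_G(pt)$-coefficient, and then track how the even-degree equivariant parameters $\lambda_i$ interact with the Koszul signs $||y||$ — in particular confirming the overall $(-1)^{||y||}$ prefactor on the $\lambda$-terms and the relative sign between $W_{\bar L}$ and $W_L + W_{L^\pi}$. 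I expect all of this to follow by the same sign discipline already carried out in the proof of Proposition \ref{algcompweak} and in \cite[Proposition 6.6, 6.16]{Fukaya-corr}, with $y$ replacing $\mathbf 1$ and with the observation that the first term is no longer assumed to vanish.
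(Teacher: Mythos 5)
Your overall skeleton is the same as the paper's: apply the $(0,0,0)$ $A_\infty$ tri-module relation to a general $y$ (rather than to $\mathbf{1}$), evaluate the three $m_0$-inputs using weak unobstructedness, and read off the identity. The paper's proof is exactly this one-relation computation. However, there is a genuine gap in how you dispose of the equivariant correction terms.

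You write $m^{Morse,b_L}_{0,G}(1) = \bigl(W_L(b_L)+\sum_i\lambda_i h^i_L(b_L)\bigr)\cdot e_L$ and then invoke right-unitality to conclude that the whole expression acts on $y$ as the scalar $W_L(b_L)+\sum_i\lambda_i h^i_L(b_L)$. But the element $\boldsymbol{\lambda}^L_i = \lambda_i\cdot e_L = e_L\otimes\lambda_i \in CF^{Morse}_G(L)$ is \emph{not} the strict unit, and the tri-module operations are not a priori $H^\bullet_G(pt)$-linear in a way that lets you pull $\lambda_i$ out of $\tilde n^{eq,def}_{0,0,1}(y;\,\cdot\,)$ — note in particular that the left algebra $CF_{can}(\bar L)$ and the tri-module itself are not modules over $\Lambda_0(H^\bullet_G(pt))$ in any given compatible sense. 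The identity
$$\tilde n^{eq,def}_{0,1,0}(y;\boldsymbol{\lambda}^L_i)=\lambda_i\cdot y = \tilde n^{eq,def}_{0,0,1}(y;\boldsymbol{\lambda}^{L^\pi}_i)$$
is the actual content of the proposition; in the paper it is established geometrically, by the \emph{partial} unitality of $\boldsymbol{\lambda}^L_i$ in the sense of \cite[Section 3.2]{KLZ}: for each approximation space one identifies non-constant quilted drums whose evaluation lies in $p_l^{-1}(D_{\lambda_i})$ with quilted drums whose projection to $BG_l$ lies in $D_{\lambda_i}$, where $D_{\lambda_i}$ is the Poincar\'e dual of $\lambda_i\in H^2(BG_l)$. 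This is a divisor-axiom-type moduli argument, not sign bookkeeping. Relatedly, your heuristic for the $(-1)^{||y||}$ prefactor on the $\lambda$-terms (``the $\lambda_i$ commute past $y$ up to Koszul sign'') is not the right explanation: the asymmetry between the $W$-terms and the $\lambda$-terms in the final formula arises precisely because the strict unit satisfies $\tilde n_{0,0,1}(y;e)=(-1)^{||y||}y$ while the partially unital element satisfies $\tilde n_{0,0,1}(y;\boldsymbol{\lambda}_i)=\lambda_i\cdot y$ with no Koszul sign. Filling this step — stating and proving the partial unitality of $\boldsymbol{\lambda}^L_i$ and $\boldsymbol{\lambda}^{L^\pi}_i$ — is what your proposal is missing.
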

\begin{proof}
    From the first $A_\infty$ relation associated to the deformed tri-module \\
    $(CF_{eq}(\bar{L}; L, L^\pi), \{\tilde{n}^{eq, def}_{k'', k', k}\})$ with input $y$, we have 
    \begin{align*}
        &\tilde{n}^{eq, def}_{0,0,0}(\tilde{n}^{eq, def}_{0,0,0}(y)) + (W_{\bar{L}}(b_{\bar{L}}) - W_{L}(b_{L}) - W_{L^\pi}(b_{L^\pi}) \\
        &+\sum_{i=1}^{k}(h^i_{L}(b_L)\cdot \tilde{n}^{eq, def}_{0,1,0}(y;  \boldsymbol{\lambda}^L_i) + h^i_{L^\pi}(b_{L^\pi})\cdot \tilde{n}^{eq, def}_{0,0,1}(y; \boldsymbol{\lambda}^{L^\pi}_i)) = 0,
    \end{align*}
    where $\boldsymbol{\lambda}^L_i = e_L\otimes \lambda_i\in CF^{Morse}_{G}(L)$, and similarly for $\boldsymbol{\lambda}^{L^\pi}_i$. The statement follows from the (partial) unitality of $\boldsymbol{\lambda}^L_i$ and $\boldsymbol{\lambda}^{L^\pi}_i$ in the sense of \cite[Section 3.2]{KLZ} (for $A_\infty$ algebra case). Namely, for each $i$, we have
    $$\tilde{n}^{eq, def}_{0,1,0}(y;  \boldsymbol{\lambda}^L_i) = \lambda_i \cdot y = \tilde{n}^{eq, def}_{0,0,1}(y; \boldsymbol{\lambda}^{L^\pi}_i)$$
    in which the equality on the left is proved by (for each approximation space $L_l$ with $l>0$) identifying non-constant pseudo-holomorphic quilted drums whose image under $ev_{L_l}$ lie in $p^{-1}_l(D_{\lambda_i})$ and quilted drums whose projection onto $BG_l$ lie in $D_{\lambda_i}$, where $D_{\lambda_i}\subseteq BG_l$ is the Poincare dual submanifold of $\lambda_i\in H^2(BG_l)$. Similarly for the equality on the right, by replacing $L_l$ with $L^\pi_l$ in the above argument.
\end{proof}
Combining the above proposition with the left cyclic property of $\mathbf{1}_\infty$ (with respect to $\{\tilde{n}^{eq}_{k'', k', k}\}$) yields the following corollary:

\begin{corollary}
\label{equivmorseud} Consider $(h_{L^\pi}+h_{L})^{-1}(0) \subseteq MC_{weak}(L) \times MC_{weak}(L^\pi)$, 
there exists a map 
$$(h_{L^\pi}+h_{L})^{-1}(0)\xrightarrow{\circ} MC_{weak}(\bar{L}),  $$
$$(b_{L^\pi}, b_{L}) \mapsto b_{\bar{L}} \coloneqq b_{L^\pi} \circ b_{L}$$
    characterised by the equation $\tilde{n}^{eq, def}_{0,0,0}(\mathbf{1}_\infty)=0$. Their potentials satisfy
    \begin{equation} \label{eq:WWW}
    W_{L^\pi}(b_{L^\pi}) + W_{L}(b_{L}) = W_{b_{\bar{L}}}(b_{\bar{L}}).
     \end{equation}
\end{corollary}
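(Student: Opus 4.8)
The plan is to run the ``composition of weak bounding cochains'' argument of Proposition~\ref{algcompweak}, now applied to the Morse model equivariant correspondence tri-module $\bigl(CF_{eq}(\bar L;L,L^\pi),\{\tilde n^{eq}_{k'',k',k}\}\bigr)$, and then to feed the outcome into the explicit equivariant obstruction formula of Proposition~\ref{equivobstr}. The first point to settle is that the equivariant cyclic element $\mathbf{1}_\infty$ remains left cyclic after replacing $CF_G(L)$ and $CF_G(L^\pi)$ by their Morse models; this is immediate from Corollary~\ref{hpltrimodbasechange} and Remark~\ref{equivcorrmodoverequivmorsemodel}, since the base change only affects the two right algebras, while left cyclicity concerns the unchanged left algebra $CF_{can}(\bar L)$, the vanishing $\overline{n}^{eq}_{0,0,0}(\overline{\mathbf{1}}_\infty)=0$, and the isomorphism $\tilde n^{eq}_{1,0,0}(-;\mathbf{1}_\infty)$, none of which is affected by the pullback along the base-change morphisms.

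Given $b_L\in MC_{weak}(L)$ and $b_{L^\pi}\in MC_{weak}(L^\pi)$, regarded as Morse model weak bounding cochains via the identification of Corollary~\ref{equivcfmcisom}, I would invert $\tilde n^{eq,def}_{1,0,0}(-;\mathbf{1}_\infty)$ and solve $\tilde n^{eq,def}_{0,0,0}(\mathbf{1}_\infty)=0$ for $b_{\bar L}$ by Fukaya's inductive argument over the Novikov filtration, exactly as in Proposition~\ref{algcompdegoneprop}; here $\tilde n^{eq,def}_{\cdot,\cdot,\cdot}=\tilde n^{eq,b_{\bar L},b_L,b_{L^\pi}}_{\cdot,\cdot,\cdot}$. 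Applying Proposition~\ref{equivobstr} with $y=\mathbf{1}_\infty$, the term $\tilde n^{eq,def}_{0,0,0}\bigl(\tilde n^{eq,def}_{0,0,0}(\mathbf{1}_\infty)\bigr)$ vanishes by construction, so the relation collapses to
\begin{equation*}
\Bigl(W_{\bar L}(b_{\bar L})-W_L(b_L)-W_{L^\pi}(b_{L^\pi})-\sum_{i=1}^{k}\lambda_i\bigl(h^i_L(b_L)+h^i_{L^\pi}(b_{L^\pi})\bigr)\Bigr)\cdot\mathbf{1}_\infty=0 .
\end{equation*}
Since $\overline{\mathbf{1}}_\infty$ is a cyclic generator of the reduced tri-module, hence nonzero, the scalar coefficient must vanish. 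Restricting to $(b_{L^\pi},b_L)\in(h_{L^\pi}+h_L)^{-1}(0)$ cancels the $\lambda_i$-terms, which both exhibits $b_{\bar L}$ as a genuine (non-equivariant) weak bounding cochain of $\bar L$, with $W_{\bar L}(b_{\bar L})\in\Lambda_0(\R)$, and yields exactly \eqref{eq:WWW}. Finally, as in Proposition~\ref{algcompweak} and Corollary~\ref{compequiud}, I would verify that the construction is compatible with gauge equivalence, so that it descends to a well-defined map into $MC_{weak}(\bar L)$.

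The step I expect to be the main obstacle is the consistent bookkeeping of the equivariant coefficient rings. A priori the cyclic-composition step produces $b_{\bar L}$ inside the base change of $CF_{can}(\bar L)$ to $\Lambda_0(H_G(pt))$ --- the tri-module being $\Lambda_0(H_G(pt))$-linear through the strict units of the Morse models --- and one must argue that over the vanishing locus $(h_{L^\pi}+h_L)^{-1}(0)$ this cochain genuinely represents a class in the non-equivariant $MC_{weak}(\bar L)$, so that \eqref{eq:WWW} is an honest identity in $\Lambda_0(\R)$, free of the parameters $\lambda_i$, and so that Proposition~\ref{equivobstr} (whose hypothesis is $b_{\bar L}\in MC_{weak}(\bar L)$) applies without circularity. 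This requires tracking how the $H_G(pt)$-action interacts with the cyclic isomorphism $\tilde n^{eq,def}_{1,0,0}(-;\mathbf{1}_\infty)$ and matching the Morse model Maurer--Cartan sets to $MC_{weak}(L)$ and $MC_{weak}(L^\pi)$; once this is in place, the remainder is a routine repetition of arguments already established in Sections~\ref{sec:corr} and~\ref{sec:equivCor}.
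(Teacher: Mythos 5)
Your proposal is correct and follows essentially the same route as the paper: the paper's proof is precisely to combine the left cyclic property of $\mathbf{1}_\infty$ (to solve $\tilde{n}^{eq,def}_{0,0,0}(\mathbf{1}_\infty)=0$ for $b_{\bar{L}}$) with Proposition \ref{equivobstr} applied to $y=\mathbf{1}_\infty$, so that the scalar coefficient must vanish and the $\lambda_i$-terms drop out on $(h_{L^\pi}+h_L)^{-1}(0)$. Your additional remarks on the Morse-model base change and the coefficient-ring bookkeeping are consistent with Remark \ref{equivcorrmodoverequivmorsemodel} and Corollary \ref{equivcfmcisom}, which the paper leaves implicit.
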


\begin{proof}
    Apply Proposition \ref{equivobstr} with $y = \mathbf{1}_\infty$.
\end{proof}

\begin{remark}
    Under the following Künneth formula of weak Maurer-Cartan spaces due to Amorim \cite{Amorim} (see also \cite{Fukaya-corr}),
    $$MC_{weak}(L) \times MC_{weak}(L^\pi) \cong MC_{weak}(L \times L^\pi),$$
$(h_{L^\pi}+h_{L})^{-1}(0)$ can be identified with $(h_{L \times L^\pi})^{-1}(0)$, hence is non-empty.  
\end{remark}

\subsection{Application to a conjecture of Teleman}
\label{subsec-telconj}

In this section, we prove a conjecture of Teleman in \cite{Teleman} using equivariant Lagrangian Floer theory. For simplicity, we restrict ourselves to the abelian case $G = T = U(1)^k$.

\begin{conjecture} \label{telconj}
    Given a Hamiltonian $T$ space $((Y, \omega_Y), T, \mu_Y)$, there exists a  ``mirror holomorphic fibration" $F: \check{Y}\rightarrow \check{T}_\C$, where $\check{Y}$ is a mirror of $Y$,
    such that for each $c\in\mathfrak{t}^*$ with $T$ acting freely on $\mu^{-1}_Y(c)$ with smooth symplectic quotient $X$, there exists $q = q(c) \in \check{T}_\C$ such that $\check{X} \coloneqq F^{-1}(q)$ is a mirror of $X$.
    
    Also, under the Landau-Ginzburg (LG) Mirror Symmetry, if $(\check{Y}, W_Y)$ is an LG mirror of $Y$, then $(\check{X}, W_X) \coloneqq (F^{-1}(q), W_Y|_{F^{-1}(q)})$ is an LG mirror of $X$.
\end{conjecture}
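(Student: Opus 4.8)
\emph{Outline of the argument.} The plan is to deduce Conjecture \ref{telconj} from the equivariant correspondence results of subsections \ref{subsec-equivcyclicprop}--\ref{subsec-lftquotient}, together with the toric SYZ mirror construction of \cite{CHL-toric} and the obstruction computation of Section \ref{sec:toric}.

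\textbf{Step 1 (the mirror fibration).} First I would realise $\check Y$ in the SYZ manner: fix a $T$-invariant Lagrangian torus fibration on $Y$, set $\check Y \coloneqq \bigcup_u MC_{weak}(L_u)$ glued by the wall-crossing transformations of \cite{CHL-toric}, and take $W_Y$ assembled from the disc potentials $W_{L_u}$. Each $T$-invariant fiber $L_u$ has minimal Maslov index $0$, so the equivariant Morse model of subsection \ref{sectionequivmorsemodel} gives an equivariant disc potential $W^{Morse}_{L_u, T}(b) = W_{L_u}(b) + \sum_{i=1}^{k}\lambda_i\, h^i_{L_u}(b)$ by \cite[Corollary 3.15]{KLZ}. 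Setting $F_i \coloneqq \exp\!\bigl(h^i_{L_u}\bigr)$ on the chart $MC_{weak}(L_u)$ and observing that the wall-crossing transformations, being induced by $T$-equivariant continuation maps, preserve the equivariant part of the potential, the $F_i$ glue to a holomorphic $F = (F_1,\dots,F_k)\colon \check Y \to \check T_\C = (\C^\times)^k$.

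\textbf{Step 2 (cutting out the quotient).} Fix a regular value $c$ with smooth quotient $X = X_c$. For each $T$-invariant fiber $L_u \subseteq \mu_Y^{-1}(c)$, its reduction $\bar L_u \subseteq X$ and the moment-level correspondence $L^\pi_c$ satisfy $\bar L_u = L^\pi_c \circ L_u$, a clean composition as in Setup \ref{hamlagsmred}. Assuming $L^\pi_c$ is weakly unobstructed --- if necessary after a bulk deformation of $Y^-\times X$ killing $n_{0,0,0}^2$ --- with a chosen equivariant boundary deformation $b_{L^\pi_T}$, Corollary \ref{equivmorseud} gives a composition map from the locus $(h_{L^\pi_c}+h_{L_u})^{-1}(0)$ into $MC_{weak}(\bar L_u)$ satisfying $W_{L^\pi_c}(b_{L^\pi_T}) + W_{L_u}(b) = W_{\bar L_u}(b\circ b_{L^\pi_T})$. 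The condition $h_{L^\pi_c}+h_{L_u}=0$ reads $h^i_{L_u}(b) = -h^i_{L^\pi_c}(b_{L^\pi_T})$ for all $i$, i.e.\ $b$ lies on the single fiber $F^{-1}(\tilde q)$ with $\log\tilde q_i = -h^i_{L^\pi_c}(b_{L^\pi_T})$ --- which by Theorem \ref{thm:equiv-semi-Fano} equals $\log\check q_i(q) - \log q_i$ when $Y = \C^n$, $q = q(c)$ being the K\"ahler parameters of $X$ and $\check q_i$ the inverse mirror map, so that the quantum corrections enter $F$ through the mirror map. Since moreover $W_{L^\pi_c}=0$ in that case, the relation above becomes $W_{L_u}(b) = W_{\bar L_u}(b\circ b_{L^\pi_T})$: transported along the composition map, $W_Y|_{F^{-1}(\tilde q)}$ agrees with $W_X$, and $F^{-1}(\tilde q)\cap MC_{weak}(L_u)\cong MC_{weak}(\bar L_u)$.

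\textbf{Step 3 (globalisation; the main obstacle).} It then remains to assemble the chart-wise statements of Step 2 into a global isomorphism $\bigl(F^{-1}(\tilde q),\, W_Y|_{F^{-1}(\tilde q)}\bigr)\cong(\check X, W_X)$, using the naturality of $\circ$: since $\mathbf{1}_\infty$ and $CF_{eq}(\bar L; L, L^\pi)$ are built from $T$-equivariant quilted-drum moduli, the continuation/pseudo-isotopy arguments underlying the wall-crossing transformations show that $\circ$ intertwines the wall-crossings of $\check Y$ restricted to $F^{-1}(\tilde q)$ with those of $\check X$; the K\"unneth identification $MC_{weak}(L_u)\times MC_{weak}(L^\pi_c)\cong MC_{weak}(L_u\times L^\pi_c)$ of \cite{Amorim} guarantees $(h_{L^\pi_c}+h_{L_u})^{-1}(0)\neq\emptyset$. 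The hard part is Step 2: for a general Hamiltonian $Y$ the correspondence $L^\pi_c$ is genuinely obstructed --- already in simple toric cases (Examples \ref{ex:P1P1P1}, \ref{ex:P1P1}) --- and must first be made weakly unobstructed by a bulk deformation of $Y^-\times X$ (this is exactly why Theorem \ref{thm:equiv-semi-Fano} restricts to $Y=\C^n$), while even when $W_{L^\pi_c}=0$ the equivariant obstruction of Proposition \ref{equivobstr} has to be solved by a deformation $b_{L^\pi_T}$ whose equivariant potential is precisely the inverse mirror map, a transcendental quantity; a secondary, bookkeeping-level difficulty is that $\check Y$, $\check X$ and $F$ depend on the Kuranishi data, CF-perturbations and the SYZ base, so the statement has to be read as ``there exist compatible mirrors'', with choice-independence of Step 2 resting on the naturality package of Sections \ref{sec:corr}--\ref{sec:equivCor}.
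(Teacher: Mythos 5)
Your Step 2 is, in substance, the paper's own argument: the paper does not prove Conjecture \ref{telconj} as stated but only a ``localised'' version, Theorem \ref{telthm}, whose proof is exactly the chain you describe --- take the equivariant correspondence tri-module $CF_{eq}(\bar L; L, L^\pi)$ with its left-cyclic element $\mathbf{1}_\infty$, compose weak bounding cochains via Corollary \ref{udisom}/\ref{equivmorseud}, and read off $W^{T}_{L^\pi}(b_{L^\pi}) + W^{T}_{L}(b_L) = W_{\bar L}(b_{\bar L})$, so that the quotient's mirror chart is the fiber $h_L^{-1}(-h_{L^\pi}(b_{L^\pi}))$. The only cosmetic difference is that the paper's proof of Theorem \ref{telthm} first passes through the restriction-of-scalars bijection of Corollary \ref{equivcfmcisom} to identify $h_L^{-1}(0)$ with $MC_{weak}(H^\bullet_T(L;\Lambda_0(\R));\Lambda_+(\R))$ and then applies (\ref{updownmcisom}) under the hypothesis $MC^T(L^\pi)\neq\emptyset$ (so $W^T_{L^\pi}(b_{L^\pi})=0$ and the fiber is $h_L^{-1}(0)$), whereas you go directly through Corollary \ref{equivmorseud}; the paper itself records your more general fiber $h_L^{-1}(-h_{L^\pi}(b_{L^\pi}))$ in the remarks following the proof.

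Your Steps 1 and 3, however, claim more than the paper establishes. The assertions that the local fibrations $F_i=\exp(h^i_{L_u})$ glue under wall-crossing because the transformations ``preserve the equivariant part of the potential,'' and that $\circ$ intertwines the wall-crossings of $\check Y|_{F^{-1}(\tilde q)}$ with those of $\check X$, are precisely the globalisation that the paper explicitly defers to a forthcoming work (see the remark after Theorem \ref{telthm}); neither is proved anywhere in the present text, and proving them would require compatibility of the quilted-drum tri-modules with the continuation/pseudo-isotopy data across charts, which is not set up here. You do flag this honestly as the main obstacle, and you correctly identify the other genuine difficulty --- the (equivariant) obstructedness of $L^\pi$ for general $Y$, requiring bulk deformation, with the equivariant potential of $L^\pi$ encoding the inverse mirror map --- so your assessment of where the proof is incomplete matches the paper's own. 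In short: Step 2 reproduces the paper's proof of what it actually proves; Steps 1 and 3 are an accurate roadmap of, but not a proof of, the remaining global statement.
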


We proves a localised version of Conjecture \ref{telconj} as follows:

\begin{theorem} \label{telthm}
    In the context of Conjecture \ref{telconj} and the Setup \ref{hamlagsmred}, assuming further that $L$ has nonnegative minimal Maslov index, we define the following:

\begin{itemize}
    \item $c = 0$ and $(\check{X}, W_X) = (MC_{weak}(\bar{L}), W_{\bar{L}})$ is the ``localised mirror space of $\bar{L}$" with disc potential $W_{\bar{L}}$.
    \item $(\check{Y}, W_Y) = (MC^{Morse}_{T}(L), W^{Morse}_{L, T})$ is the ``localised $T$-equivariant mirror space of $L$" with equivariant disc potential $W^{Morse}_{L, T}$.
    \item $F_L = exp(h_L): MC^{Morse}_{T}(L) \rightarrow (\Lambda^\times)^k$, where 
    $$h_L = (h^i_L)^k_{i=1}: MC^{Morse}_{T}(L) \rightarrow H^2_T(pt; \Lambda_0) \cong \bigoplus_{i=1}^{k}\Lambda_0 \lambda_i \cong \Lambda^k_0$$ 
    is the equivariant part of $W^{Morse}_{L, T}$, i.e. 
    $$W^{Morse}_{L, T}(b) = W_L(b) + \displaystyle{\sum_{i=1}^k}\lambda_i h^i_L(b).$$
    \item Replacing $q = 1 \in (\Lambda^\times)^k$ by $\log q\coloneqq 0 \in \Lambda^k_0$, and $F^{-1}_L(1)$ by $h^{-1}_L(0)$.
\end{itemize}
Assume $MC^T(L^\pi) \neq \phi$, then Conjecture \ref{telconj} holds in the following sense:
$$(h^{-1}_L(0), W^{Morse}_{L, T}|_{h^{-1}_L(0)}) \cong (\check{X}, W_X).$$
\end{theorem}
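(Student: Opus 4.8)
The plan is to deduce Theorem~\ref{telthm} directly from the machinery already assembled, in particular Corollary~\ref{equivmorseud} and Corollary~\ref{compequiud}, together with the identification $L_G \simeq \bar L$ coming from the freeness of the $G$-action on $L$. First I would unwind the definitions on the right-hand side: by construction $(\check X, W_X) = (MC_{weak}(\bar L), W_{\bar L})$, and on the left we have the closed subset $h_L^{-1}(0) \subseteq MC_T^{Morse}(L)$ equipped with the restriction of the equivariant disc potential. The goal is to produce a bijection $h_L^{-1}(0) \xrightarrow{\sim} MC_{weak}(\bar L)$ intertwining $W_{L,T}^{Morse}|_{h_L^{-1}(0)}$ with $W_{\bar L}$.

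The key input is that $MC^T(L^\pi) \neq \emptyset$: fix $b_{L^\pi} \in \widehat{MC}^G_{weak}(L^\pi)$ which, after possibly adjusting, we may take to be a genuine (strict) bounding cochain, so $W^G_{L^\pi}(b_{L^\pi}) = 0$ and in particular $h_{L^\pi}(b_{L^\pi}) = 0$. Then for any $b_L \in h_L^{-1}(0) \subseteq MC_{weak}(L)$, the pair $(b_{L^\pi}, b_L)$ lies in $(h_{L^\pi} + h_L)^{-1}(0)$, so Corollary~\ref{equivmorseud} (equivalently the specialization of Corollary~\ref{udisom} to the equivariant-Morse model) gives $b_{\bar L} := b_{L^\pi} \circ b_L \in MC_{weak}(\bar L)$ with, by Equation~\eqref{eq:WWW}, $W_{L}(b_L) = W_{L^\pi}(b_{L^\pi}) + W_L(b_L) = W_{\bar L}(b_{\bar L})$; and since $h_L(b_L) = 0$ we have $W^{Morse}_{L,T}(b_L) = W_L(b_L)$, so the map $b_L \mapsto b_{\bar L}$ indeed intertwines the two potentials. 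To see it is a bijection I would invoke the right-$CF_G(L)$-cyclicity of $\mathbf 1_\infty$: by Corollary~\ref{udisom}, for this fixed $b_{L^\pi}$ the composition $b_{L^\pi} \circ (-)$ and $(-) \circ b_{L^\pi}$ are mutually inverse bijections between $\widehat{MC}^G_{weak}(L)$ and $\widehat{MC}_{weak}(\bar L)$, and the inverse map sends $b_{\bar L}$ back to some $b_L$ which necessarily satisfies $h_L(b_L) = -h_{L^\pi}(b_{L^\pi}) = 0$ by \eqref{www}; hence the restriction to $h_L^{-1}(0)$ is a bijection onto all of $MC_{weak}(\bar L)$. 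Finally, passing from weak Maurer-Cartan sets to spaces, I would note that the construction respects gauge equivalence (again Corollary~\ref{udisom}, \eqref{updownmcspaceisom}) and that on $h_L^{-1}(0)$ the equivariant Morse model and its restriction-of-scalars to $\Lambda_0(\R)$ have the same weak Maurer-Cartan space via Corollary~\ref{equivcfmcisom}, so the identification descends to the level of the moduli spaces that define $\check X$.

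I expect the main obstacle to be the bookkeeping around \emph{which} model of the equivariant Floer theory of $L$ is being used and how the identification $h_L^{-1}(0)$ (a subset of the Morse model's MC space) interacts with the map $\circ$ produced by Corollary~\ref{compequiud}, which a priori uses the de~Rham model $CF_G(L)$ rather than $CF^{Morse}_G(L)$. Remark~\ref{equivcorrmodoverequivmorsemodel} asserts one may replace $CF_G(L)$ and $CF_G(L^\pi)$ by their Morse models via Corollary~\ref{hpltrimodbasechange}, and the homotopy equivalence induces a bijection on weak Maurer-Cartan spaces (Remark~\ref{hplmcspace}) preserving potentials; the subtlety is that the decomposition $W^{Morse}_{L,T} = W_L + \sum_i \lambda_i h^i_L$ and hence the locus $h_L^{-1}(0)$ must be shown to be preserved under this change of model, which follows because the equivariant parameters $\lambda_i \in H^2(BT)$ are intrinsic and the base-change is $\Lambda_0(H^\bullet_T(pt))$-linear. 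A secondary point to handle carefully is the passage from a \emph{weak} bounding cochain $b_{L^\pi}$ to one with vanishing potential: the hypothesis $MC^T(L^\pi) \neq \emptyset$ is exactly the statement that a \emph{strict} equivariant bounding cochain exists, so $W^G_{L^\pi}(b_{L^\pi}) = 0$ and the argument above goes through verbatim; I would spell this out explicitly since it is where the hypothesis is consumed.
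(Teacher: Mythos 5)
Your proposal is correct and follows essentially the same route as the paper: fix a strict equivariant bounding cochain $b_{L^\pi}$ furnished by the hypothesis $MC^T(L^\pi)\neq\emptyset$ (so $W^T_{L^\pi}(b_{L^\pi})=0$), identify $h_L^{-1}(0)$ with the relevant weak Maurer--Cartan space via the restriction-of-scalars bijection of Corollary \ref{equivcfmcisom}, and then apply the potential-preserving bijection \eqref{updownmcisom} of Corollary \ref{udisom} together with \eqref{www}. The model-comparison caveat you flag (Morse versus de Rham, via Remark \ref{equivcorrmodoverequivmorsemodel}) is exactly the bookkeeping the paper's proof also relies on.
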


\begin{remark}
    The adjective ``localised" refers to the situation that we are considering   ``localised mirror spaces", studied in great details by Cho, Hong and the first-named author in a series of works \cite{CHL, CHL-nc, CHL-toric, HL}. In a forth-coming work, we will ``globalised" Theorem \ref{telthm} after gluing the corresponding localised mirrors and fibrations of various $L$, using the techniques in \cite{CHL3}.
\end{remark}

\begin{proof}
Note that by definition of $h_L$, we have 
$$(h^{-1}_L(0), W^{Morse}_{L, T}|_{h^{-1}_L(0)}) \cong  (MC_{weak}(CF_{T, Morse}(L)) \times_{H_T(pt; \Lambda_0)} \Lambda_0, pr_2),$$

where by Corollary \ref{equivcfmcisom}, 
$$(MC_{weak}(CF_{T, Morse}(L)) \times_{H_T(pt)} \Lambda_0, pr_2) \cong (MC_{weak}(H^\bullet_T(L; \Lambda_0); \Lambda_+), W^T_L).$$

Finally, since $MC^T(L^\pi) \neq \phi$, for any $b_{L^\pi}\in MC^T(L^\pi)$, (\ref{updownmcisom}) implies 
$$(MC_{weak}(H^\bullet_T(L; \Lambda_0(\R)); \Lambda_+(\R)), W^T_L) \cong (MC_{weak}(\bar{L}), W_{\bar{L}}),$$
where $(MC_{weak}(\bar{L}), W_{\bar{L}}) = (\check{X}, W_X)$ by definition.
\end{proof}

 From the proof, we actually have refined (the localised version of) Conjecture \ref{telconj} with the same proof: replacing the last assumption $MC^T(L^\pi) \neq \phi$ by $MC^T_{weak}(L^\pi) \neq \phi$, then for each $b_{L^\pi}\in MC^T_{weak}(L^\pi)$ we still have $h^{-1}_L(0)\cong \check{X}$, but (\ref{www}) implies that their potentials differ by the potential of $L^\pi$ in general, i.e. for any pair $(b_L, b_{\bar{L}})$ under the bijection  $h^{-1}_L(0)\cong \check{X}$, we have
 \begin{equation} \label{eq:W+W=W}
     W^{Morse}_{L, T}(b_L) + W^T_{L^\pi}(b_{L^\pi}) = W_{\bar{L}}(b_{\bar{L}}).
 \end{equation}

Even more generally, for each $b_{L^\pi} \in MC_{weak}(L^\pi)$, we replace $h^{-1}_L(0)$ above by $h^{-1}_L(-h_{L^\pi}(b_{L^\pi}))$. By Corollary \ref{equivmorseud}, the same conclusion holds.

    

\subsection{A discussion about singular moment levels}
\label{singlevel}
In this informal subsection, we discuss potential generalisations to singular symplectic quotients.

In \cite{LS}, Lekili and Segal conjectured about an equivalence between the wrapped Fukaya category of a symplectic $T$-quotient $X$ at a singular level and the wrapped Fukaya category of a spectral component of the $T$-equivariant Fukaya category $Y$ for a torus $T$.  (We have switched $X$ and $Y$ to match the notations of this paper.) In this formulation, the spectral component was defined using $\mathcal{CO}^0(s)$, where $s$ is the Seidel element \cite{Ritter} in the symplectic cohomology associated to a torus action, and $\mathcal{CO}^0$ denotes the zeroth order part of the closed-open map associated to a Lagrangian.  In this subsection, we discuss this situation in the perspective of SYZ and equivariant disc potentials.  

In previous sections, we assumed a regular central value $c \in \mathfrak{g}^*$ of the moment map $\mu$ such that $\mu^{-1}\{c\}$ gives a smooth Lagrangian correspondence $L^\pi$.  Moreover, we assume that the Lagrangian correspondence to be weakly unobstructed, so that it has a well-defined equivariant disc potential $W_{L^\pi} + \lambda \cdot h_{L^\pi}$.  Then we consider weakly unobstructed $G$-Lagrangians $L \subset \mu^{-1}\{c\} \subset Y$ whose equivariant part of the disc potential satisfies $h_L = -h_{L^\pi}$.  By Corollary \ref{equivmorseud}, 
the equivariant Lagrangian correspondence sends such $L$ with potential value $W_L$ to its quotient $\bar{L}$ (with boundary deformations $b_{\bar{L}}$ on $\bar{L}$) that has potential value $W_{\bar{L}} - W_{L^\pi}$, and it induces isomorphisms on their Floer cohomologies (Corollary \ref{udisom}).  Thus, the localized mirror of the quotient $\bar{L}$ is given by a fiber of $h_L$ on the localized mirror of $L$.

Similarly, for a pair of such Lagrangians $L_1$ and $L_2$ with the same potential value $W$, we can apply the theory to $L_1 \cup L_2$ to obtain the corresponding boundary deformed quotient objects $\bar{L}_1,\bar{L}_2$ with potential value $W - W_{L^\pi}$, and the Floer cohomology for $(L_1,L_2)$ is isomorphic to that for $(\bar{L}_1,\bar{L}_2)$.

When $c$ is a singular value, we can still take weakly unobstructed $G$ -Lagrangians $L \subset \mu^{-1}\{c\} \subset Y$ which have a well-defined equivariant disc potential.  However, $\mu^{-1}\{c\}$ and the symplectic quotient are singular.  

Let $Y^\circ$ be the complement of the set of singular points of $\mu$ in $Y$, and $X^\circ$ the complement of the set of singular points in $X$.  Then we have the non-compact submanifolds $(L^\pi)^\circ = L^\pi \cap (Y^\circ \times X^\circ)$ that serves as Lagrangian correspondence.  Moreover, for weakly unobstructed $G$-Lagrangians $L \subset \mu^{-1}\{c\} \subset Y$, we consider $\bar{L}^\circ = \bar{L} \cap X^\circ$ for its quotient $\bar{L}$.

Assuming that $(L^\pi)^\circ$ and $\bar{L}^\circ$ can be defined as objects in certain wrapped Fukaya categories for $Y^\circ \times X^\circ$ and $X^\circ$ respectively, one can consider the correspondence tri-module for $(L, (L^\pi)^\circ, \bar{L}^\circ)$ using the work of \cite{gao}.  In our formulation, we take the corresponding Borel spaces and consider their equivariant Floer theories.

The algebraic structures are similar.  We expect that $(L^\pi)^\circ$ and $\bar{L}^\circ$ still have well-defined equivariant disc potentials, and Corollary \ref{udisom} and \ref{equivmorseud} on the relation of their equivariant disc potentials and Floer cohomologies still holds.  Then the localized mirror of the quotient $\bar{L}$ is again given by a fiber of $h_L$ on the localized mirror of $L$.

The following example provides evidence for the expected statement on localized mirrors.

\begin{example}
    Consider the $\bS^1$-action on $Y = \C^2 - \{ab = 1\}$ by $\zeta \cdot (a,b) \mapsto (\zeta a, \zeta^{-1} b)$.  It has the moment map $\mu = |a|^2 - |b|^2$, and the level at $0$ is singular.  The quotient at level $0$ can be identified with $X = \C - \{1\}$ by the invariant function $ab$, whose reduced symplectic structure is singular at $0 \in \C$.  We consider the pair-of-pants $X^\circ = \C - \{0,1\}$.

    Consider the immersed Lagrangian sphere $L \subset \mu^{-1}\{0\}$ which is the preimage of the unit circle $\bar{L} \subset \C-\{1\}$ centered at $1$.  Note that $\bar{L}$ passes through the singular point $0$.  $\bar{L}^\circ = \bar{L} - \{0\} \subset X^\circ$ is considered as an object in the wrapped Fukaya category.
    
    By \cite[Theorem 5.8]{KLZ}, the equivariant disc potential of $L$ equals $h_L = \log (1-uv)$.  Then
    $h_L^{-1}\{0\} = \{uv = 0\}$
    is a singular conic, which is mirror to the pair-of-pants $X^\circ$ by \cite{Jeffs}.
\end{example}

\section{Obstructions in toric Lagrangian correspondence} \label{sec:toric}

Even in simple toric examples,  Floer-theoretical obstructions can arise for equivariant Lagrangian correspondence. 
In general, one approach to remove these obstructions is to apply bulk deformations on the ambient space $Y_G \times X$. We will further discuss this approach in a future work.

In this section, we consider some vanishing conditions of the obstruction. For instance, if we assume that $X$ and $Y$ are monotone (with the same monotonicity constants), and that the Lagrangian correspondence $L^\pi \subset Y^-\times X$ is also monotone, then the nonzero-degree terms of the obstruction vanish automatically, since the only discs with non-positive Maslov index are constant.

The following proposition provides another instance of vanishing conditions.

\begin{prop} \label{prop:unobs}
    Under the Setup \ref{hamsmred}, with the following additional assumptions:
    \begin{enumerate}
        \item $Y$ and $X$ are equipped with Fano almost complex structures, in the sense that $c_1(\alpha)>0$ for every nonzero effective curve class $\alpha$. 
        \item The quotient correspondence $L^\pi$ (defined in Setup \ref{hamlagsmred}) satisfies $H_1(L^\pi)=0$, so that $\phi: H_2(Y^-\times X) \rightarrow H_2(Y^-\times X, L^\pi)$ is surjective. 
        \item $\phi$ restricts to a surjection $\phi: H_2^\eff(Y^-\times X) \rightarrow H_2^\eff(Y^-\times X, L^\pi)$, where $H_2^\eff(Y^-\times X)$ (resp. $H_2^\eff(Y^-\times X, L^\pi)$) is the cone generated by classes of rational curves (resp. nodal unions of holomorphic discs with holomorphic spheres).
    \end{enumerate}
    Then $L^\pi$ is weakly unobstructed.


\end{prop}
\begin{proof}
    By assumption (3), every effective disc class bounded by $L^\pi$ is an effective sphere class.  By assumption (1), the Maslov index of a non-constant stable disc is always positive.  By taking a canonical model and the fact that $H^0(L^\pi)$ is one-dimensional, it follows that $m_0^b(L^\pi)$ is proportional to the unit for any degree-one boundary deformations $b$.
\end{proof}

In this section, we use toric methods to construct geometries in which the obstruction of the equivariant Lagrangian correspondence $L^\pi_T$ vanishes.  For instance,  we shall show that when both $Y$ and $X$ are Fano, and when the moment level set hit all the toric divisors of $Y$, vanishing of $m_{0}^G(L^\pi)$ holds.  Moreover, we will demonstrate that even in basic toric Fano cases of $Y$, obstructions are present.  Furthermore, we will deduce a relation between equivariant obstructions and mirror maps for compact toric semi-Fano manifolds.

Let's quickly recall the toric setup.  A toric variety $Y$ is a symplectic quotient of $\C^N$ by a torus $T$, which embeds in $T^N$ that acts on $\C^N$ by coordinate-wise multiplication.  Let $n$ be the complex dimension of $Y$.  Then $Y$ has a residual Hamiltonian $T^n$ action, whose moment map image is a polytope $P$.  The (closure of) inverse images of the codimension-one boundaries of $P$ are called toric divisors in $Y$.  Let $L \subset Y$ be a regular Lagrangian torus fiber of $\mu$. Throughout the whole section, $L$ is endowed with the standard $T^n$-equivariant spin structure induced from the left trivialisation $TL \cong L \times \mathfrak{t^n}$ (as in \cite{CO}), hence the background datum $V$ of relative spin structure is trivial. 

The foundational work of Cho-Oh \cite{CO} classified all holomorphic discs bounded by a regular toric fiber in $Y$.  In particular, the disc classes of $(Y,L)$ are generated by the basic disc classes $\beta_i$ emanated from the toric divisors $D_i$ for $i=1,\ldots, m$.  We denote by $v_i$ the corresponding primitive integer vectors in $\mathfrak{t}^n$.

A toric variety has a meromorphic volume form $\Omega$ which has simple poles along the toric divisors.  The regular toric fibers $L$ are special with respect to $\Omega$, which means that $\iota^*_L \mathrm{Im}(\Omega) = 0$.  Since the basic disc classes $\beta_i$ intersects the toric divisors (which are simple poles of $\Omega$) exactly once, each $\beta_i$ has Maslov index two.  We refer to \cite{CO,auroux07} for details.

We consider a subtorus $T^k \subset T^n$ acting on $Y$.  Let's denote the corresponding moment map by $\mu:Y \to \R^k$.  Then, we have a toric quotient $X = Y\sslash_c T^k = \mu^{-1}\{c\}/T^k$, which is assumed to be smooth.  The moment level set $\mu^{-1}\{c\} \subset Y$ induces a Lagrangian correspondence $L^\pi \subset Y^- \times X$ for the symplectic quotient.  (Recall that $Y^-$ denotes the symplectic manifold $(Y,-\omega)$.  $L^\pi := \{(y,[y]) \in Y^-\times X: y\in \mu^{-1}\{c\}\}$.) 

The following short exact sequence is useful in describing the topology of $(X,\bar{L})$ for a regular toric fiber $\bar{L} \subset X$:
\begin{equation} \label{toric-exact}
    0 \to K \to \Z^n \to N \to 0
\end{equation}
where $K = H_2(X,\Z)$, $N = H_1(\bar{L},\Z)$ and $H_2(X,\bar{L}) \cong \Z^n$.  $N$ is the lattice whose induced vector space supports the fan picture of $X$.

Let's make some topological preparations by describing disc classes in $\pi_2(Y^-\times X, L^\pi)$.

\begin{lemma} \label{lem:disc-moment}
    Let $Y$ be a toric manifold and $X$ a symplectic quotient with respect to a subtorus action.
    For the level set $\mu^{-1}\{c\}\subset Y$, let $I$ be the subset of indices $i\in 1,\ldots,m$ that satisfies $D_i \cap \mu^{-1}\{c\} = \emptyset$, where $D_i$ denote the toric prime divisors of $Y$.
    Let $\beta_j$, $j=1,\ldots,m$, be the basic disc classes of a regular toric fiber of $Y$.  Then $\pi_2(Y,\mu^{-1}\{c\})$ is generated by $\{\beta_i: i \in I\}$.  
\end{lemma}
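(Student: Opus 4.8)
The statement is a topological fact about which relative homotopy/homology classes survive when we pass from the regular toric fiber $L \subset Y$ to the moment level set $\mu^{-1}\{c\}$. The plan is to use the long exact sequence of the pair together with the explicit description of $\pi_2(Y,L)$ coming from the work of Cho--Oh \cite{CO}. Recall that $\pi_2(Y,L) \cong H_2(Y,L;\Z) \cong \Z^m$ is freely generated by the basic disc classes $\beta_1,\ldots,\beta_m$ dual to the toric prime divisors $D_1,\ldots,D_m$, and that $H_2(Y;\Z)$ injects into $H_2(Y,L;\Z)$ with quotient $H_1(L;\Z) \cong \Z^n$ via the map $\beta_i \mapsto v_i$. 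So my first step would be to set up the commutative ladder comparing the long exact sequences of the pairs $(Y,L)$ and $(Y,\mu^{-1}\{c\})$, induced by the inclusion $L \hookrightarrow \mu^{-1}\{c\}$ (after isotoping $L$ into the level set, which we may since $L \subset \mu^{-1}\{c\}$ is already assumed in Setup \ref{hamlagsmred} once $c$ is chosen appropriately, or by pushing a nearby regular fiber onto the level).

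\textbf{Key steps.} First I would identify $H_2(Y,\mu^{-1}\{c\};\Z)$. The level set $\mu^{-1}\{c\}$ is a principal $T^k$-bundle over $X = Y\sslash_c T^k$, and $\mu^{-1}\{c\} \hookrightarrow Y$ is the inclusion of a coisotropic submanifold; by excision/deformation-retract arguments $Y$ deformation retracts (equivariantly) onto a neighbourhood structured by the toric strata, and the pair $(Y,\mu^{-1}\{c\})$ only "sees" the divisors $D_i$ that are disjoint from the level set, i.e.\ those $i \in I$. Concretely: the classes $\beta_i$ with $i \notin I$ become trivial in $\pi_2(Y,\mu^{-1}\{c\})$ because their boundary circle can be capped off inside $\mu^{-1}\{c\}$ (the disc meets $D_i$ which now intersects the level set, so the disc can be slid into the level set) — more precisely, $\partial \beta_i = v_i$ bounds in $\mu^{-1}\{c\}$ exactly when $D_i \cap \mu^{-1}\{c\} \neq \emptyset$. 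Conversely, for $i \in I$ the divisor $D_i$ is "outside" the level set and the class $\beta_i$ is genuinely non-trivial and these are the only surviving generators. The cleanest way to make this rigorous is to observe that $Y \setminus \bigcup_{i\in I} D_i$ deformation retracts onto $\mu^{-1}\{c\}$ (the gradient flow of the moment map components pushes everything onto the level set away from the divisors one is flowing toward), so $H_*(Y, \mu^{-1}\{c\}) \cong H_*(Y, Y\setminus \bigcup_{i\in I} D_i)$, and the latter is computed by the Thom isomorphism / Mayer--Vietoris to be generated by the Thom classes of the $D_i$, $i\in I$, which under the boundary map correspond exactly to the $\beta_i$, $i\in I$.

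\textbf{Main obstacle.} The delicate point is establishing the deformation retraction $Y \setminus \bigcup_{i\in I} D_i \simeq \mu^{-1}\{c\}$, or equivalently controlling the homotopy type of the level set inside $Y$. One has to argue that the moment map flow (in the directions determined by the subtorus $T^k$) carries $Y \setminus \bigcup_{i\in I}D_i$ onto $\mu^{-1}\{c\}$ without running into the excluded divisors, which requires that the excluded divisors are precisely the ones "at infinity" relative to the level $c$. This is where the hypothesis that $X = Y\sslash_c T^k$ is smooth (so $T^k$ acts freely on $\mu^{-1}\{c\}$ and $c$ is a regular value) gets used. If a fully rigorous retraction argument is cumbersome, an alternative is to argue directly at the level of relative homotopy groups: a disc $u\colon (D^2,\partial D^2) \to (Y,\mu^{-1}\{c\})$ can, by the same flow, be homotoped so that its interior meets only the divisors $D_i$ with $i\in I$ (since any intersection with $D_i$, $i\notin I$, can be removed by pushing into the level set), and then $[u]$ is a combination of the $\beta_i$, $i\in I$; the relations $H_2(Y;\Z)\to H_2(Y,L;\Z)$ do not affect this since the classes $\beta_i$, $i\notin I$, span a complement containing the image of the sphere classes that become trivial. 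I would also need to double-check the edge case where $I = \emptyset$ (the level set hits all toric divisors), in which case $\pi_2(Y,\mu^{-1}\{c\})$ should be the sphere part $H_2(Y;\Z)$ — consistent with the lemma if one interprets "generated by $\{\beta_i : i\in I\} = \emptyset$" together with the residual sphere classes, so I would state the lemma's conclusion carefully to allow for that.
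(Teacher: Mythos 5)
Your core idea --- push the boundary of any disc into a regular fiber $L$, so that $\pi_2(Y,L)\to\pi_2(Y,\mu^{-1}\{c\})$ is onto, and then show that $\beta_i$ dies precisely when $D_i$ meets the level set --- is exactly the paper's (very short) proof, and your "alternative" homotopy-level sketch at the end is essentially that argument. Two caveats, though. First, note that "$\partial\beta_i$ bounds in $\mu^{-1}\{c\}$" only puts $\beta_i$ in the image of $\pi_2(Y)$; what one actually uses is the stronger fact that the whole basic disc can be slid along the polytope slice to a point of $D_i\cap\mu^{-1}\{c\}$ and contracted there. Second, your worry about residual sphere classes when $I=\emptyset$ is unfounded: sphere classes are integer combinations $\sum a_i\beta_i$ in $\pi_2(Y,L)$, so they die whenever all the $\beta_i$ do, and the lemma's conclusion (trivial relative $\pi_2$ when $I=\emptyset$) is correct as stated.

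The genuine problem is with the route you call "the cleanest way to make this rigorous": the claimed deformation retraction of $Y\setminus\bigcup_{i\in I}D_i$ onto $\mu^{-1}\{c\}$ is false in general. Take $Y=\bP^1\times\bP^1$ with the diagonal $\bS^1$-action and a level $c$ whose slice of the square runs from the bottom facet to the left facet (the paper's Example \ref{ex:P1P1}). Then $I=\{\text{top},\text{right}\}$ and $Y\setminus(D_{\text{top}}\cup D_{\text{right}})\cong\C^2$ is contractible, whereas $\mu^{-1}\{c\}\cong\bS^3$. What retracts onto the level set is the semistable locus $Y^{ss}$ (Kirwan--Neeman), and $Y^{ss}$ is $Y$ minus \emph{all} toric strata whose moment image misses the slice --- here it omits, in addition, the fixed point over the corner $(0,0)$, which lies on no $D_i$ with $i\in I$. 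Your Thom-class computation happens to survive because the discrepancy $\bigl(Y\setminus\bigcup_{i\in I}D_i\bigr)\setminus Y^{ss}$ consists of toric strata of complex codimension $\geq 2$, which cannot affect $\pi_2$ or $H_2$ of the pair; but as written the retraction claim, which you designate as the rigor-carrying step, is wrong and would need to be replaced either by the semistable-locus statement plus this codimension argument, or simply by the direct disc-sliding argument that the paper uses.
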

\begin{proof}
    A disc $(\Delta,\partial\Delta) \to (Y,\mu^{-1}\{c\})$ is homotopic to one whose boundary lies in a regular toric fiber of $Y$.  Thus, its homotopy class is an integer combination of $\beta_i$.  Moreover, $\beta_i$ is zero as a class in $\pi_2(Y,\mu^{-1}\{c\})$ if and only if $D_i \cap \mu^{-1}\{c\} \not= \emptyset$.  Thus $\pi_2(Y,\mu^{-1}\{c\})$ is generated by $\{\beta_i: i \in I\}$.
\end{proof}


\begin{lemma} \label{lem:dics_class}
    Let $Y$ be a toric manifold and $X$ a compact symplectic quotient with respect to a subtorus action.
    We have the short exact sequence 
    $$ 0 \to \pi_2(X) \to \pi_2(Y^- \times X,L^\pi) \to \pi_2(Y,\mu^{-1}\{c\}) \to 0.$$
\end{lemma}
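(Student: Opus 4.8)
The plan is to identify $\pi_2(\bar Y\times X,L^\pi)$ with the homotopy of the fibration $L^\pi\cong\mu^{-1}\{c\}\to X$ (with fiber the $T^k$-orbit) and feed this into the long exact sequence of a pair.  First I would observe that $L^\pi$ is diffeomorphic to $\mu^{-1}\{c\}$ via the graph embedding $y\mapsto(y,[y])$, and that under the projection $\mathrm{pr}_X:\bar Y\times X\to X$ the restriction $\mathrm{pr}_X|_{L^\pi}$ is exactly the principal $T^k$-bundle $\pi:\mu^{-1}\{c\}\to X$, while $\mathrm{pr}_{\bar Y}|_{L^\pi}$ is the inclusion $\mu^{-1}\{c\}\hookrightarrow Y$.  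Hence we get a map of pairs $(\bar Y\times X,L^\pi)\to(Y,\mu^{-1}\{c\})$, induced by $\mathrm{pr}_{\bar Y}$, and also a map of pairs $(\bar Y\times X,L^\pi)\to(X,\mathrm{pt})$ induced by $\mathrm{pr}_X$ (note $\mathrm{pr}_X(L^\pi)=X$, so the relative part collapses and we land in absolute $\pi_2(X)$).

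The key step is to show these assemble into the claimed short exact sequence.  I would use that $\bar Y\times X$ retracts (as a pair with $L^\pi$) appropriately: concretely, the inclusion $L^\pi\hookrightarrow\bar Y\times X$ factors through $\mu^{-1}\{c\}\times X$, and $\bar Y\times X$ deformation retracts onto... — rather than chase retractions, the cleaner route is the following.  Consider the triple of spaces and the commutative diagram of fibration-like sequences; since $T^k$ is a torus, $\pi_2(T^k)=\pi_1(T^k)\otimes\cdots$ — in fact $\pi_i(T^k)=0$ for $i\ge2$ and $\pi_2$ of the base $X$ surjects onto $\pi_1$ of the fiber-orbit only through $\pi_1(X)$ considerations; but here the relevant long exact sequence of the bundle $T^k\to\mu^{-1}\{c\}\to X$ gives $\pi_2(X)\to\pi_1(T^k)\to\pi_1(\mu^{-1}\{c\})\to\pi_1(X)\to0$ and $\pi_2(\mu^{-1}\{c\})\cong\pi_2(X)$.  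So instead I would directly compute with the pair: by excision/homotopy, $\pi_2(\bar Y\times X,L^\pi)$ fits in the long exact sequence
\begin{equation}
\pi_2(L^\pi)\to\pi_2(\bar Y\times X)\to\pi_2(\bar Y\times X,L^\pi)\to\pi_1(L^\pi)\to\pi_1(\bar Y\times X),
\end{equation}
and $\pi_2(\bar Y\times X)=\pi_2(Y)\oplus\pi_2(X)$, $\pi_1(L^\pi)=\pi_1(\mu^{-1}\{c\})$.  Comparing with the analogous sequence for $(Y,\mu^{-1}\{c\})$ and for $(X,\mathrm{pt})$, the maps of pairs above give a morphism of long exact sequences; a diagram chase (using that $\pi_2(\mu^{-1}\{c\})\to\pi_2(Y)$ and $\pi_1(\mu^{-1}\{c\})\to\pi_1(Y)$ have controlled kernels/cokernels coming precisely from the torus $T^k$, and that the $X$-direction is "split off" by $\mathrm{pr}_X$) yields the exactness of
\begin{equation}
0\to\pi_2(X)\to\pi_2(\bar Y\times X,L^\pi)\to\pi_2(Y,\mu^{-1}\{c\})\to0.
\end{equation}

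Concretely the cleanest argument: the section-like map $X\to\bar Y\times X$, $x\mapsto(y_0,x)$ for a fixed basepoint (or rather the projection $\mathrm{pr}_X$) shows $\pi_2(X)\hookrightarrow\pi_2(\bar Y\times X,L^\pi)$ is split injective with splitting $(\mathrm{pr}_X)_*$, because a sphere in $X$ lifts to a disc in $\bar Y\times X$ with boundary on $L^\pi$ (boundary mapping to $\mathrm{pr}_Y$ of a single $T^k$-orbit, which is null-homotopic in $\mu^{-1}\{c\}$ since a $T^k$-orbit bounds in the total space when the class comes from $\pi_2(X)$ — this is exactly where $\pi_1(T^k)\to\pi_1(\mu^{-1}\{c\})$ and its behavior under the bundle sequence enters).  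Then $(\mathrm{pr}_Y)_*:\pi_2(\bar Y\times X,L^\pi)\to\pi_2(Y,\mu^{-1}\{c\})$ is surjective (any disc in $(Y,\mu^{-1}\{c\})$ is the $\bar Y$-component of a disc in $(\bar Y\times X,L^\pi)$ by composing its boundary with $\pi$), and its kernel is generated by classes whose $\bar Y$-component is trivial, i.e.\ supported in the $X$-direction, which is precisely $\pi_2(X)$.

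\textbf{Main obstacle.}  The delicate point is the exactness in the middle — i.e.\ that $\ker(\mathrm{pr}_Y)_*$ is exactly the image of $\pi_2(X)$ and not larger.  A disc $u$ with $(\mathrm{pr}_Y)_*[u]=0$ has its $Y$-component null-homotopic rel $\mu^{-1}\{c\}$, so after a homotopy $u$ maps into $\{y_0\}\times X$ with boundary on $L^\pi\cap(\{y_0\}\times X)=\{(y_0,[y_0])\}$, a point; hence $[u]\in\pi_2(X)$.  Making this homotopy rigorous (carrying the boundary along $L^\pi$ while contracting the $Y$-part, using that $L^\pi\to\mu^{-1}\{c\}$ is a diffeomorphism so there is no obstruction on the boundary) is the crux; once that is in place the rest is a formal diagram chase.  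I expect this to go through cleanly because $L^\pi$ is literally a graph, so homotopies of the $Y$-coordinate drag the $X$-coordinate along canonically.
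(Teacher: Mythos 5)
Your overall strategy — project via $\mathrm{pr}_{\bar Y}$ to $\pi_2(Y,\mu^{-1}\{c\})$, identify the kernel with sphere classes of $X$, and prove surjectivity by lifting discs — is exactly the route the paper takes, and your treatment of the kernel (homotope the $Y$-component to a constant rel $\mu^{-1}\{c\}$, dragging the $X$-component along via the graph structure of $L^\pi$) matches the paper's argument. However, the surjectivity step has a genuine gap: given $u_Y:(\Delta,\partial\Delta)\to(Y,\mu^{-1}\{c\})$, "composing its boundary with $\pi$" only prescribes the $X$-component on $\partial\Delta$. You must still extend the loop $\pi\circ u_Y|_{\partial\Delta}$ over the whole disc inside $X$, and this is an honest obstruction in $\pi_1(X)$. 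The paper disposes of it by combining Lemma \ref{lem:disc-moment} (the classes $\beta_i$, $i\in I$, generate $\pi_2(Y,\mu^{-1}\{c\})$) with $\pi_1(X)=0$, which holds because $X$ is a compact toric manifold. Your argument never uses the compactness hypothesis, which is the tell-tale sign that this step is missing; for a quotient with $\pi_1(X)\neq 0$ the lift need not exist and the map need not be surjective.

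A second problem is the claimed splitting of $\pi_2(X)\to\pi_2(\bar Y\times X,L^\pi)$ by $(\mathrm{pr}_X)_*$: as you yourself observe, $\mathrm{pr}_X(L^\pi)=X$, so $\mathrm{pr}_X$ is a map of pairs into $(X,X)$ and induces the zero map on relative $\pi_2$ — there is no "collapse to absolute $\pi_2(X)$". (Relatedly, the aside $\pi_2(\mu^{-1}\{c\})\cong\pi_2(X)$ is false in general; the bundle sequence for $T^k\to\mu^{-1}\{c\}\to X$ only gives an injection $\pi_2(\mu^{-1}\{c\})\hookrightarrow\pi_2(X)$, as the example $S^1\to S^3\to S^2$ shows.) So injectivity of $\pi_2(X)\to\pi_2(\bar Y\times X,L^\pi)$ is not established by your splitting; what your kernel analysis actually proves is exactness in the middle, $\ker(\mathrm{pr}_{\bar Y})_*=\mathrm{im}\,\pi_2(X)$, which together with surjectivity is also the content of the paper's proof (the paper likewise leaves injectivity implicit).
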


\begin{proof}
    We have the projection map $\pi_2(Y^- \times X,L^\pi) \to \pi_2(Y^-,\mu^{-1}\{c\})$.  Let's consider the kernel of this map.  Suppose an element in $\pi_2(Y^- \times X,L^\pi)$ is projected to the zero class in $\pi_2(Y^-,\mu^{-1}\{c\})$.  In particular, the boundary loop is homotopic to zero in $\mu^{-1}\{c\} \cong L^\pi$.  Thus the corresponding class in $\pi_2(Y^- \times X,L^\pi)$ is homotopic to a sphere class of $Y \times X$.  Since it projects to zero class in $Y$, it is a sphere class in $X$.

    By the previous lemma, a disc class in $\pi_2(Y^-,\mu^{-1}\{c\})$ is a linear combination of the basic disc classes $\beta_i$ for $i \in I$.  Consider the boundary of a basic disc class $\beta_i$ and $D_i \cap \mu^{-1}\{c\} = \emptyset$.  Since $X$ is compact toric, $\pi_1(X)=0$, and so the image of $\partial \beta_i$ in $X$ bounds a disc class in $X$.  Thus, we can lift any such $\beta_i$ to a disc class in $\pi_2(Y^- \times X,L^\pi)$.  Thus, the stated short exact sequence holds.
\end{proof}

\begin{theorem}[Vanishing of obstructions in Lagrangian correspondence] \label{thm:m0=0}
    Let $Y=\C^n$ and $X$ be a toric Fano symplectic quotient of $Y$.  Suppose that the moment-map level set $\mu^{-1}\{c\}$ for $X$ intersects all the toric divisors of $Y$.  Then the equivariant disc potential of the quotient Lagrangian correspondence $L^\pi \cong \mu^{-1}\{c\}$ vanishes.
\end{theorem}
\begin{proof}
    By the condition that the moment level set intersects all the toric divisors of $Y$, $H^1(\mu^{-1}\{c\})=0$.
    We want to show that the Maslov indices of all nonconstant stable discs bounded by $L^\pi$ are positive. This implies that $L^\pi$ is weakly unobstructed.  Also, the equivariant part which is contributed by Maslov-zero disc classes also vanishes.

    By Proposition \ref{prop:unobs}, it suffices to show that assumption (3) holds, i.e. the effective disc classes $\beta$ bounded by $L^\pi$ coincide with the effective curve classes of $Y^-\times X$, or equivalently those of $X$ as $Y = \C^n$. Suppose $\beta\not=0$ corresponds to a curve class $\alpha$ of $X$ which is not effective. Then there exists a toric K\"ahler form $\omega$ of $X$ such that $([\omega],\alpha) \leq 0$. Such $\omega$ can be lifted to a toric K\"ahler form of $Y$ whose symplectic quotient gives $(X,\omega)$. For the corresponding symplectic form $\hat{\omega}$ on $Y^-\times X$, we have $([\hat{\omega}],\beta) \leq 0$. Thus $\beta$ cannot be an effective disc class.
    
    In the situation that the moment level set $\mu^{-1}\{c\}$ intersects all the toric divisors, by the classification of holomorphic discs of Cho-Oh \cite{CO}, there is a one-to-one correspondence between holomorphic discs of Maslov index two of $(Y,L)$ and $(X,\bar{L})$.  Thus, the disc potentials $W_L(b_L)$ and $W_{\bar{L}}(b_{\bar{L}})$ exactly coincide under the restriction map in Theorem \ref{telthm}.  By Equation \eqref{eq:W+W=W}, the non-equivariant part of the potential of $L^\pi$ vanishes.
\end{proof}

\begin{example}
    Consider $Y=\C^n$ and suppose $X=Y\sslash_c T^k$ is a compact Fano toric manifold, where $c$ is a generic value in the moment map image and $Y$ comes from the middle term of the toric exact sequence \eqref{toric-exact} of $X$ as $\C^n = \Z^n \otimes \C$.  Let $l_j=c_j$ for $j=1,\ldots,k$ be the defining affine linear equations of the moment level set, where $l_j$ correspond to a basis of the subtorus $T^k$ and $c=(c_1,\ldots,c_k)\in \R^k$ is a constant vector.  Let $L$ be a regular toric fiber of $\C^n$ whose quotient is a regular toric fiber $\bar{L} \subset X$.

    By \cite{KLZ}, the equivariant disc potential of $\C^n$ equals
    $$\textstyle \bT^{A_1} e^{x_1} + \ldots + \bT^{A_n} e^{x_n} + \sum_j \lambda_j l_j(x_1,\ldots,x_n) $$
    where $A_i$ are the symplectic areas of the basic disc classes $\beta_i$ bounded by the toric fiber $L\subset \C^n$.
    By Theorem \ref{telthm}, the disc potential of $\bar{L}$ equals the restriction of $\bT^{A_1} e^{x_1} + \ldots + \bT^{A_n} e^{x_n}$ on $\{l_j(x_1,\ldots,x_n)=0 \textrm{ for all } j=1,\ldots,k\}$, where $x_i$ are the (complex) boundary deformation parameters in $H^1(L)$ and $A_i$ are some positive real numbers.
\end{example}

Next, we consider the semi-Fano situation $c_1 \geq 0$ for all curve classes.

\begin{prop} \label{prop:semi-Fano}
    Let $Y=\C^n$ and $X$ be a toric semi-Fano symplectic quotient of $Y$ (meaning that all curve classes have non-negative first Chern number $c_1$).  Suppose that the moment-map level set $\mu^{-1}\{c\}$ for $X$ intersects all the toric divisors of $Y$. Then the (non-equivariant) obstruction $m_0^{L^\pi}$ vanishes.    
\end{prop}
\begin{proof}
    As in the proof of Theorem \ref{thm:m0=0}, holomorphic disc classes bounded by $L^\pi$ coincide with rational curve classes of $X$.  By the semi-Fano condition, all disc classes have Maslov indices $\geq 0$.  Thus $m_0(L^\pi)$ has degree $\leq 2$.
    Moreover, $H^2(L^\pi)=0$: By using the $(\C^\times)^n$-action on $\C^n$, any two-cycle of $\mu^{-1}\{c\}$ is homologous to a two-cycle supported in the intersection of $\mu^{-1}\{c\}$ with a coordinate plane $\C^2 \subset\C^n$, which does not support $H^2$.  Hence the two-cycle must be homologous to zero.
    
More generally, one can show that $\pi_1(L^\pi) = 0 = \pi_2(L^\pi)$ as follows: since $\mu^{-1}\{c\}$ is homotopically equivalent to the stable locus $U$ of the $(\C^\times)^k$-action on $\C^n$, it suffices to show that $\pi_1(U) = 0 = \pi_2(U)$. The former equality follows from $\mathrm{codim}_\C(U\subseteq \C^n)\geq 2$; the latter equality follows from the long exact sequence of homotopy groups associated to $U\rightarrow X$ as a principal $(\C^\times)^k$-bundle.
    
    Thus, there is no degree two element in the canonical model.  Hence, the non-equivariant $m_0(L^\pi)$ vanishes.
\end{proof}

In the above proposition, even though $L^\pi$ has zero non-equivariant disc potential, holomorphic curves with Chern number zero can still contribute to the equivariant disc potential of $L^\pi$ and lead to quantum corrections for the comparison between $L\subset Y$ and $\bar{L} \subset X$.  Indeed, these are crucial for the disc potentials of semi-Fano toric manifolds.

In the joint work of the first and second named author with Chan and Tseng \cite{CLLT}, the disc potential of a regular toric fiber of a compact semi-Fano toric manifold was computed and expressed in terms of the (inverse) mirror map.  The mirror map is given by hypergeometric functions that are solutions to a certain Picard-Fuchs system of differential equations. 

\begin{theorem}[\cite{CLLT}]
    The disc potential of a regular toric fiber of a compact semi-Fano toric manifold equals
    $W_{\bar{L}} = \sum_{l=1}^{n} \exp(g_l(\check{q}(q))) \, Z_l,$
    where
    \begin{equation*} \label{Eqn Z}
    Z_l = \left\{ \begin{array}{ll}
    z_l & \text{ when } l=1,\ldots,d;\\
    q_{l-d} z^{v_l}:=q_{l-d}\prod_{i=1}^d z_i^{(\nu_i, v_l)} & \text{ when } l=d+1,\ldots,n,
    \end{array}
    \right.
    \end{equation*}
    \begin{equation}\label{eqn:funcn_g}
    g_l(\check{q}):=\sum_{c}\frac{(-1)^{(D_l\cdot c)}(-(D_l\cdot c)-1)!}{\prod_{p\neq l} (D_p\cdot c)!}\check{q}^c
    \end{equation}
    and the summation is over all effective curve classes $c\in H_2^\text{eff}(X)$ satisfying
    $$-K_X\cdot c=0, D_l\cdot c<0 \text{ and } D_p\cdot c \geq 0 \text{ for all } p\neq l$$
    and $\check{q}=\check{q}(q)$ is the inverse of the mirror map $q=q(\check{q})$.
\end{theorem}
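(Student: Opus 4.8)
The plan is to express the disc potential of $\bar L$ as a generating function of one-pointed open Gromov--Witten invariants, to reduce the computation of these invariants to closed genus-zero Gromov--Witten invariants of $X$, and finally to identify the resulting generating functions with the hypergeometric series governing the mirror map by invoking the toric mirror theorem.

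First I would use the classification of holomorphic discs bounded by a regular toric fiber due to Cho--Oh \cite{CO} together with the semi-Fano hypothesis. Every nonconstant stable disc bounded by $\bar L$ has Maslov index $\geq 0$, and those of Maslov index exactly $2$ lie in classes of the form $\beta_l + c$, where $\beta_l$ ($1 \leq l \leq n$) is the basic disc class associated to the toric prime divisor $D_l$ and $c \in H_2^{\mathrm{eff}}(X)$ has Chern number zero, i.e.\ $-K_X \cdot c = 0$; moreover such a configuration consists of a single basic $\beta_l$-disc with a genus-zero stable map of class $c$ attached at an interior node mapping into $D_l$, which forces $D_l \cdot c < 0$ and $D_p \cdot c \geq 0$ for $p \neq l$. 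Hence, writing $n_{\beta_l+c}$ for the corresponding open GW invariant and $Z_l$ as in the statement,
$$W_{\bar L} = \sum_{l=1}^{n} \Big( 1 + \sum_{c} n_{\beta_l+c}\, q^{c} \Big)\, Z_l,$$
so it suffices to prove $1 + \sum_c n_{\beta_l+c}\, q^c = \exp\big(g_l(\check q(q))\big)$ with $\check q(q)$ the inverse mirror map.

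The heart of the argument is an open mirror theorem relating $n_{\beta_l+c}$ to closed invariants. I would analyze the degeneration of the moduli space $\mathcal{M}_1(\bar L; \beta_l+c)$ of stable discs with one boundary marked point: such a disc breaks into a standard $\beta_l$-disc and a genus-zero stable map of class $c$ carrying one interior marked point constrained to $D_l$. Comparing the Kuranishi structures and virtual fundamental chains, following the Fukaya--Oh--Ohta--Ono framework as carried out in \cite{CCLT13}, expresses $n_{\beta_l+c}$ in terms of a one-point genus-zero GW invariant of $X$ with a point-type constraint on $D_l$; equivalently, after degenerating $X$ to the normal cone of $D_l$ this reduces to the toric Calabi--Yau situation treated in \cite{CLL}. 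One then recognizes $\sum_c n_{\beta_l+c}\,\check q^{c}$ as the $D_l$-component of the hypergeometric modification appearing in Givental's $I$-function for $X$: the coefficient $\frac{(-1)^{(D_l\cdot c)}(-(D_l\cdot c)-1)!}{\prod_{p\neq l}(D_p\cdot c)!}$ is precisely what the hypergeometric factors of the toric $I$-function produce for classes $c$ with $-K_X\cdot c = 0$, $D_l\cdot c < 0$, and $D_p\cdot c \geq 0$ for $p\neq l$, which identifies this sum with $\exp(g_l(\check q))$.

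Finally I would invoke the toric mirror theorem of Givental \cite{Givental} (and Lian--Liu--Yau): the $J$-function of the compact semi-Fano toric manifold $X$ equals its $I$-function after the change of variables given by the mirror map $q = q(\check q)$. Restricting this identity to the Chern-number-zero part pins down the relation between the Kähler parameters $q$ and the flat coordinates $\check q$ as the inverse mirror map $\check q = \check q(q)$, and substituting into the formula above yields $W_{\bar L} = \sum_{l=1}^n \exp(g_l(\check q(q)))\, Z_l$. The main obstacle is the open-to-closed reduction in the middle step: in the semi-Fano (rather than Fano) setting the Chern-number-zero curves genuinely contribute through nontrivial obstruction bundles and multiple-cover phenomena, so one cannot argue by a naive enumerative count but must run the full virtual machinery and feed in the closed mirror theorem --- this is exactly where the bulk of the work of \cite{CLL,CCLT13,CLLT} lies.
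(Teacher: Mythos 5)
This statement is quoted in the paper from \cite{CLLT} and is not proved here, so there is no internal proof to compare against; what can be assessed is whether your outline reconstructs the argument of the cited reference. Your overall strategy does match it: the Cho--Oh classification together with the semi-Fano condition reduces $W_{\bar L}$ to a sum $\sum_l (1+\sum_c n_{\beta_l+c}q^c)Z_l$ over Maslov-index-two classes $\beta_l+c$ with $-K_X\cdot c=0$, the open invariants $n_{\beta_l+c}$ are converted into closed genus-zero invariants, and the closed toric mirror theorem identifies the resulting generating function with $\exp(g_l(\check q(q)))$. That is exactly the architecture of \cite{CLLT}.

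The place where your sketch diverges from the actual proof is the open-to-closed step, and this is also the step that carries essentially all of the technical weight. In \cite{CLLT} the invariant $n_{\beta_l+c}$ is not expressed as a point-constrained one-point invariant of $X$ supported on $D_l$, nor is $X$ degenerated to the normal cone of $D_l$ to reduce to the toric Calabi--Yau setting of \cite{CLL}; rather, $n_{\beta_l+c}$ is identified with a closed Gromov--Witten invariant of the \emph{Seidel space} $E_l$ associated to the $\C^\times$-action dual to the divisor $D_l$ (a toric manifold built by augmenting the fan of $X$), and the hypergeometric series $\exp(g_l)$ then emerges from applying the mirror theorem to $E_l$ and comparing its $I$-function with that of $X$. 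Your proposed alternatives (a point-class insertion on $D_l$, or a normal-cone degeneration) are not obviously wrong in spirit, but neither is established in the literature you cite, and in the semi-Fano regime the Chern-number-zero classes contribute through obstruction bundles and multiple covers, so the naive correspondence between a $\beta_l$-disc with a sphere bubble on $D_l$ and a constrained closed invariant requires exactly the kind of virtual-chain comparison that the Seidel-space construction was designed to make tractable. As written, then, your middle step is a placeholder for the hard part rather than a proof of it; to make the argument complete you would either need to carry out the Seidel-space identification of \cite{CLLT} or supply an independent justification of your proposed degeneration.
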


In the above theorem, we have fixed a maximal cone of the fan of $X$ spanned by a basis$\{v_1,\ldots,v_d\}$, whose dual basis is denoted by $\{\nu_1,\ldots,\nu_d\}$.  Then other vectors in the fan are expressed in terms of this basis: $v_l = \sum_{i=1}^d (\nu_i,v_l) v_i$.  We have the curve classes $\Psi_j \in H_2(X)$ for $j=1,\ldots,n-d$, which is the linear combination of basic disc classes $\beta_{d+j} - \sum_{i=1}^d (\nu_i,v_l) \beta_i$.  Their corresponding K\"ahler parameters are denoted by $q_j = T^{\omega \cdot \Psi_j}$.  Moreover, we have absorbed the Novikov coefficients into the variables $z_l$, namely, $z_l = T^{\omega\cdot\beta_l} e^{x_l}$.

In this case, even though the disc potential of $\C^n$ is simple, the disc potential of its toric quotient is highly non-trivial.  Using Proposition \ref{prop:semi-Fano}, we can now explain that the coefficients $\exp(g_l(\check{q}(q)))$ in terms of the equivariant Lagrangian correspondence $L^\pi_{T^k}$.

\begin{theorem} \label{thm:equiv-semi-Fano}
    Let $Y=\C^n$ and $X=Y\sslash_c T^k$ be a compact semi-Fano toric manifold, where $\C^n$ comes from the middle term of the toric exact sequence \eqref{toric-exact} of $X$.
    The equivariant disc potential of the Lagrangian correspondence $L^\pi$ equals
    $$\textstyle W^{Morse}_{L^\pi, T} = \sum_{j=1}^{n-d} \lambda_j (\log q_j - \log \check{q}_j(q))$$
    where $\check{q}_j(q)$ denotes the inverse mirror map for $X$.
\end{theorem}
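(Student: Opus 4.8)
The plan is to read off $W^{Morse}_{L^\pi,T}$ from the comparison of equivariant disc potentials in Theorem \ref{telthm}, exploiting that both of the other two potentials appearing there are already known: the equivariant disc potential of a toric fiber of $Y=\C^n$ is elementary by \cite{KLZ}, and the disc potential of the semi-Fano toric manifold $X$ was computed in \cite{CLLT} (the theorem quoted above). So the content is really an identification, not a new Floer-theoretic computation.

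First I would verify that Proposition \ref{prop:semi-Fano} applies, so that $L^\pi$ is non-equivariantly unobstructed. Since $\C^n$ carries no compact holomorphic curves it is (trivially) semi-Fano, $X$ is semi-Fano by hypothesis, and for generic $c$ the level set $\mu^{-1}\{c\}$ meets every toric divisor of $\C^n$ because $X$ is compact. Finally $L^\pi\cong\mu^{-1}\{c\}$ is a principal $T^{n-d}$-bundle over $X$; a Serre spectral sequence argument for $T^{n-d}\to\mu^{-1}\{c\}\to X$, using $H^{\mathrm{odd}}(X;\R)=0$ together with the fact that the $n-d$ Euler classes of its circle factors form a basis of $H^2(X;\R)$ (equivalently $d_2\colon H^1(T^{n-d})\xrightarrow{\sim}H^2(X)$, whose induced $\Lambda^2$ is still injective), gives $H^1(L^\pi)=H^2(L^\pi)=0$. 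Proposition \ref{prop:semi-Fano} then yields that $L^\pi$ is non-equivariantly unobstructed with vanishing non-equivariant disc potential, and since $H^1(L^\pi)=0$ there is no boundary-deformation freedom, so $W^{Morse}_{L^\pi,T}=\sum_{j=1}^{n-d}\lambda_j h^j_{L^\pi}$ for functions $h^j_{L^\pi}\in\Lambda_+$ of the K\"ahler parameters $q$ of $X$, contributed by the Maslov-zero stable discs of $L^\pi_{T}$.

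Next I would apply Equation \eqref{eq:W+W=W} (equivalently \eqref{eq:WWW}): on the correspondence locus $\Sigma\subseteq MC_{weak}(L)$, under the bijection $b_L\mapsto b_{\bar{L}}=b_{L^\pi}\circ b_L$ onto $MC_{weak}(\bar{L})$, one has $W^{Morse}_{L,T}(b_L)+W^{Morse}_{L^\pi,T}=W_{\bar{L}}(b_{\bar{L}})$. For $Y=\C^n$ the left side is explicit by \cite{KLZ}: in coordinates $z_i=\bT^{A_i}e^{x_i}$ on $MC_{weak}(L)\cong(\Lambda^\times)^n$, $W^{Morse}_{L,T}=\sum_{i=1}^n z_i+\sum_{j=1}^{n-d}\lambda_j l_j(x)$ with $l_j$ the affine moment-map forms, so $h^j_L=l_j$. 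Matching coefficients of $\lambda_j$ identifies $\Sigma$ with the fiber $\{l_j(x)=-h^j_{L^\pi}\}$, and the $\lambda$-free part says that $W_{\bar{L}}(b_{\bar{L}})$ is the restriction of $\sum_{i=1}^n z_i$ to $\Sigma$. In multiplicative form the equations cutting out $\Sigma$ read $\prod_i z_i^{D_i\cdot\Psi_j}=q_j\,e^{-h^j_{L^\pi}}$, whose classical case $h^j_{L^\pi}=0$ is exactly the relation recovering the classical toric potential; eliminating the dependent variables using $\Psi_j=\beta_{d+j}-\sum_{i=1}^d(\nu_i,v_{d+j})\beta_i$ (hence $D_i\cdot\Psi_j=-(\nu_i,v_{d+j})$ for $i\le d$ and $D_{d+j}\cdot\Psi_j=1$) gives $W_{\bar{L}}=\sum_{l=1}^d z_l+\sum_{j=1}^{n-d}e^{-h^j_{L^\pi}}q_j\prod_{i=1}^d z_i^{(\nu_i,v_{d+j})}$, up to the one remaining freedom in the bijection, namely a monomial reparametrisation of the free coordinates $z_1,\dots,z_d$.

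Finally I would match this against $W_{\bar{L}}=\sum_{l=1}^d\exp(g_l(\check{q}(q)))\,z_l+\sum_{j=1}^{n-d}\exp(g_{d+j}(\check{q}(q)))\,q_j\prod_{i=1}^d z_i^{(\nu_i,v_{d+j})}$ from \cite{CLLT}; equating the two Laurent polynomials forces the reparametrisation to be $z_l\mapsto\exp(g_l(\check{q}))z_l$ and then $e^{-h^j_{L^\pi}}=\exp\!\left(g_{d+j}(\check{q})-\sum_{i\le d}(\nu_i,v_{d+j})g_i(\check{q})\right)$, i.e. $h^j_{L^\pi}=-\sum_{i=1}^n(D_i\cdot\Psi_j)\,g_i(\check{q}(q))$. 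By the characterisation of the (inverse) mirror map of a semi-Fano toric manifold in \cite{CLLT} (see \eqref{eqn:funcn_g} and the surrounding discussion), this right-hand side is precisely $\log q_j-\log\check{q}_j(q)$, which gives the claim. The hardest part will be this last comparison: making the monomial matching unambiguous requires carefully pinning down the affine forms $l_j$, the two systems of symplectic areas on $\C^n$ versus on $X$, and exactly which factors are absorbed by the correspondence bijection rather than by the level shift $-h^j_{L^\pi}$; the spectral-sequence computation of $H^1(L^\pi)$ and $H^2(L^\pi)$ is a lesser but still necessary technical ingredient.
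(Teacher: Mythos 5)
Your proposal is correct and follows essentially the same route as the paper: verify the hypotheses of Proposition \ref{prop:semi-Fano} to kill the non-equivariant part of $W^{Morse}_{L^\pi,T}$, then feed the known potentials of $L\subset\C^n$ (from \cite{KLZ}) and of $\bar{L}\subset X$ (from \cite{CLLT}) into Equation \eqref{eq:WWW}, solve for the equivariant coefficients $h^j_{L^\pi}$ by matching Laurent monomials, and identify the result with $\log q_j-\log\check{q}_j(q)$ via the inverse mirror map formula. The only divergence is your verification of $H^1(L^\pi)=H^2(L^\pi)=0$ through the Serre spectral sequence of the principal $T^{n-d}$-bundle $\mu^{-1}\{c\}\to X$, where the paper instead pushes two-cycles into coordinate planes using the induced $(\C^\times)^n$-action; both arguments are valid.
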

\begin{proof}
    As in the proof of Proposition \ref{prop:semi-Fano}, $H^2(L^\pi)=0$. This implies the vanishing of the non-equivariant part of $W^{Morse}_{L^\pi, T}$. 
    Thus, $W^{Morse}_{L^\pi, T} = \sum_{j=1}^{n-d} \lambda_j h_j(q)$ for some $h_j$.  
    
    The $T^k$-action is in the direction of $E_l + \sum_{i=1}^d (\nu_i,v_l) E_i$ for $l=d+1,\ldots,n$, where $\{E_k:k=1\ldots,d\}$ denotes the standard basis.  By Equation \eqref{eq:WWW}, $W_L = \tilde{z}_1 + \ldots + \tilde{z}_n$ of $\C^n$ equals $W_{\bar{L}} = \sum_{l=1}^{n} \exp(g_l(\check{q}(q))) \, Z_l$ by restricting to $\log \tilde{z}_l + \sum_{i=1}^d (\nu_i, v_l) \log \tilde{z}_i + h_j = 0$ for $l=d+1,\ldots,n$.  Then the equality follows from the fact that the inverse mirror map is given by
    $ \check{q}_j(q) = q_j \prod_{l=1}^n (\exp g_l(\check{q}(q)))^{-D_l \cdot \Psi_j}, $
    where $\Psi_j$ is the curve class for the K\"ahler parameter $q_j$.
\end{proof}

\begin{example} \label{ex:F2}
    Let's consider the Hirzebruch surface $X=\bF_2$, which is obtained as a symplectic quotient of $Y=\C^4$ by the $T^2$-action generated by $(0,1,0,1)$ and $(1,2,1,0)$.  It is well known that $X$ is semi-Fano.  There are two generating curve classes, namely the fiber class $f$ and the exceptional curve class $e$ (that has self-intersection $(-2)$.  Let's denote their K\"ahler parameters by $q^f = T^{\omega\cdot f}$ and $q^e = T^{\omega\cdot e}$.  We have $\Psi_1=e+2f$ and $\Psi_2=f$, and so $q_1 = q^{e+2f}$ and $q_2 = q^f$.  We also have the corresponding mirror complex parameters $\check{q}_1=\check{q}^{e+2f}$ and $\check{q}_2=\check{q}^f$.

    The above $g_l$ is non-zero only when $l=4$:
     $g_4(\check{q}^e) = \sum_{j>0} \frac{(2j-1)!}{(j!)^2} \check{q}^j. $
    The mirror map is given by $q_1=\check{q}_1$ and 
    $$q_2 = \check{q}_2 \exp (-g_4(\check{q}^e)) = \check{q}_2 \exp (-g_4(\check{q}_1\check{q}_2^{-2})).$$
    Its inverse is given by $\check{q}_1=q_1,\,\check{q}_2 = q_2(1+q^e)$.  This gives the disc potential of a regular toric fiber of $X=\bF_2$ to be
    $$\textstyle z_1 + z_2 + \frac{q_1}{z_1z_2^2} + \frac{q_2(1+q^e)}{z_2}. $$
    (We have absorb some Novikov coefficients $T^A$ into $z_1,z_2$.)

    The equivariant disc potential of a regular toric fiber $L \subset \C^4$ equals $T^a \tilde{z}_1 + T^b \tilde{z}_2+ T^c \tilde{z}_3+ T^d \tilde{z}_4 + \lambda_1 \log \left(\tilde{z}_1\tilde{z}_2^2\tilde{z}_3\right) + \lambda_2 \log \left( \tilde{z}_2\tilde{z}_4\right)$ for some $a,b,c,d>0$, where $a+2b+c = \omega \cdot (e+2f)$ and $b+d = \omega \cdot f$ give the symplectic areas of the curve classes downstairs.
    Let $W^{Morse}_{L^\pi, T} = \lambda_1 h_1(q_1,q_2) + \lambda_2 h_2(q_1,q_2)$ be the equivariant disc potential of $L^\pi$.  We require vanishing of the equivariant part of $W^{Morse}_{L, T} + W^{Morse}_{L^\pi, T}$ to obtain the disc potential of $W_{\bar{L}}$:
    equals 
    $$\textstyle \lambda_1 \log \left(\tilde{z}_1\tilde{z}_2^2\tilde{z}_3\right) + \lambda_2 \left(\log  (\tilde{z}_2\tilde{z}_4) + h_2(q_1,q_2)\right) = 0.$$
    This gives $T^a \tilde{z}_1 + T^b \tilde{z}_2+ \frac{T^c}{\tilde{z}_1\tilde{z}_2^2\exp h_1(q_1,q_2)} + \frac{T^d}{\tilde{z}_2 \exp h_2(q_1,q_2)}$ as the potential of $\bar{L}$.  This equals the above expression as Laurent polynomials in $\tilde{z}$ by some change of coordinates of the form $z_1 = T^a \tilde{z}_1(1+o_1(T))$ and $z_2 = T^b \tilde{z}_2(1+o_2(T))$ for some elements $o_1(T),o_2(T)$ in the Novikov ring $\Lambda_+$.  This forces $h_1=0$ and $h_2 = -\log (1+q^e)$.
\end{example}

In general, when $Y$ is not $\C^n$, even in very simple Fano situations, the Lagrangian correspondence $L^\pi$ can be obstructed.  The general theory of Fukaya-Oh-Ohta-Ono \cite[Theorem 3.8.41, Corollary 3.8.43]{FOOO} will be useful to kill the obstructions.  Namely, if the obstruction $m_0(L^\pi)$ (other than the unit term) lies in the image of $H^*(Y^-\times X) \to H^*(L^\pi)$, then one can take bulk deformation by an element in $H^*(Y\times X)$ to achieve weakly unobstructedness.  Intuitively, by using a bulk deformation, there are constant discs that have an interior marked point passing through the obstruction cochain.  Since these discs are constant, they evaluate at the output boundary marked point to the obstruction cochain itself, which contributes to kill the obstruction.

Once we achieve weakly unobstructedness by bulk deformation, $L^\pi$ has a well-defined equivariant disc potential.  We need this as quantum corrections in comparing the disc potentials before and after quotient via Equation \ref{eq:W+W=W}.

To understand the obstruction terms, let's take a closer look at Lemma \ref{lem:dics_class}, which states that $\pi_2(Y^- \times X,L^\pi)$ is generated by some lifting of the disc classes $\beta_i$ for $i\in I$ given in Lemma \ref{lem:disc-moment} and sphere classes of $Y$ and $X$.  However, note that there are different liftings of $\beta_i$ to $\pi_2(Y^- \times X,L^\pi)$ in general.  Indeed, different liftings can have different Maslov indices.  

To fix this ambiguity, we stratify the moment level set $\mu^{-1}\{c\}$ by its intersections with the (open) toric strata of $Y$, namely, $\mu^{-1}\{c\} = \coprod_B \mu^{-1}\{c\} \cap B$ where $B$ runs over the toric strata of $Y$ such that $\mu^{-1}\{c\} \cap B \not= \emptyset$.  When the closure $\bar{B}$ (which is itself a toric manifold) contains $\beta_i$ as a basic disc class bounded by a regular toric fiber $F$ of $B$ and $\partial\beta_i\in\pi_1(F)$ descends to a constant point in the quotient $F/T \subset X$, we denote by $\beta_i^B$ to be the corresponding disc class of $\pi_2(Y^- \times X,L^\pi)$ which is constant in the $X$ factor.

The existence of obstruction is due to the fact that these disc classes $\beta_i^B$ can have non-positive Maslov indices.  The work of Cho-Kim \cite{Cho-Kim} is useful to find the Maslov indices of these disc classes.

\begin{theorem}[Theorem 3.7 of \cite{Cho-Kim}]
Let $X$ be a symplectic manifold with a Hamiltonian $\bS^1$-action, and let $H:X\to\R$ be the Hamiltonian.  Suppose $L$ is an $\bS^1$-invariant Lagrangian submanifold of $M$ contained in a level set of $H$.  For any gradient holomorphic disc $u$, its Maslov index equals $-2 n_z$, where $n_z$ is the sum of weights at the unique fixed point $z$ in the image of $u$.
\end{theorem}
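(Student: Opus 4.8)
The plan is to reduce the computation to the local model of the $S^1$-action at $z$, using that both $L$ and the gradient holomorphic disc $u$ are equivariant. By definition a gradient holomorphic disc is $J$-holomorphic with image invariant under the gradient flow of $H$; since $\nabla H = J\xi_H$, where $\xi_H$ generates the circle action, the image of $u$ is invariant under the complexified action of $\mathbb{C}^\times = S^1_{\mathbb{C}}$. First I would reparametrize $u$ so that it intertwines the rotation of the domain disc $\Delta$ with the circle action on $X$ (after rescaling the $S^1$-parameter by the covering multiplicity of the boundary orbit $u(\partial\Delta)\subset L$), and show that this extends to a $\mathbb{C}^\times$-equivariant map, with $\mathbb{C}^\times$ acting on $\Delta\setminus\{0\}$ by scaling. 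Then $u(0)$ is the unique fixed point $z$, and $\Delta\setminus\{0\}$ is a single free $\mathbb{C}^\times$-orbit.

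Next I would build two competing trivializations of $u^{*}TX$. An honest trivialization $\Phi$ exists over all of $\Delta$ since the base is contractible; relative to such a $\Phi$, the Maslov index of $u$ is twice the winding number (of the square of the determinant) of the loop of Lagrangian planes $\theta\mapsto T_{u(e^{i\theta})}L$. On $\Delta\setminus\{0\}$ there is also the \emph{equivariant} trivialization $\Psi$, obtained by translating a fixed frame along the free $\mathbb{C}^\times$-orbit. Because $L$ is $S^1$-invariant and $\partial\Delta$ is a single $S^1$-orbit, the Lagrangian boundary frame is \emph{constant} in the $\Psi$-trivialization; hence the Maslov index equals, up to sign, $2$ times the winding number of $\det$ of the transition $\Phi^{-1}\Psi$ restricted to $\partial\Delta$.

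Finally I would evaluate that winding near $z$. Using Bochner linearization of the holomorphic $\mathbb{C}^\times$-action, pick a chart around $z$ in which $T_zX\cong\mathbb{C}^{n}$, the action is diagonal with integer weights $w_1,\dots,w_n$, and (since the orbit limits onto $z$ as the scaling parameter tends to $0$) the weights in the directions occupied by $u$ are positive. In this chart the equivariant frame scales as $\mathrm{diag}(\zeta^{w_1},\dots,\zeta^{w_n})$ relative to the constant chart frame, so $\det(\Phi^{-1}\Psi)$ behaves like $\zeta^{\,w_1+\dots+w_n}$ times a unit at $\zeta=0$; its winding over a small loop, and hence over $\partial\Delta$, is $n_z:=\sum_j w_j$. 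Tracking the orientation of $\partial\Delta$ and the direction of the equivariant translation then fixes the overall sign, giving $\mathrm{MI}(u)=-2n_z$. Equivalently, one can phrase this with a local holomorphic volume form $\Omega$ near $z$: its weight under the action at $z$ is $-n_z$, so $u^{*}\Omega$ vanishes to order $n_z$ at the centre, and the Maslov index is $-2$ times this order.

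\textbf{Main obstacle.} The delicate steps are the equivariant extension in the first paragraph --- showing that a gradient holomorphic disc is genuinely $\mathbb{C}^\times$-equivariant after reparametrization, including the case of multiply covered boundary orbits --- and the precise sign in the last paragraph, where one must simultaneously match the boundary orientation of $\Delta$, the convention for the Maslov index, and the direction in which the $\mathbb{C}^\times$-translation contracts onto $z$; reversing any one of these flips the sign of the answer.
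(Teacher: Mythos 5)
The paper does not prove this statement; it is quoted verbatim as Theorem~3.7 of Cho--Kim, so there is no internal proof to compare against. Measured against the source's argument, your outline is essentially the standard (and essentially Cho--Kim's) proof: a gradient holomorphic disc is, after reparametrization, $\C^\times$-equivariant, hence is the closure of a single $\C^\times$-orbit limiting to the fixed point $z$; in the Bochner-linearized chart it therefore has the explicit weighted form $\zeta\mapsto(c_1\zeta^{m_1},\dots,c_n\zeta^{m_n})$, and the Maslov index reduces to the winding of the determinant of the loop $\theta\mapsto dg_{e^{i\theta}}$ of unitary maps carrying $T_{u(1)}L$ around the boundary orbit, which is $\sum_j w_j$. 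Two remarks. First, the step you gloss over as ``hence over $\partial\Delta$'' is exactly where equivariance earns its keep: a priori the transition $\Phi^{-1}\Psi$ need only be controlled near $z$, but because the whole disc is the orbit closure it lies inside the linearizing chart, so the small loop and $\partial\Delta$ are genuinely homotopic through the complement of $z$ and the winding numbers agree; you should say this explicitly, and also note that a $k$-fold covered boundary orbit multiplies both the Maslov index and the winding by $k$, so the formula survives. Second, a small phrasing error: the Maslov index of a loop of Lagrangian planes is the winding number of the \emph{square} of the determinant of a unitary frame (equivalently, twice the winding of the determinant), not ``twice the winding number of the square of the determinant''; your final count $\pm 2n_z$ from a determinant winding of $n_z$ shows you used the correct normalization, so this is only a slip in the prose. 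The residual sign is indeed fixed by the moment-map and boundary-orientation conventions (as the basic example $\C$, $H=\tfrac12|z|^2$, weight $-1$ at the origin, Maslov index $2$ confirms), and you correctly flag this rather than asserting it; with those two points tightened the argument is complete.
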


In general, a toric quotient is an orbifold.  For simplicity, we restrict to the situation that the quotient is a smooth manifold, in which case the Maslov indices are integers.

\begin{prop} \label{prop:Maslov}
   Let $Y$ be a toric manifold and $X$ be a symplectic quotient with respect to a subtorus $T^k$ action.  Assume that the action is free so that $X$ is smooth.  For the disc class $\beta_i^B$ described above, its Maslov index is given by
    $\mu(\beta_i^B) = 2 \sum_j n_j $
   where $v = \sum_j n_j v_j$ is a primitive vector that has $n_i>0$ and lies in the intersection of $\mathfrak{t}^k$ and the normal space of the moment map image of $B \cap D_i$.
\end{prop}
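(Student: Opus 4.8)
The plan is to reduce the computation to the $\bS^1$-invariant setting of the Cho--Kim formula (Theorem 3.7 of \cite{Cho-Kim}), by exhibiting $\beta_i^B$ as a gradient holomorphic disc for a suitably chosen circle subaction, and then to identify the weight sum at its unique fixed point with $\sum_j n_j$. First I would set up the local toric picture: the disc class $\beta_i^B$ is, by construction, the basic disc class $\beta_i$ of the toric submanifold $\bar{B}$ bounded by a regular toric fiber $F$ of the open stratum $B$, whose boundary $\partial\beta_i \in \pi_1(F)$ descends to a point in the quotient $F/T^k \subseteq X$. The descent-to-a-point condition says precisely that the loop $\partial\beta_i$ lies in a $T^k$-orbit inside $F$; equivalently, the primitive vector $v_i$ (the inward normal of the facet $D_i$, read inside the normal lattice of $B$) lies in $\mathfrak{t}^k$. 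Since the $T^k$-action is free on $\mu^{-1}\{c\}$ and $X$ is smooth, one may extend $v_i$ to a primitive vector $v \in \mathfrak{t}^k$ that lies in the intersection of $\mathfrak{t}^k$ with the normal space of $\mu(B\cap D_i)$ and has positive $\beta_i$-pairing; write $v=\sum_j n_j v_j$ in the basis of facet normals of $\bar B$ at the relevant maximal cone, so $n_i>0$.

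Next I would invoke the Cho--Kim theorem with the Hamiltonian $\bS^1$-action generated by $v$. The key observations are: (i) $L^\pi \cong \mu^{-1}\{c\}$ is $\bS^1$-invariant and sits inside a level set of the moment map $H_v$ of $v$ (because $v\in\mathfrak{t}^k$ and $\mu^{-1}\{c\}$ is $T^k$-invariant at the fixed level $c$); (ii) the holomorphic disc representing $\beta_i^B$, chosen as the standard Cho--Oh disc in $\bar B$ (constant in the $X$-factor), is a gradient holomorphic disc for the $\bS^1$-flow — this is the standard fact that a basic toric disc of Maslov index $2$ has boundary on a circle orbit and interior sweeping out a gradient trajectory of $H_v$, with a unique interior fixed point $z$ where the disc meets the toric divisor $D_i$. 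Then Theorem 3.7 gives $\mu(\beta_i^B) = -2 n_z$, where $n_z$ is the sum of the weights of the $\bS^1$-action on $T_zY$ in the normal directions to the image of $u$ at the fixed point $z$. It remains to compute $n_z$ in terms of the $n_j$. In toric local coordinates around the fixed point $z$ (the torus-fixed point of $\bar B$ corresponding to the maximal cone spanned by $v_1,\dots,v_d$), the weights of the $\bS^1$ generated by $v=\sum n_j v_j$ on the coordinate lines are $\langle \nu_j, v\rangle = n_j$; the disc $u$ runs in the $i$-th coordinate direction, so the ``normal weights'' contributing to $n_z$ are the $n_j$ for the directions transverse to the disc's axis, and a bookkeeping of the sign conventions (the disc enters $z$ with negative weight $-n_i$, contributing $+n_i$ after the $-2n_z$ convention) yields $-2n_z = 2\sum_j n_j$. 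This gives $\mu(\beta_i^B) = 2\sum_j n_j$ as claimed.

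The main obstacle I anticipate is the careful verification of step (ii) together with the sign/convention bookkeeping in the weight sum: one must pin down which $\bS^1\subseteq T^k$ to use (the freeness of the $T^k$-action and smoothness of $X$ are exactly what guarantees such a primitive $v$ exists with integer pairings, hence integer Maslov index), confirm that the Cho--Oh disc in $\bar B$ is genuinely a \emph{gradient} holomorphic disc for that circle (not merely $\bS^1$-invariant), and match the Cho--Kim sign convention (weights at the fixed point in the image of $u$, summed with the orientation induced by the disc) against the toric convention in which $\mu(\beta_i)=2$ for an ordinary basic disc. A useful sanity check along the way: when $X$ is a point and $\bar B = Y$ itself, the formula must recover $\mu(\beta_i)=2$, which forces the normalization $v=v_i$, $n_i=1$, all other $n_j=0$; and when the disc becomes constant in a direction where $v$ acts with weight $n_j<0$, the negative contribution correctly produces a non-positive Maslov index, matching the obstruction phenomenon discussed before the proposition. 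I would also cross-check the $\bF_2$ case of Example \ref{ex:P1P1P1}/Example \ref{ex:F2}, where the depicted discs are asserted to have Maslov index $(-2)$, against $2\sum_j n_j$ with the explicit fan data.
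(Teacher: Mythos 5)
Your proposal follows essentially the same route as the paper's proof: extract the primitive $v\in\mathfrak{t}^k$ from the descent-to-a-point condition, let the circle it generates act on $\bar{Y}\times X$ (trivially on the $X$ factor), note that it preserves $L^\pi$ and the basic Cho--Oh disc in $\bar{B}$ with unique fixed point on $B\cap D_i$, and apply the Cho--Kim formula. The one step where the paper is cleaner is exactly the weight count you flag as your main worry: Cho--Kim's $n_z$ is the sum of \emph{all} weights at the fixed point, and the paper simply records that these weights are $-n_j$ for every $j$ (the sign coming from working in $\bar{Y}$), so $n_z=-\sum_j n_j$ and $\mu=-2n_z=2\sum_j n_j$; your split into ``transverse weights $+n_j$'' plus a separately sign-corrected disc direction is internally inconsistent as written (it would need the transverse weights to enter $n_z$ with the opposite sign from the one you assign them), even though you land on the correct formula.
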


\begin{proof}
    By the assumption that $\partial \beta_i^B$ descends to a point in the quotient in the definition of $\beta_i^B$, there must be a vector in $\mathfrak{t}^k$ that preserves and acts non-trivially on $\partial \beta_i^B$.  Such a vector is contained in the normal space of the moment map image of $B \cap D_i$ and has a non-zero coefficient in $v_i$.  Thus the vector $v$ stated above exists.

    Since $v$ lies in $\mathfrak{t}^k$, it preserves the moment map level set $\mu^{-1}\{c\}$.  Then $(v,0)$ gives a Hamiltonian action on $Y^-\times X$ that preserves $L^\pi$.  Moreover, since it lies in the normal space of the moment map image of $B \cap D_i$, it preserves a basic holomorphic disc in $B$ that represent the class $\beta_i^B$.  As $n_i>0$, it acts on this basic disc non-trivially and has exactly one fixed point, which is the intersection point of the disc with $B \cap D_i$.  Thus the above theorem of \cite{Cho-Kim} can be applied.  The weights at the fixed points are negative of the coefficients $n_j$ of the expression of $v$.
\end{proof}

Below, we give some examples for $\beta_i^B$, their Maslov indices, and explain how they affect the obstructions and disc potentials.  

\begin{example} \label{ex:P1P1}
    Let $Y = \bP^1 \times \bP^1$ and $\bS^1 = \langle (1,1) \rangle \subset T^2$.  Let $p,q$ be the symplectic areas of the two factors of $\bP^1 \subset Y$ respectively.
    Some moment level sets for different choices of $c$ are depicted in Figure \ref{fig:P1TimesP1-moment}.

    It can be computed by Proposition \ref{prop:Maslov} that for the disc classes bounded by $L^\pi \subset Y^-\times X$ shown in Figure \ref{fig:P1TimesP1-moment}, 
        $\mu(\beta_1^{D_4})=0$, $\mu(\beta_3^{D_4}) = 4$, $\mu(\beta_1^{D_2}) = 4$,
        $\mu(\beta_3^{D_2}) = 0$,
        $\mu(\beta_2^{D_1})=0$, $\mu(\beta_4^{D_1})=4$.

    Note that the effective disc class $\beta_3^{D_2} \in H_2(Y^-\times X,L^\pi)$ does not correspond to an effective curve class of $H_2(Y^-\times X)$. Consider the curve class $[D_2]$ in $Y$. Let $l$ be the fundamental class of the quotient $X \cong \bP^1$. Indeed $\beta_3^{D_2} = [D_2] - l$, which is not an effective curve class. Similarly, $\beta_2^{D_1} = [D_1] - l$ is not an effective curve class.
    
    The discs with Maslov index zero in class $\beta_3^{D_2}$ and $\beta_2^{D_1}$ will contribute to obstructions of $L^\pi$.  This is also manifested in comparing the disc potentials of $Y$ and $X$.
    Let's first focus on the case on the left.  For $L \subset Y$, its equivariant disc potential is
    $$ \textstyle W_L = T^a x + T^b y + \frac{T^c}{x} + \frac{T^d}{y} + \lambda \log xy$$
    where $a+c=p$ and $b+d=q$. Let's write it as
    $ W_L = T^a\left(x + \frac{T^{d-a}}{y}\right) + T^b\left(y+\frac{T^{c-b}}{x}\right)+ \lambda \log xy. $
    The equivariant part $\lambda \cdot \log xy$ tells us that setting $xy=1$ should be related to the potential of the quotient $\bar{L}\subset X$.  This will give the expression
    $ T^a x (1+T^{d-a}) + \frac{T^b}{x}(1+T^{c-b}) $
    which is not really the potential of $\bar{L}\subset X$.  
    
    Indeed, we need to take a bulk deformation by divisor classes $\pi_2^*([p_0])$ and $\pi_2^*([p_\infty])$ in $H^2(Y\times X)$ respectively, where $p_0,p_\infty$ are the toric divisors of $X=\bP^1$, to kill the (degree-two) obstruction in $L^\pi$ contributed by $\beta_2^{D_1},\beta_3^{D_2}$ that have areas $(d-a)$ and $(c-b)$ respectively.  Correspondingly, $X$ is also bulk-deformed by the divisor classes $p_0,p_\infty$, which accounts for the factors $(1+T^{d-a})$ and $(1+T^{c-b})$.  The analysis for the case on the right of Figure \ref{fig:P1TimesP1-moment} is similar and is left to the reader.
\end{example}

\begin{figure}[h]
\begin{center}
\includegraphics[scale=0.4]{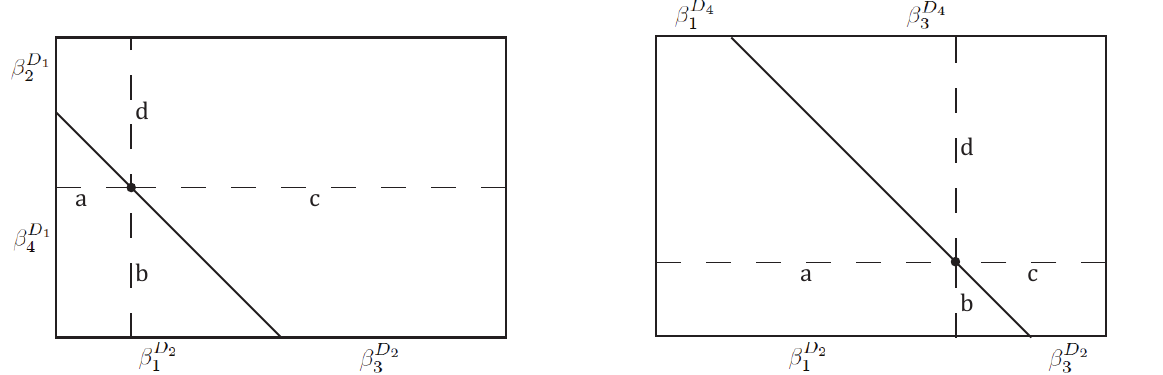}
\caption{\small{Symplectic quotients of $\bP^1 \times \bP^1$.}}
\label{fig:P1TimesP1-moment}
\end{center}
\end{figure}

\begin{example} \label{ex:P1P1P1-2}
    We go back to Example \ref{ex:P1P1P1} in the Introduction.  The equivariant disc potential of $L \subset Y = (\bP^1)^3$ is
    $$ \textstyle W_L = T^{a_1} z_1 + T^{a_2} z_2 + T^{a_3} z_3 + \frac{T^{b_1}}{z_1} + \frac{T^{b_2}}{z_2} + \frac{T^{b_3}}{z_3} + \lambda \log z_1z_2z_3$$
    where $(a_1,a_2,a_3)$ are the base coordinates in the moment-map cube of $L$, $(a_1+b_1),(a_2+b_2),(a_3+b_3)$ are the symplectic areas of the three coordinate lines of $Y=(\bP^1)^3$, and $a_1+a_2+a_3=c$ is the moment-map level.  The equivariant part tells us to set $z_1z_2z_3=1$, and we get
    $$\textstyle T^{a_1} z_1 + T^{a_2} z_2 + \frac{T^{a_3}}{z_1z_2} + \frac{T^{b_1}}{z_1} + \frac{T^{b_2}}{z_2} + T^{b_3}z_1z_2.$$
    On the other hand, the disc potential of the quotient $\bar{L} \subset X$ is
     $T^{a_1} z_1 + T^{a_2} z_2 + \frac{T^{c-a_1-a_2}}{z_1z_2}. $

    In this case, $L^\pi$ bounds holomorphic discs of Maslov index $(-2)$ drawn in Figure \ref{fig:P1P1P1}.  These contribute to degree-four obstructions in $m_0^{L^\pi}$.  In order to kill these obstructions of $L^\pi$, we can take a bulk deformation by $\pi_2^*([p_1]),\pi_2^*([p_2]),\pi_2^*([p_3]) \in H^4(Y\times X)$, where $p_1=[1:0:0],p_2=[0:1:0],p_3=[0:0:1]$.  Then constant discs with interior marked points mapped to $(\pi^{-1}\{p_i\},p_i) \subset L^\pi$ contribute to cancel the obstructions $[\pi^{-1}\{p_i\},p_i] \in H^4(L^\pi)$, where $\pi:\mu^{-1}\{c\} \to X$ denotes the quotient map.  Correspondingly, $X$ also needs to be bulk-deformed by $[p_1],[p_2],[p_3]\in H^4(X)$, which lead to contributions from Maslov-four disc classes in $X=\bP^2$ that produce the extra terms $z_1^{-1}$,$z_2^{-1}$ and $z_1z_2$.

    Alternatively, we can take bulk deformation by $\pi_1^*([A_1]),\pi_1^*([A_2]),\pi_1^*([A_3]) \in H^4(Y\times X)$, where $[A_i] \in H^4(Y)$ are the three coordinate axes of $Y = (\bP^1)^3$ that contain the discs of Maslov index $(-2)$ for $L^\pi$.  This has the same effect of turning on constant discs with interior marked points mapped to $(\pi^{-1}\{p_i\},p_i) \subset L^\pi$ to cancel the obstructions.  Correspondingly, $Y$ is bulk-deformed by $[A_1],[A_2],[A_3]\in H^4(Y)$, which leads to contributions from Maslov-four disc classes in $Y$ that produce extra terms $z_2z_3,z_1z_3,z_1z_2$ to $W_{Y}$.
    
\end{example}

Since $L^\pi$ is non-toric, explicit expressions of general holomorphic discs is not available.  On the other hand, some representatives are easy to write down.  For instance, we have used basic disc classes (bounded by possibly degenerate toric fibers) to represent the classes $\beta_i^B$.  In above, we have considered discs in $Y$ whose boundary becomes a point under quotient.  More generally, we can make use of symplectic involution to have a general form of a holomorphic disc in $Y^- \times X$.  The Lemma below is easy to prove.

\begin{lemma} \label{lem:ubar}
  Let $Y$ be a symplectic quotient of $\C^n$ by a linear subtorus action $\rho: T^k \times \C^n \to \C^n$.  let $\sigma: \C^n \to \C^n$ be defined by $\sigma(z_1,\ldots,z_n) = (\bar{z}_1,\ldots,\bar{z}_n)$.
  \begin{enumerate}
    \item $\sigma$ is an anti-symplectic involution on $(\C^n,\omega_{\C^n})$ for the standard symplectic form $\omega_{\C^n}$, namely, $\sigma^* \omega_{\C^n} = -\omega_{\C^n}$.  In other words, $\sigma$ is a symplectomorphism $(\C^n,\omega_{\C^n}) \cong (\C^n,-\omega_{\C^n})$.
    \item $\sigma \circ \rho_t = \rho_{-t} \circ \sigma$, where $\rho_t(x):=\rho(x,t)$.  In particular, $\sigma$ descends to a diffeomorphism on $Y$, which is still denoted by $\sigma$.
    \item $\sigma$ is an anti-symplectic involution on $(Y,\omega_Y)$.  It maps every toric fiber of $Y$ back to itself.
    \item Denote the symplectic manifold $(Y,-\omega)$ by $Y^-$.  Let $F \subset Y$ be a toric fiber.  There is a one-to-one correspondence between holomorphic disc $u:(\Delta,\partial \Delta) \to (Y,F)$ and that of $(Y^-,F)$ by $u \mapsto \bar{u} := \sigma \circ u$.
  \end{enumerate}
\end{lemma}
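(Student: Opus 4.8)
The plan is to verify the four assertions in turn; each is an elementary consequence of two facts: complex conjugation is an anti-holomorphic, anti-symplectic involution of $\C^n$, and it intertwines every linear torus action with its inverse. For (1) I would write $\omega_{\C^n} = \sum_{j=1}^n \der x_j \wedge \der y_j$ with $z_j = x_j + \consti y_j$; since $\sigma$ fixes each $x_j$ and sends $y_j \mapsto -y_j$, we get $\sigma^*\omega_{\C^n} = -\omega_{\C^n}$ at once, and $\sigma^2 = \Id$ is obvious. This is precisely the assertion that $\sigma$ is a symplectomorphism $(\C^n,\omega_{\C^n}) \cong (\C^n,-\omega_{\C^n})$.

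For (2) I would diagonalise the linear action: there are characters $\chi_1,\ldots,\chi_n : T^k \to U(1)$ with $\rho_t(z) = (\chi_1(t)z_1,\ldots,\chi_n(t)z_n)$, so that $\sigma(\rho_t(z)) = (\overline{\chi_j(t)}\,\bar z_j)_j = (\chi_j(-t)\bar z_j)_j = \rho_{-t}(\sigma(z))$, using $\overline{\chi_j(t)} = \chi_j(t)^{-1} = \chi_j(-t)$. Hence $\sigma$ carries $T^k$-orbits to $T^k$-orbits, and the moment map $\mu$ of the linear action depends only on the $|z_j|^2$, so it is $\sigma$-invariant and $\sigma$ preserves the level set $\mu^{-1}\{c\}$. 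Writing $\sigma_c$ for the restriction of $\sigma$ to $\mu^{-1}\{c\}$, we have $\pi\circ\sigma_c = \sigma_Y\circ\pi$ and $\iota\circ\sigma_c = \sigma\circ\iota$ for a well-defined involution $\sigma_Y$ of $Y = \mu^{-1}\{c\}/T^k$, still denoted $\sigma$.

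For (3) I would combine the defining property of the reduced form, $\pi^*\omega_Y = \iota^*\omega_{\C^n}$, with the two equivariance relations above and part (1): $\pi^*(\sigma_Y^*\omega_Y) = (\sigma_Y\circ\pi)^*\omega_Y = (\pi\circ\sigma_c)^*\omega_Y = \sigma_c^*(\iota^*\omega_{\C^n}) = \iota^*(\sigma^*\omega_{\C^n}) = -\iota^*\omega_{\C^n} = \pi^*(-\omega_Y)$, and injectivity of $\pi^*$ (as $\pi$ is a surjective submersion) gives $\sigma_Y^*\omega_Y = -\omega_Y$; likewise $\sigma_Y^2 = \Id$. That $\sigma$ preserves every toric fibre is the same observation again: a regular toric fibre of $Y$ is a level set of the residual moment map, which depends only on the $|z_j|^2$ and is therefore $\sigma$-invariant. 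For (4) I take the standard integrable complex structure $J_Y$ on the toric variety $Y$, with respect to which $\sigma$ is anti-holomorphic since it is the restriction of complex conjugation, and equip $\bar Y = (Y,-\omega_Y)$ with the compatible almost complex structure $-J_Y$. Then for $u:(\Delta,\partial\Delta)\to(Y,F)$ that is $J_Y$-holomorphic one computes $\der(\sigma\circ u)\circ j = \der\sigma\circ J_Y\circ\der u = -J_Y\circ\der\sigma\circ\der u = (-J_Y)\circ\der(\sigma\circ u)$, so $\sigma\circ u$ is $(-J_Y)$-holomorphic, i.e.\ a holomorphic disc of $\bar Y$; the boundary constraint is preserved because $\sigma(F) = F$ by (3); and $u\mapsto\sigma\circ u$ is its own inverse, hence a bijection. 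One can additionally record that it preserves symplectic areas, $\int \bar u^*(-\omega_Y) = \int u^*\sigma^*(-\omega_Y) = \int u^*\omega_Y$.

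There is no genuine obstacle here; the argument is bookkeeping. The only points worth a word of care are fixing $-J_Y$ once and for all as the reference almost complex structure on $\bar Y$, so that ``holomorphic disc of $(\bar Y,F)$'' is unambiguous, and noting that the induced complex structure on the toric variety is integrable, which is what makes the descended conjugation honestly anti-holomorphic. If one later wants $u\mapsto\sigma\circ u$ to upgrade to an isomorphism of moduli spaces with their Kuranishi structures and CF-perturbations, rather than merely of disc maps, it suffices to transport the chosen perturbation data from $(Y,F)$ to $(\bar Y,F)$ via $\sigma$; this is compatible with the disc-class analysis used for $L^\pi$ in this section and requires no new input.
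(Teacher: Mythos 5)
Your proof is correct and complete; the paper itself omits the argument entirely, stating only that the lemma ``is easy to prove,'' and your verification is exactly the intended elementary one (conjugation negates the standard form, intertwines a diagonalised linear action with its inverse, preserves the $|z_j|^2$-dependent moment maps and hence descends, and is anti-holomorphic so postcomposition exchanges $J_Y$- and $(-J_Y)$-holomorphic discs). Your closing remarks about fixing $-J_Y$ as the reference structure on $\bar Y$ and about transporting perturbation data are sensible but not required for the statement as written.
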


Using the above symplectic involution, a general form of a holomorphic disc can be written as follows.
\begin{prop}
    A holomorphic disc bounded by the Lagrangian correspondence $L^\pi \subset Y^- \times X$ is of the form $(\bar{u}_Y,u_X)$, where $u_Y: \Delta \to Y$ is a holomorphic disc bounded by the moment level set $\mu^{-1}\{c\}$ of $Y$ and $u_X: \Delta \to X$ is a holomorphic disc such that $u_X|_{\partial\Delta}$ agrees with the composition of the quotient map and $\bar{u}_Y|_{\partial\Delta}$.    
\end{prop}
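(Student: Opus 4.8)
The statement to prove is that a holomorphic disc bounded by $L^\pi \subset \bar{Y}\times X$ is of the form $(\bar u_Y, u_X)$ where $u_Y$ is a holomorphic disc in $Y$ with boundary on $\mu^{-1}\{c\}$ and $u_X$ is a holomorphic disc in $X$ whose boundary loop matches the image of $\partial u_Y$ under the quotient map. Let me sketch the proof.

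\textbf{Proof plan.} The plan is to unwind the definition of the Lagrangian correspondence $L^\pi = \{(y,\pi(y)) : y \in \mu^{-1}\{c\}\} \subset \bar Y \times X$ and use the product structure of the almost complex structure on $\bar Y \times X$. First I would observe that a map $u : (\Delta,\partial\Delta) \to (\bar Y\times X, L^\pi)$ is, by the product structure, a pair $u = (u_{\bar Y}, u_X)$ where $u_{\bar Y}:\Delta \to \bar Y$ and $u_X : \Delta \to X$; holomorphicity with respect to the product almost complex structure $(-J_Y)\oplus J_X$ (recall the convention in subsection \ref{subsec-corrtrimod} that the $Y$-patch carries $-J$) is equivalent to $u_{\bar Y}$ being $(-J_Y)$-holomorphic and $u_X$ being $J_X$-holomorphic separately. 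Next, by Lemma \ref{lem:ubar}(4), the map $\sigma: \bar Y \to Y$ (complex conjugation on coordinates, descended to $Y$) sets up a bijection between $(-J_Y)$-holomorphic discs in $\bar Y$ and $J_Y$-holomorphic discs in $Y$; so I set $u_Y := \sigma\circ u_{\bar Y}$, a genuine holomorphic disc in $Y$. Since $\sigma$ preserves every toric fiber of $Y$ and in particular maps $\mu^{-1}\{c\}$ to itself (it commutes with the torus action up to inversion by Lemma \ref{lem:ubar}(2), hence preserves moment level sets), the boundary condition $u_{\bar Y}(\partial\Delta) \subset \{y : (y,\pi(y))\in L^\pi\} = \mu^{-1}\{c\}$ translates to $u_Y(\partial\Delta)\subset \mu^{-1}\{c\}$. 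Writing $u_{\bar Y} = \bar u_Y$ with $\bar u_Y := \sigma\circ u_Y$ recovers the first component.

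\textbf{Matching the boundaries.} The remaining point is the compatibility of the two boundary loops. The defining equation of $L^\pi$ forces that for each $\theta \in \partial\Delta$, the point $(u_{\bar Y}(\theta), u_X(\theta))$ lies in $L^\pi$, i.e. $u_X(\theta) = \pi(u_{\bar Y}(\theta))$ where $\pi : \mu^{-1}\{c\} \to X$ is the quotient map (here I am using that $\sigma$ intertwines $\pi$ with itself, since $\pi$ is $T^k$-invariant and $\sigma$ only inverts the torus action — so $\pi\circ\sigma = \pi$ on $\mu^{-1}\{c\}$, which is what lets me replace $u_{\bar Y}$ by $\bar u_Y$ and still have $u_X|_{\partial\Delta}$ equal to $\pi\circ \bar u_Y|_{\partial\Delta}$). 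This is exactly the asserted condition: $u_X|_{\partial\Delta}$ agrees with the composition of the quotient map with $\bar u_Y|_{\partial\Delta}$. Conversely any such pair clearly gives a disc on $L^\pi$, so the description is complete.

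\textbf{Main obstacle.} The content here is essentially bookkeeping of conventions; the only genuine subtlety — and the step I would be most careful about — is the sign/conjugation juggling: keeping straight that the correspondence tri-module convention puts $-J$ on the $Y$-factor, that $\bar Y$ means $(Y,-\omega)$, that $\sigma$ is anti-symplectic and anti-holomorphic so it converts a $(-J_Y)$-holomorphic disc in $\bar Y$ into a $J_Y$-holomorphic disc in $Y$, and that $\pi\circ\sigma = \pi$ on the level set so the quotient boundary loop is unaffected. Once these identifications are laid out via Lemma \ref{lem:ubar}, the proposition follows immediately, and indeed this is why the excerpt remarks that ``the Lemma below is easy to prove'' and then states the proposition without a separate proof. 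I would therefore present the argument as a short paragraph invoking Lemma \ref{lem:ubar} and the definition of $L^\pi$, with the conjugation bookkeeping made explicit.
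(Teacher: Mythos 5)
Your argument is correct and follows the paper's proof essentially verbatim: decompose the disc into its two components, invoke Lemma \ref{lem:ubar} to write the $\bar{Y}$-component as $\bar{u}_Y$ for a genuine holomorphic disc $u_Y$ of $Y$, and read off the boundary conditions from the definition of $L^\pi$ together with $\sigma(\mu^{-1}\{c\})=\mu^{-1}\{c\}$. One small caveat: your parenthetical claim that $\pi\circ\sigma=\pi$ on $\mu^{-1}\{c\}$ is generally false --- $\sigma$ preserves $T^n$-orbits but not the $T^k$-orbits of a proper subtorus, so it does not descend to the identity on $X$ --- but it is also unnecessary, since $\bar{u}_Y=\sigma\circ u_Y=\sigma\circ\sigma\circ u_{\bar Y}=u_{\bar Y}$ is literally the $\bar{Y}$-component of the disc, and the condition $u_X|_{\partial\Delta}=\pi\circ\bar{u}_Y|_{\partial\Delta}$ is then immediate from $(y,\pi(y))\in L^\pi$ with no intertwining of $\pi$ and $\sigma$ required.
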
 

\begin{proof}
    Given a holomorphic disc $u: (\Delta,\partial\Delta) \to (Y^- \times X,L^\pi)$, its projections to $Y^-$ and to $X$ are holomorphic.  By Lemma \ref{lem:ubar}, the projection to $Y^-$ must be $\bar{u}_Y$ for some holomorphic disc of $Y$.  The projection to $X$ is denoted by $u_X$.  By the boundary condition $u(\partial\Delta) \subset L^\pi$, it follows that $\bar{u}_Y(\partial\Delta)\subset \mu^{-1}\{c\}$ and $u_X|_{\partial\Delta} = [\bar{u}_Y|_{\partial\Delta}]$.  Moreover, since the involution $\sigma:Y\to Y$ preserves all toric fibers, $\sigma(\mu^{-1}\{c\}) = \mu^{-1}\{c\}$, and hence $u_Y(\partial\Delta)\subset \mu^{-1}\{c\}$.
\end{proof}

\begin{example}
    In Example \ref{ex:F2}, we have seen that the exceptional curve $e$ in $X=\bF_2$ contributes to the equivariant disc potential of $L^\pi$.  We can depict such discs in the above form $(\bar{u}_Y,u_X)$.  Namely, $\bar{u}_Y$ is taken as the conjugation of a basic holomorphic disc bounded by a degenerate toric fiber $T^3$ in the coordinate hyperplane $\{x_2=0\}$ of $\C^4$; $u_X$ is taken as a basic holomorphic disc bounded by a degenerate toric fiber $T^1$ contained in the exceptional curve $e$ of $\bF_2$, whose boundary is negative of the quotient image of the boundary of $\bar{u}_Y$.  This gives a Maslov-zero holomorphic disc that contributes to the term $q^e$ of the equivariant disc potential of $L^\pi$.  See Figure \ref{fig:F2}.
\end{example}

\begin{figure}[h]
\begin{center}
\includegraphics[scale=0.25]{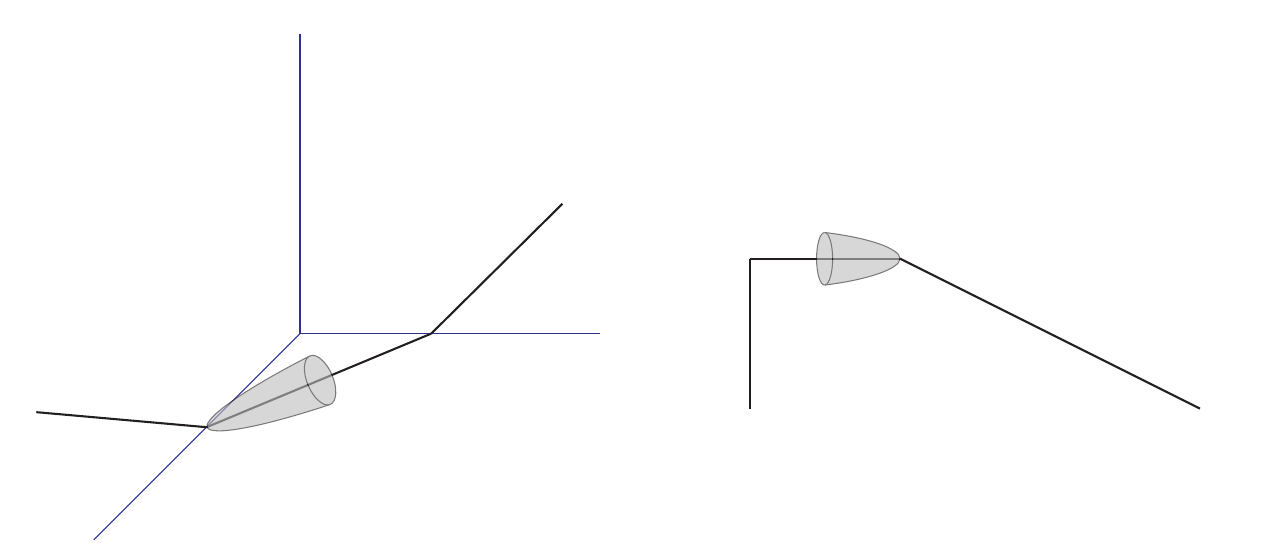}
\caption{\small{Holomorphic discs in $(\C^4 \times \bF_2,L^\pi)$ that contribute to the equivariant disc potential.  The infinite divisor has been taken away in the picture to reduce from $\C^4$ to $\C^3$.}}
\label{fig:F2}
\end{center}
\end{figure}

\bibliographystyle{plain}
\bibliography{geometry}
	
\end{document}